\newcommand{\mcal}[1]{\mathcal{#1}}
\newcommand{\wtilde}[1]{\widetilde{#1}}
\newcommand{\mbb}[1]{\mathbb{#1}}
\newcommand{\msf}[1]{\mathsf{#1}}
\newcommand{\mfrk}[1]{\mathfrak{#1}}
\newcommand{\Stab}[1]{\mathrm{Stab}{#1}}
\newcommand{\Z}{\mathbb{Z}}
\newcommand{\N}{\mathbb{N}}
\newcommand{\R}{\mathbb{R}}
\renewcommand{\epsilon}{\varepsilon}
\newcommand{\corchete}[1]{\left\{{#1}\right\}}
\newcommand{\ov}[1]{\overline{#1}}
\newcommand{\CAT}[1]{\textup{CAT}(#1)}
\newcommand{\genby}[1]{\langle #1\rangle}
\renewcommand*{\backref}[1]{}
\renewcommand*{\backrefalt}[4]{\quad \tiny 
  \ifcase #1 (\textbf{NOT CITED.})%
  \or    (Cited on page~#2.)%
  \else   (Cited on pages~#2.)%
  \fi}
\def\MRbibitem{\@ifnextchar[\my@lbibitem\my@bibitem}
\def\mybiblabel#1#2{\@biblabel{{\hyperref{http://www.ams.org/mathscinet-getitem?mr=#1}{}{}{#2}}}}
\def\myhyperanchor#1{\Hy@raisedlink{\hyper@anchorstart{cite.#1}\hyper@anchorend}}
\def\my@lbibitem[#1]#2#3#4\par{%
  \item[\mybiblabel{#2}{#1}\myhyperanchor{#3}\hfill]#4%
  \@ifundefined{ifbackrefparscan}{}{\BR@backref{#3}}%
  \if@filesw{\let\protect\noexpand\immediate
    \write\@auxout{\string\bibcite{#3}{#1}}}\fi\ignorespaces%
}
\def\my@bibitem#1#2#3\par{%
  \refstepcounter\@listctr
  \item[\mybiblabel{#1}{\the\value\@listctr}\myhyperanchor{#2}\hfill]#3%
  \@ifundefined{ifbackrefparscan}{}{\BR@backref{#2}}%
  \if@filesw\immediate\write\@auxout
    {\string\bibcite{#2}{\the\value\@listctr}}\fi\ignorespaces%
}
\newtheorem{thm}{Theorem}[section]
\newtheorem{prop}[thm]{Proposition}
\newtheorem{lemma}[thm]{Lemma}
\newtheorem{coro}[thm]{Corollary}
\theoremstyle{remark}\newtheorem{rmk}[thm]{Remark}
\theoremstyle{definition}
\newtheorem{defi}[thm]{Definition}
\begin{document}

\title{\textbf{ON CUBULATED RELATIVELY HYPERBOLIC GROUPS}}

\author{\small{Eduardo Oreg\'on-Reyes}}

\markboth{E Oreg\'on Reyes}{Nombreartículo}

\date{}
\maketitle

\begin{abstract} We show that properly and cocompactly cubulated relatively hyperbolic groups are virtually special, provided the peripheral subgroups are virtually special in a way that is compatible with the cubulation. This extends Agol's result for cubulated hyperbolic groups, and applies to a wide range of peripheral subgroups. In particular, we deduce virtual specialness for properly and cocompactly cubulated groups that are hyperbolic relative to virtually abelian groups. As another consequence, by using a theorem of Martin and Steenbock we obtain virtual specialness for groups obtained as a quotient of a free product of finitely many virtually compact special groups by a finite set of relators satisfying the classical $C'(1/6)$-small cancellation condition.
\end{abstract}


\section{Introduction}\label{introduction}

$\CAT{0}$ cube complexes were introduced by Gromov in his seminal paper \cite{Gromov1987HypGroups} as examples of singular metric spaces with non-positive curvature. This notion has played a prominent role in the last decades, and has shown to have important connections with other aspects of topology and group theory. In particular, the class of \emph{special cube complexes} introduced by Haglund and Wise \cite{HaglundWise2008Special,HaglundWise2012Comb} was key to Agol's proof of the Virtually Haken Conjecture, a result which turned out to be a consequence of the following property of cubulated hyperbolic groups \cite[Thm.~1.1]{Agol2012VirtualHaken}:
\begin{thm}[Agol]\label{agol}
Let $G$ be a hyperbolic group acting properly and cocompactly on a $\CAT{0}$ cube complex $X$. Then $G$ has a finite index subgroup $G'$ acting freely on $X$ such that $X/G'$ is special.
\end{thm}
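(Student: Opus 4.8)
The plan is to reproduce the architecture of Agol's proof, which rests on three external ingredients: the Haglund--Wise combinatorial criterion for virtual specialness, Wise's Malnormal Special Quotient Theorem, and group-theoretic Dehn filling (Osin; Groves--Manning; Agol--Groves--Manning).

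\textbf{Reduction to separability.} First I would pass to a finite-index subgroup so that the action on $X$ is free, and record that since $G$ is hyperbolic and the action on the uniformly locally finite complex $X$ is cocompact, every hyperplane stabiliser $\mathrm{Stab}(W)$ is quasiconvex in $G$. By the Haglund--Wise characterisation of special cube complexes, $X/G'$ is special for a suitable finite-index $G'\le G$ provided: (a) each hyperplane stabiliser is separable in $G$; and (b) for each pair $W,W'$ of hyperplanes of $X$ that cross or osculate and each $g\in G$, the double coset $\mathrm{Stab}(W)\,g\,\mathrm{Stab}(W')$ is separable. By Minasyan's theorem on products of quasiconvex subgroups of hyperbolic groups, (a) and (b) both follow once $G$ is QCERF, i.e.\ every quasiconvex subgroup is separable. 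So it suffices to prove that a hyperbolic group acting properly and cocompactly on a $\CAT{0}$ cube complex is QCERF.

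\textbf{The inductive engine.} I would establish QCERF by induction along a well-founded complexity of the pair $(G,X)$ --- for instance the lexicographic pair $(\dim X,\ \#\{G\text{-orbits of hyperplanes}\})$ --- with virtually free groups as base case. Given the statement at lower complexity, each hyperplane $W$ is a lower-dimensional $\CAT{0}$ cube complex on which the hyperbolic quasiconvex group $\mathrm{Stab}(W)$ acts properly and cocompactly, hence $\mathrm{Stab}(W)$ is virtually compact special by induction; after a further finite-index passage the action becomes ``clean'' (hyperplanes embed and are two-sided), so only condition (b) can fail. Now I would extract from the hyperplane (or hyperplane-intersection) stabilisers a $G$-invariant family of quasiconvex subgroups that becomes \emph{almost malnormal} after passing to finite index, feed a suitable such family into Wise's Malnormal Special Quotient Theorem using the virtual specialness just obtained, and Dehn-fill: for deep enough finite-index subgroups the quotient $\bar G$ is hyperbolic, is again virtually compact special by the Malnormal Special Quotient Theorem, and --- via Wise's cubical small-cancellation construction --- acts properly and cocompactly on a $\CAT{0}$ cube complex $\bar X$ of strictly smaller complexity.

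\textbf{Transfer back, and the main obstacle.} To conclude that $G$ is QCERF, take quasiconvex $H,K\le G$ and $g\notin HK$. I would choose the fillings deep enough --- using bounds on the height of quasiconvex subgroups and the Agol--Groves--Manning separability criteria for Dehn fillings --- that $\bar H,\bar K$ remain quasiconvex, that $\bar g\notin\bar H\bar K$, and that a finite quotient of $\bar G$ separating $\bar g$ from $\bar H\bar K$ (available by the inductive hypothesis applied to $\bar G$ acting on $\bar X$) pulls back to a finite quotient of $G$ separating $g$ from $HK$; this closes the induction and, with the first step, proves the theorem. Essentially all the difficulty --- and Agol's genuinely new input --- lies in the second step: arranging, after a finite-index passage, an \emph{almost malnormal} quasiconvex family whose fillings simultaneously (i) strictly decrease the complexity, (ii) still act properly and cocompactly on a $\CAT{0}$ cube complex, and (iii) retain enough separations to be transferred back in the third step. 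Certifying this requires the ``weak separation'' statements for double cosets of hyperplane stabilisers, delicate control of the quotient cube complex under cubical small cancellation, and compatibility between the Dehn-filling hyperbolicity estimates and the cubical structure; by contrast, the reductions in the first and third steps are comparatively formal once one is handed the Haglund--Wise criterion, Minasyan's theorem, the Malnormal Special Quotient Theorem, and the Dehn-filling package.
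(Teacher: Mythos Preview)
The paper does not itself prove this theorem---it is quoted as Agol's result from \cite{agolhaken}---but the paper's proof of its main theorem (Theorem~\ref{mainthm}) follows Agol's architecture closely, so one can compare against that.

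Your proposal contains a genuine circularity and a structural misconception. The circularity is in your invocation of Wise's Malnormal Special Quotient Theorem: MSQT requires the \emph{ambient} group $G$ to be virtually special (see Theorem~\ref{msqt}), not merely the quasiconvex subgroups you feed in. The ``virtual specialness just obtained'' in your inductive step is only for the hyperplane stabilisers $\mathrm{Stab}(W)$, so you cannot apply MSQT to $G$. What Agol actually uses at this point is the weak separation theorem of Agol--Groves--Manning (Theorem~\ref{agolgrovesmanning} here): if $H<G$ is quasiconvex and \emph{$H$} is virtually special, then any $g\notin H$ can be separated from $H$ in a hyperbolic quotient in which the image of $H$ is \emph{finite}. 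This needs only the subgroup, not $G$, to be virtually special, and it is the content of the appendix to \cite{agolhaken}.

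The structural misconception concerns the shape of the induction. Agol does not Dehn-fill $G$ to obtain a group acting on a cube complex of smaller complexity and then pull QCERF back; no such cubulation of the filled group is produced, and ``Wise's cubical small-cancellation construction'' does not furnish one. Instead, the weak separation theorem is used (as in Section~\ref{constructionofthecomplex}) to find a normal subgroup $K\trianglelefteq G$ so that the quotient complex $\mcal{X}=\dot X/K$ has \emph{finite, embedded} walls. One then \emph{colours} the walls of $\mcal{X}$ (Section~\ref{coloringwalls}) and, via a measure-theoretic existence argument, builds colour by colour a sequence of ``cubical polyhedra'' (Sections~\ref{cubicalpolyhedra}--\ref{constructingVj-1}) which encode a splitting of (a finite-index subgroup of) $G$ as a graph of groups with quasiconvex edge groups and vertex groups already known to be virtually special. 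This places $G$ in $\mcal{QVH}$, and Wise's hierarchy theorem (Theorem~\ref{wiseQVH}) finishes. The colouring and cubical-polyhedra construction---not a Dehn filling of $G$ that lowers cubical complexity---is Agol's genuinely new input, and it is absent from your outline.
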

For a good exposition about this result see \cite{Agol2014ICM,Shepherd2019AgolsCubulations}. Virtually special groups have finite index subgroups embedding nicely into right-angled Artin groups, and so they inherit some of their properties, mainly in terms of subgroup separability. In particular, hyperbolic groups satisfying Theorem \ref{agol} are residually finite, large, linear over $\Z$, and their quasiconvex subgroups are
separable (in fact they are virtual retracts) \cite{HaglundWise2008Special}.

The assumption of hyperbolicity in Theorem \ref{agol} is in some sense necessary, since there are examples of infinite simple groups acting properly and cocompactly on products of trees \cite{BurgerMozes2000Latticesinxtrees}. The goal of this manuscript is to extend Agol's result to relatively hyperbolic groups, but for that we need to restrict our class of peripheral subgroups since any countable group is hyperbolic relative to the whole group. The following is the main result of the paper\footnote{After this paper was written, Groves and Manning \cite[Thm.~A]{GrovesManning2020SpecialRH} gave a proof of Theorem \ref{mainthm} via different methods.}.

\begin{thm}[Main Theorem]\label{mainthm} Let $G$ be a group acting properly and cocompactly on the $\CAT{0}$ cube complex $X$, and suppose $G$ is hyperbolic relative to compatible virtually special subgroups. Then there exists a finite index subgroup $G'<G$ acting freely on $X$ such that $X/G'$ is a special cube complex.
\end{thm}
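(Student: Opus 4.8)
The plan is to reduce the relatively hyperbolic case to Agol's hyperbolic case by passing to a \emph{cusped space} construction and showing that the relevant cube complex structure survives. First I would recall that Groves--Manning (and Bowditch) provide a hyperbolic space — the cusped space $\widehat{X}$ — obtained from the Cayley graph of $G$ by gluing combinatorial horoballs along cosets of the peripheral subgroups. The idea is to build, out of the given $\CAT{0}$ cube complex $X$ together with compatible cocompact cubulations of the peripheral subgroups, an auxiliary $\CAT{0}$ cube complex $\widehat{X}$ on which a hyperbolic group (or a hyperbolic-like object) acts, apply Agol's theorem there, and then pull the resulting special structure and finite-index subgroup back to $X$. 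The compatibility hypothesis on the peripheral subgroups is precisely what should let one glue cubical horoballs to the walls of $X$ that cross a peripheral orbit, producing a cube complex whose wall structure is controlled.

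Next I would set up the wall/hyperplane bookkeeping. The specialness of $X/G'$ is equivalent, by Haglund--Wise, to the absence of the four pathologies (self-intersecting, self-osculating, one-sided, inter-osculating hyperplanes) in the quotient; by the standard separability reformulation this follows once one knows that the hyperplane stabilizers in $G$ are separable and that certain double cosets are separable. So the real content is: (i) the hyperplane stabilizers of the $G$-action on $X$ are relatively quasiconvex (this should follow from cocompactness plus a convexity argument on the cusped space), and (ii) relatively quasiconvex subgroups of $G$ are separable, which one gets from the virtual specialness of the peripheral subgroups via the combination theorems of Hsu--Wise / Sageev--Wise / Huang--Manning or the Dahmani/Martínez-Pedroza relatively hyperbolic Dehn filling machinery. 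Then one runs the canonical completion and retraction argument relative to the peripheral pieces: do the cubical surgery locally near the cusps using the compatible special structures there, and in the hyperbolic-direction use Agol's theorem applied to $\widehat{X}$.

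The hard part — and the step I expect to consume most of the argument — is the gluing itself: constructing $\widehat{X}$ as an honest $\CAT{0}$ cube complex on which the combinatorial structure is compatible both with Agol's theorem and with the peripheral cubulations, and verifying that a finite-index special subgroup of the ``cusped'' object descends to a finite-index special subgroup acting freely on the original $X$. One has to make sure the horoball cube complexes attached along peripheral cosets are themselves special in a way compatible with the ambient walls, so that no new inter-osculations are created at the seams, and one has to control how the finite-index subgroup produced by Agol interacts with the (infinitely many) cusps. This is where the \emph{compatible} hypothesis is doing all the work, and formulating it correctly — strong enough to glue, weak enough to hold for virtually abelian peripherals and for the Martin--Steenbock small-cancellation quotients — is the crux. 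A secondary obstacle is the usual subtlety that relative quasiconvexity of hyperplane stabilizers is not automatic: one must check that the hyperplanes themselves, intersected with a peripheral coset, land in a cocompact piece, which again uses compatibility.

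I would therefore organize the proof as: Section on cusped cube complexes (build $\widehat{X}$, prove it is $\CAT{0}$ and that the induced metric space is hyperbolic relative to the peripheral cubulations' cusps); Section on hyperplane stabilizers (prove they are relatively quasiconvex, hence separable given the peripheral specialness, via a combination theorem); Section on the specialness transfer (apply Agol to the hyperbolic part, patch in the peripheral special structures, run canonical completion/retraction relative to cusps, and show the intersection of all the finite-index subgroups so produced is still finite-index and acts freely and specially on $X$). The verification that this final intersection is finite-index — i.e.\ that only finitely many peripheral conjugacy classes and finitely many hyperplane orbits need to be corrected, which holds by cocompactness — is the clean ending.
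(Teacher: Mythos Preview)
Your proposal has two strands, and neither closes as written.

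\textbf{The cusped cube complex reduction.} Even if you succeed in building a $\CAT{0}$ cube complex $\widehat{X}$ by attaching cubical horoballs, the action of $G$ on $\widehat{X}$ is not cocompact (horoballs are unbounded), so Agol's theorem does not apply. There is no hyperbolic group acting properly and cocompactly on $\widehat{X}$ to which you could apply it either: $G$ itself is not hyperbolic, and the cusped space is not the universal cover of anything with compact quotient. The paper does \emph{not} reduce to Agol's theorem by a cusp construction; it instead adapts Agol's \emph{proof} --- the measure-theoretic coloring trick on walls --- directly to the relatively hyperbolic setting, replacing Wise's $\mathcal{QVH}$ hierarchy by a relative version $\mathcal{CMVH}$ whose edge groups are only relatively malnormal rather than almost malnormal.

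\textbf{The separability route.} You correctly identify the Haglund--Wise double-coset criterion as the target, and wall stabilizers are indeed relatively quasiconvex. But they are typically \emph{not full} relatively quasiconvex (their intersection with a peripheral can have infinite index), and the existing combination/Dehn-filling literature you cite (Mart\'inez-Pedroza, Manning--Mart\'inez-Pedroza, Sageev--Wise) gives separability only for full relatively quasiconvex subgroups, or under hypotheses you have not secured. Bridging this gap is the actual content of the paper: it extends Groves--Manning's $H$-wide Dehn filling to control relative height and double cosets of non-full subgroups, uses Einstein's relative malnormal special quotient theorem to produce hyperbolic virtually special fillings, and then runs an inductive $\mathcal{CMVH}$ hierarchy argument to obtain separability of wall stabilizers and their double cosets. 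Your sentence ``relatively quasiconvex subgroups of $G$ are separable, which one gets from the virtual specialness of the peripheral subgroups via the combination theorems'' is precisely the theorem being proved, not an input to it.
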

The formal definition of \emph{compatible virtually special peripheral subgroups} is given in Subsection \ref{subsecvs}, and it essentially means that if $P<G$ is a peripheral subgroup, then there exists \emph{some} $P$-invariant convex subcomplex $Z\subset X$, and there is a finite index subgroup $P'<P$ such that $Z/P'$ is a compact and special cube complex. This compatibility is flexible enough so that Theorem
\ref{mainthm} applies when $G$ is cubulated and hyperbolic relative to virtually abelian subgroups.
\begin{coro}\label{virtab} If $G$ acts properly and cocompactly on a $\CAT{0}$ cube complex and is hyperbolic relative to virtually abelian subgroups, then $G$ is virtually compact special.
\end{coro}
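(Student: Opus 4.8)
The plan is to deduce the corollary from the Main Theorem by verifying that, when $G$ acts properly and cocompactly on a $\CAT{0}$ cube complex $X$ and is hyperbolic relative to virtually abelian subgroups, the peripheral subgroups are automatically \emph{compatible virtually special} in the sense of Subsection \ref{subsecvs}. Fix a peripheral subgroup $P<G$; by hypothesis $P$ is virtually $\Z^n$ for some $n$. First I would recall that, since $G$ acts properly and cocompactly on $X$, each peripheral subgroup $P$ acts properly on $X$, and one of the key structural results about cubulated relatively hyperbolic groups (the analogue of the fact that peripheral subgroups of relatively hyperbolic groups act on "nice" invariant subcomplexes) provides a $P$-invariant convex subcomplex $Z\subseteq X$ on which $P$ acts cocompactly — this is exactly the kind of statement the paper will have established or will cite in its preliminaries, so I will assume it here. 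Then the task reduces to showing that $Z/P'$ is compact and special for a suitable finite index $P'<P$.

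The heart of the verification is thus: a virtually abelian group $P$ acting properly and cocompactly on a $\CAT{0}$ cube complex $Z$ is virtually compact special. This is a known fact — it follows, for instance, from work of Haglund on special cube complexes together with the structure theory of $\CAT{0}$ cube complexes admitting a cocompact virtually abelian action (which, after passing to a finite index subgroup and the essential core, split as a product of lines, so that $Z$ is quasi-isometric to $\R^n$ and the cube complex is, up to finite index, a product of trees/lines). One cleanly packages this as: a $\CAT{0}$ cube complex with a proper cocompact action of a virtually abelian group has a finite-index-equivariant special quotient. I would cite the appropriate statement (Haglund--Wise, or the virtually abelian case folklore, or Wise's work) rather than reprove it. Combining this with the $P$-invariant convex subcomplex $Z$ from the previous paragraph shows $P$ is a compatible virtually special peripheral subgroup.

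Having checked compatibility for every peripheral subgroup, Theorem \ref{mainthm} applies directly and yields a finite index subgroup $G'<G$ acting freely on $X$ with $X/G'$ special; since the $G$-action on $X$ is cocompact, $X/G'$ is compact, so $G$ is virtually compact special. The main obstacle I anticipate is not conceptual but bookkeeping: one must make sure the convex subcomplex $Z$ associated to $P$ is genuinely $P$-invariant and cocompact — i.e. that the general mechanism producing invariant subcomplexes for peripheral subgroups in the relatively hyperbolic setting is available in the form required by the definition of compatibility — and that the "virtually abelian implies virtually compact special" input is invoked in precisely the equivariant form the definition demands (a finite index $P'<P$ with $Z/P'$ special), rather than merely an abstract isomorphism statement. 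Once those two ingredients are lined up with the exact wording of Subsection \ref{subsecvs}, the corollary is immediate.
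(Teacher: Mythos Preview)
Your proposal is correct and follows the same overall strategy as the paper: verify the compatibility condition for each peripheral subgroup and then invoke Theorem~\ref{mainthm}. The existence of the $P$-invariant convex cocompact subcomplex $Z$ is indeed supplied by Theorem~\ref{sageevwise} (Sageev--Wise), which the paper states in its preliminaries, so your bookkeeping concern there is handled.

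Where you diverge from the paper is in the verification that $(P,Z)$ is virtually special. You propose to invoke structural results about cocompact virtually abelian actions on $\CAT{0}$ cube complexes (essential cores, product decompositions into lines), citing Haglund or folklore. The paper instead uses a more direct and self-contained route: it applies the Haglund--Wise double coset criterion (Theorem~\ref{haglundwisedoublecoset}), which says $(P,Z)$ is virtually special if and only if wall stabilizers and double cosets of intersecting wall stabilizers are separable in $P$. Since $P$ is finitely generated virtually abelian, \emph{every} subgroup and \emph{every} double coset of $P$ is separable, so the criterion is satisfied immediately. This avoids any structural analysis of $Z$ and requires no equivariant statements beyond what the definition asks for. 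Your approach would work, but it imports more machinery than necessary; the paper's argument is a one-line application of separability in virtually abelian groups.
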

The preceding corollary recovers some remarkable results, such as virtual specialness of fundamental groups of non-compact finite-volume hyperbolic 3-manifolds and of limit groups \cite{Wise2011Hierar} due to Wise (see also \cite{CooperFuter2019qFsfcs} and \cite{GrovesManning2021Quasiconv}).

Another consequence of our main result depends on the combination theorem for cubulations in small cancellation theory due to Martin and Steenbock \cite[Thm.~1.1]{MartinSteenbock2017Combsmallcan}.
\begin{thm}[Martin-Steenbock]\label{martin-steenbock} Let $F$ be the free product of finitely many groups $G_1,\dots , G_r$, and assume each $G_i$ acts properly and cocompactly on the $\CAT{0}$ cube complex $X_i$. If $G$ is a quotient of $F$ by a finite set of relators that satisfies the classical $C'(1/6)$-small cancellation condition over $F$, then $G$ acts properly and cocompactly on a $\CAT{0}$ cube complex $X$.\\ Moreover, this complex is constructed in such a way that for each $i$, there is a $G_i$-equivariant combinatorial isometric embedding $\dot{X}_i \hookrightarrow X$, where $\dot{X}_i$ is the cubical barycentric subdivision of $X_i$.
\end{thm}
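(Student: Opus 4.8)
The plan is to follow the strategy of Martin and Steenbock \cite{marste}: reduce the statement to producing a suitable $G$-invariant wallspace and then invoke Sageev's dualization. Concretely, one first builds a connected, simply connected complex $Y$ carrying a proper cocompact $G$-action, namely the Cayley complex of the $C'(1/6)$ presentation of $G$ over $F$: its $1$-skeleton is a ``blown-up'' relative Cayley graph in which every coset $gG_i$ spans a copy of the barycentric subdivision $\dot X_i$ (used here as vertex space so that the relator polygons attach combinatorially), and every conjugate of a relator $r_j$ bounds a polygon whose sides are geodesic segments lying inside these copies. When $G=F$ (no relators) this $Y$ is just the tree of cube complexes associated to the free product, which is already cubulated by combining the hyperplanes of the $\dot X_i$ with the ``edge walls'' dual to the Bass--Serre tree; the task is to retain a version of both families after the relator polygons have been glued in.

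I would therefore put two families of walls on $Y$. First, the \emph{$X_i$-walls}: each hyperplane of a vertex space $\dot X_i$ is extended across the relator polygons it runs into, the $C'(1/6)$ geometry forcing a hyperplane entering a polygon through one side to leave through a controlled and disjoint collection of sides, so that the extension closes up into an embedded, two-sided wall. Second, the \emph{relator walls}: one transverse wall for each relator polygon, pairing up its sides in the manner of Wise's cubulation of classical $C'(1/6)$ groups. One then checks the wallspace axioms, and this is where the small cancellation hypothesis does the work. The key input is a Greendlinger-type lemma for reduced disk diagrams over a $C'(1/6)$ presentation over a free product (with cube-complex vertex spaces): it implies that a wall cannot self-cross --- a sub-path of such a self-crossing would bound a reduced diagram contradicting the small cancellation condition --- so every wall is embedded, two-sided, and separates $Y$ into exactly two halfspaces; properness of the wallspace (finitely many walls separating any pair of vertices, at least one separating any two distinct ones) follows from properness of the $G_i$-actions on the $X_i$ on the vertex-space part and from the bounded interaction of walls with polygons on the polygon part; and cocompactness, together with an upper bound on the number of pairwise-crossing walls (hence finite-dimensionality of the dual), follows from cocompactness of the $G_i$-actions and finiteness of the relator set. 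Sageev's construction applied to $(Y^{(0)},\mathcal{W})$ then yields the $\CAT{0}$ cube complex $X$ with a proper cocompact $G$-action.

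For the ``moreover'' clause, I would fix an index $i$ and a lift of $G_i$, and consider the collection of walls of $X$ that originate as extensions of hyperplanes of that copy of $\dot X_i$; these cut out a convex subcomplex of $X$, and tracking the combinatorics shows it is $G_i$-equivariantly isomorphic to $\dot X_i$ (this is why the barycentric subdivision, rather than $X_i$ itself, appears), the inclusion being combinatorial and isometric by construction. The step I expect to be the main obstacle is the control of the $X_i$-walls: proving that the extension of a hyperplane of $\dot X_i$ across the relator polygons is well-defined, terminates, stays embedded, and does not wrap around. This rests on first setting up the disk-diagram machinery and the Greendlinger-type lemmas in this generalized small cancellation setting, and on a careful understanding of how hyperplane carriers of the $\dot X_i$ sit inside the polygon sides --- everything else is a fairly standard, if lengthy, verification of Sageev's hypotheses.
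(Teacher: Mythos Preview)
This theorem is not proven in the paper: it is quoted verbatim as \cite[Thm.~1.1]{marste}, with the ``moreover'' clause attributed to \cite[Rmk.~3.43]{marste} (cf.\ \cite[Cor.~4.5]{janwis}). There is therefore no in-paper proof to compare your proposal against; the paper simply invokes the result as a black box in order to deduce Proposition~\ref{combsmallcancellation}.

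That said, your outline is a faithful sketch of the Martin--Steenbock strategy itself: build a polygonal complex from the relative Cayley graph with cube-complex vertex spaces, define a $G$-invariant wallspace combining extended hyperplanes of the $\dot X_i$ with Wise-style relator walls, use $C'(1/6)$ disk-diagram arguments (Greendlinger-type lemmas over free products) to verify embeddedness, separation, properness and linear separation, and then apply Sageev. You have also correctly identified the delicate point --- controlling the propagation of an $\dot X_i$-hyperplane through the relator polygons --- as the technical heart of the argument. If you were asked to supply a proof here rather than a citation, this is the right plan; but for the purposes of this paper, the appropriate ``proof'' is simply the reference.
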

The ``moreover" part of the previous theorem is implicit in the construction of $X$, see \cite[Rmk.~3.43]{MartinSteenbock2017Combsmallcan} (cf.~\cite[Cor.~4.5]{JankiewiczWise2017}). The group $G$ is hyperbolic relative to the free factors $G_1, \dots, G_r$ \cite[p2~Example~(II)]{Osin2006RHBook}, and since a compact non-positively curved complex is virtually special if and only if its cubical barycentric division is virtually special (see Corollary \ref{vspecialsubdivision}), the existence of equivariant isometric embeddings $\dot{X}_i \hookrightarrow X$ imply that the cubulation $(G, X)$ satisfies the compatibility condition provided each of the cubulations $(G_i,X_i)$ is virtually special. Hence Theorem \ref{mainthm} implies the following combination result:
\begin{prop}\label{combsmallcancellation}Let $F$ be the free product of finitely many virtually compact special groups. If $G$ is a quotient of $F$ by a finite set of relators satisfying the classical $C'(1/6)$-small cancellation condition over $F$, then $G$ is also virtually compact special.
\end{prop}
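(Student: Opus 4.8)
The plan is to deduce Proposition~\ref{combsmallcancellation} as a direct consequence of Theorem~\ref{martin-steenbock} and the Main Theorem (Theorem~\ref{mainthm}), following the outline already sketched in the paragraph preceding the statement. First I would set up the data: write $F = G_1 \ast \cdots \ast G_r$ with each $G_i$ virtually compact special, and fix for each $i$ a proper cocompact action of $G_i$ on a $\CAT{0}$ cube complex $X_i$ for which $X_i/G_i'$ is compact special for some finite index $G_i' < G_i$ (this is exactly what \emph{virtually compact special} provides). Let $G$ be the quotient of $F$ by a finite set of relators satisfying $C'(1/6)$ over $F$. Then Theorem~\ref{martin-steenbock} supplies a proper cocompact action of $G$ on a $\CAT{0}$ cube complex $X$, together with, for each $i$, a $G_i$-equivariant combinatorial isometric embedding $\dot{X}_i \hookrightarrow X$ onto a convex subcomplex, where $\dot{X}_i$ is the cubical barycentric subdivision of $X_i$.

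Next I would verify that $(G,X)$ satisfies the hypotheses of Theorem~\ref{mainthm}. By the combination theorem of Osin (\cite{osinrel}), $G$ is hyperbolic relative to the images of the free factors $G_1, \dots, G_r$, so the peripheral subgroups are (conjugates of) the $G_i$. It remains to check that these peripheral subgroups are \emph{compatible virtually special} in the sense of Subsection~\ref{subsecvs}. Fix a peripheral $G_i$; I take $Z := \dot{X}_i \subset X$ to be the convex subcomplex provided by the equivariant embedding, which is $G_i$-invariant. Since $G_i$ is virtually compact special, there is a finite index $G_i' < G_i$ with $X_i/G_i'$ compact special; by Corollary~\ref{vspecialsubdivision}, the cubical barycentric subdivision of a compact non-positively curved complex is virtually special if and only if the complex itself is, so (after possibly passing to a further finite index subgroup) $\dot{X}_i/G_i'' = Z/G_i''$ is compact and special for some finite index $G_i'' < G_i$. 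This is precisely the compatibility condition, so Theorem~\ref{mainthm} applies and yields a finite index subgroup $G' < G$ acting freely on $X$ with $X/G'$ special.

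Finally I would record that $X/G'$ is also compact: since the action of $G$ on $X$ is cocompact and $G' < G$ has finite index, $X/G'$ is a compact cube complex, and it is special by the previous paragraph. Hence $G'$ is compact special and $G$ is virtually compact special, which is the claim. I do not expect any genuine obstacle here: the proposition is essentially a bookkeeping assembly of Theorem~\ref{martin-steenbock}, Osin's relative hyperbolicity statement, Corollary~\ref{vspecialsubdivision}, and Theorem~\ref{mainthm}. The one point requiring a little care is the invocation of Corollary~\ref{vspecialsubdivision} to transfer virtual specialness between $X_i$ and its barycentric subdivision $\dot{X}_i$ at the level of the convex subcomplex sitting inside $X$ --- one must make sure the finite index subgroup witnessing specialness of $\dot{X}_i/(\cdot)$ is a subgroup of $G_i$ compatible with the action on all of $X$, but this is immediate since the embedding $\dot{X}_i \hookrightarrow X$ is $G_i$-equivariant. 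The mild subtlety of reconciling ``special'' versus ``compact special'' and passing to common finite index subgroups is routine.
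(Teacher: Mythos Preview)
Your proposal is correct and follows essentially the same approach as the paper: the paper's argument (given in the paragraph immediately preceding the statement) also assembles Theorem~\ref{martin-steenbock}, Osin's relative hyperbolicity \cite{osinrel}, Corollary~\ref{vspecialsubdivision}, and Theorem~\ref{mainthm} in exactly this order. Your write-up simply fills in a few more details (notably the final compactness check) that the paper leaves implicit.
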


By the work of Schreve \cite[Thm.~1.2]{Schreve2014Atiyahvspecial}, virtually compact special groups satisfy the strong Atiyah conjecture. Martin and Steenbock used this result, together with Theorems \ref{agol} and \ref{martin-steenbock} to deduce the strong Atiyah conjecture for quotients of free products of cubulated hyperbolic groups satisfying the $C'(1/6)$-small cancellation condition. If we use Proposition \ref{combsmallcancellation} instead, we can remove the hyperbolicity assumption on the free factors: 

\begin{coro}\label{Atiyahconj}
Let $G$ be as in the assumptions of Proposition \ref{combsmallcancellation}. Then $G$ satisfies the strong Atiyah conjecture.
\end{coro}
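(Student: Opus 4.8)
The plan is to combine the previous results in the excerpt essentially verbatim. Let $G$ be as in Proposition \ref{combsmallcancellation}: that is, $G$ is a quotient of a free product $F = G_1 * \cdots * G_r$ of finitely many virtually compact special groups by a finite set of relators satisfying the classical $C'(1/6)$-small cancellation condition over $F$. First I would invoke Proposition \ref{combsmallcancellation} directly to conclude that $G$ is itself virtually compact special. This is the substantive input, and it is exactly the statement we are allowed to assume, so no work is needed here.

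Next I would appeal to Schreve's theorem \cite[Thm.~1.2]{schreve}, which asserts that every virtually compact special group satisfies the strong Atiyah conjecture. Applying this to $G$ immediately yields the desired conclusion. So the proof is the one-line chain
\begin{equation*}
G \text{ is as in Prop.~\ref{combsmallcancellation}} \;\Longrightarrow\; G \text{ is virtually compact special} \;\Longrightarrow\; G \text{ satisfies the strong Atiyah conjecture}.
\end{equation*}

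Since both implications are quoted results, there is essentially no obstacle to overcome at this stage: the corollary is a formal consequence, and the only point worth remarking is the comparison with the Martin--Steenbock argument. They obtained the strong Atiyah conjecture for such quotients under the additional hypothesis that each free factor $G_i$ is hyperbolic, because they relied on Agol's Theorem \ref{agol} (via Theorem \ref{martin-steenbock}) to establish virtual specialness of $G$. Replacing that step with Proposition \ref{combsmallcancellation}, whose proof in turn rests on the Main Theorem \ref{mainthm} rather than on Agol's theorem, removes the hyperbolicity assumption on the $G_i$ while keeping the rest of the deduction unchanged. The real mathematical content therefore lies upstream, in Theorem \ref{mainthm} and hence in Proposition \ref{combsmallcancellation}; Corollary \ref{Atiyahconj} is simply the packaging of that content together with Schreve's theorem.
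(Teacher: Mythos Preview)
Your proposal is correct and matches the paper's approach exactly: the corollary is stated without a separate proof, as the discussion immediately preceding it already explains that it follows by combining Proposition~\ref{combsmallcancellation} with Schreve's theorem \cite[Thm.~1.2]{schreve}. Your write-up simply makes this one-line deduction explicit.
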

\paragraph{\textbf{A cubulated malnormal hierarchy.}} One of the main tools in Agol's proof of Theorem \ref{agol} is Wise's quasiconvex hierarchy theorem (see Theorem \ref{wiseQVH}), which says that a hyperbolic group is virtually compact special if and only if it can be obtained from finite groups after a finite
sequence of (virtual) amalgamations or HNN extensions over quasiconvex subgroups. Our proof of Theorem \ref{mainthm} follows that approach, and for that we require a notion of hierarchy that is compatible with relatively hyperbolic groups.
\begin{defi}\label{defiCMVH}Let $\mcal{CMVH}$ denote the smallest class of cubulated and relatively hyperbolic groups $(G, X)$ (here $G$ acts properly and cocompactly on the cubulation $X$) relative to compatible virtually special subgroups, that is closed under the following operations:
\begin{enumerate}
    \item $(\corchete{1},X) \in \mcal{CMVH}$ for any finite $\CAT{0}$ cube complex $X$.
    \item If $G$ splits as a finite graph of groups $(\Gamma, \mcal{G})$ satisfying
\begin{itemize}
\item  each edge/vertex group is convex in $(G, X)$,
\item if $v$ is a vertex of $\Gamma$ then the collection $\mcal{A}_v := \corchete{G_e \colon e \text{ an edge attached to }v}$ is \emph{relatively malnormal} in $G_v$, and
\item  if $G_v$ is a vertex group, then it has a convex core $X_v \subset X$ with $(G_v, X_v) \in \mcal{CMVH}$, \end{itemize}
then $(G, X) \in \mcal{CMVH}$.
\item If $H<G$ with $|G : H|<\infty$ and $(H, X) \in  \mcal{CMVH}$, then $(G, X) \in \mcal{CMVH}$.
\end{enumerate}
\end{defi}
The notation $\mcal{CMVH}$ is meant to be an abbreviation for ``cubulated (relatively) malnormal virtual hierarchy", and the main concepts involved in this definition are explained in Section \ref{preliminaries}. In Section \ref{proofoftheorem} we give a relative version of Wise's quasiconvex hierarchy theorem and prove:
\begin{thm}\label{CMVHimpliesspecial}If $(G, X) \in \mcal{CMVH}$ then there is a finite index subgroup $G'< G$ acting freely on
$X$ such that $X/G'$ is special.
\end{thm}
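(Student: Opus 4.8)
The plan is to argue by induction on the derivation witnessing $(G,X)\in\mcal{CMVH}$, running through the three operations of Definition~\ref{defiCMVH}. Operation (1) is the base case and is immediate: a finite $\CAT{0}$ cube complex is already special, since the hyperplanes of a $\CAT{0}$ cube complex are embedded, two-sided, do not self-osculate and do not inter-osculate; so $G'=G=\corchete{1}$ works, with $X/G'=X$. Operation (3) is also immediate: if $(H,X)\in\mcal{CMVH}$ with $|G:H|<\infty$ and $H'<H$ is the finite-index subgroup produced by the inductive hypothesis, then $H'$ has finite index in $G$, still acts freely on $X$, and $X/H'$ is special.

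The substance is operation (2). Suppose $G$ splits as a finite graph of groups $(\Gamma,\mcal{G})$ as in Definition~\ref{defiCMVH}: each edge and vertex group is convex in $(G,X)$, each edge system $\mcal{A}_v$ is relatively malnormal in $G_v$, and each vertex pair $(G_v,X_v)$ lies in $\mcal{CMVH}$ with $X_v\subset X$ a convex core for $G_v$. By induction each $G_v$ has a finite-index subgroup acting freely on $X_v$ with special quotient; in particular every vertex group is virtually compact special and its convex subgroups are separable. Collapsing $\Gamma$ one edge at a time, it suffices to treat a single amalgam $G=G_1\ast_A G_2$ or HNN extension $G=G_1\ast_A$ along a convex, relatively malnormal subgroup $A$, with the factors virtually compact special. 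At this point I would invoke the relative version of Wise's quasiconvex hierarchy theorem established in Section~\ref{proofoftheorem}: since $G$ is relatively hyperbolic and $A$ is a relatively quasiconvex (because convex) and relatively malnormal subgroup along which $G$ splits over virtually compact special factors, $G$ is virtually compact special, and its relatively quasiconvex subgroups are separable in $G$.

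It remains to upgrade this abstract conclusion to the given cubulation, i.e.\ to produce a finite-index $G'<G$ acting freely on $X$ with $X/G'$ special. Using residual finiteness of $G$ (already available from the previous paragraph), and after a cubical subdivision of $X$ if necessary, pass to a finite-index subgroup acting freely on $X$ with embedded two-sided hyperplanes, so that the remaining obstructions to specialness of the quotient are self-osculation and inter-osculation. Since $X$ is $G$-cocompact there are finitely many $G$-orbits of hyperplanes, and each hyperplane stabilizer is relatively quasiconvex in $(G,X)$ by the convexity built into $\mcal{CMVH}$ together with cocompactness; the osculation obstructions are controlled by finitely many double cosets of these stabilizers, which are separable by the previous step, so a further finite-index subgroup $G'<G$ eliminates them. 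Then $G'$ acts freely on $X$ and $X/G'$ is special, closing the induction.

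The main obstacle is the relative quasiconvex hierarchy theorem used in the second paragraph, and in particular the coordination of the vertex cube complexes $X_v$ with the ambient $X$: one must run a relative Dehn-filling (malnormal special quotient) argument so that the finite-index subgroups chosen at the vertices are simultaneously compatible along the edge groups, and relative malnormality of the systems $\mcal{A}_v$ is precisely what makes the filled quotients behave. Checking that the resulting special structure is carried by $X$ itself — rather than by some auxiliary cube complex produced by the hierarchy machinery — is the delicate point, and is where the convexity hypotheses on edge and vertex groups and the convex-core condition on the $X_v$ are essential.
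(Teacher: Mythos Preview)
Your inductive skeleton (base case, operation (3)) matches the paper. The substance is operation (2), and there your proposal has two genuine problems.

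First, the reduction ``collapse $\Gamma$ one edge at a time'' does not work as stated. After performing a single amalgam $G_1\ast_A G_2$, you would need the result to already be virtually compact special (with all the requisite separability) in order to feed it back in as a vertex group for the next edge; but establishing that is precisely the inductive step you are trying to prove. The paper avoids this by treating the entire graph of groups $(\Gamma,\mcal{G})$ at once in Proposition~\ref{splittingimpliessepa}.

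Second, and more seriously, when you write ``invoke the relative version of Wise's quasiconvex hierarchy theorem established in Section~\ref{proofoftheorem}'' you are invoking the theorem you are proving. The actual content of Section~\ref{proofoftheorem} is not a black-box hierarchy theorem but rather Propositions~\ref{splittingimpliessepa} and~\ref{factorvspecial}: one uses the Dehn filling machinery of Section~\ref{dehnfilling,relativeheight,andaweakseparationtheorem} (wide fillings, Einstein's relative malnormal special quotient theorem, Proposition~\ref{superfilling}) to produce a peripherally finite filling $G\to\ov{\mbb{G}}$ where $\ov{\mbb{G}}$ is hyperbolic and virtually special, and then pulls separability of (fully) quasiconvex subgroups and their double cosets back from $\ov{\mbb{G}}$ to $G$. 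This is the engine, and your proposal only gestures at it in the last paragraph.

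Finally, the paper's route from separability to specialness of $(G,X)$ is more direct than your ``upgrade'' step. Rather than first obtaining abstract virtual specialness and then transferring it to $X$, the paper verifies the Haglund--Wise criterion (Theorem~\ref{haglundwisedoublecoset}) for $(G,X)$ directly: wall stabilizers are convex, hence strongly peripherally separable by Lemma~\ref{compimpliesstrongly}, hence separable by Proposition~\ref{splittingimpliessepa}(1); and for intersecting walls $W_1,W_2$ one must check that the pair $G_{W_1},G_{W_2}$ is \emph{doubly peripherally separable}, which is done via Remark~\ref{wallsubcomplex} and the virtual specialness of the peripheral cores. This last verification is a technical point your outline omits entirely.
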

Note that the compatibility between the peripheral structure on $G$ and the splitting $(\Gamma, \mcal{G})$ in condition (2) of the definition above is only reflected in the relative quasiconvexity of edge/vertex subgroups (this is implied by convexity, see Theorem \ref{hruskaundistorted}) and the relative malnormality of edge groups in the vertex groups. This seems to be a weak assumption when we compare, for instance, with the quasiconvex, malnormal and $\mcal{P}$-fully elliptic hierarchy that requires fully relative quasiconvexity of edge/vertex groups (see e.g. \cite[Sec.~3]{Einstein2019RMSQT}), and is extremely important in our argument, particularly in Proposition \ref{splittingimpliessepa}.

The second main ingredient in Agol's proof of Theorem \ref{agol} is the ``coloring" trick to produce a hierarchy for a cubulated hyperbolic group. By adapting this technique to a cubulated relatively hyperbolic group, we prove the theorem below, which together with Theorem \ref{CMVHimpliesspecial} and Corollary \ref{vspecialsubdivision} implies Theorem
\ref{mainthm}.
\begin{thm}\label{relhypimpliesCMVH}If $(G, X)$ is cubulated and hyperbolic relative to compatible virtually special subgroups, then $(G, \dot{X}) \in \mcal{CMVH}$, where $\dot{X}$ is the cubical barycentric subdivision of $X$.
\end{thm}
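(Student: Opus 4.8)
The plan is to follow Agol's ``coloring'' strategy, adapted to the relatively hyperbolic setting. The input is a cubulated relatively hyperbolic group $(G,X)$ with $G$ acting properly cocompactly on $X$ relative to compatible virtually special peripherals $\{P_i\}$. Passing to the barycentric subdivision $\dot X$ first, so that walls in $\dot X$ are two-sided and the action has no inversions, I would consider the finite set of $G$-orbits of hyperplanes (equivalently walls) in $\dot X$. The key combinatorial object is the \emph{crossing graph} (or contact/wall-crossing pattern) on these orbits, together with the intersection pattern of wall stabilizers with the peripheral subgroups. Since $G$ is relatively hyperbolic and each wall stabilizer $G_W$ is relatively quasiconvex (by convexity of the dual halfspaces, via \cref{hruskaundistorted}), one can control how walls interact with the peripheral cosets.

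The heart of the argument is to produce a $G$-invariant finite coloring of the hyperplanes of $\dot X$ — after passing to a finite-index subgroup $G'<G$, using residual finiteness of $G$ (which holds here because the peripherals are virtually special, hence residually finite, and relatively hyperbolic groups with residually finite peripherals and QCERF... — more carefully, one uses Dehn filling / separability of the relevant quasiconvex subgroups) — such that hyperplanes of the same color are pairwise disjoint and, crucially, such that the walls of a single color class cut $\dot X$ along a pattern that realizes a $\mcal{CMVH}$-splitting. Concretely, cutting $X$ along a $G'$-invariant disjoint collection $\mathcal{W}$ of walls yields a graph-of-groups decomposition of $G'$ whose vertex groups act on the convex components and whose edge groups are the wall stabilizers $G'_W$; one must arrange, by choosing the coloring/finite-index subgroup carefully, that (i) each edge and vertex group is convex in the (subdivided) cubulation, (ii) at each vertex the incident edge groups form a \emph{relatively malnormal} collection in the vertex group, and (iii) the vertex groups are themselves cubulated relatively hyperbolic groups of strictly smaller ``complexity,'' so that an induction on the number of hyperplane orbits (or some analogous well-founded complexity) applies. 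The base of the induction — when there are essentially no walls to cut, i.e. the complex is bounded — gives finite groups, which lie in $\mcal{CMVH}$ by clause (1); the peripheral pieces that survive the hierarchy are exactly the compatible virtually special subgroups, which by hypothesis have convex cores that are virtually compact special, and I would need to check these are admissible leaves (this is where the ``compatible'' hypothesis is used essentially).

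The main obstacle, and where the relative setting genuinely departs from Agol's, is establishing relative malnormality of the edge-group collection in each vertex group, rather than plain malnormality. In the hyperbolic case one uses that wall stabilizers are quasiconvex and that a generic finite-index subgroup makes distinct conjugates of a quasiconvex subgroup have ``small'' (finite) intersection via the malnormal special quotient theorem / double coset separability; here one instead needs that the intersections $G_W \cap g G_{W'} g^{-1}$ are either finite or conjugate into a peripheral subgroup, and that passing to a suitable finite-index $G'$ (via Dehn filling theorems for relatively hyperbolic groups, e.g. Osin / Groves--Manning, combined with separability of relatively quasiconvex subgroups) can be done simultaneously for all the finitely many orbit pairs while preserving the convexity and smallness conditions. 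Controlling the peripheral subgroups under this filling — ensuring they remain compatible virtually special and are carried along correctly through the splitting — is the delicate bookkeeping step; I expect one needs the full strength of the compatibility definition (the existence of a $P$-invariant convex subcomplex $Z$ with $Z/P'$ compact special) to verify that the resulting pieces still satisfy the $\mcal{CMVH}$ axioms.

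I would organize the write-up as: first, reduce to $\dot X$ and fix notation for walls/hyperplane orbits and wall stabilizers, recording their relative quasiconvexity; second, state and prove the separability lemma allowing one to pass to a finite-index $G'$ that ``colors'' the hyperplanes with the desired disjointness and relative-malnormality properties (this is the technical core, leaning on Dehn filling and QCERF-type results for the peripherals); third, verify that cutting along one color class produces a finite graph of groups satisfying the three bullets in \cref{defiCMVH}(2), with vertex groups of smaller complexity and the surviving peripheral leaves being the compatible virtually special subgroups; and finally, conclude by induction on complexity together with clauses (1) and (3) of the definition of $\mcal{CMVH}$. Throughout, \cref{hruskaundistorted} supplies the implication convex $\Rightarrow$ relatively quasiconvex that is used repeatedly.
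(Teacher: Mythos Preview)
Your outline captures the broad shape of Agol's strategy, but it contains a genuine gap at the ``heart of the argument'': you propose to produce, after passing to a finite-index subgroup $G'$, a \emph{$G'$-invariant} finite coloring of the hyperplanes of $\dot X$ such that same-color hyperplanes are disjoint. This is essentially circular. Such a coloring would already give a virtual splitting with the desired properties, and obtaining one is tantamount to knowing virtual specialness; moreover your justification (``using residual finiteness of $G$'') assumes something that is a \emph{consequence} of the theorem, not an input. Agol's breakthrough was precisely to avoid this by never producing a single invariant coloring: instead one first constructs a quotient $\mcal{X}=\dot X/K$ with \emph{finite} walls (Theorem~\ref{finitewalls}, using Dehn filling and the Agol--Groves--Manning weak separation theorem), then works with the entire \emph{space} $C_{k+1}(\Gamma(\mcal{X}))$ of colorings and a $G$-invariant measure on it. The initial collection $\mcal{V}_{k+1}$ of ``cubical polyhedra'' (each a single vertex together with a coloring) comes from this measure-theoretic argument, and one then descends through levels $\mcal{V}_{k+1},\dots,\mcal{V}_0$ by gluing polyhedra along portals, subject to Gluing Equations that the invariant measure guarantees. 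The induction is on $\dim X$ (used so that wall/portal stabilizers, which act on lower-dimensional complexes, are already known to be in $\mcal{CMVH}$), together with the descent in the color index $j$---not on the number of hyperplane orbits.

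Your identification of relative malnormality as the new difficulty is correct, but the mechanism is not separability or Dehn filling of the pairs $G_W\cap gG_{W'}g^{-1}$. In the paper it is established geometrically (Proposition~\ref{acylindricity}): if two portal stabilizers share a loxodromic element, that element has axes in both walls; Proposition~\ref{loxodromics} forces these axes to be within $\delta$ of each other, and then the coloring constraints force the walls to coincide. Dehn filling and the relative MSQT are used elsewhere---to build the finite-walled quotient $\mcal{X}$ and to obtain the weak separability needed to align portal stabilizers before gluing (Proposition~\ref{weakseparability})---but not to manufacture the relative malnormality of the edge collection itself.
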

While the theorem above is enough to deduce our main result, virtual compact specialness for relatively hyperbolic groups implies the existence of strong virtual hierarchies, as was proven by Einstein \cite[Thm.~1]{Einstein2019RMSQT}. In consequence, Theorem \ref{mainthm} implies:
\begin{coro}\label{fullyelliptic}Let $(G,\mcal{P})$ be a cubulated relatively hyperbolic group with compatible virtually special subgroups. Then there exists a finite index subgroup $G_0< G$ with induced relatively hyperbolic structure $(G_0,\mcal{P}_0)$ so that $G_0$ has a quasiconvex, malnormal and fully $\mcal{P}_0$-elliptic hierarchy terminating in groups isomorphic to elements of $\mcal{P}_0$.
\end{coro}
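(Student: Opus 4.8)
The plan is to derive this corollary directly from the Main Theorem (Theorem~\ref{mainthm}) together with Einstein's hierarchy theorem \cite[Thm.~1]{einstein}, which asserts that a virtually compact special relatively hyperbolic group admits, after passing to a finite-index subgroup, a quasiconvex, malnormal and fully elliptic hierarchy terminating in its peripheral subgroups. So the work is essentially to verify that the output of Theorem~\ref{mainthm} is exactly the input that Einstein's theorem consumes, and then to keep careful track of the peripheral structures through the finite-index passages.

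First I would apply Theorem~\ref{mainthm} to the cubulated relatively hyperbolic group $(G,\mcal{P})$: since the peripheral subgroups are compatible virtually special, there is a finite-index subgroup $G'<G$ acting freely on $X$ with $X/G'$ a special cube complex, so $G'$ is compact special and hence $G$ is virtually compact special. I would also record that the compatibility hypothesis forces each $P\in\mcal{P}$ to be virtually compact special (a $P$-invariant convex $Z\subset X$ with $Z/P'$ compact special for some finite-index $P'<P$), which is what guarantees that all the peripheral subgroups appearing below are of the type handled in \cite{einstein}.

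Next I would pass to induced peripheral structures: any finite-index $H<G$ inherits a relatively hyperbolic structure $(H,\mcal{P}_H)$, with $\mcal{P}_H$ a set of representatives for the $H$-conjugacy classes of the infinite intersections $H\cap gPg^{-1}$ ($P\in\mcal{P}$, $g\in G$), each of which is again virtually compact special as a finite-index subgroup of a conjugate of a peripheral. Applying \cite[Thm.~1]{einstein} to the virtually compact special relatively hyperbolic group $(G',\mcal{P}_{G'})$ then yields a further finite-index subgroup $G_0<G'$ with induced structure $(G_0,\mcal{P}_0)$ admitting a quasiconvex, malnormal and fully $\mcal{P}_0$-elliptic hierarchy terminating in groups isomorphic to elements of $\mcal{P}_0$. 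Since $G_0$ is finite-index in $G$ and $(G_0,\mcal{P}_0)$ is precisely the structure induced on $G_0$ by $(G,\mcal{P})$, this $G_0$ is the desired subgroup and the corollary follows.

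The genuine content is all in the two cited results; the remaining part is bookkeeping, and I expect the only potential subtlety to be confirming that the hypotheses of \cite[Thm.~1]{einstein} are met verbatim — in particular that "virtually compact special" in the sense produced by Theorem~\ref{mainthm} is the notion Einstein uses, and that the terminal groups of the hierarchy can be taken isomorphic to the peripherals of $G_0$ (not merely commensurable to peripherals of $G$). Both are handled by the facts that relatively quasiconvex subgroups of compact special groups are themselves virtually compact special and that finite-index passage is transitive, so no new ideas should be needed beyond assembling these ingredients.
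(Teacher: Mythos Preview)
Your proposal is correct and matches the paper's approach exactly: the paper does not give a standalone proof of this corollary, but simply states that it follows from Theorem~\ref{mainthm} together with Einstein's hierarchy theorem \cite[Thm.~1]{einstein}. The bookkeeping you supply about induced peripheral structures on finite-index subgroups is a faithful elaboration of what the paper leaves implicit.
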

\paragraph{\textbf{Organization of the paper.}} In Section \ref{preliminaries} we present the required background, recalling the main properties about relatively hyperbolic groups, cubulations, and special cube complexes. There we also define compatibility of virtually special peripheral subgroups, deduce Corollary
\ref{virtab} and Proposition \ref{combsmallcancellation}, and state Proposition \ref{independencecore}, which shows that the compatibility of the peripherals is independent on the convex core. \\ 
\indent We introduce Dehn filling in Section \ref{dehnfilling,relativeheight,andaweakseparationtheorem}, which is the main tool in the proof of Theorem \ref{CMVHimpliesspecial}, as well as extend some properties of $H$-wide fillings in Theorems \ref{heightfilling} and \ref{doublecosetfilling}. We also introduce
Einstein's relative malnormal special quotient theorem, a fundamental result that, by means of
Wise's quasiconvex hierarchy theorem, will allow us to obtain hyperbolic virtually special fillings for relatively hyperbolic groups in $\mcal{CMVH}$. All these results will be used in Section \ref{proofoftheorem} to prove Theorem \ref{CMVHimpliesspecial}.\\
\indent The rest of the paper consists in proving Theorem \ref{relhypimpliesCMVH}, by adapting Agol's proof that cubulated hyperbolic groups are in $\mcal{QVH}$. In section \ref{constructionofthecomplex} we construct a quotient cube complex $\mcal{X}$ for $X$ with finite embedded walls, which will be used to model the desired hierarchy. We color the walls of $\mcal{X}$ in Section \ref{coloringwalls}, and in Section \ref{cubicalpolyhedra} we use this coloring to start the construction of the \emph{cubical polyhedra}, an inductively defined collection of (disconnected) cube complexes with \emph{boundary walls} that codify the description of $(G, X)$ as a cubulated group in $\mcal{CMVH}$. We study these cubical polyhedra in more detail in Section \ref{boundarywallsandgraphofgroups}, and conclude the proof of
Theorem \ref{relhypimpliesCMVH} in Section \ref{constructingVj-1}, where we show how to perform the inductive construction.\\
\indent An appendix is included at the end of the article, in which we prove Theorem \ref{doublecosetconvexsep} that is used in the proof of Theorem \ref{CMVHimpliesspecial} and says that double cosets of convex subgroups of virtually compact special groups are separable. This is done by studying the functoriality of the canonical completion, and implies Proposition \ref{independencecore}.

\section{Preliminaries}\label{preliminaries}
We start by introducing the main concepts and notation about relative hyperbolicity, $\CAT{0}$ cube complexes and special cube complexes, which will be used in the paper. We expect the reader to be familiar with hyperbolic groups \cite{Gromov1987HypGroups} and $\CAT{0}$ metric spaces \cite{BridsonHaefliger1999}.
\subsection{Relatively hyperbolic groups}\label{subsecrh} There are several equivalent definitions for relatively hyperbolic groups \cite{Hruska2010RH+RQC}, some of them valid for arbitrary countable groups. For convenience we will restrict to the finitely generated case and introduce relative hyperbolicity in terms of \emph{cusped spaces}, which will be useful in Section \ref{dehnfilling,relativeheight,andaweakseparationtheorem}. See \cite{GrovesManning2008DehnFilling} or \cite[Appendix A]{Agol2012VirtualHaken} for more details about cusped spaces associated to relatively hyperbolic groups.

Let $G$ be a group generated by a finite symmetric set $S$, and let $\mcal{P}=\corchete{P_1, \dots, P_n}$ be a finite collection of subgroups of $G$ such that $S \cap P_i$ generates $P_i$ for every $i$. 

For each $P \in \mcal{P}$, let $S_0=S \cap P \backslash \corchete{1}$, and for $n > 0$ let $S_n := S_{n-1} \cup \corchete{s_1s_2 \neq 1\colon s_1, s_2 \in S_{n-1}}$.
Given a left coset $gP$ with $g \in G$, define the 1-complex $\mcal{H}(gP)$ as the vertex set $\mcal{H}(gP)^{(0)}=
gP \times \Z_{\geq 0}$ and edges given by:
\begin{enumerate}
    \item (vertical) If $(v, n) \in \mcal{H}(gP)^{(0)}$, then an edge joins $(v, n)$ and $(v, n+1)$.
    \item (horizontal) If $(v, n) \in \mcal{H}(gP)^{(0)}$ and $s \in S_n$, there is an edge from $(v, n)$ to $(vs, n)$ (so
that if for instance $s$ has order 2, then there are two different edges between $(v, n)$ and $(vs, n)$).
\end{enumerate}
Note that there is a natural way to glue $\mcal{H}(gP)$ and the Caley graph $\Gamma(G, S)$ along $gP=gP\times \corchete{0}$.
\begin{defi}The \emph{cusped space} $X(G,\mcal{P},S)$ is obtained from $\Gamma(G, S)$ by gluing all the complexes $\mcal{H}(gP)$ for $g \in G$ and $P \in \mcal{P}$ in the previously mentioned way. The group $G$ is then \emph{hyperbolic relative to} $\mcal{P}$ if the cusped space $X(G,\mcal{P}, S)$ is Gromov hyperbolic \cite[Rmk.~A.13]{Agol2012VirtualHaken}. In that case we say that $\mcal{P}$ is a \emph{peripheral structure} on $G$.
\end{defi}

\begin{rmk}With the definition of cusped space given above, the natural isometric action of $G$
on $X(G,\mcal{P}, S)$ is \emph{free}, and the distance function on vertices coincides with the one constructed in \cite{GrovesManning2008DehnFilling}. Therefore, the coarse geometry is unchanged, and the classical results about cusped spaces also hold for this slightly different construction (see \cite[Rmk.~A.13]{Agol2012VirtualHaken}).
\end{rmk}

If $(G,\mcal{P})$ is relatively hyperbolic, then a \emph{parabolic subgroup} will be any subgroup of $G$ that can be conjugated into a member of $\mcal{P}$, and a maximal parabolic subgroup will be called \emph{peripheral subgroup}. An element of $G$ is \emph{loxodromic} if it has infinite order and the group it generates is not parabolic. 

A \emph{horoball} of $X(G,\mcal{P}, S)$ is a full subgraph on the vertices $gP \times \Z_{\geq R}$ for some $R \geq 0$ and some left coset $gP$ with $P \in \mcal{P}$. Note that peripheral subgroups correspond to stabilizers in $G$ of horoballs. 

An infinite subgroup of $G$ is non-parabolic if and only if it contains a loxodromic element. Namely, if $H<G$ is infinite and with no loxodromics, then any $H$-orbit of a point in $X(G,\mcal{P},S)$ is unbounded and $H$ must fix a point in the Gromov boundary $\partial X(G,\mcal{P},S)$ (see e.g. \cite[Thm.~6.2.3]{DasSimmonsUrbanski2017Book}). The action of $G$ on $\partial X(G,\mcal{P},S)$ is geometrically finite \cite[Sec.~3]{Hruska2010RH+RQC}, so this fixed point corresponds to a horoball by \cite[Thm.~3A~(a)]{Tukia1998UniformConv}.

\subsection{Relative Quasiconvexity} Let $H<G$ be a finitely generated subgroup of a relatively hyperbolic group $(G,\mcal{P})$ with cusped space $X=X(G,\mcal{P}, S)$, and suppose there exists a finite set $\mcal{D}$ of representatives for $H$-conjugacy classes of the infinite groups of the form $H \cap P^g$ with $g \in G$
and $P \in \mcal{P}$. Given $D \in \mcal{D}$, there is a a unique $P_D \in \mcal{P}$ and some $c_D \in G$ of shortest word-length
so that $D = H \cap P_D^{c_D}$. Also, assume that each group in $\mcal{D}$ is finitely generated, and let $X_H$ be a combinatorial cusped space for the pair $(H, \mcal{D})$ with respect to some compatible finite generating set $S'$ of $H$, in the sense that $S'$ is symmetric and $S'\cap D$ generates $D$ for each $D\in \mcal{D}$. We extend the inclusion $\iota : H \hookrightarrow G$ to an $H$-equivariant Lipschitz map $\check{\iota} : X_H^{(0)}\rightarrow X$ as follows: a vertex in a horoball of $X_H$ is a tuple $(sD, h, n)$ with $s \in H$, $D \in \mcal{D}$, $h \in sD$ and $n \in \Z_{\geq 0}$, so define its image by $\check{\iota}(sD, h, n) := (sc_DP_D, hc_D, n)$.
\begin{defi}The pair $(H, \mcal{D})$ is \emph{relatively quasiconvex} in $(G,\mcal{P})$ if the image $\check{\iota}(X_H^{(0)})\subset X$ is
$\lambda$-quasiconvex for some $\lambda$, which will be called a \emph{quasiconvexity constant} for $(H, \mcal{D})$ in $(G,\mcal{P})$. Sometimes we will omit the peripheral structures and simply say that $H$ is relatively quasiconvex in $G$.
\end{defi}
As noted in \cite[Def.~2.9]{GrovesManning2021Quasiconv}, this definition is equivalent to other notions of relative quasiconvexity existing in literature, at least in the finitely generated case \cite{Hruska2010RH+RQC}. In particular, if $H <G$ is relatively quasiconvex, then the collection $\mcal{D}$ defined above makes $(H, \mcal{D})$ into a relatively hyperbolic group \cite[Thm.~9.1]{Hruska2010RH+RQC}, and we call $\mcal{D}$ an \emph{induced relatively hyperbolic structure} (or \emph{peripheral structure}) on $H$.

The characterization of relative quasiconvexity given above will be helpful in Section \ref{dehnfilling,relativeheight,andaweakseparationtheorem}, while for the rest of the paper we will use the criterion stated below. A subgroup $H<G$ of a finitely generated group $G$ is \emph{undistorted} if it is finitely generated and the inclusion map $H \hookrightarrow G$ is a quasi-isometric embedding with respect to some (hence any) choice of finite sets of generators for $H$ and $G$. Hruska proved the following criterion for relative quasiconvexity valid for finitely generated relatively hyperbolic groups \cite[Thm.~1.5]{Hruska2010RH+RQC}.
\begin{thm}[Hruska]\label{hruskaundistorted} If $H<G$ is an undistorted subgroup, then $H$ is relatively quasiconvex in $(G,\mcal{P})$ with respect to any possible peripheral structure $\mcal{P}$ on $G$.
\end{thm}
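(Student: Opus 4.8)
The plan is to unwind the definition of relative quasiconvexity given above through the geometry of the cusped space. Fix a peripheral structure $\mcal{P}$ on $G$ and let $X = X(G,\mcal{P},S)$ be the associated cusped space, which is $\delta$-hyperbolic for some $\delta$ and contains the Cayley graph $\Gamma(G,S)$ as the depth-$0$ part $G = G\times\corchete{0}$. Two things must be produced: (i) a \emph{finite} set $\mcal{D}$ of representatives for the $H$-conjugacy classes of the infinite subgroups $H\cap P^g$, together with the fact that each such group is finitely generated; and (ii) the conclusion that the $H$-equivariant map $\check{\iota}\colon X_H^{(0)}\to X$ built from $\mcal{D}$ has $\lambda$-quasiconvex image for some $\lambda$. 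For (ii) I would note that it suffices to show $\check{\iota}$ is a quasi-isometric embedding: $X_H$ is a geodesic space and $X$ is hyperbolic, so the $\check{\iota}$-image of any $X_H$-geodesic is a uniform quasigeodesic in $X$, hence by stability of quasigeodesics it stays uniformly close to an $X$-geodesic, and this forces $\check{\iota}(X_H^{(0)})$ to be quasiconvex. The Lipschitz bound for $\check{\iota}$ is already recorded in the text, so the entire content is the lower bound $d_{X_H}(p,q)\preceq d_X(\check{\iota}(p),\check{\iota}(q))$, and composing with vertical moves reduces it to the case $p=h_1$, $q=h_2$ with $h_1,h_2\in H\subset X$ at depth $0$.

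It is convenient to pass to the coned-off Cayley graph $\hat\Gamma=\hat\Gamma(G,\mcal{P},S)$, which is hyperbolic because $(G,\mcal{P})$ is relatively hyperbolic \cite{osinrel}, and into which geodesics of $X$ project as uniform quasigeodesics. A geodesic $[h_1,h_2]_X$ decomposes as an alternating concatenation of depth-$0$ segments and horoball excursions into parabolic cosets $g_jP_j$, entered and left at vertices $u_j,v_j\in g_jP_j$; projecting to $\hat\Gamma$ turns each excursion into a single coset-jump within $g_jP_j$ of bounded $\hat\Gamma$-length. The key lemma I would isolate is that \emph{$\hat\Gamma$-geodesics between points of $H$ lie in a uniform neighbourhood of $H$}, i.e. $H$ is relatively quasiconvex in the coned-off-graph sense; this is exactly where undistortion enters. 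Writing such a geodesic as $S$-segments interleaved with coset-jumps, one argues that whenever a coset-jump inside a coset $g_jP_j$ is long, the endpoints $u_j,v_j$ must be close to $H$ and $H\cap g_jP_jg_j^{-1}$ must be infinite: otherwise the $d_G$-cost of that excursion, computed via the exponential distortion of combinatorial horoballs, would be incompatible with the lower bound on $d_G$ between two elements of $H$ supplied by undistortion ($d_G|_H\asymp d_H$) together with bounded coset penetration. Granting this lemma, the horoballs genuinely entered by geodesics between $H$-points fall into finitely many $H$-orbits, each with infinite $H$-stabilizer, which yields the set $\mcal{D}$; finite generation of each $D=H\cap P^g$ then follows by observing that $D$ acts properly and cocompactly on a suitable $D$-invariant quasiconvex subcomplex of the (hyperbolic) horoball of $g_jP_j$ and invoking the Milnor--Švarc lemma. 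One then re-reads the lemma and the excursion decomposition to build, from a length-$N$ geodesic $[h_1,h_2]_X$, a path of length $O(N)$ in $X_H$, completing the quasi-isometric embedding estimate.

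The main obstacle is precisely the key lemma, that is, reconciling the hypothesis, which is purely about the word metric $d_G$, with the conclusion, which lives in the relative (cusped) metric: an $\hat\Gamma$-geodesic between two elements of $H$ can a priori make deep excursions into parabolic cosets that appear irrelevant to $H$, and one must rule this out. The mechanism is that a deep excursion into a coset $gP$ forces an $H$-element of $gPg^{-1}$ whose $d_G$-size is exponential in the excursion's relative length; undistortion then converts $d_G$-size into $d_H$-size, pushing this element into an infinite subgroup $H\cap gPg^{-1}$ and pinning the excursion endpoints near $H$. Making this quantitative requires the standard package of bounded coset penetration, the exponential distortion estimates for combinatorial horoballs \cite{gromandehn}, and the thin-triangles geometry of $X$ and $\hat\Gamma$; this is the technical heart of \cite{hruska}, and once it is in place the remainder of the argument is bookkeeping.
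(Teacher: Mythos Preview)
The paper does not give its own proof of this statement: Theorem~\ref{hruskaundistorted} is simply quoted from Hruska \cite[Thm.~1.5]{hruska} as a black box, with no argument supplied. So there is nothing in the paper to compare your proposal against.

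That said, your sketch is a reasonable outline of how Hruska's result is actually proved. The core mechanism you identify---that a deep horoball excursion along a geodesic between elements of $H$ forces, via the exponential distortion of combinatorial horoballs together with the undistortion hypothesis $d_G|_H \asymp d_H$, an infinite intersection $H\cap gPg^{-1}$ and pins the excursion endpoints near $H$---is exactly the point. Your reduction to showing $\check\iota$ is a quasi-isometric embedding is also the right move, and correctly uses only hyperbolicity of the target $X$. One place where you are a bit casual is the finite generation of each $D\in\mcal{D}$: invoking Milnor--\v{S}varc requires exhibiting a $D$-cocompact subspace, and the natural candidate (a tubular neighbourhood of $\check\iota(X_H)$ inside the relevant horoball) needs the quasiconvexity you are in the process of proving, so some care with the order of the argument is required. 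Hruska handles this by proving the equivalence of several formulations of relative quasiconvexity and routing through the coned-off-graph version, which is essentially what your ``key lemma'' is; your sketch is faithful to that strategy even if the bookkeeping is left implicit.
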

An important class of relatively quasiconvex subgroups is given by those which are full, and in a sense this is the correct generalization to quasiconvex subgroups of hyperbolic groups.
\begin{defi}
A relatively quasiconvex subgroup $H<G$ of a relatively hyperbolic group $(G,\mcal{P})$ is \emph{fully relatively quasiconvex} if for any peripheral subgroup $P$ of $G$, the group $H \cap P$ is either
finite or finite index in $P$.
\end{defi}

\subsection{Cube complexes and cubulated groups} A \emph{cube complex} is a metric polyhedral complex in which all polyhedra are unit length Euclidean cubes. Such a complex is \emph{non-positively curved} (NPC) if its universal cover is a $\CAT{0}$ metric space when endowed with the induced length distance. There is a combinatorial description of this property, due to Gromov \cite[7, Thm.~II.5.2]{BridsonHaefliger1999}.
\begin{prop}\label{linkcond}A cube complex $X$ is NPC if and only if the link of each vertex is a flag complex.
\end{prop}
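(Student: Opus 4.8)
The standard route is to reduce, via Gromov's local-to-global principle, to a combinatorial statement about links. First I would unwind the definition: writing $p\colon \wtilde{X}\to X$ for the universal cover, $X$ is NPC precisely when $\wtilde{X}$ is $\CAT{0}$; since $X$ is a complete geodesic space (its cells are cubes), so is $\wtilde{X}$, and by the Cartan--Hadamard theorem for length spaces a complete simply connected space is $\CAT{0}$ if and only if it is \emph{locally} $\CAT{0}$. As $\CAT{0}$-ness of small balls is a local condition preserved by the local isometry $p$, we get that $X$ is NPC iff $X$ is locally $\CAT{0}$, and moreover that for each vertex $v$ the link $\mathrm{Lk}(v,X)$ is isometric to $\mathrm{Lk}(\tilde v,\wtilde{X})$ for any lift $\tilde v$. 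So it suffices to characterize when $X$ is locally $\CAT{0}$.

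Next I would invoke the link condition for $M_\kappa$-polyhedral complexes with finitely many isometry types of cells \cite{brihae}: such a complex is locally $\CAT{0}$ iff the link of every point is $\CAT{1}$, and for a cube complex it is enough to test this at vertices. Indeed, the link of a point in the relative interior of a $k$-cube $c$ splits as the spherical join $S^{k-1}\ast \mathrm{Lk}(c,X)$, a spherical join is $\CAT{1}$ iff each factor is, round spheres are $\CAT{1}$, and $\mathrm{Lk}(c,X)$ appears as the link of a simplex inside $\mathrm{Lk}(v,X)$ for any vertex $v\le c$ (links of simplices in a $\CAT{1}$ complex are $\CAT{1}$). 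Finally, recall that $\mathrm{Lk}(v,X)$ is a piecewise spherical complex whose cells are all \emph{all-right} spherical simplices — each $k$-cube at $v$ contributes a copy of the intersection of the unit sphere with a coordinate orthant, in which every edge has length $\pi/2$ — and its combinatorics records exactly which edges at $v$ span a common cube; under the usual conventions $\mathrm{Lk}(v,X)$ is a genuine simplicial complex (otherwise pass first to the barycentric subdivision, which affects neither side of the proposition). Thus the statement reduces to \emph{Gromov's Lemma: an all-right piecewise spherical simplicial complex is $\CAT{1}$ if and only if it is flag.}

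I would prove Gromov's Lemma by induction on dimension, using that the link of a vertex in an all-right complex is again all-right of strictly smaller dimension, and that flag-ness passes to vertex links. For the ``only if'' direction, suppose $L$ is not flag and pick a clique $\{v_0,\dots,v_k\}$ with $k\ge 2$ of minimal size spanning no simplex. If $k\ge 3$, then in $\mathrm{Lk}(v_0,L)$ the set $\{v_1,\dots,v_k\}$ is a clique (each $\{v_0,v_i,v_j\}$ spans a simplex of $L$ by minimality) spanning no simplex, so $\mathrm{Lk}(v_0,L)$ is a smaller-dimensional all-right complex that is not flag, hence not $\CAT{1}$ by induction, hence $L$ is not $\CAT{1}$ since the link of a point in a $\CAT{1}$ space is $\CAT{1}$. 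If $k=2$, the three edges among $v_0,v_1,v_2$, each of length $\pi/2$, form a closed local geodesic of length $3\pi/2<2\pi$: at $v_0$ the two directions are the vertices $v_1,v_2$ of $\mathrm{Lk}(v_0,L)$, which are non-adjacent there, and non-adjacent vertices of any all-right complex are at distance $\ge\pi$ — a geodesic issuing from a vertex stays in the first cell it enters until it has run a length $\pi/2$, so a geodesic of length $<\pi$ between two vertices would place them in a common cell, forcing adjacency. Since a $\CAT{1}$ space has no closed local geodesic of length $<2\pi$, $L$ is not $\CAT{1}$. For the ``if'' direction, assume $L$ is flag; every vertex link is then all-right, flag, and of smaller dimension, hence $\CAT{1}$ by induction, so $L$ is locally $\CAT{1}$ by the link condition, and it remains only to check that $L$ has no closed local geodesic of length $<2\pi$, since a complete locally $\CAT{1}$ complex (with finitely many shapes) satisfying this is $\CAT{1}$.

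This systolic bound is the heart of the argument and the step I expect to be the main obstacle. The plan is to take a closed local geodesic $c$ of minimal length $\ell<2\pi$ — which meets at least two cells, as an all-right spherical simplex has diameter $<\pi$ and lies in an open hemisphere, so it contains no closed local geodesic — and to analyze the cyclic sequence of open simplices that $c$ passes through. Using local geodesy together with the all-right geometry (again: a geodesic runs exactly $\pi/2$ going from a vertex to the opposite face), one shows that the vertices whose open stars meet $c$ are pairwise adjacent in $L$; the flag hypothesis then forces them to span a single simplex $\sigma$, from which one deduces that $c$ is confined to a neighbourhood controlled by $\sigma$, contradicting the absence of closed local geodesics in a single spherical cell. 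Combining the two directions of Gromov's Lemma with the reductions above proves the proposition.
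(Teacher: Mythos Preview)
The paper does not prove this proposition at all: it is quoted as background with a citation to Bridson--Haefliger \cite[Thm.~II.5.2]{brihae}, and no argument is given. Your outline is precisely the standard proof one finds in that reference (Cartan--Hadamard reduction to a local condition, the link condition for $M_\kappa$-complexes, and Gromov's lemma on all-right spherical flag complexes), so there is nothing to compare against. Your sketch is correct in broad strokes; the only part you leave genuinely incomplete is the systolic step in the ``if'' direction, which you yourself flag as the main obstacle, and which Bridson--Haefliger carry out in detail.
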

Recall that the \emph{link} of a vertex $v$ is the complex $Lk_X(v)$ with vertices the edges of $X$ incident to $v$, and in which $n$ such edges are the 1-skeleton of an $(n-1)$-face in $Lk_X(v)$ if and only if they are incident to a common $n$-cell at $v$. Also, a \emph{flag simplicial complex} is a complex determined by its 1-skeleton: for every complete subgraph of the 1-skeleton there is a simplex with 1-skeleton equal to that subgraph. For more references about the geometry of NPC cube complexes see \cite{BridsonHaefliger1999, Sageev2014CAT0Groups}.

An $n$-cube $C=[0, 1]^n \subset \R^n$ has $n$-midcubes obtained by setting one coordinate to $1/2$. Since the face of a midcube of $C$ is the midcube of a face of $C$, the set of midcubes of a cube complex $X$ has a cube complex structure, called the \emph{wall complex}. A \emph{wall} of $X$ is a connected component of the wall complex. When $X$ is a $\CAT{0}$ cube complex any wall $W$ is 2-sided and embeds as a convex subspace, so we will not make distinction between a wall and its embedded image. Also, the complement of a wall in $X$ has exactly two connected components, whose closures are called \emph{half-spaces} of $X$. Since walls are convex subcomplexes of the cubical barycentric subdivision of
$X$, they are $\CAT{0}$ cube complexes as well. For an edge $e$ in the cube complex $X$, let $W(e)$ denote the unique wall it intersects, and say that $e$ is \emph{dual} to $W(e)$.

We are interested in group actions on $\CAT{0}$ cube complexes.
\begin{defi}Suppose $G$ is a group acting properly and cocompactly by cubical isometries on the $\CAT{0}$ cube complex $X$ with $\CAT{0}$ distance $d$. In that case we say that $(G, X)$ is a \emph{cubulated group}, and
that $X$ is a \emph{cubulation} of $G$.
\end{defi}

\begin{rmk}Although in this paper we will restrict to proper and cocompact actions, when $G$ is relatively hyperbolic we can consider cubulations that are non necessarily cocompact, but cosparse \cite{HruskaWise2014FinCub, SageevWise2015Cores}. Also, there has been some progress in understanding non-proper actions of hyperbolic groups \cite{Groves2018HyperbolicImproperly}. Recently, Einstein and Groves introduced relatively geometric cubulations of relatively hyperbolic groups \cite{EinsteinGroves2020RelCub}.
\end{rmk}
If $(G, X)$ is a cubulated group, then $X$ is finite-dimensional, locally finite, and $G$ is finitely generated and quasi-isometric to $X$ \cite[Prop. 4.2]{Shepherd2019AgolsCubulations}. Cocompactness and properness also imply the following lemma (cf.~\cite[p1052]{Agol2012VirtualHaken}):
\begin{lemma}\label{finiteclasses} If $(G, X)$ is a cubulated group, then:
\begin{enumerate}
    \item There are only finitely many conjugacy classes of torsion elements in $G$.
    \item If $Y \subset X$ is a non-empty subset and $H <  G$ preserves $Y$ and acts cocompactly on it, then for any $R > 0$ the set of double cosets
  \begin{equation*}
  A_{Y,H,R} := \corchete{HgH \colon g \in G, d(Y, gY) \leq  R}
  \end{equation*}
is finite.
\end{enumerate}
\end{lemma}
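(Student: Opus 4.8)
The plan is to prove the two parts separately; both are standard consequences of properness and cocompactness, so I would isolate the few ``coarse'' inputs needed and assemble them.

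\textbf{Part (1).} I would start from a torsion element $g \in G$, so that $\langle g\rangle$ is a finite group acting on the complete $\CAT{0}$ space $X$; by the fixed-point theorem for finite group actions on complete $\CAT{0}$ spaces \cite{brihae}, there is a point $p$ fixed by $\langle g\rangle$. Since every point of a cube complex lies in the relative interior of a \emph{unique} cube, I let $C$ be the unique cube with $p$ in its relative interior; then for each $f \in \langle g\rangle$ the cube $f(C)$ also has $f(p)=p$ in its relative interior, so $f(C)=C$, and hence $\langle g\rangle$ (in particular $g$) stabilizes $C$ setwise. Next I would use that, since $X$ is locally finite and the action is cocompact, a compact set $K$ with $GK=X$ meets only finitely many cubes, so there are finitely many $G$-orbits of cubes, say with representatives $C_1,\dots,C_m$; thus some conjugate $hgh^{-1}$ lies in $\mathrm{Stab}_G(C_i)$ for some $i$. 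Finally, $\mathrm{Stab}_G(C_i)$ acts on the finite vertex set of $C_i$ with kernel contained in a vertex stabilizer, which is finite by properness, so $\mathrm{Stab}_G(C_i)$ is finite; hence $\bigcup_{i=1}^{m}\mathrm{Stab}_G(C_i)$ is a finite set meeting every conjugacy class of torsion elements.

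\textbf{Part (2).} Here I would fix $x_0 \in Y$ and, using cocompactness of the $H$-action on $Y$, choose a compact $K \subset Y$ with $HK=Y$; since $X$ is a finite-dimensional locally finite $\CAT{0}$ cube complex it is proper, so $K \subset \overline{B(x_0,\rho)}$ for some $\rho \geq 0$. Given $g \in G$ with $d(Y,gY)\leq R$, there are $y,w \in Y$ with $d(y,gw)\leq R+1$; writing $y=h_1k_1$, $w=h_2k_2$ with $h_i \in H$ and $k_i \in K$, and setting $g':=h_1^{-1}gh_2$, one has $HgH=Hg'H$, and applying the isometry $h_1^{-1}$ gives $d(k_1,g'k_2)\leq R+1$, whence
\[
d(x_0,g'x_0)\;\leq\; d(x_0,k_1)+d(k_1,g'k_2)+d(g'k_2,g'x_0)\;\leq\; 2\rho+R+1.
\]
By properness the displacement set $F:=\{\gamma \in G : d(x_0,\gamma x_0)\leq 2\rho+R+1\}$ is finite, and every element of $A_{Y,H,R}$ has the form $H\gamma H$ with $\gamma \in F$; hence $A_{Y,H,R}$ is finite.

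\textbf{Main obstacle.} Neither part presents a real difficulty; the care needed is in the coarse-geometric inputs — that a finite subgroup stabilizes a cube, that a cocompact action on a locally finite complex has finitely many cell orbits, and that properness is equivalent to finiteness of the sets $\{\gamma : d(x_0,\gamma x_0)\leq C\}$ (via Hopf--Rinow, closed balls in $X$ are compact). I expect the only point requiring attention to be the reduction in (2): one must absorb the $H$-translations that bring $y$ and $w$ into the fundamental domain $K$ \emph{on the two sides of $g$}, which is why I replace $g$ by the two-sided translate $h_1^{-1}gh_2$ rather than a one-sided one, so as to remain inside the double coset $HgH$.
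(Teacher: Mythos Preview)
Your proposal is correct and follows essentially the same approach as the paper. For Part~(1) the paper simply cites \cite[Cor.~II.2.8~(2)]{brihae} for the general $\CAT{0}$ statement, whereas you spell out the fixed-point-plus-finite-cell-stabilizer argument in the cube-complex case; for Part~(2) your two-sided translation $g'=h_1^{-1}gh_2$ into a bounded displacement set is exactly the paper's argument (they phrase it via a fundamental domain $D$ and elements $w_1,w_2\in H$ with $d(w_1^{-1}gw_2 D, D)\leq R$, then invoke properness).
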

\begin{proof}Part (1) holds for arbitrary proper and cocompact actions by isometries on $\CAT{0}$ spaces
\cite[Cor.~II.2.8~(2)]{BridsonHaefliger1999}.\\
For part (2), let $g \in G$ be such that $d(gY, Y) \leq R$, and let $\gamma$ be a geodesic of length $\leq R+1$ joining $Y$ and $gY$ . Consider $D \subset Y$ a compact subset with $H \cdot D=Y$ and $w_1, w_2 \in H$ such that $\gamma$ intersects $w_1D$ in $Y$ and $gw_2D$ in $gY$. Then $d(w_1^{-1}gw_2D, D) \leq R+1$, and so by local finiteness and properness of the action, there is a finite set $F \subset G$ such that $w_1^{-1}gw_2 \in F$. Therefore $g \in w_1F w_2^{-1} \subset HFH$.
\end{proof}
Let $(G, X)$ be a cubulated group, and for $W$ a wall of $X$, let $G_W< G$ denote its set-wise stabilizer in $G$. The group $G_W$ acts properly and cocompactly on $W$ \cite[Rmk.~2.3]{Shepherd2019AgolsCubulations}, and since $W$ is convex in $X$, $(G_W , W)$ is a cubulated group and $G_W$ is undistorted in $G$.
\begin{defi} If $(G, X)$ is a cubulated group, we say that a subgroup $H<G$ is \emph{convex} in $(G, X)$ if it acts cocompactly on a convex subcomplex $Z \subset X$. Such a subcomplex $Z$ will be
called a \emph{convex core} for $H$.
\end{defi}
\begin{defi} If $Y$ is a NPC cube complex and $S \subset Y$ is any subset, then the \emph{cubical neighborhood} of $S$ is the subcomplex $\mcal{N}(S) \subset Y$ consisting of the cubes of $Y$ intersecting $S$.
\end{defi}
If $(G, X)$ is a cubulated group and $W$ is a wall of $X$, then $\mcal{N} (W)$ is a convex subcomplex of $X$ \cite[Lem.~13.4]{HaglundWise2008Special}, and hence it is a convex core for $G_W$. If in addition $G$ is relatively hyperbolic, and since $G$ quasi-isometric to $X$, by Theorem \ref{hruskaundistorted} every convex subgroup of $G$ is relatively quasiconvex. As a partial converse, Sageev and Wise proved the following \cite[Thm.~1.1]{SageevWise2015Cores}:

\begin{thm}[Sageev-Wise]\label{sageevwise} If $(G, X)$ is a cubulated group with $G$ relatively hyperbolic and $H<G$ is fully relatively quasiconvex, then for any compact subset $B \subset X$ there exists a convex core $Z \subset X$ for $H$ that contains $B$. In particular, any peripheral subgroup of $G$ is convex.
\end{thm}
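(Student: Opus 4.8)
The plan is to deduce the ``in particular'' clause from the main statement, and to prove the latter along the lines of Sageev--Wise \cite{sagwis}. For the deduction, a peripheral subgroup $P$ of $G$ is finitely generated and relatively quasiconvex in $(G,\mcal{P})$, and --- since the peripheral subgroups of a relatively hyperbolic group form an almost malnormal collection --- $P\cap Q$ is finite or equal to $Q$ for every peripheral subgroup $Q<G$; hence $P$ is fully relatively quasiconvex, and the main statement applied with $B$ a single point shows that $P$ is convex. So from now on let $H<G$ be fully relatively quasiconvex; I will first produce \emph{some} $H$-invariant convex subcomplex on which $H$ acts cocompactly, and only at the end arrange that it contain $B$.

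First I would upgrade the relative quasiconvexity of $H$ to genuine quasiconvexity inside $X$. Working in the cusped space $X(G,\mcal{P},S)$, which is Gromov hyperbolic and in which $X$ sits coarsely as the depth-$0$ subspace, relative quasiconvexity of $H$ gives a $\lambda$-quasiconvex, $H$-invariant, $H$-cocompact subset of the cusped space --- a bounded neighbourhood of $\check{\iota}(X_H^{(0)})$ --- which contains a vertex orbit $Hx_0$. Fullness of $H$ forces each infinite intersection $H\cap P^{g}$ to have finite index in $P^{g}$, and there are only finitely many $H$-conjugacy classes of such intersections, so each one acts cocompactly on --- and coarsely fills --- the corresponding coset of $P^{g}$. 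Combining these two facts, one should be able to extract from the depth-$0$ part of the quasiconvex hull an $H$-invariant, $H$-cocompact subcomplex $Y\subseteq X$ that contains $Hx_0$ and is quasiconvex for the $\CAT{0}$ metric of $X$.

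Next I would pass from $Y$ to a convex core: a convex subcomplex $Z_0\subseteq X$ at bounded Hausdorff distance from $Y$. Since $Z_0$ is then a closed $H$-invariant subcomplex sitting inside a finite cubical neighbourhood of the $H$-cocompact $Y$, it is automatically $H$-cocompact; the point is to build it, which is the cube-complex ``core'' construction of Sageev--Wise, carried out inside $X$ through its wallspace structure and a finite-dimensionality argument (the natural candidate being the cubical convex hull of $Y$, i.e.\ the intersection of suitable halfspaces containing $Y$). Finally, to arrange $B\subseteq Z$: since $B$ is compact and $X$ is locally finite and connected, $B$ meets only finitely many cubes, each within some finite combinatorial distance of the nonempty subcomplex $Z_0$; taking $k$ greater than all these distances, the iterated cubical neighbourhood $Z:=\mcal{N}^{k}(Z_0)$ contains $B$. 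As the cubical neighbourhood of a convex subcomplex is again convex (cf.\ \cite[Lem.~13.4]{hagwissp}), $Z$ is a convex subcomplex; it is $H$-invariant, it contains $B$, and --- being a finite cubical neighbourhood of the $H$-cocompact $Z_0$ inside the locally finite $X$ --- it is $H$-cocompact. Thus $Z$ is a convex core for $H$ containing $B$, and the ``in particular'' clause follows at once.

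The hard part is the first step. A merely relatively quasiconvex subgroup need not act cocompactly on a subcomplex of $X$ that is quasiconvex for the $\CAT{0}$ metric --- peripheral cosets, for instance, are in general not quasiconvex in the word metric of $G$, even though peripheral subgroups do turn out to be convex --- and it is precisely the fullness hypothesis that rescues the argument, by forcing the parabolic directions of $Y$ to be $H$-cocompact and to fill up the relevant peripheral cosets. Making this interaction between the cubical structure of $X$ and the relatively hyperbolic (cusped) geometry of $G$ precise, together with the cube-complex ``core'' input used in the second step, is where the real difficulty lies; the final fattening step is then routine.
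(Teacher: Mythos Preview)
The paper does not give a proof of this statement: it is quoted verbatim as \cite[Thm.~1.1]{sagwis} and used as a black box throughout. So there is no ``paper's own proof'' to compare against.

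That said, your outline is a fair summary of the Sageev--Wise strategy, and the parts you do pin down are correct. The deduction of the ``in particular'' clause from the main statement is right (peripheral subgroups are fully relatively quasiconvex by almost malnormality of the peripheral structure), and the final fattening step via iterated cubical neighbourhoods is exactly what the present paper isolates as Lemma~\ref{enlarging}. Where your sketch is thin is the first step: you propose to read off $\CAT{0}$-quasiconvexity of an $H$-orbit in $X$ from quasiconvexity in the cusped space, but passing between these two geometries is delicate, and Sageev--Wise in fact work directly inside $X$ using the relatively thin triangles property (which you will find invoked later in this paper, in the proof of Proposition~\ref{loxodromics}) rather than via the cusped space. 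You correctly identify this as the hard part and correctly locate fullness as the hypothesis that makes it work; if you want to turn the sketch into a proof, that is the place to look in \cite{sagwis}.
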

\begin{rmk}\label{wallsubcomplex} When $(G, X)$ is cubulated with $G$ relatively hyperbolic and $P < G$ is a peripheral subgroup with convex core $Z \subset X$, then for any wall $W \subset X$ with $W \cap Z \neq \emptyset$ we have $P_{W\cap Z} = P \cap G_W$. The inclusion $P \cap G_W \subset P_{W\cap Z}$ is evident, and if $e$ is an edge of $Z$ dual to $W$, then $e$ is also dual to the wall $W \cap Z$ of $Z$. Therefore, for any $g \in P$ with $g(W \cap Z) = W \cap Z$, the edge $ge$ is dual to $W\cap Z \subset W$, and hence $gW = gW(e) = W(ge) = W$, implying $P_{W\cap Z} = P\cap G_W$.
\end{rmk}
From the previous remark we see that the intersection of a peripheral subgroup with a wall stabilizer is a convex subgroup. In general, we have the following lemma:
\begin{lemma}\label{intersectionconvex} If $(G, X)$ is a cubulated group and $H_1, H_2<G$ are convex subgroups with convex cores $Y_1, Y_2 \subset X$ respectively and such that $Y_1 \cap Y_2 \neq \emptyset$, then $H_1 \cap H_2$ is a convex subgroup with convex core $Y_1 \cap Y_2$.
\end{lemma}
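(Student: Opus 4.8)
The plan is to verify directly that $Y_1 \cap Y_2$ is a convex subcomplex on which $H_1 \cap H_2$ acts cocompactly. Convexity of the intersection is immediate: an intersection of convex subcomplexes of a $\CAT{0}$ cube complex is again convex (the convexity condition on half-spaces is preserved under intersection, and combinatorially, an intersection of subcomplexes each of which contains, with any two of its vertices, a combinatorial geodesic between them, has the same property). Likewise $Y_1 \cap Y_2$ is clearly $H_1 \cap H_2$-invariant, since each $Y_i$ is $H_i$-invariant and hence $(H_1\cap H_2)$-invariant. So the only real content is cocompactness of the action of $H_1\cap H_2$ on $Y_1\cap Y_2$, together with the hypothesis $Y_1\cap Y_2\neq\emptyset$ ensuring this subcomplex is nonempty (hence a genuine convex core).

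For cocompactness I would argue as follows. Fix a compact fundamental domain $D_1\subset Y_1$ for the $H_1$-action. Since $Y_1\cap Y_2$ is contained in $Y_1$, every cube of $Y_1\cap Y_2$ is $H_1$-translated into $D_1$; more precisely there is a finite set $F_1\subset H_1$ such that every cube of $Y_1\cap Y_2$ lies in $hD_1$ for some $h\in F_1$. It therefore suffices to show that for each fixed $h\in F_1$, the cubes of $Y_1\cap Y_2$ meeting $hD_1$ fall into finitely many $(H_1\cap H_2)$-orbits; equivalently, the set $\{g\in H_1 : gD_1\cap Y_2\neq\emptyset,\ gD_1\cap Y_1\cap Y_2\neq\emptyset\}$ meets only finitely many cosets $(H_1\cap H_2)g$. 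Here is where I expect to invoke a double-coset finiteness statement of the type already available in the excerpt: apply Lemma \ref{finiteclasses}(2) with the closed set $Y=Y_2$, the subgroup $H=H_2$ acting cocompactly on it, and a radius $R$ large enough that $D_1$ (hence any translate $gD_1$ meeting $Y_2$) lies within distance $R$ of $Y_2$. This yields that $\{H_2 g H_2 : d(Y_2,gY_2)\le R\}$ is finite, from which one extracts that $\{g\in G : gD_1\cap Y_2\neq\emptyset\}$ is contained in finitely many double cosets $H_2 s_j$ with $s_j$ in a finite set; intersecting with $H_1$ and bookkeeping the cosets of $H_1\cap H_2$ gives the claim. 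A cleaner route to the same end is to use that $X/(H_1\cap H_2)$ need not be studied directly: the quotient $(Y_1\cap Y_2)/(H_1\cap H_2)$ maps to the compact quotient $Y_1/H_1$ with the property that point-preimages are discrete and the relevant stabilizer data is finite, so properness of the ambient $G$-action on $X$ forces local finiteness and the image being compact forces the total space compact.

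The main obstacle, and the step needing the most care, is exactly this cocompactness argument: passing from ``$Y_1\cap Y_2$ sits inside $Y_1$ on which $H_1$ acts cocompactly'' to ``$H_1\cap H_2$ acts cocompactly on $Y_1\cap Y_2$'' is not formal, since a subgroup of a group acting cocompactly on a space need not act cocompactly on an invariant subspace. The resolution hinges on $Y_2$ being a convex core for $H_2$ together with the double-coset finiteness of Lemma \ref{finiteclasses}(2), which is the cube-complex incarnation of the familiar fact that the intersection of two cocompact convex sets is cocompact under the intersection of the stabilizers; I would present it essentially as a direct application of that lemma. Everything else — nonemptiness from the hypothesis, convexity and invariance of the intersection, and the identification of $Y_1\cap Y_2$ as a convex core in the sense of the definition preceding the statement — is routine.
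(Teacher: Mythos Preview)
Your overall strategy---convexity and invariance are immediate, cocompactness is the real content---is right, and a properness/double-coset argument along your lines can be made to work. But the execution has several genuine gaps.

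First, the sentence ``there is a finite set $F_1\subset H_1$ such that every cube of $Y_1\cap Y_2$ lies in $hD_1$ for some $h\in F_1$'' is precisely what you are trying to prove (modulo replacing $F_1$ by finitely many $(H_1\cap H_2)$-cosets); it is not something you already know. Second, your invocation of Lemma~\ref{finiteclasses}(2) is a misfire: that lemma controls the set of double cosets $H_2gH_2$ with $d(Y_2,gY_2)\le R$, whereas the elements you care about satisfy $gD_1\cap Y_2\neq\emptyset$, a different condition, and in any case you then claim to extract finitely many \emph{left} cosets ``$H_2 s_j$'', which does not follow from a double-coset statement. What you actually want is the direct properness argument buried in the \emph{proof} of Lemma~\ref{finiteclasses}(2): if $gD_1\cap Y_2\neq\emptyset$ then, writing $Y_2=H_2\cdot D_2$ for compact $D_2$, there is $h_2\in H_2$ with $h_2^{-1}gD_1\cap D_2\neq\emptyset$, so $g\in H_2F$ for the finite set $F=\{a\in G: aD_1\cap D_2\neq\emptyset\}$. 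Third, the ``bookkeeping'' step is not automatic and needs to be spelled out: from $g\in H_1\cap H_2F$ you must argue that for each $f\in F$ the set $H_1\cap H_2f$, if nonempty, is a single left $(H_1\cap H_2)$-coset (an elementary computation: if $g,g_0\in H_1\cap H_2 f$ then $gg_0^{-1}\in H_1\cap H_2$). With these corrections your argument goes through.

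The paper bypasses all of this by citing a single lemma (cf.\ \cite[Lem.~4.2]{marped}): for a proper action on a proper metric space, for any $K\ge 0$ there exists $L$ with $N_K(H_1\cdot x_0)\cap N_K(H_2\cdot x_0)\subset N_L((H_1\cap H_2)\cdot x_0)$. Since $Y_1\cap Y_2\subset N_R(H_1\cdot x_0)\cap N_R(H_2\cdot x_0)$ for suitable $R$, cocompactness is immediate. This packages exactly the properness-plus-coset-bookkeeping you were reaching for into one clean statement.
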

\begin{proof}Let $x_0$ be a vertex in the convex subcomplex $Y_1 \cap Y_2$, and let $R \geq 0$ be such that if $D \subset X^{(0)}$
is the combinatorial $R$-ball around $x_0$, then $Y_1 \subset H_1\cdot D$ and $Y_2 \subset H_2 \cdot D$. Since $X^{(0)}$
is a proper metric space with the combinatorial distance and $G$ acts properly on $X^{(0)}$, it can be proven that for any $K \geq 0$ there exists some $L = L(K)$ such that 
\begin{equation*}N_K(H_1 \cdot x_0) \cap N_K(H_2 \cdot x_0) \subset N_{L}((H_1 \cap H_2) \cdot x_0),
\end{equation*}
with $N_r(S)$ denoting the combinatorial $r$-neighborhood of $S \subset X^{(0)}$ (cf.~\cite[Lem.~4.2]{MartinezPedroza2009Combqc}). In particular we have
\begin{equation*}
 Y_1 \cap Y_2 \subset H_1 \cdot D \cap H_2 \cdot D \subset N_R(H_1 \cdot x_0) \cap N_R(H_2 \cdot x_0) \subset N_{L(R)}((H_1 \cap H_2)\cdot  x_0),   
\end{equation*}
implying that $H_1 \cap H_2$ acts cocompactly on $Y_1 \cap Y_2$ by the local finiteness of $X$.
\end{proof}
The lemma above requires two convex cores to intersect, which can be always assumed after enlarging the convex cores.
\begin{lemma}\label{enlarging}If $(G, X)$ is a cubulated group and $H<(G, X)$ is a convex subgroup, then for any compact set $B \subset X$ there is a convex core $Y \subset X$ for $H$ such that $B \subset Y$ . In addition, for any two convex cores $Y_1, Y_2 \subset X$ for the subgroup $H$, there exists a third convex core $Y_3 \subset X$ containing both $Y_1$ and $Y_2$.
\end{lemma}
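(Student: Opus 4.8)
The plan is to deduce both statements from the single claim that \emph{if $Z\subseteq X$ is a convex subcomplex on which $H$ acts cocompactly and $B\subseteq X$ is bounded, then there is an $H$-invariant convex subcomplex $Y$ with $Z\cup B\subseteq Y$ on which $H$ acts cocompactly}. For the first assertion, apply this with $Z$ any convex core for $H$ and $B$ the given compact set. For the second, apply it with $Z:=Y_1$ and $B:=Y_2$; the only property of $Y_2$ that the argument uses is that it lies in a bounded neighborhood of $Y_1$, and this holds because any two convex cores $Y_1,Y_2$ for $H$ lie at finite Hausdorff distance from each other. Indeed, since $G$ acts by isometries and $Y_2$ is $H$-invariant, $d(hv,Y_2)=d(hv,hY_2)=d(v,Y_2)$ for $h\in H$, so the function $v\mapsto d(v,Y_2)$ is constant on the (finitely many, by cocompactness) $H$-orbits of vertices of $Y_1$; hence it is bounded on $Y_1^{(0)}$, so $Y_1\subseteq N_m(Y_2)$ for some $m$, and symmetrically.

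To prove the claim, take $Y:=\mcal{N}^{n}(Z)$, the $n$-fold iterated cubical neighborhood of $Z$, where $n$ is chosen so large that $B\subseteq\mcal{N}^{n}(Z)$; this is possible because $B$ is compact and, since $X$ is connected and locally finite, the subcomplexes $\mcal{N}^{k}(Z)$ form an increasing exhaustion of $X$ (each vertex of $X$ is at finite distance from $Z$, and $\mcal{N}(A)$ contains the combinatorial $1$-neighborhood $N_1(A)$ for every subcomplex $A$). Then $Z\cup B\subseteq Y$, and $Y$ is $H$-invariant since $\mcal{N}$ is $G$-equivariant and $Z$ is $H$-invariant. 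Since $\mcal{N}(A)^{(0)}\subseteq N_{\dim X}(A)$ for every subcomplex $A$, iterating gives $Y^{(0)}\subseteq N_{n\dim X}(Z)$; combining this with $Z\subseteq N_{R_0}(H\cdot z_0)$ for a vertex $z_0\in Z$ (which follows from the cocompactness of the action of $H$ on $Z$) gives $Y^{(0)}\subseteq N_{n\dim X+R_0}(H\cdot z_0)$, so $Y^{(0)}$ meets only finitely many $H$-orbits; since $X$ is locally finite and finite-dimensional this forces $Y/H$ to be compact. It remains only to check that $Y$ is convex, for which it suffices that the cubical neighborhood of a convex subcomplex of a $\CAT{0}$ cube complex is again convex — then so is any finite iterate.

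This last point is where I expect the only genuine difficulty to lie, and it is the single step that really uses the geometry of $\CAT{0}$ cube complexes. It is, however, standard: the argument showing that carriers of hyperplanes are convex (as in \cite[Lem.~13.4]{hagwissp}) applies here, the key being that $\mcal{N}(Z)$ is connected and locally convex — the link of each of its vertices is a full subcomplex of the link of that vertex in $X$ — which for a subcomplex of a $\CAT{0}$ cube complex is equivalent to convexity, and local convexity of $\mcal{N}(Z)$ is inherited from that of $Z$. Alternatively one can take $Y:=\mathrm{hull}(Z\cup H\cdot B)$, the combinatorial convex hull, which is convex and $H$-invariant and contains $B$ by construction; then the remaining work is to show that $Y$ lies in a bounded neighborhood of $Z$, i.e.\ that the combinatorial convex hull of a bounded neighborhood of a convex subcomplex stays within a bounded neighborhood of it — a statement which, via hyperplane separation and starting from the elementary fact $\mathrm{hull}(Z\cup\{a\})\subseteq N_{d(a,Z)}(Z)$ for a single vertex $a$, comes down to the same geometric input. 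In either formulation, once $Y$ is known to be convex and at finite Hausdorff distance from $Z$, the cocompactness of the $H$-action and the inclusion $Z\cup B\subseteq Y$ follow as above.
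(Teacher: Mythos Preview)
Your proposal is correct and follows essentially the same route as the paper: both take $Y=\mcal{N}^{n}(Z)$ for a suitable iterate of the cubical neighborhood, with the paper citing \cite[Lem.~13.15]{hagwissp} directly for convexity where you sketch the local-convexity/link argument. The only cosmetic difference is in the second assertion: the paper picks compact fundamental domains $B_i\subset Y_i$ with $H\cdot B_i=Y_i$ and applies the first assertion to $B_1\cup B_2$, whereas you apply your claim with $B=Y_2$ after observing that $Y_2$ lies in a bounded neighborhood of $Y_1$; the paper's reduction is slightly cleaner since it keeps $B$ genuinely compact and avoids the parenthetical caveat that your claim only needs $B\subset N_m(Z)$ rather than $B$ bounded.
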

\begin{proof}Let $Y'$ be any convex core for $H$. Since $X$ is finite-dimensional, by \cite[Lem.~13.15]{HaglundWise2008Special} the
iterated cubical neighborhoods $\mcal{N}^k(Y')=\mcal{N}(\mcal{N}^{k-1}(Y'))$ are convex cores for $H$ for all $k \geq 1$, and we can choose $k$ such that $Y := \mcal{N}^k(Y')$ contains $B$. The second statement follows by considering a compact set $B_i \subset Y_i$ such that $H \cdot B_i \supset Y_i$ for each $i= 1, 2$, and then finding a convex core $Y_3$ containing $B_1 \cup B_2$.
\end{proof}
We need one more result, which is key in the proof of Proposition \ref{acylindricity}. We use the notation $[p, q]$ for the geodesic segment joining the points $p$ and $q$.
\begin{prop}\label{loxodromics}If $(G,\mcal{P})$ is relatively hyperbolic and cubulated by $X$, then there exists some
$\delta \geq 0$ such that if $h \in G$ is loxodromic and preserves two axes $\gamma_1, \gamma_2 \subset X$, then $d(\gamma_1, \gamma_2) \leq \delta$.
\end{prop}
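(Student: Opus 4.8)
The plan is to reduce to a flat strip and then to show that a \emph{wide} such strip would have to be ``peripheral'', contradicting that $h$ is loxodromic.

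First I would recall that $h$, being loxodromic, has infinite order, and that a group acting properly and cocompactly by isometries on a $\CAT{0}$ space acts by semisimple isometries (see \cite{brihae}); since cell stabilizers are finite, the orbit of $h$ is unbounded, so $h$ is not elliptic, hence it is a hyperbolic isometry of $X$. Thus $\mathrm{Min}(h)\neq\emptyset$ and, by the structure of minimal sets of hyperbolic isometries, $\mathrm{Min}(h)$ splits isometrically as $Y\times\R$ so that $h$ acts as $(y,t)\mapsto (y,t+|h|)$ with $|h|>0$, the axes of $h$ being exactly the lines $\{y\}\times\R$. Hence $\gamma_1=\{y_1\}\times\R$ and $\gamma_2=\{y_2\}\times\R$, the number $D:=d(\gamma_1,\gamma_2)$ equals $d_Y(y_1,y_2)$, and the convex hull $\Sigma:=[y_1,y_2]\times\R\subset\mathrm{Min}(h)$ is a convex subspace of $X$ isometric to the flat strip $[0,D]\times\R$, on which $\langle h\rangle$ acts properly and cocompactly, with $h\Sigma=\Sigma$. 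In particular $\Sigma$ contains, for each $k\in\Z$, an isometrically and convexly embedded flat square $Q_k\cong[0,D]\times[0,D]$ (a ``block'' of the strip), and $\Sigma=\bigcup_k Q_k$.

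The key input I would invoke is that wide flats in $X$ are peripheral. Since $X$ is $G$--equivariantly quasi-isometric to $G$ and $(G,\mcal{P})$ is relatively hyperbolic, the asymptotic cones of $X$ are tree-graded with pieces the ultralimits of peripheral cosets, and a flat square is connected and contains no cut-point, hence lies in a single piece. This yields constants $N_0,C_0$ depending only on $(G,X,\mcal{P},S)$ such that every isometrically embedded flat square $[0,a]\times[0,a]\hookrightarrow X$ with $a\ge N_0$ lies in the $C_0$--neighbourhood of a convex core of some peripheral subgroup (peripheral subgroups are convex by \ref{sageevwise}, and their convex cores are at bounded Hausdorff distance from the corresponding peripheral coset). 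I would also use the standard bounded coset penetration fact: there is $B_0$ such that for distinct peripheral subgroups $P_1\neq P_2$ with convex cores $Z_1,Z_2$ one has $\mathrm{diam}\big(N_{C_0}(Z_1)\cap N_{C_0}(Z_2)\big)\le B_0$.

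Now set $\delta:=\max(N_0,B_0)$ and suppose $D>\delta$. Each block $Q_k$ has side $D\ge N_0$, so $Q_k\subset N_{C_0}(Z_k)$ for a peripheral core $Z_k$; consecutive blocks $Q_k,Q_{k+1}$ share a cross-section of $\Sigma$ of diameter $D>B_0$ contained in $N_{C_0}(Z_k)\cap N_{C_0}(Z_{k+1})$, so by bounded coset penetration all the $Z_k$ are cores of one peripheral subgroup $P'$. Hence $\Sigma=\bigcup_k Q_k$ lies in a uniform neighbourhood $N_{C}(\mcal{O})$ of a $P'$--orbit $\mcal{O}$. Applying $h$ and using $h\Sigma=\Sigma$, the unbounded set $\Sigma$ also lies in $N_{C}(h\mcal{O})$, an orbit-neighbourhood of the peripheral subgroup $hP'h^{-1}$; bounded coset penetration forces $hP'h^{-1}=P'$. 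But a maximal parabolic subgroup is self-normalising --- an element normalising $P'$ fixes the unique boundary point fixed by $P'$, hence lies in the stabiliser of that parabolic point, which is $P'$ --- so $h\in P'$, i.e.\ $h$ is parabolic, contradicting that $h$ is loxodromic. Therefore $D=d(\gamma_1,\gamma_2)\le\delta$, and $\delta$ depends only on the cubulation and the peripheral structure.

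The main obstacle is the ``wide flats are peripheral'' statement \emph{with constants uniform over all loxodromic $h$}; once that is available the chaining argument is routine. Some care is also needed because $\Sigma$ is a convex subspace rather than a subcomplex (one may replace it by a $\langle h\rangle$--cocompact convex subcomplex at bounded Hausdorff distance, e.g.\ an iterated cubical neighbourhood, without affecting the estimates), and to check that all auxiliary constants are intrinsic to $(G,X,\mcal{P})$.
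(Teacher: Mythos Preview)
Your argument is correct and follows the same overall arc as the paper's proof: reduce to a flat strip via the $\CAT{0}$ minimal-set decomposition, show that a wide strip must lie in a bounded neighbourhood of a single peripheral coset, and derive a contradiction with loxodromicity. The differences are in how each step is packaged.

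For the geometric core, the paper invokes the \emph{relatively thin triangles} property directly (from \cite{sagwis,drusap}): every geodesic triangle in $X$ is $\delta'$-thin except possibly for a portion lying in the $\delta'$-neighbourhood of a single peripheral orbit. It then builds explicit triangles $\Delta_\eta$ with one side on $\gamma_1$ and a vertex on $\gamma_2$, uses elementary Euclidean trigonometry inside the strip to see that long subsegments of $\gamma_1$ fall in the ``peripheral'' alternative, and chains these using the bounded-overlap estimate \eqref{diameter} to conclude $\gamma_1\subset N_{\delta'}(gP\cdot x)$. Your route instead appeals to the asymptotic-cone/tree-graded characterisation to get that large flat squares lie near a piece, and then chains consecutive $D\times D$ blocks of the strip. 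Both are instances of the same Dru\c{t}u--Sapir machinery; the paper's version is more explicit and avoids asymptotic cones, while yours is slightly more conceptual but leans on a black box you rightly flag.

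For the contradiction, the paper argues that $d_S(h^n,gP)$ is uniformly bounded, whence $\langle h\rangle$ is bounded in the word metric over $S\cup\bigcup\mcal{P}$, contradicting loxodromicity via \cite[Lem.~8.3(1)]{hruwispack}. Your conclusion via $h\Sigma=\Sigma$, bounded coset penetration, and the self-normalising property of maximal parabolics is a clean alternative and equally valid.
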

\begin{proof}Let $x \in X$ be a base-point. The map $G \xrightarrow{\mu} X$, $g \mapsto gx$ is a quasi-isometry for $X$ considered with the $\CAT{0}$ metric, so there
exists some $\delta'\geq 0$ such that for any geodesic triangle $\Delta \subset X$ with vertices $a, b, c$ there is some peripheral left coset $Q_{\Delta}= g_{\Delta}P_{\Delta}$ with $g_{\Delta} \in G$ and $P_{\Delta} \in P$, such that for any point $p \in \Delta$, either:
\begin{itemize}
    \item[($i$)] $p$ lies in the $\delta'$-neighborhood of the union of the sides of $\Delta$ not containing $p$, or
\item[($ii$)] $p \in N_{\delta'}(Q_{\Delta} \cdot x)$
\end{itemize}
(see e.g. \cite[Thm.~4.1 \& Prop.~4.2]{SageevWise2015Cores} or \cite[ Sec.~8.1.3]{DrutuSapir2005Treegraded}). In addition, for such a $\delta'$ there exists some $\lambda \geq 0$ so that if $gP$ and $g'P'$ are distinct peripheral left cosets, then
\begin{equation}\label{diameter}
\mathrm{diam}(N_{\delta'}(gP \cdot x) \cap N_{\delta'}(g'P'\cdot x))<   \lambda,
\end{equation}
where $N_R(M)$ denotes the $R$-neighborhood of $M$, see \cite[10, Cor.~2.3]{Einstein2019RMSQT}.

Let $\delta := 4\delta'$, and suppose by contradiction that $h \in G$ is loxodromic and preserves two axes $\gamma_1, \gamma_2 \subset X$ with $r=d(\gamma_1, \gamma_2) > \delta$. We claim that there is some peripheral left coset $gP$ such that $\gamma_1 \subset N_{\delta'}(gP\cdot x)$.

By \cite[II.2.13]{BridsonHaefliger1999}, $\gamma_1$ and $\gamma_2$ are asymptotic and bound a flat strip isometric to $\R  \times [0, r]$. Let
$a \in \gamma_1$, and let $b$ be its closest point projection into $\gamma_2$. Choose an isometry $\alpha : \R \rightarrow \gamma_1$ sending $0$ to $a$, and for $\eta > r$ consider the geodesic triangle $\Delta_{\eta}$ with vertices $a, b$, and $\alpha(\eta)$. After using some Euclidean trigonometry we can prove that the segment $[\alpha(\sqrt{2}\delta'),\alpha(\eta/2)]$ lies outside the $\delta'$-neighborhood of $[a, b]\cup  [\alpha(\eta), b]$, so by condition ($ii$) above there exists some peripheral left coset
$Q_\eta = g_\eta P_\eta$ with $[\alpha(\sqrt{2}\delta'),\alpha(\eta/2)] \subset N_{\delta'}(Q_\eta \cdot  x)$. Also, by \eqref{diameter} we have $Q_{\eta}= Q_{\eta'}$ for any
$\eta \geq \eta':=\max(2\lambda+2\sqrt{2}\delta',r)$, implying $\alpha([\sqrt{2}\delta',+\infty))\subset N_{\delta'}(Q_{\eta'}\cdot x)$. In fact, by a completely analogous argument we can prove $\gamma_1\subset N_{\delta'}(Q_{\eta'}\cdot x)$, and so the claim follows with $gP := Q_{\eta'}$.

Now, let $L=d(hx, \gamma_1)$ which equals $d(h^nx, \gamma_1)$ for any $n \in \Z$. By our previous claim we have $d(h^nx, gP \cdot x) \leq L + \delta'$ for all $n$, so by means of the quasi-isometry $\mu$ we can find a constant $C \geq 0$ such that
\begin{equation*}
d_S(h^n, gP) \leq C
\end{equation*}
for any $n \in \Z$, for $d_S$ the word-metric with respect to some finite generating set $S$ of $G$. This is our desired contradiction, since in that case the infinite cyclic group generated by $h$ would be bounded in $G$ for the word-metric $d_{(S\cup \bigcup{\mcal{P}})}$, contradicting that $h$ is loxodromic \cite[Lem.~8.3 (1)]{HruskaWise2009Packing}.\qedhere
\tikzset{every picture/.style={line width=0.75pt}} 
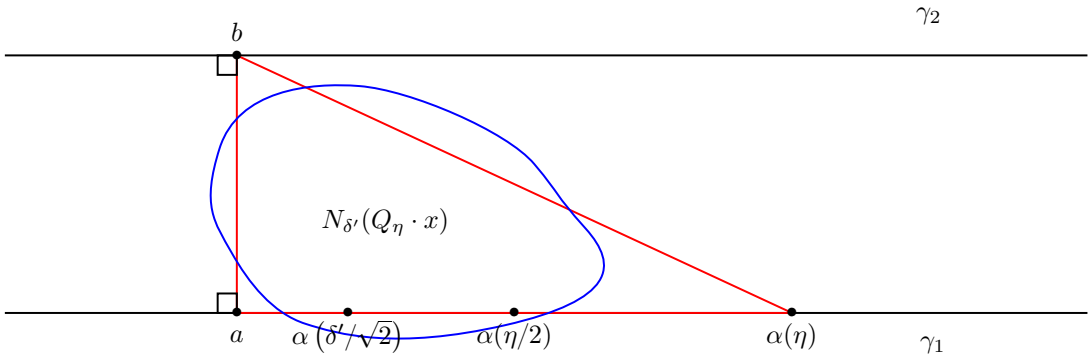
\begin{figure}[hbt]
\begin{tikzpicture}[x=0.75pt,y=0.75pt,yscale=-1,xscale=1]

\draw    (104,230) -- (650,230) ;
\draw    (104,100) -- (650,100) ;
\draw [color={red}  ,draw opacity=1.5 ]   (221,100) -- (221,230) ;
\draw [color={red}  ,draw opacity=1.5 ]   (221,100) -- (501,230) ;
\draw [color={red}  ,draw opacity=1.5 ]   (221,230) -- (501,230) ;
\draw  [color={blue}  ,draw opacity=1.5 ] (283,115.5) .. controls (308.33,117.5) and (357,138.17) .. (371,154.83) .. controls (385,171.5) and (380.33,168.83) .. (397,187.5) .. controls (413.67,206.17) and (411.67,223.5) .. (356.33,236.17) .. controls (301,248.83) and (272.33,240.83) .. (255.67,235.5) .. controls (239,230.17) and (227,214.83) .. (218.33,199.5) .. controls (209.67,184.17) and (203,176.17) .. (212.33,146.83) .. controls (221.67,117.5) and (257.67,113.5) .. (283,115.5) -- cycle ;
\draw   (211.33,100) -- (221,100) -- (221,109.83) -- (211.33,109.83) -- cycle ;
\draw   (211.33,220.17) -- (221,220.17) -- (221,230) -- (211.33,230) -- cycle ;

\draw (221,88) node    {$b$};
\draw (570,80) node    {$\gamma _{2}$};
\draw (361,242) node    {$\alpha ( \eta /2)$};
\draw (277,242) node    {$\alpha \left( \sqrt{2}\delta '\right)$};
\draw (501,242) node    {$\alpha ( \eta )$};
\draw (221,242) node    {$a$};
\draw (296,184) node    {$N_{\delta'}( Q_{\eta } \cdot x)$};
\draw (572,246) node    {$\gamma _{1}$};
\draw (221,100) node    {\small{$\bullet$}};
\draw (221,230) node    {\small{$\bullet$}};
\draw (501,230) node    {\small{$\bullet$}};
\draw (277,230) node    {\small{$\bullet$}};
\draw (361,230) node    {\small{$\bullet$}};
\end{tikzpicture}
	\caption{Proof of Proposition \ref{loxodromics}.}
\end{figure}
\end{proof}
\subsection{Virtually special groups}\label{subsecvs} We will not work directly with the definition of special cube complex \cite{HaglundWise2008Special}. Instead, we present some of their main properties and some criteria for virtual specialness in the case of cubulated hyperbolic groups. We will extend some of these results in Section \ref{proofoftheorem} when we prove Theorem \ref{CMVHimpliesspecial}.
\begin{defi} We say that a cubulated group $(G, X)$ is \emph{special} if $G$ acts freely on $X$ and the (compact) quotient $X/G$ is a special cube complex, and that $(G, X)$ is \emph{virtually special} if there is a finite index subgroup $G'< G$ such that $(G, X)$ is special.
\end{defi}
By abuse of notation, sometimes we will simply say that $G$ is (virtually) special without mentioning the cubulation $X$, and it will be implicit that the quotient $X/G$ is compact.
\begin{defi}A subset $S \subset G$ of a group $G$ is \emph{separable} in $G$ (or simply \emph{separable} if the
ambient group $G$ is understood) if for any $a \in G\backslash S$ there exists a finite quotient of $G$ in which the image of $a$ does not lie in the image of $S$. In particular, a subgroup $H<G$ is separable if it is the intersection of finite index subgroups of $G$. The group $G$ is \emph{residually finite} if $\corchete{1}$ is separable.
\end{defi}
One of the main properties of virtually special groups is the following result due to Haglund and Wise \cite[Thm.~1.3, Cor.~7.9 \& Prop.~13.7]{HaglundWise2008Special}:
\begin{thm}[Haglund-Wise]\label{haglundwiseseparability} If $G$ is hyperbolic and virtually special then every quasiconvex
subgroup of $G$ is separable.\\
In general, if $(G, X)$ is a virtually special group then every convex subgroup of $(G, X)$ is separable.
\end{thm}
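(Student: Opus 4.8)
The plan is to reduce both assertions to producing a \emph{virtual retraction} onto the given subgroup, and then to derive separability from residual finiteness of virtually special groups. Since a quasiconvex subgroup of a cubulated hyperbolic group acts cocompactly on the combinatorial convex hull of one of its orbits, it is convex in the sense used here, so the first assertion is a special case of the second, and I would argue the second throughout. So suppose $(G,X)$ is virtually special, fix a finite index $G'<G$ acting freely on $X$ with $X/G'$ compact and special, and let $H<G$ be convex with convex core $Z\subset X$. Set $H':=H\cap G'$, a finite index subgroup of $H$ acting freely and cocompactly on the $\CAT{0}$ subcomplex $Z$; then $Y:=Z/H'$ is a compact NPC cube complex with $\pi_1(Y)\cong H'$, and the convex inclusion $Z\hookrightarrow X$, composed with the covering $X\to X/G'$, descends to a combinatorial local isometry $\phi\colon Y\to X/G'$. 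Since a combinatorial local isometry cannot create a self-crossing, one-sided, self-osculating or inter-osculating hyperplane, $Y$ inherits specialness from $X/G'$.

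Next I would invoke Haglund--Wise's \emph{canonical completion and retraction}: as $Y$ is compact and $X/G'$ is special, $\phi$ completes to a finite cover $p\colon\widehat X\to X/G'$, a lift $Y\hookrightarrow\widehat X$ of $\phi$, and a canonical combinatorial retraction $\rho\colon\widehat X\to Y$ with $\rho|_Y=\mathrm{id}_Y$. On fundamental groups $\pi_1(Y)\hookrightarrow\pi_1(\widehat X)$ is injective (local isometries of NPC complexes are $\pi_1$-injective) and $\rho_*$ splits it, so $H'\cong\pi_1(Y)$ is a retract of $G'':=\pi_1(\widehat X)$; moreover $[G:G'']<\infty$ because $p$ is a finite cover and $[G:G']<\infty$. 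Hence $H'$ is a virtual retract of $G$.

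It remains to pass from a virtual retraction to separability. The group $G$ is residually finite, since it has a finite index subgroup isomorphic to the fundamental group of a compact special cube complex, which embeds in a right-angled Artin group, and right-angled Artin groups are residually finite. If $r\colon G''\to H'$ is a retraction of the residually finite group $G''$, then $H'$ is separable in $G''$: the self-map $x\mapsto x\,r(x)^{-1}$ of $G''$ is continuous for the profinite topology and the preimage of $1$ is exactly $H'$, so $H'$ is closed, i.e.\ an intersection of finite index subgroups of $G''$; as $[G:G'']<\infty$ these are finite index in $G$, whence $H'$ is separable in $G$. Finally $H$ is a finite union of cosets of $H'$, hence closed in the profinite topology on $G$, hence separable; this proves the second assertion, and the first follows from it as noted above.

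I expect the genuinely hard step to be the canonical completion and retraction --- the construction of $\widehat X$ and the verification that it is an honest finite cover of $X/G'$ carrying a combinatorial retraction onto $Y$ --- which is the essential combinatorial content of Haglund and Wise and rests on the hyperplane geometry of special cube complexes together with Stallings-type completions of immersions into Salvetti complexes of right-angled Artin groups. By comparison, the reduction to a compact special core and the deduction of separability from a virtual retraction are routine. A secondary point needing care, used in reducing the first assertion to the second, is the standard fact that quasiconvex subgroups of cubulated hyperbolic groups act cocompactly on convex subcomplexes.
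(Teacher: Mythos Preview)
The paper does not give its own proof of this theorem: it is stated as a result of Haglund--Wise with a citation to \cite[Thm.~1.3, Cor.~7.9 \& Prop.~13.7]{hagwissp}, and no argument is supplied. Your sketch is correct and is essentially the standard Haglund--Wise proof via canonical completion and retraction --- indeed, the paper itself invokes precisely this machinery in its Appendix (it recalls the construction of $\msf{C}(A,B)$ and cites the separability-from-retraction criterion as \cite[Lem.~9.3]{hagwissp}) when proving the related double-coset separability results. So your approach coincides with the one the paper is implicitly pointing to.

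One minor technical remark: for the canonical completion to work as stated you need $X/G'$ to be not just special but \emph{fully clean} with simplicial $1$-skeleton (cf.\ the paper's Appendix and \cite[Rmk.~6.8]{hagwissp}); this is always achievable after passing to a further finite cover, so it does not affect the argument, but it is worth flagging since you only assumed specialness of $X/G'$.
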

As a partial converse, they also proved the following characterization of virtual specialness \cite[
Thm.~9.19]{HaglundWise2008Special}:
\begin{thm}[Haglund-Wise]\label{haglundwisedoublecoset} The cubulated group $(G, X)$ is virtually special if and only if:
\begin{itemize}
    \item[($i$)] $G_W$ is separable for every wall $W \subset X$, and
    \item[($ii$)] the double coset $G_{W_1}G_{W_2}$
is separable for any pair $W_1, W_2$ of intersecting walls of $X$.
\end{itemize}
\end{thm}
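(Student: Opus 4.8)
\emph{Necessity of ($i$) and ($ii$).} Suppose $(G,X)$ is virtually special. Every wall stabilizer $G_W$ acts cocompactly on the convex subcomplex $\mcal{N}(W)\subset X$, so it is a convex subgroup of $(G,X)$, and hence separable by the second part of Theorem~\ref{haglundwiseseparability}; this gives ($i$). For ($ii$), if $W_1,W_2$ are intersecting walls then $G_{W_1}$ and $G_{W_2}$ are convex subgroups, and the product of two convex subgroups of a virtually compact special group is a separable subset: this is the separability of double cosets of convex subgroups proved, via the functoriality of the canonical completion, in the appendix (cf.~Proposition~\ref{independencecore}), whose proof does not invoke the present theorem. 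Hence $G_{W_1}G_{W_2}$ is separable.

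\emph{Sufficiency, the strategy.} For the converse I would use Haglund--Wise's combinatorial criterion \cite{hagwissp}: a compact NPC cube complex is special exactly when every hyperplane is $2$-sided and no hyperplane self-intersects, no hyperplane self-osculates, and no two distinct hyperplanes inter-osculate. So it suffices to produce a finite-index \emph{normal} subgroup $G'\lhd G$ acting freely on $X$ with $X/G'$ exhibiting none of these pathologies. Freeness comes first: by Lemma~\ref{finiteclasses}(1) there are only finitely many conjugacy classes of torsion elements, and ($i$) forces $G$ to be residually finite (an element moving some wall off itself lies outside the separable subgroup $G_W$; an element fixing every wall and every half-space fixes every vertex, so is trivial), whence some finite-index subgroup is torsion-free and acts freely. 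Two-sidedness comes next: for each of the finitely many $G$-orbits of walls the elements of $G_W$ preserving a chosen side of $W$ form a subgroup of index at most $2$; intersecting these over orbit representatives and passing to the normal core makes every hyperplane of the quotient $2$-sided.

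\emph{Sufficiency, removing crossings and osculations.} Fix walls $W,V$ of $X$. If $g\in G$ is such that $gV$ crosses or osculates $W$, then $\mcal{N}(W)\cap g\mcal{N}(V)\neq\emptyset$, so by Lemma~\ref{finiteclasses}(2) (applied to the convex core $\mcal{N}(W)$, and its evident extension to pairs of distinct wall stabilizers) only finitely many double cosets $G_WgG_V$ occur among such $g$. Now a self-intersection of the image of $W$ in $X/G'$ is precisely an element $g\in G'\setminus G_W$ with $gW$ crossing $W$; since $W$ and $gW$ are then intersecting walls, ($ii$) shows $G_WG_{gW}=G_W(gG_Wg^{-1})$ is separable, hence so is $G_WgG_W$ (separability of a subset is preserved under translation), and $1\notin G_WgG_W$; therefore some finite-index subgroup of $G$ is disjoint from it. Carrying this out for the finitely many bad double cosets, over the finitely many orbits of $W$ (for self-intersection and self-osculation) and of pairs $(W,V)$ (for inter-osculation), intersecting with the finite-index subgroup from the previous paragraph, and passing to a normal finite-index subgroup $G'$ (so that conditions checked on orbit representatives transfer by conjugation to all walls), produces the required $G'$, and $X/G'$ is special.

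\emph{The main obstacle.} The genuinely delicate point is the \emph{osculation} cases, since hypothesis ($ii$) as stated supplies separability only for \emph{crossing} pairs of walls, whereas self-osculation and inter-osculation are witnessed by walls whose carriers meet without the walls themselves intersecting. The plan there is to first pass to the cover in which all self-intersections have been removed: in that cover the carrier $\mcal{N}(\overline{W})$ of each hyperplane embeds as $\overline{W}\times[0,1]$, and a residual self-osculation becomes a statement about separating a single coset of $G_W$ in $G$, accessible from ($i$); for inter-osculation, which is a disjunction (``the two hyperplanes do not cross \emph{or} do not osculate''), one resolves it on the ``do not osculate'' side, working with the combinatorial hyperplanes $W^{\pm}\subset X$---convex subcomplexes with stabilizer $G_W$---so that ($ii$) may effectively be applied to carrier-intersecting pairs. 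Checking that killing self-intersections introduces no new osculations, and carrying out this reduction carefully, is the technical heart of the argument; by contrast the $2$-sidedness and torsion-freeness reductions and the orbit bookkeeping are routine.
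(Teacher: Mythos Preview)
The paper does not give its own proof of this statement: Theorem~\ref{haglundwisedoublecoset} is quoted from Haglund--Wise \cite[Thm.~9.19]{hagwissp} without argument, so there is nothing in the paper to compare your attempt against.

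That said, your sketch follows the same architecture as the original Haglund--Wise proof, and the necessity direction is fine (your appeal to Theorem~\ref{doublecosetconvexsep} in the appendix is logically sound and not circular, though the original \cite{hagwissp} argument is more direct). For sufficiency, your treatment of residual finiteness, two-sidedness, and self-intersection is correct: the key observation that $G_W g G_W = (G_W G_{gW})g$ is a translate of a separable set supplied by ($ii$) is exactly the right mechanism, and the finiteness of the relevant double cosets via Lemma~\ref{finiteclasses}(2) is the correct bookkeeping.

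Where you have a genuine gap is precisely where you flag it: the osculation cases. Your plan to reduce self-osculation to ($i$) after removing self-intersections, and to resolve inter-osculation on the ``do not osculate'' side via combinatorial hyperplanes, is the right shape, but as written it is a strategy rather than a proof. In particular you have not verified that eliminating self-intersections does not create new osculations, nor have you made precise how ($ii$)---which speaks only of \emph{crossing} walls---is leveraged for pairs whose carriers meet without the walls crossing. These are exactly the technical points that occupy the bulk of \cite[\S9]{hagwissp}, and your proposal does not yet discharge them.
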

The second characterization of virtual specialness is due to Wise \cite[Thm~13.3]{Wise2011Hierar}, and is in terms of the \emph{quasiconvex virtual hierarchy} $\mcal{QVH}$ defined below (see also \cite[Thm.~10.2]{AgolGrovesManning2016MSQT}).
\begin{defi}Let $\mcal{QVH}$ be the smallest class of hyperbolic groups closed under the operations:
\begin{enumerate}
    \item $\corchete{1} \in \mcal{QVH}$.
\item If $G= A\ast_B C$ with $A, C \in \mcal{QVH}$ and such that $B$ is quasiconvex in $G$, then $G \in \mcal{QVH}$.
\item If $G = A\ast_B$ with $A \in \mcal{QVH}$ and such that $B$ is quasiconvex in $G$, then $G \in \mcal{QVH}$.
\item If $H< G$ with $|G : H|<\infty$ and $H \in \mcal{QVH}$, then $G \in \mcal{QVH}$.
\end{enumerate}
\end{defi}
\begin{thm}[Wise]\label{wiseQVH} A hyperbolic group is virtually special if and only if it is in $\mcal{QVH}$.
\end{thm}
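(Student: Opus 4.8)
Since Theorem~\ref{wiseQVH} is Wise's Quasiconvex Hierarchy Theorem, the plan is to reconstruct its two implications, with essentially all the difficulty concentrated in the direction $\mcal{QVH}\Rightarrow$ virtually special. For that direction I would argue by induction on the length of a hierarchy witnessing $G\in\mcal{QVH}$. The base case $\corchete{1}$ and the finite-index step (a finite-index overgroup of a group possessing a finite-index special subgroup again possesses one) are immediate, so the work is in the splitting step: $G=A\ast_B C$ (the HNN case $A\ast_B$ being parallel) with $A,C$ hyperbolic and virtually special and $B$ quasiconvex in $G$. First I would reduce to the case where $B$ is \emph{malnormal} in the vertex groups: since $B$ is quasiconvex in the virtually special hyperbolic groups $A$ and $C$, it is separable there by Theorem~\ref{haglundwiseseparability}, and using separability together with residual finiteness one passes to a finite-index subgroup of $G$ --- still a graph of groups of the same shape, with quasiconvex edge groups and (by Theorem~\ref{haglundwiseseparability} again) virtually special vertex groups --- in which each edge group is malnormal in its vertex groups. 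Next I would invoke the Malnormal Special Quotient Theorem: for suitable finite-index $\dot B<B$, the Dehn filling $\bar G=G/\langle\!\langle \dot B\rangle\!\rangle$ is a \emph{hyperbolic} group that is virtually special. Finally, $G$ itself acts properly and cocompactly on a $\CAT{0}$ cube complex built from the cubulations of $A$ and $C$ along the Bass--Serre tree, and I would verify the Haglund--Wise criterion (Theorem~\ref{haglundwisedoublecoset}) for this cubulation --- separability of wall stabilizers and of double cosets $G_{W_1}G_{W_2}$ of intersecting walls --- by pushing the offending group elements into the special hyperbolic fillings $\bar G$, where the corresponding separability statements hold by Theorem~\ref{haglundwiseseparability}, and pulling back via the canonical completion and retraction of special cube complexes.

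For the converse, virtual specialness $\Rightarrow\mcal{QVH}$, I would pass to a finite-index subgroup so that $(G,X)$ is compact special. If $G$ is finite we are in the base case; otherwise, by Sageev's theory the $\CAT{0}$ cube complex carries an essential hyperplane $W$, whose stabilizer $G_W$ is undistorted --- hence, since $G$ is hyperbolic, quasiconvex --- and codimension one. Cutting $X$ along the $G$-orbit of $W$ then exhibits $G$ as the fundamental group of a finite graph of groups with quasiconvex edge groups and vertex groups acting properly and cocompactly on convex (again compact special) subcomplexes of $X$. A complexity of the cubulation --- for instance the number of $G$-orbits of hyperplanes, or a lexicographic pair involving the dimension --- strictly drops for each piece, so iterating terminates at finite groups and produces the desired hierarchy.

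The hard part is the Malnormal Special Quotient Theorem together with its surrounding package: that the hyperbolic Dehn fillings are not merely hyperbolic but virtually special, and that quasiconvex subgroups and double cosets stay separable along the hierarchy --- statements whose proofs rest on cubical small-cancellation theory over cusped / relatively hyperbolic cube complexes and on the canonical completion and retraction. Beyond that, the bookkeeping needed to achieve malnormality by a single finite-index passage compatible with the splitting, and to ensure the complexity genuinely decreases in the converse direction, are the remaining points that would require care.
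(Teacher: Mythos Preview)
The paper does not prove Theorem~\ref{wiseQVH}; it is quoted as Wise's result \cite[Thm.~13.3]{wisebible} (see also \cite[Thm.~10.2]{agmmsqt}) and used as a black box throughout. So there is no ``paper's own proof'' to compare against.

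As a standalone sketch of Wise's theorem your outline has the right architecture, but two steps are stated too casually. First, the reduction to malnormal edge groups is not a consequence of separability and residual finiteness alone: separability lets you separate $B$ from finitely many elements, but malnormality requires controlling \emph{all} conjugates $gBg^{-1}\cap B$. The actual mechanism is that quasiconvex subgroups of hyperbolic groups have finite \emph{height} \cite{gmrs}, and one uses this (together with separability) to refine the hierarchy to an almost malnormal one; this is the content of the equality $\mcal{QVH}=\mcal{MVH}$ in \cite[Sec.~3]{agmmsqt}. Second, the sentence ``$G$ itself acts properly and cocompactly on a $\CAT{0}$ cube complex built from the cubulations of $A$ and $C$ along the Bass--Serre tree'' hides a genuine theorem: this is the Hsu--Wise/Haglund--Wise cubulation of amalgams, and it \emph{requires} the almost malnormal hypothesis on $B$ to produce a proper cocompact action --- so the order of your steps matters and the malnormal reduction must come first. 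Your identification of the Malnormal Special Quotient Theorem as the crux is correct, and your converse direction (cutting along hyperplanes with a decreasing complexity) is the standard argument.
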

As mentioned in the introduction, we will require the following compatibility condition for cubulated relatively hyperbolic groups:
\begin{defi}\label{compatiblevsperi} A cubulated and relatively hyperbolic group $(G, X)$ with peripheral structure $\mcal{P}$ is said to be \emph{hyperbolic relative to compatible virtually
special subgroups} if for any peripheral subgroup $P < G$ there exists some convex core $Z \subset X$ for $P$ such that the cubulated group $(P, Z)$ is virtually special.
\end{defi}
From Theorems \ref{sageevwise} and \ref{haglundwiseseparability}, it follows that if $(G, X)$ is relatively hyperbolic and virtually special, then $(G, X)$ is hyperbolic relative to compatible virtually special subgroups. By Theorem \ref{martin-steenbock}, this compatibility also holds under the assumptions in Proposition \ref{combsmallcancellation}. The same applies to Corollary \ref{virtab} since any subgroup or double coset of a finitely generated virtually abelian group is separable, so Theorem \ref{haglundwisedoublecoset} gives us that any cubulated group that is hyperbolic relative to virtually abelian subgroups satisfies Definition \ref{compatiblevsperi}.

In general, the existence of a virtually special convex core cubulation for a convex subgroup implies that any other convex core gives a virtually special cubulation. This follows from the next proposition, which will be proven in the Appendix as a consequence of Theorem \ref{doublecosetconvexsep}.

\begin{prop}\label{independencecore} Let $(G, X)$ be a cubulated group and let $H<G$ be a convex subgroup with convex core $Y \subset X$. If $(H, Y)$ is virtually special, then $(H, Y')$ is virtually special for any other convex core $Y'\subset X$ for $H$.
\end{prop}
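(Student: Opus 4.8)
The plan is to deduce the proposition from Haglund--Wise's characterization of virtual specialness (Theorem \ref{haglundwisedoublecoset}), the real content being a separability statement for double cosets of convex subgroups. So assume $(H,Y)$ is virtually special and let $Y'$ be an arbitrary convex core for $H$; I will verify conditions $(i)$ and $(ii)$ of Theorem \ref{haglundwisedoublecoset} for the cubulated group $(H,Y')$. First, arguing as in Remark \ref{wallsubcomplex}: every wall $W'$ of $Y'$ has the form $W'=W\cap Y'$ for a unique wall $W$ of $X$ (pick an edge $e$ of $Y'$ dual to $W'$ and put $W:=W(e)$, using convexity of $Y'$), and for $h\in H$ the element $h$ stabilizes $W'$ if and only if $he$ is dual to $W$, that is, $hW=W(he)=W(e)=W$; hence $H_{W'}=H\cap G_W$. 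Next I claim that $H\cap G_W$ is a convex subgroup of the cubulated group $(H,Y)$. Indeed, $G_W$ is convex in $(G,X)$ with convex core $\mcal{N}(W)$; choosing a vertex $y_0\in Y$, Lemma \ref{enlarging} applied to $G_W$ produces a convex core $Z_W\subset X$ for $G_W$ with $y_0\in Z_W$, so $Y\cap Z_W$ contains $y_0$ and is nonempty. By Lemma \ref{intersectionconvex}, $H\cap G_W$ is convex in $(G,X)$ with convex core $Y\cap Z_W$, which is a convex subcomplex of $Y$; thus $H\cap G_W$ acts cocompactly on a convex subcomplex of $Y$, as claimed.

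Granting these two facts, the conclusion follows modulo one statement. Since $(H,Y)$ is virtually special, Theorem \ref{haglundwiseseparability} shows that each $H_{W'}=H\cap G_W$, being a convex subgroup of $(H,Y)$, is separable in $H$; this gives $(i)$. For $(ii)$, a pair of intersecting walls $W'_1,W'_2$ of $Y'$ yields $H_{W'_1}H_{W'_2}=(H\cap G_{W_1})(H\cap G_{W_2})$, a product of two convex subgroups of $(H,Y)$; once we know such a double coset is separable in $H$, condition $(ii)$ holds and Theorem \ref{haglundwisedoublecoset} gives that $(H,Y')$ is virtually special. Thus the proposition reduces to the following key statement: \emph{if $(K,Z)$ is a virtually special cubulated group and $A,B<K$ are convex subgroups, then the double coset $AB$ is separable in $K$.}

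To prove this I would first reduce to the case where $K$ acts freely and $M:=Z/K$ is a compact special cube complex: passing to a finite-index normal subgroup $K_0\trianglelefteq K$ with $(K_0,Z)$ special, one writes $AB$ as a finite union of right translates of products $(A\cap K_0)\big(K_0\cap B\big)^{a}$ of convex subgroups of $(K_0,Z)$ (with $a$ ranging over a transversal of $A\cap K_0$ in $A$), and uses that a subset of a finite-index subgroup is separable in $K$ if and only if it is separable in that subgroup. With $M$ special, the convex cores of $A$ and $B$ descend to local isometries $N_A\to M$ and $N_B\to M$ of compact special cube complexes; by separability of convex subgroups (Theorem \ref{haglundwiseseparability}) and the standard promotion of a local isometry to an embedding in a finite cover, one passes to a common finite cover of $M$ in which $N_A$ and $N_B$ both embed as convex subcomplexes. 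The final step is to run the canonical completion and retraction of Haglund--Wise on the configuration $(M;N_A,N_B)$, producing a finite cover $\widehat{M}\to M$ together with retractions onto copies of $N_A$ and $N_B$ that are mutually compatible; feeding an element $g\notin AB$ into this construction yields a finite quotient of $K$ in which the image of $g$ avoids the image of $AB$.

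I expect the reductions in the first three paragraphs to be routine given the lemmas already in the excerpt, and the main obstacle to be exactly this last statement: setting up a version of the canonical completion functorial enough that the completions attached to $N_A$ and $N_B$ map coherently under inclusions of subcomplexes and under passage to covers. This functoriality is what upgrades the separate virtual-retract statements for $A$ and $B$ into separability of the product $AB$, and it is where the technical heart of the argument lies.
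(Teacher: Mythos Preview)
Your proposal is correct and lands on exactly the same key statement as the paper's appendix (Theorem~\ref{doublecosetconvexsep}): in a virtually special cubulated group, double cosets of convex subgroups are separable; and like the paper you identify functoriality of the Haglund--Wise canonical completion as the technical heart. The only difference is in the reduction. You verify conditions $(i)$--$(ii)$ of Theorem~\ref{haglundwisedoublecoset} for $(H,Y')$ directly, using Lemmas~\ref{enlarging} and~\ref{intersectionconvex} to show each $H_{W'}=H\cap G_W$ is convex in $(H,Y)$. The paper instead routes through an intermediate statement (Proposition~\ref{specialimpliesspecial}): if $Y\subset X$ is a $G$-invariant convex subcomplex and $(G,Y)$ is virtually special then $(G,X)$ is; this is proved by using the gate map $\mathfrak{g}:X\to Y$ to show wall stabilizers of $X$ are convex in $(G,Y)$, and then Proposition~\ref{independencecore} follows by embedding both $Y$ and $Y'$ in a common larger core via Lemma~\ref{enlarging} and invoking \cite[Cor.~7.8]{hagwissp} for the downward step. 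Your reduction is a bit more direct and avoids the gate map; the paper's route yields the marginally stronger intermediate Proposition~\ref{specialimpliesspecial} along the way. Either reduction is routine, and the substantive content---setting up the functorial canonical completion (the paper's Proposition~\ref{functoriality}) so that the retractions for the two subcomplexes are compatible---is precisely what you flag as the main obstacle.
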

We end this subsection by proving that all the relevant subgroups mentioned in points (2) and (3) of Definition \ref{defiCMVH} are hyperbolic relative to compatible virtually special subgroups.
\begin{lemma}\label{consistentcompatible} If $(G, X)$ is a cubulated group such that $(G,\mcal{P})$ is hyperbolic relative to compatible virtually special subgroups, and $H < G$ is a convex subgroup with convex core $Y \subset X$, then $(H, Y)$ is also hyperbolic relative to compatible virtually special subgroups, when endowed with its induced peripheral structure.
\end{lemma}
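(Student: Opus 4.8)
The plan is to transport the data witnessing compatibility for $(G,\mathcal P)$ along to $(H,\mathcal D)$, where $\mathcal D$ is the induced peripheral structure on $H$ guaranteed by Hruska's theorem (Theorem~\ref{hruskaundistorted}), since $H$ is convex, hence undistorted, hence relatively quasiconvex. Recall that the elements of $\mathcal D$ are represented by groups of the form $D = H \cap P^{c_D}$ with $P \in \mathcal P$ and $c_D \in G$. So let $D \in \mathcal D$ be such a peripheral subgroup of $(H,Y)$; after conjugating we may as well assume $D = H \cap P$ for some $P \in \mathcal P$ itself. We must produce a convex core for $D$ inside $Y$ on which $D$ acts with virtually special quotient.

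First I would invoke the compatibility hypothesis on $(G,X)$ to get a convex core $Z \subset X$ for $P$ with $(P,Z)$ virtually special. Next, using Lemma~\ref{enlarging} I would enlarge both $Y$ and $Z$ if necessary so that $Y \cap Z \neq \emptyset$ — concretely, pick a vertex and take iterated cubical neighborhoods of each core until they contain a common point; this does not change the groups $H$, $P$ acting. Now $Y$ is a convex core for $H$ and $Z$ is a convex core for $P$, and they intersect, so Lemma~\ref{intersectionconvex} applies and tells us that $H \cap P = D$ is convex with convex core $Y \cap Z$. Moreover $Y \cap Z$ is a convex subcomplex of $Z$, and since $(P,Z)$ is virtually special and $D < P$ is convex in $(P,Z)$ with convex core $Y \cap Z$, Proposition~\ref{independencecore} (applied inside the cubulated group $(P,Z)$, comparing the convex core $Y \cap Z$ against any fixed virtually special one for $D$ — or more directly: a convex subcomplex of a virtually special cubulation, with cocompact stabilizer, is again virtually special, which is what one gets by passing to the relevant finite-index subgroup and noting convex subcomplexes of special cube complexes are special) gives that $(D, Y \cap Z)$ is virtually special. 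Since $Y \cap Z \subset Y$, this is a convex core for $D$ inside $Y$, which is exactly the compatibility condition for the peripheral subgroup $D$ of $(H,Y)$.

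To finish, I would observe that this argument runs uniformly over the finitely many $H$-conjugacy classes in $\mathcal D$: a conjugate $D^h$ of $D$ (for $h \in H$) has convex core $h(Y\cap Z)$, still inside $Y$ since $hY = Y$, and still virtually special. Hence every peripheral subgroup of $(H,Y)$ admits a virtually special convex core contained in $Y$, which is Definition~\ref{compatiblevsperi} for $(H,Y)$ with its induced peripheral structure. (That $(H,\mathcal D)$ is genuinely relatively hyperbolic and that $\mathcal D$ is its induced peripheral structure is precisely the content of the sentence following Theorem~\ref{hruskaundistorted}.)

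The main obstacle I anticipate is the bookkeeping around \emph{which} convex core one ends up with: compatibility for $(G,X)$ gives a core $Z$ for $P$ in $X$, but we need a core for $D$ \emph{in $Y$}, and the naive intersection $Y \cap Z$ is only guaranteed to be nonempty and to be a $D$-core after the enlargement step — so the real work is checking that enlarging $Z$ (via cubical neighborhoods) preserves virtual specialness of $(P,Z)$, which is again Proposition~\ref{independencecore}, and that the resulting intersection is the correct core, which is Lemma~\ref{intersectionconvex}. Once these two cited results are in hand the argument is essentially a diagram chase; the only subtlety is making sure the conjugating elements $c_D$ do not spoil the inclusion into $Y$, which they do not because conjugation by an element of $H$ preserves $Y$.
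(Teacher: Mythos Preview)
Your approach is essentially the paper's, but there is one genuine slip: you enlarge $Y$. You may not do this, since $Y$ is the fixed cubulation in the statement and the compatibility condition for $(H,Y)$ requires the convex core for $D$ to lie in this particular $Y$. If you replace $Y$ by some larger $Y'$, then $Y'\cap Z\subset Y'$ gives a virtually special core for $D$ in $Y'$, not in $Y$, and you have proved the lemma for $(H,Y')$ rather than $(H,Y)$.

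The fix is immediate: only enlarge $Z$. Pick any vertex of $Y$ as your compact set $B$ and use Lemma~\ref{enlarging} (or Theorem~\ref{sageevwise}) to obtain a convex core $Z$ for $P$ containing that vertex; Proposition~\ref{independencecore} ensures $(P,Z)$ stays virtually special. Now $Y\cap Z\neq\emptyset$, Lemma~\ref{intersectionconvex} gives that $Y\cap Z$ is a convex core for $D=H\cap P$, and since $Y\cap Z$ is a convex subcomplex of the virtually special $(P,Z)$, the pair $(D,Y\cap Z)$ is virtually special. As $Y\cap Z\subset Y$, you are done. The paper's proof is a cosmetic variant: it first produces a core $U\subset Y$ for $H\cap P$ via Theorem~\ref{sageevwise} applied to $(H,Y)$, and then engulfs $U$ inside a convex core $Z$ for $P$ in $X$; the punchline (convex subcomplex of virtually special is virtually special) is identical.
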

\begin{proof} By Theorem \ref{hruskaundistorted}, $H$ is relatively quasiconvex in $G$. Let $P< G$ be a peripheral subgroup such that $H \cap P$ is infinite, and let $U \subset Y$ be a convex core for $H \cap P$. We claim that $(H \cap P, U)$ is virtually special. If $Z \subset X$ is any convex core for $P < G$, by Proposition \ref{independencecore} the cubulation $(P, Z)$ is virtually special, and so by the characterization of special cube complexes given in \cite[Def.~2.4]{HaglundWise2012Comb} it is enough to show that there is a convex core $Z$ with $U \subset Z$. But $U/(H \cap P)$ is compact, so there is a compact subset $B \subset Z$ such that $(H \cap P) \cdot B = U$. Therefore by Theorem \ref{sageevwise} we can choose $Z$ containing $B$, and hence containing $U$. 
\end{proof}

\section{Dehn filling, relative height, and a weak separation theorem}\label{dehnfilling,relativeheight,andaweakseparationtheorem}
In this section we introduce Dehn filling, the malnormal special quotient theorem, and prove some results which will be used in the remaining sections. The main results of this section are Theorems \ref{heightfilling} and \ref{doublecosetfilling}.
\subsection{Wide Dehn fillings} Dehn filling was introduced independently by Groves and Manning \cite{GrovesManning2008DehnFilling} and Osin \cite{Osin2007Perfilling} as a group theoretical generalization of the corresponding concept to cusped hyperbolic 3-manifolds.
\begin{defi} Let $(G,\mcal{P}=\corchete{P_1,\dots , P_n})$ be a relatively hyperbolic group and consider subgroups $N_i \trianglelefteq P_i$. The \emph{group theoretic Dehn filling} of $G$ (or simply the \emph{filling}) is the quotient map
\begin{equation*}
    \phi : G \rightarrow G(N_1, \dots , N_n) =G(\mcal{N}):= G/\genby{\genby{\bigcup{N_i}}}_G,
\end{equation*}
where $\mcal{N}=\corchete{N_1,\dots ,N_n}$ is the collection of \emph{filling kernels} of $\phi$. Let $\ov{\mcal{P}}$ denote the set of images of the subgroups $P_i$ under $\phi$.
\end{defi}
If $H<(G,\mcal{P})$ is a relatively quasiconvex subgroup, we need further conditions on a filling to guarantee good properties for the image of $H$. In \cite{GrovesManning2021Quasiconv}, Groves and Manning introduced $H$-\emph{wide fillings}, as a generalization of $H$-fillings, but with enough flexibility to behave nicely even when $H$ is not necessarily full. Let $\mcal{D}$ be an induced peripheral structure on $H$, such that every $D \in \mcal{D}$ is of the form $D = H \cap P_{i_D}^{c_D}$ for some $P_{i_D} \in \mcal{P}$ and some $c_D \in G$ of shortest length (with respect to a fixed compatible generating set of $G$).
\begin{defi} If $S\subset \bigcup{\mcal{P}}\backslash \corchete{1}$ is a finite set, then a filling $G \rightarrow G(N_1,\dots, N_n)$ is $(H, S)$-\emph{wide} if for any $D \in \mcal{D}$, and for any $d \in D$ and $w \in S \cap P_{i_D}$ , we have $c_Dwc_D^{-1}
\in D$ whenever $dc_Dwc_D^{-1}
 \in N_{i_D}^{c_D}$.
\end{defi}
More generally, if $\mcal{H}=\corchete{(H_1, \mcal{D}_1),\dots,(H_k, \mcal{D}_k)}$ is a collection of relatively quasiconvex subgroups of $G$, then a filling is $(\mcal{H}, S)$-\emph{wide} if it is $(H_j, S)$-wide for each $1 \leq j \leq k$.

Given a collection $\corchete{N_i \trianglelefteq P_i}_i$
of filling kernels of $(G,\mcal{P})$, the \emph{induced filling kernels} of $(H,\mcal{D})$ are the groups of the collection $\mcal{N}_H=\corchete{K_D := D \cap N^{c_D}_{i_D}}_{D\in \mcal{D}}$. These groups define the \emph{induced filling}
\begin{equation*}
    \phi_H : (H,\mcal{D}) \rightarrow (H(\mcal{N}_H), \ov{\mcal{D}}),
\end{equation*}
with $\ov{\mcal{D}}$ being the set of images of the elements of $\mcal{D}$ in $H(\mcal{N}_H) := H/\genby{\genby{\bigcup_{D}K_D}}_H$. Note that
there is a natural map from $H(\mcal{N}_H)$ into $\ov{G}=G(N_1,\dots, N_n)$.

If $G \rightarrow \ov{G}$ is a Dehn filling of $(G,\mcal{P})$ with kernel $K$, and $X$ is a cusped space for $(G,\mcal{P})$ defined in Subsection \ref{subsecrh}, then a combinatorial cusped space for $(\ov{G},\ov{\mcal{P}})$ is obtained from $\ov{X}=X/K$ by removing self-loops. This removing process does not affect the metric on the $0$-skeleton, so we will ignore this ambiguity and simply set $\ov{X}=X/K$ \cite[p4]{GrovesManning2021Quasiconv}.

\begin{defi} A property $\msf{P}$ holds for \emph{all sufficiently long and $H$-wide fillings} if there is a finite set $S\subset 
\bigcup{\mcal{P}}\backslash \corchete{1}$ so that $\msf{P}$ holds for any $(H,S)$-wide filling $G \rightarrow G(N_1,\dots, N_n)$ with $S \cap (\bigcup_{i}{N_i})= \emptyset$. In general, if $\mcal{H}$ is a collection of relatively quasiconvex subgroups of $G$, then $\msf{P}$ holds for \emph{all sufficiently long and $\mcal{H}$-wide fillings} if there is a finite set $S$ so that $\msf{P}$ holds for any $(\mcal{H}, S)$-wide filling with $S \cap (\bigcup_{i}{N_i})= \emptyset$.
\end{defi}
The next result summarizes the main required properties about wide Dehn filling of relatively hyperbolic groups \cite[Sec.~7]{AgolGrovesManning2016MSQT}, \cite[Sec.~3 \& 4]{GrovesManning2021Quasiconv}.
\begin{thm}\label{dehnfilling}Let $(G,\mcal{P})$ be a relatively hyperbolic group and let $X$ be a cusped space for $(G,\mcal{P})$.
Let $H<G$ be a relatively quasiconvex subgroup with quasiconvexity constant $\lambda$ with respect to $X$, and let $A \subset G$ be a finite set. Then there exist positive numbers $\delta'=\delta'(\delta)$ and $\lambda'=\lambda'(\lambda, \delta)$ such that for all sufficiently long and $H$-wide fillings $\phi : G \rightarrow \ov{G} := G(N_1,\dots, N_n)$:
\begin{enumerate}
    \item $(\ov{G},\ov{\mcal{P}})$ is relatively hyperbolic and $\ov{X}$ is $\delta'$-hyperbolic.
\item $\phi(A) \cap \phi(H) = \phi(A \cap H)$.
\item  $\ov{H} := \phi(H)$ is relatively quasiconvex in $(\ov{G},\ov{\mcal{P}})$, and $\lambda'$ is a quasiconvexity constant for $\ov{H}$ w.r.t. $\ov{X}$. In addition, if $H$ is fully relatively quasiconvex, then $\ov{H}$ is fully relatively quasiconvex.
\item  $\ov{H}$ is canonically isomorphic to the induced filling $H(\mcal{N}_H)$.
\end{enumerate}
\end{thm}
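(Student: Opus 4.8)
The plan is to assemble the four items from the cusped-space machinery of Groves--Manning \cite{gromandehn} and Osin \cite{osinper}, together with the $H$-wide refinements of Groves--Manning \cite{gromanquasi} (cf.\ \cite{agmmsqt}). Write $K=\ker\phi=\genby{\genby{\bigcup N_i}}_G$, so that $\ov{X}=X/K$ is, after deleting self-loops, a cusped space for $(\ov{G},\ov{\mcal{P}})$, and fix the $H$-equivariant $\lambda$-quasiconvex map $\check\iota\colon X_H^{(0)}\to X$ coming from the definition of relative quasiconvexity. The first step is to pin down the finite obstruction set $S\subset\bigcup\mcal{P}\setminus\corchete{1}$: I would let $S$ consist of all nontrivial peripheral elements lying in the ball of some radius $R=R(\delta,\lambda,|A|)$ about the identity in $X$, chosen large enough that every assertion below goes through. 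Then an $(H,S)$-wide filling with $S\cap\bigcup_i N_i\neq\emptyset$ is automatically ``long'', in the sense that no $N_i$ contains a peripheral element of $X$-length below $R$. For item (1), the mechanism is that these short relators are long in $X$, so any lift to $X$ of a geodesic of $\ov{X}$ is a local quasigeodesic whose constants improve with $R$; Gromov's local-to-global principle in the $\delta$-hyperbolic space $X$ promotes these to uniform global quasigeodesics, whence the sides of the image of any geodesic triangle of $\ov{X}$ are uniformly thin. This is exactly the Dehn filling theorem of \cite{gromandehn,osinper}, and it produces $\delta'=\delta'(\delta)$ and the relative hyperbolicity of $(\ov{G},\ov{\mcal{P}})$.

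Item (4) is what I expect to be the technical heart. One must prove $H\cap K=\genby{\genby{\bigcup_D K_D}}_H$, the inclusion $\supseteq$ being trivial. Given $h\in H\cap K$, write $h$ as a product of $G$-conjugates of elements of the $N_i$ and realize the resulting relation by a disc diagram; using hyperbolicity of $X$ and $\lambda$-quasiconvexity of $\check\iota(X_H^{(0)})$ one arranges the diagram so that its boundary fellow-travels the image of $X_H$, and then the sub-arcs of the boundary that enter a horoball and return are controlled by the peripheral intersections $D=H\cap P_{i_D}^{c_D}$. The $H$-wide hypothesis is precisely the statement that each such excursion, which a priori only yields $dc_Dwc_D^{-1}\in N_{i_D}^{c_D}$, can be rewritten with the conjugator $c_Dwc_D^{-1}$ itself lying in $D$; substituting these in expresses $h$ as a product of $H$-conjugates of the induced kernels $K_D=D\cap N_{i_D}^{c_D}$. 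Hence the natural surjection $H(\mcal{N}_H)\to\ov{H}$ is injective, giving the canonical isomorphism.

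For item (3), once (4) is in hand the kernel $K_H$ of the induced filling $\phi_H$ satisfies $K_H=H\cap K$, so $\ov{X}_H=X_H/K_H$ embeds in $\ov{X}=X/K$ through the induced map of cusped spaces. To bound its quasiconvexity constant I would lift a geodesic of $\ov{X}$ joining two points of $\ov{X}_H$ to a path in $X$ joining the corresponding points of a $K$-translate of $\check\iota(X_H^{(0)})$; by (1) this lift is a uniform quasigeodesic, hence stays uniformly close to a genuine geodesic of $X$, which by $\lambda$-quasiconvexity stays close to $\check\iota(X_H^{(0)})$, and projecting back down shows the original geodesic is $\lambda'(\lambda,\delta)$-close to $\ov{X}_H$. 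The fullness clause is bookkeeping: the induced peripheral structure $\ov{\mcal{D}}$ on $\ov{H}$ consists of the images of the $D\in\mcal{D}$, and $\phi$ is injective on each finite $D$ while $[\ov{P}:\ov{D}]\leq[P:D]$ when $D$ is finite index in a peripheral $P$, so each $\ov{D}$ is finite or finite index in the corresponding peripheral of $\ov{G}$.

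Item (2) I would extract from the same geometric control. If $a\in A$ and $\phi(a)\in\phi(H)$, say $\phi(a)=\phi(h)$ with $h\in H$, then a geodesic of $\ov{X}$ from the base vertex to $\phi(a)$ has both endpoints on $\ov{X}_H$ up to bounded error; lifting as in (3) shows $a$ lies within a neighborhood of $\check\iota(X_H^{(0)})$ in $X$ of radius depending only on $\delta$ and $\lambda$, hence within a bounded neighborhood of $H$. Since $R$ was chosen with $|A|$ in mind, large enough that no element of $A\setminus H$ comes that close to $H$, this forces $a\in H$, so $a\in A\cap H$ and $\phi(a)\in\phi(A\cap H)$; the opposite inclusion $\phi(A\cap H)\subseteq\phi(A)\cap\phi(H)$ is immediate. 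The genuinely delicate points throughout are the uniformity of all constants in $\delta$ and $\lambda$ alone rather than in the particular filling, and the surgery in (4) --- in both of which the $H$-wide hypothesis is indispensable.
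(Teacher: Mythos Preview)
The paper does not prove this theorem; it is quoted as a summary of \cite[Sec.~7]{agmmsqt} and \cite[Sec.~3 \& 4]{gromanquasi}, so there is no in-paper argument to compare against. Your sketch of items (1), (3), and (4) is broadly faithful to the arguments in those references: the local-to-global promotion for (1), the disc-diagram/horoball surgery exploiting the $H$-wide hypothesis for (4), and the lift-and-project quasiconvexity transfer for (3) are all the right mechanisms.

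Your treatment of item (2), however, has a genuine gap. You deduce that $a$ lies in a $C(\delta,\lambda)$-neighborhood of $H$ in $X$, and then assert that by taking $R$ ``large enough that no element of $A\setminus H$ comes that close to $H$'' one forces $a\in H$. But proximity to $H$ never implies membership: an element $a=hs$ with $h\in H$ and $s$ a generator has $d_X(a,H)\le 1$ regardless of the filling, and the constant $C(\delta,\lambda)$ does not grow with $R$, so enlarging $R$ cannot separate $A\setminus H$ from $H$ metrically. The argument actually used in \cite{gromanquasi} (and mirrored for double cosets in this paper's proof of Theorem~\ref{doublecosetfilling}) is a minimal-counterexample shortening: assuming $a\in KH\setminus H$, choose $g=ah^{-1}\in K$ of minimal $X$-length over $h\in H$; apply Lemma~\ref{loops} to the geodesic $[1,g]$ (which projects to a loop in $\ov X$) to locate a deep horoball excursion and a shortening element $k\in K$ stabilizing that horoball; then use $\lambda$-quasiconvexity of $\check\iota(X_H^{(0)})$ together with the $(H,S)$-wide hypothesis to rewrite $kg$ as $a(h')^{-1}$ with $h'\in H$ and $d_X(1,kg)<d_X(1,g)$, contradicting minimality. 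Your lifting idea can be repaired, but only by producing a \emph{specific} $h\in H$ with $\ov h=\ov a$ and $d_X(a,h)$ bounded in terms of $\delta,\lambda,|a|$ --- so that $ah^{-1}$ is a short nontrivial element of $K$, impossible for long fillings. Pinning down that particular $h$ requires controlling which $K$-translate of $\check\iota(X_H^{(0)})$ the lifted geodesic fellow-travels, and that is exactly the step you have elided.
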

\begin{defi}Let $(H,\mcal{D})$ be a relatively quasiconvex subgroup of the relatively hyperbolic group $(G,\mcal{P})$. We say that $H$ is \emph{peripherally separable} in $(G,\mcal{P})$ if $D$ is separable in $P_{i_D}^{c_D}$ for every $D \in \mcal{D}$. We say that $H$ is \emph{strongly peripherally separable} if $D'$ is separable in $P^{c_D}_{i_D}$ for any finite index subgroup $D'< D$ and for any $D \in \mcal{D}$.
\end{defi}
The existence of wide fillings is guaranteed by the following lemma \cite[Lem.~5.2]{GrovesManning2021Quasiconv}.
\begin{lemma}\label{existencewide}Let $(G,\mcal{P})$ be relatively hyperbolic and let $\mcal{H}=\corchete{(H_1, \mcal{D}_1),\dots ,(H_k, \mcal{D}_k)}$ be a finite collection of relatively quasiconvex and peripherally separable subgroups of $G$. Then for any finite set $S \subset
\bigcup{\mcal{P}}\backslash \corchete{1}$ there exist finite index subgroups $\dot{N}_i \trianglelefteq P_i$ such that any filling $G \rightarrow G(N_1,\dots,N_n)$ with $N_i< \dot{N}_i$ is $(\mcal{H}, S)$-wide.
\end{lemma}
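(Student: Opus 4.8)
The plan is to unwind the definition of an $(H,S)$-wide filling into a coset condition inside the peripheral subgroups and then feed it directly into the peripheral separability hypothesis. First I would fix $1\leq j\leq k$ and $D\in\mcal{D}_j$, abbreviate $P:=P_{i_D}$ and $c:=c_D$, and set $E:=c^{-1}H_jc\cap P$, a subgroup of $P$; conjugation by $c$ is an isomorphism $P^{c}\to P$ carrying $D=H_j\cap P^{c}$ onto $E$, so the assumption that $D$ is separable in $P^{c}$ is exactly the assumption that $E$ is separable in $P$. Writing a general element of $D$ as $d=cec^{-1}$ with $e\in E$, and taking $w\in S\cap P$, one computes $dcwc^{-1}=c(ew)c^{-1}$ while $cwc^{-1}\in P^{c}$; hence $dcwc^{-1}\in N_{i_D}^{c}$ iff $ew\in N_{i_D}$, and $cwc^{-1}\in D$ iff $w\in E$. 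Therefore the filling $G\to G(N_1,\dots,N_n)$ is $(H_j,S)$-wide iff, for every $D\in\mcal{D}_j$ and every $w\in(S\cap P_{i_D})\setminus E$, we have $w\notin E N_{i_D}$ (note that $EN_{i_D}$ is a genuine subgroup of $P_{i_D}$ since $N_{i_D}\trianglelefteq P_{i_D}$).

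Next I would build the $\dot N_i$. The set of triples $(j,D,w)$ with $1\leq j\leq k$, $D\in\mcal{D}_j$, and $w\in(S\cap P_{i_D})\setminus E$ is finite, because each $\mcal{D}_j$ is a finite collection and $S$ is finite. For each such triple, separability of $E=c_D^{-1}H_jc_D\cap P_{i_D}$ in $P_{i_D}$ yields a homomorphism $\psi$ from $P_{i_D}$ onto a finite group with $\psi(w)\notin\psi(E)$. For each index $i$ let $\dot N_i\trianglelefteq P_i$ be the intersection of the kernels of all the maps $\psi$ arising from triples with $i_D=i$; this is a finite-index normal subgroup of $P_i$ (the empty intersection being $P_i$ itself). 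Now consider any filling with $N_i\trianglelefteq P_i$ and $N_i<\dot N_i$ for all $i$, and fix a triple $(j,D,w)$ as above with $i_D=i$. If $w\in E N_i$, say $w=en$ with $e\in E$, $n\in N_i$, then, since $N_i\subseteq\dot N_i\subseteq\ker\psi$ for the corresponding $\psi$, we get $\psi(w)=\psi(e)\in\psi(E)$, a contradiction. So $w\notin EN_i$ for every such triple, which by the reformulation of the previous paragraph means the filling is $(H_j,S)$-wide for each $j$, i.e.\ $(\mcal{H},S)$-wide.

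I do not expect a genuine obstacle here: the lemma is bookkeeping built on top of separability. The two points that need care are (a) the translation of the wideness inequality into ``$w\notin E N_{i_D}$'', where one should notice that only the finitely many $w\in S\cap P_{i_D}$ lying \emph{outside} $E$ impose any condition — so a peripheral $P_{i_D}$ with $S\cap P_{i_D}\subseteq E$ for all relevant $D$ contributes no constraint at all — and (b) the monotonicity $N_i<\dot N_i\Rightarrow E N_i\subseteq E\dot N_i$, which is precisely why one fixed choice of $\dot N_i$ works simultaneously for all admissible filling kernels $N_i$.
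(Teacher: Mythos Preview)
Your argument is correct. The paper does not supply its own proof of this lemma; it simply cites \cite[Lem.~5.2]{gromanquasi}, and your unwinding of the $(H,S)$-wide condition into the coset avoidance $w\notin EN_{i_D}$ followed by a direct appeal to peripheral separability is exactly the standard argument one finds there.
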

In our situation, strong peripheral separability follows from convexity, as is shown in the following lemma:
\begin{lemma}\label{compimpliesstrongly}Let $(G,X)$ be a cubulated group that is hyperbolic relative to compatible virtually special subgroups. Then any convex subgroup of $(G,X)$ is strongly peripherally separable.
\end{lemma}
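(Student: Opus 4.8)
The plan is to reduce strong peripheral separability to the separability of double cosets of convex subgroups in the virtually special peripheral cubulations, which is available via Theorem \ref{haglundwiseseparability} (combined with the appendix's double-coset separability result, or more directly with the fact that convex subgroups of virtually special groups are separable). Let $H<(G,X)$ be convex with convex core $Y$, and let $\mcal{D}$ be its induced peripheral structure, so each $D\in\mcal{D}$ has the form $D=H\cap P^{c_D}_{i_D}$. Fix $D\in\mcal{D}$ and write $P:=P^{c_D}_{i_D}$, a peripheral subgroup of $G$. I must show every finite index subgroup $D'<D$ is separable in $P$.

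First I would set up a common geometric picture. By Theorem \ref{sageevwise} (or Lemma \ref{enlarging}) together with Lemma \ref{independencecore}, choose a convex core $Z\subset X$ for $P$ that contains a translate of the convex core of $H$ meeting it, so that $(P,Z)$ is virtually special and, by Lemma \ref{intersectionconvex}, $D=H\cap P$ is a convex subgroup of $(P,Z)$ with convex core $Y\cap Z$ (after enlarging cores via Lemma \ref{enlarging} to force the intersection to be nonempty). Thus $D$ is a \emph{convex} subgroup of the virtually special cubulated group $(P,Z)$. Now $D'<D$ has finite index, so $D'$ is also convex in $(P,Z)$ (it acts cocompactly on the same convex core $Y\cap Z$, or on a finite cover thereof inside $Z$). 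By Theorem \ref{haglundwiseseparability}, every convex subgroup of the virtually special group $(P,Z)$ is separable in $P$; applying this to $D'$ gives exactly that $D'$ is separable in $P=P^{c_D}_{i_D}$. Since $D$ and $D'$ were arbitrary, $H$ is strongly peripherally separable.

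The one point requiring care — and the main obstacle — is verifying that a finite index subgroup of a convex subgroup is again convex, i.e.\ still acts cocompactly on a convex subcomplex of $Z$. This is immediate because if $D$ acts cocompactly on the convex subcomplex $Y\cap Z$, then so does the finite index subgroup $D'$ on the very same complex $Y\cap Z$; cocompactness of the ambient action is unaffected by passing to finite index. Hence no subtlety actually arises, and the lemma follows by assembling Lemma \ref{intersectionconvex}, Lemma \ref{enlarging}, Proposition \ref{independencecore}, Theorem \ref{sageevwise}, and Theorem \ref{haglundwiseseparability}. The only bookkeeping is to make sure the cores are enlarged so the relevant intersections are nonempty and that the induced peripheral representatives $D=H\cap P^{c_D}_{i_D}$ are matched with the correct convex core $Z$ for $P^{c_D}_{i_D}$, which is handled by the independence-of-core statement.
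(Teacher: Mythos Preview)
Your proposal is correct and follows essentially the same route as the paper: enlarge cores so that $Y\cap Z\neq\emptyset$, use Lemma~\ref{intersectionconvex} to see that $D'=H\cap P$ (and any finite index subgroup of it) is convex in $(P,Z)$, invoke Proposition~\ref{independencecore} to know $(P,Z)$ is virtually special regardless of the core, and conclude separability of $D'$ in $P$ via Theorem~\ref{haglundwiseseparability}. The opening mention of double-coset separability is unnecessary---only subgroup separability of convex subgroups is used---but the argument is otherwise the paper's own.
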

\begin{proof}Let $H<G$ be a convex subgroup with convex core $Y \subset X$, and let $P< G$ be a peripheral subgroup with $H \cap P$ infinite. We claim that if $D' <  H\cap P$ is a finite index subgroup, then it is separable in $P$. To prove the claim, let $Z \subset X$ be a convex core for $P$, for which we can assume to have non-trivial intersection with $Y$ by Theorem \ref{sageevwise}. In this case, Lemma \ref{intersectionconvex} implies that $D'$
is a convex subgroup of $(P, Z)$. But the cubulated group $(P, Z)$ is virtually special by Proposition \ref{independencecore}, and so the claim follows from Theorem \ref{haglundwiseseparability}.
\end{proof}
\subsection{Relative Height, and $R$-Hulls} The relative height of a relatively quasiconvex subgroup of a relatively hyperbolic group was introduced by Hruska and Wise in \cite[Def.~8.1]{HruskaWise2009Packing} as the appropriate analog of the height for quasiconvex subgroups of hyperbolic groups \cite{GitikMitraRipsSageev1998Width}. In particular, they proved that the relative height is always finite. In this subsection we introduce the corresponding version for a finite collection of relatively quasiconvex subgroups, we show that it is finite, and prove that it is non-increasing under Dehn filling, extending the results in \cite[Sec.~7]{GrovesManning2021Quasiconv} (c.f. \cite[A.3]{Agol2012VirtualHaken}).
\begin{defi} Let $(G,\mcal{P})$ be a relatively hyperbolic group and consider a set $\mcal{H}=\corchete{H_1,\dots , H_k}$ of \emph{distinct} relatively quasiconvex subgroups of $G$. The \emph{relative height} of $\mcal{H}$ in $G$ is the maximum number $n \geq 0$ so that there are distinct left cosets $\corchete{g_1H_{\sigma(1)},\dots , g_nH_{\sigma(n)}}$ with
$\bigcap_{i=1}^{n}{H^{g_i}_{\sigma(i)}}$ containing a loxodromic element of $G$.
\end{defi}
The following is the main result of the subsection.
\begin{thm}\label{heightfilling}
For all sufficiently long and $\mcal{H}$-wide fillings, the groups in $\ov{\mcal{H}}=\corchete{\ov{H}_1\dots, \ov{H}_k}$ are all distinct and the relative height of $\ov{\mcal{H}}$ in $(\ov{G},\ov{\mcal{P}})$ is at most the relative height of $\mcal{H}$ in $(G,\mcal{P})$.
\end{thm}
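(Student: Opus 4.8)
The plan is to argue by contradiction: suppose that for an infinite sequence of sufficiently long and $\mcal{H}$-wide fillings $\phi_m : G \to \ov{G}_m$, either two of the filled subgroups $\ov{H}_i, \ov{H}_j$ coincide, or the relative height of $\ov{\mcal{H}}_m$ strictly exceeds the relative height $n$ of $\mcal{H}$ in $(G,\mcal{P})$. In the latter case, for each $m$ we obtain distinct left cosets $\ov{g}_1^{(m)}\ov{H}_{\sigma_m(1)},\dots,\ov{g}_{n+1}^{(m)}\ov{H}_{\sigma_m(n+1)}$ in $\ov{G}_m$ whose intersection contains a loxodromic element $\ov{h}_m \in \ov{G}_m$. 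Lift each $\ov{g}_\ell^{(m)}$ to some $g_\ell^{(m)}\in G$. The strategy, following \cite[Sec.~7]{gromanquasi} and \cite[A.3]{agolhaken}, is: (i) use a geometric description of the intersection $\bigcap_\ell \ov{H}_{\sigma_m(\ell)}^{\,\ov{g}_\ell^{(m)}}$ in the filled cusped space $\ov{X}_m = X/K_m$ in terms of $R$-hulls of the quasiconvex sets $\check{\iota}(X_{H_i}^{(0)})$, with a uniform quasiconvexity constant $\lambda'$ coming from Theorem \ref{dehnfilling}(3); (ii) show that an $R$-hull witnessing a loxodromic in $\ov{X}_m$ must, for $m$ large (i.e.\ the filling kernels avoiding a large enough ball), be the image of the corresponding $R$-hull upstairs in $X$, because a loxodromic axis in $\ov{X}_m$ of given translation length projects from a quasigeodesic in $X$ that stays far from the horoballs that get collapsed; (iii) conclude that the lifted cosets $g_\ell^{(m)} H_{\sigma_m(\ell)}$ are themselves distinct and their intersection contains a loxodromic of $G$, contradicting that the relative height is $n$.

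More concretely, the key steps in order are as follows. First, fix the finite collection $\mcal{H}$ and apply Theorem \ref{dehnfilling} to get uniform constants $\delta'$ (hyperbolicity of all $\ov{X}_m$) and $\lambda'$ (quasiconvexity of every $\ov{H}_i$), together with parts (2) and (4). Second, recall the $R$-hull machinery: for a relatively quasiconvex $H$ with quasiconvex core $Q_H := \check{\iota}(X_H^{(0)})$, and a loxodromic $h$ preserving $\bigcap_\ell Q_{H_{\sigma(\ell)}}^{g_\ell}$, the relevant object is the $R$-hull of $\{g_\ell Q_{H_{\sigma(\ell)}}\}_\ell$ for $R$ depending only on $\delta',\lambda'$; its $G$-orbit has finitely many types by a Gromov–Manning-type argument, and this is exactly what bounds relative height downstairs (Hruska–Wise \cite{hruwispack}). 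Third — and this is the technical heart — show that for all sufficiently long fillings, if such an $R$-hull in $\ov{X}_m$ meets a loxodromic axis, then it is disjoint from every collapsed horoball neighborhood, so it is the bijective image of the $R$-hull of $\{g_\ell^{(m)} Q_{H_{\sigma_m(\ell)}}\}_\ell$ in $X$; for this one uses that loxodromic elements of $\ov{G}_m$ have translation length on $\ov{X}_m$ bounded below (a consequence of the properness/cocompactness in the cubulation combined with Lemma \ref{finiteclasses} and the classification of isometries), whereas points deep in a collapsed horoball have small displacement. Fourth, use part (2) of Theorem \ref{dehnfilling} (with $A$ a finite set of short coset representatives and double-coset representatives, available via Lemma \ref{finiteclasses}) to promote distinctness of the cosets in $\ov{G}_m$ to distinctness of their lifts in $G$: if $g_\ell^{(m)} H_{\sigma_m(\ell)} = g_{\ell'}^{(m)} H_{\sigma_m(\ell')}$ in $G$ they would already be equal in $\ov{G}_m$. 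Fifth, the same $R$-hull identification shows the preimage loxodromic $h_m \in G$ (a lift of $\ov{h}_m$, or rather some power realizing the translation on the lifted hull) lies in $\bigcap_\ell H_{\sigma_m(\ell)}^{g_\ell^{(m)}}$, giving $n+1$ distinct cosets with loxodromic intersection in $G$, the desired contradiction. The distinctness-of-the-$\ov{H}_i$ claim is the special case of this argument with height-one configurations (or follows directly from Theorem \ref{dehnfilling}(2) applied to $A$ containing representatives distinguishing the $H_i$).

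The main obstacle I expect is step three: controlling the interaction between $R$-hulls and the collapsed horoballs uniformly over all sufficiently long fillings. One must verify that a loxodromic axis in $\ov{X}_m$ cannot "hide" inside an image horoball, and that the finitely many combinatorial types of $R$-hulls in $X$ survive the filling without new coincidences or new configurations appearing — the latter being precisely where the $\mcal{H}$-wideness hypothesis is used, to guarantee that $\ov{H}_i \cap \ov{P} = \phi(H_i \cap P^{c})$ behaves correctly and that no genuinely new intersections of conjugates are created in $\ov{G}_m$. Making the dependence of the threshold length only on $\mcal{H}$ and the geometry of $(G,\mcal{P})$ — not on the particular filling — requires assembling the uniform constants from Theorem \ref{dehnfilling} carefully, and invoking the finiteness statements of Lemma \ref{finiteclasses} to bound the bookkeeping data (short representatives $c_D$, double cosets) once and for all before choosing $S$.
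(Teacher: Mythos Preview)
Your proposal has the right references but assembles them in a way that does not work. The paper's proof takes a different route that sidesteps exactly the difficulties you anticipate.

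The central gap is step three. You claim that a loxodromic axis in $\ov{X}_m$, together with the relevant $R$-hulls, can be arranged to avoid all collapsed horoballs and therefore lift bijectively to $X$. This is not true as stated: $R$-hulls of subgroups with infinite parabolic intersections contain, by definition, vertical rays going arbitrarily deep into horoballs, so they are never disjoint from horoballs. Moreover, your justification --- a uniform lower bound on translation length of loxodromics coming from ``properness/cocompactness in the cubulation'' --- is a category error: Theorem \ref{heightfilling} is a statement about relatively hyperbolic groups and their cusped spaces, and the cube complex plays no role here (nor does Lemma \ref{finiteclasses}). There is no uniform lower bound on loxodromic translation length in $\ov{X}_m$ across all fillings. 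Step four has a related problem: the representatives $g_\ell^{(m)}$ depend on $m$ and there is no a priori bound on their length, so you cannot choose a single finite set $A$ in advance to which Theorem \ref{dehnfilling}(2) applies.

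The paper avoids lifting configurations altogether. It introduces \emph{relative multiplicity}, a topological invariant of the maps $\mathscr{J}_j : Z_j \to Y$ from the quotients $Z_j = \wtilde{Z_j}/H_j$ of $R$-hulls to $Y = X/G$, and proves (Proposition \ref{height=mult}) that for $R$ large this equals the relative height of $\mcal{H}$. The point is that for sufficiently long and $\mcal{H}$-wide fillings, the quotient maps $Z_j \to \ov{Z}_j$ and $Y \to \ov{Y}$ are \emph{homeomorphisms} (Lemma \ref{Rhull} plus Theorem \ref{dehnfilling}(4)), so the fibre products $S_\sigma$ and $\ov{S}_\sigma$ are canonically identified. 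One then checks that a component of $\ov{S}_\sigma$ witnessing a loxodromic downstairs corresponds to a component of $S_\sigma$ witnessing a loxodromic upstairs --- this is where wideness enters, via the identification $\ov{H}_j \cong H_j/(K\cap H_j)$. No lifting of cosets or of loxodromic elements is needed. Distinctness of the $\ov{H}_i$ is, as you say, immediate from Theorem \ref{dehnfilling}(2) with $A$ containing one element of $H_i \setminus H_j$ for each pair $i\neq j$.
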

In the case where $\mcal{H}$ consists of a single subgroup $H$, Theorem \ref{heightfilling} reduces to \cite[Thm.~7.12]{AgolGrovesManning2016MSQT}
for $H$-fillings and to \cite[Thm.~7.15]{GrovesManning2021Quasiconv} for general $H$-wide fillings.

\begin{defi}A collection $\corchete{H_1,\dots,H_k}$ of distinct subgroups of $(G,\mcal{P})$ is \emph{relatively malnormal} if whenever $H_i \cap H_j^{g}$ contains a loxodromic element, then $i = j$ and $g \in H_i$. The collection is \emph{almost malnormal} if $H_i \cap H^g_j$ is finite unless $i = j$ and $g \in H_i$.
\end{defi}
Note that relative malnormality is equivalent to relative height at most 1, and that relative malnormality coincides with almost malnormality when $\mcal{P}$ is empty or consists of finite groups (in which case $G$ is hyperbolic). These observations together with Theorem \ref{heightfilling} imply the following corollary. Recall that a filling $G \rightarrow G(N_1,\dots,N_n)$ is \emph{peripherally finite} if $N_i \trianglelefteq  P_i$
is finite index for each $1 \leq i \leq n$.
\begin{coro}\label{almostmalnormalfilling} Let $\mcal{H}=\corchete{H_1,\dots,H_k}$ be a relatively malnormal collection of relatively quasiconvex subgroups of $(G,\mcal{P})$. Then for all sufficiently long and $\mcal{H}$-wide peripherally finite fillings, the collection $\corchete{\ov{H}_1,\dots , \ov{H}_k}$ is almost malnormal in $\ov{G}$.
\end{coro}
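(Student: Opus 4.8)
The plan is to deduce Corollary \ref{almostmalnormalfilling} formally from Theorem \ref{heightfilling}, using the two elementary observations recorded just before the statement: that relative malnormality of a collection of distinct subgroups is the same as relative height at most $1$, and that over a peripheral structure consisting of finite groups relative malnormality coincides with almost malnormality. The real content is entirely inside Theorem \ref{heightfilling}; the rest is bookkeeping, so I do not anticipate a serious obstacle.

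First I would verify the equivalence ``relatively malnormal $\Longleftrightarrow$ relative height $\leq 1$'' for a collection of distinct subgroups. If $\mcal{H}$ had relative height $\geq 2$, there would be distinct left cosets $g_1H_{\sigma(1)}$, $g_2H_{\sigma(2)}$ with $H_{\sigma(1)}^{g_1}\cap H_{\sigma(2)}^{g_2}$ containing a loxodromic element; conjugating, $H_{\sigma(1)}\cap H_{\sigma(2)}^{\,g_1^{-1}g_2}$ contains a loxodromic, so relative malnormality gives $\sigma(1)=\sigma(2)$ and $g_1^{-1}g_2\in H_{\sigma(1)}$, whence $g_1H_{\sigma(1)}=g_2H_{\sigma(2)}$, a contradiction. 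Conversely, if $H_i\cap H_j^{\,g}$ contains a loxodromic then, regarding $H_i$ and $gH_j$ as a height-$2$ configuration, relative height $\leq 1$ forces these cosets to coincide, i.e.\ $i=j$ and $g\in H_i$ (using that the $H_\ell$ are pairwise distinct). Hence a collection of distinct relatively quasiconvex subgroups is relatively malnormal if and only if it has relative height at most $1$.

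Now the main argument. Since $\mcal{H}$ is relatively malnormal, its relative height in $(G,\mcal{P})$ is at most $1$. Theorem \ref{heightfilling} then yields a finite set $S\subset\bigcup\mcal{P}\setminus\corchete{1}$ such that for every $(\mcal{H},S)$-wide filling $\phi\colon G\to\ov{G}=G(N_1,\dots,N_n)$ with $S\cap(\bigcup_iN_i)\neq\emptyset$ the subgroups $\ov{H}_1,\dots,\ov{H}_k$ are pairwise distinct and the relative height of $\ov{\mcal{H}}$ in $(\ov{G},\ov{\mcal{P}})$ is at most $1$ --- i.e., by the equivalence above, $\ov{\mcal{H}}$ is a relatively malnormal collection in $(\ov{G},\ov{\mcal{P}})$. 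I keep this same $S$ and add peripheral finiteness: then each $\ov{P}_i=\phi(P_i)=P_i/N_i$ is finite, so $\ov{\mcal{P}}$ consists of finite groups and $\ov{G}$ is a hyperbolic group, which has no infinite parabolic subgroups; consequently every infinite subgroup of $\ov{G}$ contains a loxodromic element, so $\ov{H}_i\cap\ov{H}_j^{\,g}$ contains a loxodromic exactly when it is infinite, and relative malnormality of $\ov{\mcal{H}}$ becomes precisely almost malnormality of $\corchete{\ov{H}_1,\dots,\ov{H}_k}$ in $\ov{G}$. Finally, a peripherally finite $(\mcal{H},S)$-wide filling with $S\cap(\bigcup_iN_i)\neq\emptyset$ is in particular a ``sufficiently long and $\mcal{H}$-wide'' filling in the sense of Theorem \ref{heightfilling}, so this $S$ is exactly the set witnessing the corollary and no new choices are needed (existence of such fillings, if one wants it, follows from Lemma \ref{existencewide} when $\mcal{H}$ is peripherally separable, but is not required for the statement); matching this last quantifier is the only subtlety, and it is immediate.
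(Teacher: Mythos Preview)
Your proof is correct and follows exactly the approach the paper indicates: the paper does not write out a separate proof but simply notes, just before the statement, that relative malnormality is equivalent to relative height at most $1$, that under a peripherally finite filling the induced peripheral structure $\ov{\mcal{P}}$ consists of finite groups so relative malnormality coincides with almost malnormality, and that these observations together with Theorem~\ref{heightfilling} give the corollary. Your write-up is a faithful expansion of this sketch, including the verification that Theorem~\ref{heightfilling} also ensures the $\ov{H}_i$ remain pairwise distinct.
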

The proof of Theorem \ref{heightfilling} will take us the rest of the subsection, in which we basically refine the techniques from \cite{AgolGrovesManning2016MSQT} and \cite{GrovesManning2021Quasiconv} about $R$-hulls over cusped spaces and relative multiplicity.

Let $X$ be a cusped space for $(G,\mcal{P})$, and let $\delta \geq 0$ be given by Theorem \ref{dehnfilling} (1), such that $X$ and $\ov{X}$ are $\delta$-hyperbolic for all sufficiently long fillings. The \emph{depth} of a vertex of $X$ is its distance
to the Caley graph $\Gamma(G, S)$. Note that the action of $G$ on $X$ is depth-preserving.
\begin{defi}[c.f.~Def.~7.3,\cite{GrovesManning2021Quasiconv}]\label{defRhull} Let $\wtilde{\ast}=1 \in G \subset X$, and consider $(H,\mcal{D})<(G,\mcal{P})$ a relatively quasiconvex subgroup and $R \geq 0$. An \emph{$R$-hull for $H$ acting on $X$} is a connected $H$-invariant full subgraph $\wtilde{Z_H} \subset X$ satisfying:
\begin{enumerate}
    \item $\wtilde{\ast}\in \wtilde{Z_H}$.
\item If $\gamma \subset X$ is a geodesic with endpoints in the limit set $\Lambda(H)$, then $N_R(\gamma) \cap N_R(G) \subset \wtilde{Z_H}$.
\item If $A$ is a horoball in $X$ containing a vertex $a$ at depth greater than 0 in the image $\check{\iota}(X_H)$, then $\wtilde{Z_H} \cap A^{(0)}$ contains every vertex of the maximal vertical ray in $A$ containing $a$.
\item The action of $(H,\mcal{D})$ on $\wtilde{Z_H}$ is \emph{weakly geometrically finite} in the sense of \cite[ A.27]{Agol2012VirtualHaken}.
\end{enumerate}
\end{defi}
By \cite[Lem.~7.6]{GrovesManning2021Quasiconv}, $R$-hulls for the action of $H$ on $X$ exist for any $R \geq 0$, and moreover we can construct them in such a way that they have only finitely many $H$-orbits of vertices at depth 0. Therefore, anytime we consider an $R$-hull, implicitly we will assume it satisfies this property. The relevance of $R$-hulls is that they allow us to extract some algebraic information about their corresponding relatively quasiconvex subgroups, as we will see in Proposition \ref{height=mult} below.

Let $Y = X/G$, and consider an $R$-hull $\wtilde{Z_H}$ for the action of $H$ on $X$ with quotient $Z_H := \wtilde{Z_H}/H$. The natural map $\mathscr{J}_H : Z_H \rightarrow Y$ induces the inclusion $H \rightarrow G$ in the following way. If $\ast_H \in Z_H$ and $\ast \in Y$ are the respective projections of $\wtilde{\ast}$, we obtain canonical surjections $s : \pi_1(Z_H, \ast_H) \rightarrow H$ and $s : \pi_1(Y,\ast) \rightarrow G$ such that the following diagram commutes
\begin{equation}\label{eq2}
\begin{tikzcd}
{\pi_{1}\left(Z_{H}, \ast_{H}\right)} \arrow[r,"\left(\mathscr{J}_{H}\right)_{*}"] \arrow[d,"s"] & {\pi_{1}(Y, \ast)} \arrow[d,"s"] \\
H \arrow[r]           & G          
\end{tikzcd}
\end{equation}
with the bottom map being the inclusion.

Let $\mcal{H} = \corchete{H_1,\dots,H_k}$ be a collection of relatively quasiconvex subgroups, and assume that $H_i \neq H_j$ whenever $i\neq j$. By Theorem \ref{dehnfilling} (3), let $\lambda$ be a quasiconvexity constant for each $H_j$ and $\ov{H}_j$ for all sufficiently long and $\mcal{H}$-wide fillings. Consider $R$-hulls $\wtilde{Z_j}= \wtilde{Z_{H_j}}$ for each $1 \leq j \leq k$, with quotients $Z_j = \wtilde{Z_j}/H_j$ and maps $\mathscr{J}_j=\mathscr{J}_{H_j}: Z_j \rightarrow Y$ as above.
\begin{defi}[cf.~Def.~A.15, \cite{Agol2012VirtualHaken}] Given $m > 0$ and an $m$-tuple $\sigma= (\sigma(1),\dots , \sigma(m)) \in \corchete{1,\dots , k}^m$, we write $|\sigma|= m$ and define
\begin{equation*}
    S_\sigma :=\corchete{(z_1,\dots , z_m) \in Z_{\sigma(1)} \times \dots \times Z_{\sigma(m)}
\colon \mathscr{J}_{\sigma(1)}(z_1)=\cdots =\mathscr{J}_{\sigma(m)}(z_m)
}\backslash \Delta_\sigma
\end{equation*}
where $\Delta_\sigma= \corchete{(z_1,\dots , z_m)\colon \exists  i \neq j \text{ s.t. } \sigma(i)= \sigma(j) \text{ and }z_i= z_j}$ should be understood as the fat
diagonal .
\end{defi}
Points in $S_\sigma$ have a well-defined depth which is the depth of the image in $Y$. Let $C$ be a component of $S_\sigma$ containing a base-point $p=(p_1,\dots , p_m)$ at depth 0, and fix maximal trees $T_j$ in $Z_j$ inducing preferred paths $\beta_v$ between $\ast_j=\ast_{H_j}$ and the vertex $v \in Z_j$. The paths $\beta_i=\beta_{p_i}$ give isomorphisms $\pi_1(Z_{\sigma(i)}, p_i) \rightarrow \pi_1(Z_{\sigma(i)}, \ast_{\sigma(i)})$, and composing them with the push-forwards of the projections $\omega_i: (C, p) \rightarrow (Z_{\sigma(i)}, p_i)$ we obtain homomorphisms $(\omega_i)_{\ast}: \pi_1(C, p) \rightarrow \pi_1(Z_{\sigma(i)},\ast_{\sigma(i)})$.
Define then $\tau_{C,i} : s \circ (\omega_i)_{\ast}: \pi_1(C, p) \rightarrow H_{\sigma(i)}$, where $s: \pi_1(Z_{\sigma(i)}, \ast_{\sigma(i)}) \rightarrow H_{\sigma(i)}$ is as in \eqref{eq2}.
\begin{defi}[cf.~Def.~7.7, \cite{GrovesManning2021Quasiconv}] The \emph{relative multiplicity} of $\corchete{\mathscr{J}_j : Z_j \rightarrow Y}_{1\leq j\leq k}$ is the largest $m$ so that there is some $\sigma$ with $|\sigma|=m$ and $S_\sigma$ containing a component $C$ such that the group $\tau_{C,i}(\pi_1(C, p))$ is infinite non-parabolic for all $1 \leq i \leq m$.
\end{defi}
\begin{rmk}\label{rmkmult}As we mentioned after Definition \ref{defRhull}, we are considering $R$-hulls with only finitely many orbits of vertices at depth 0. This implies that for any $j$, the number $r_j$ of vertices in $Z_j$ at depth 0 is finite. In particular, if $m > r_1+\cdots +r_k$ and $|\sigma|=m$, then any tuple in $Z_{\sigma(1)}\times \cdots \times Z_{\sigma(m)}$ at depth 0 lies in $\Delta_\sigma$, and consequently the relative multiplicity of $\corchete{\mathscr{J}_j : Z_j \rightarrow Y}_{1\leq j\leq k}$ is bounded by $r_1+\cdots +r_k$.
\end{rmk}
The main property of relative multiplicity is given by the following proposition:
\begin{prop}[cf.~Thm.~7.8, \cite{GrovesManning2021Quasiconv}]\label{height=mult} For sufficiently large $R$, depending only on $\delta$ and $\lambda$, if each $\wtilde{Z_j}$ is an $R$-hull for the action of $H_j$ on $X$, then the relative height of $\corchete{H_1,\dots,H_k}$ in $G$ is equal to the relative multiplicity of $\corchete{\mathscr{J}_j : Z_j \rightarrow Y}_{1\leq j\leq k}$.
\end{prop}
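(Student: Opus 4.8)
The plan is to upgrade the proof of \cite[Thm.~7.8]{gromanquasi}, which treats the case $k=1$, by allowing the tuples $\sigma$ to mix the finitely many subgroups $H_1,\dots,H_k$. The engine of both inequalities is a dictionary between, on one hand, components $C$ of $S_\sigma$ with base-point $p$ at depth $0$ and, on the other, $|\sigma|$-tuples $(g_1H_{\sigma(1)},\dots,g_{|\sigma|}H_{\sigma(|\sigma|)})$ of distinct left cosets, taken up to a simultaneous left translation from $G$. Concretely, I would fix a lift $\widetilde y\in X$ of the depth-$0$ point $\mathscr J_{\sigma(1)}(p_1)\in Y$; for each $i$ the fiber-product condition forces $\widetilde y\in g_i\widetilde{Z_{\sigma(i)}}$ for a coset $g_iH_{\sigma(i)}$ pinned down by this containment, and removing the fat diagonal $\Delta_\sigma$ is precisely what guarantees $g_iH_{\sigma(i)}\neq g_jH_{\sigma(j)}$ whenever $\sigma(i)=\sigma(j)$ (distinctness across distinct values of $\sigma$ being automatic, since the $H_j$ are pairwise distinct). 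I would then transport $\widetilde y$ around a loop $\gamma\in\pi_1(C,p)$ to obtain a holonomy element $g_\gamma\in G$ and, by chasing \eqref{eq2}, identify $g_i^{-1}g_\gamma g_i=\tau_{C,i}(\gamma)$ for every $i$; thus the holonomy image sits inside $\bigcap_iH^{g_i}_{\sigma(i)}$ and is conjugate (by $g_i$) to $\tau_{C,i}(\pi_1(C,p))$.

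With this dictionary in hand, the inequality ``relative multiplicity $\leq$ relative height'' is immediate and needs no hypothesis on $R$: if the $\tau_{C,i}(\pi_1(C,p))$ are all infinite and non-parabolic, then so is $\bigcap_iH^{g_i}_{\sigma(i)}$, which therefore contains a loxodromic element, and the $|\sigma|$ distinct cosets exhibit relative height at least $|\sigma|$. For the reverse inequality I would start from distinct cosets $g_1H_{\sigma(1)},\dots,g_nH_{\sigma(n)}$ realizing the relative height $n$, with a loxodromic $h$ in $K:=\bigcap_iH^{g_i}_{\sigma(i)}$. Each $g_i\widetilde{Z_{\sigma(i)}}$ is $\langle h\rangle$-invariant, and an axis $\gamma_h$ of $h$ has endpoints in $\Lambda(H^{g_i}_{\sigma(i)})=g_i\Lambda(H_{\sigma(i)})$, so property (2) of an $R$-hull applied to $g_i^{-1}\gamma_h$ gives $N_R(\gamma_h)\cap N_R(G)\subset g_i\widetilde{Z_{\sigma(i)}}$ for each $i$, hence inside $I:=\bigcap_ig_i\widetilde{Z_{\sigma(i)}}$. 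For $R$ large in terms of $\delta$ and $\lambda$, I would argue that $I$ contains an $h$-invariant connected subset meeting depth $0$ along which $h$ translates, so that a (still loxodromic) power of $h$ stabilizes a depth-$0$ component $\widetilde C_0$ of $I$; after replacing $h$ by that power, the pair $(\widetilde C_0,(g_iH_{\sigma(i)})_i)$ names a component $C$ of $S_\sigma$ (it lies in $S_\sigma$ rather than $\Delta_\sigma$ because the cosets are distinct, and its base-point is at depth $0$). Running the holonomy computation backwards, $h$ is then represented by a loop $\gamma_0\in\pi_1(C,p)$ with $\tau_{C,i}(\gamma_0)=g_i^{-1}hg_i$, which is loxodromic; hence each $\tau_{C,i}(\pi_1(C,p))$ is infinite non-parabolic and $\sigma$ witnesses relative multiplicity at least $n$. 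Combining the two inequalities yields the claimed equality.

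The hard part is the bundle of estimates compressed into ``$R$ sufficiently large, depending only on $\delta$ and $\lambda$'': that for such $R$ the depth-$0$ components of the intersections $I=\bigcap_ig_i\widetilde{Z_{\sigma(i)}}$ really are in the clean bijection with tuples of distinct cosets used above; that the stabilizer of such a component is exactly the corresponding intersection of conjugates $\bigcap_ig_iH_{\sigma(i)}g_i^{-1}$, so that $\tau_{C,i}$ has the image claimed and the fat-diagonal/distinctness correspondence is faithful; and that a loxodromic element of $K$ is indeed ``visible'' in $I$ at depth $0$. I expect to obtain all of this by running the $R$-hull and relative-multiplicity arguments of \cite[\S7]{gromanquasi} (compare \cite[\S A]{agolhaken}) essentially verbatim, the only difference being that they are now applied simultaneously to the finite list $H_1,\dots,H_k$, using the common quasiconvexity constant $\lambda$ supplied by Theorem~\ref{dehnfilling}(3) and the common hyperbolicity constant $\delta$; since $k$ is finite this does not affect how the lower bound on $R$ depends on $\delta$ and $\lambda$. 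The genuinely new bookkeeping relative to the $k=1$ case is only in the index set $\{1,\dots,k\}^m$ and in the fact that $\Delta_\sigma$ identifies only coordinates with equal $\sigma$-value.
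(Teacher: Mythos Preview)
Your proposal is correct and follows essentially the same approach as the paper: both directions are handled exactly as you describe, with the ``height $\leq$ multiplicity'' inequality obtained by placing a quasi-geodesic axis of a (power of a) loxodromic $h$ inside $\bigcap_i g_i\wtilde{Z_{\sigma(i)}}$ via property (2) of an $R$-hull (the paper defers this to \cite{gromanquasi}, just as you plan to), and the reverse inequality obtained from your holonomy identity $g_i^{-1}g_\gamma g_i=\tau_{C,i}(\gamma)$, which is precisely the content of the paper's Lemma~\ref{gij}. The paper's verification that the cosets $g_iH_{\sigma(i)}$ are distinct is carried out exactly along the lines you sketch, lifting the paths $\beta_i,\beta_j$ to $\wtilde{Z_{\sigma(i)}}$ and using $p\notin\Delta_\sigma$; your observation that this direction needs no lower bound on $R$ is also borne out by the paper's argument.
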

We need a preliminary lemma (cf.~\cite[Lem.~A.37]{Agol2012VirtualHaken}).
\begin{lemma}\label{gij} Let $C$ be a fixed component of $S_\sigma$ based at the point $p=(p_1,\dots , p_m)$ at depth 0. Then for any $1 \leq i, j \leq m$ there is some $g_{i,j} \in G$ such that $g_{i,j}\tau_{C,j}([\alpha]){g_{i,j}}^{-1}=\tau_{C,1}([\alpha])$ for any homotopy class $[\alpha] \in \pi_1(C, p)$.
\end{lemma}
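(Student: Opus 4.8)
The plan is a basepoint diagram-chase resting on one observation: by the very definition of $S_\sigma$ as a fibre product over $Y$, the maps $\mathscr{J}_{\sigma(1)}\circ\omega_1,\dots,\mathscr{J}_{\sigma(m)}\circ\omega_m\colon C\to Y$ all coincide with a single map $\mathscr{J}_C\colon C\to Y$. Hence a loop $\alpha$ in $C$ based at $p$ determines a \emph{single} loop $\bar\alpha:=\mathscr{J}_C\circ\alpha$ in $Y$ based at $q:=\mathscr{J}_C(p)$, independent of the coordinate $i$; all the dependence of $\tau_{C,i}([\alpha])$ on $i$ will be absorbed into the auxiliary tree paths $\beta_i$, which become loops in $Y$ once transported to a common basepoint.

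First I would record the basepoint data: $\mathscr{J}_{\sigma(i)}(p_i)=q$ for all $i$ (definition of $S_\sigma$), $\mathscr{J}_{\sigma(i)}(\ast_{\sigma(i)})=\ast$ for all $i$ (both are the image of $\wtilde{\ast}=1\in G$), and, writing $\alpha_i:=\omega_i\circ\alpha$, the formula $\tau_{C,i}([\alpha])=s\big([\beta_i\cdot\alpha_i\cdot\beta_i^{-1}]\big)\in H_{\sigma(i)}$ that merely unwinds the definitions of $(\omega_i)_\ast$ and $\tau_{C,i}$. Set $\bar\beta_i:=\mathscr{J}_{\sigma(i)}\circ\beta_i$, a path in $Y$ from $\ast$ to $q$, and note $\mathscr{J}_{\sigma(i)}\circ\alpha_i=\mathscr{J}_C\circ\alpha=\bar\alpha$ for every $i$.

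Next I would push everything into $G$ using the commutativity of \eqref{eq2}, which says that the image of $s([\mu])$ in $G$ equals $s\big((\mathscr{J}_{\sigma(i)})_\ast[\mu]\big)$ for $[\mu]\in\pi_1(Z_{\sigma(i)},\ast_{\sigma(i)})$. Applying this with $[\mu]=[\beta_i\cdot\alpha_i\cdot\beta_i^{-1}]$ and using $(\mathscr{J}_{\sigma(i)})_\ast[\beta_i\cdot\alpha_i\cdot\beta_i^{-1}]=[\bar\beta_i\cdot\bar\alpha\cdot\bar\beta_i^{-1}]$ gives, in $G$,
\[
\tau_{C,i}([\alpha])=s\big([\bar\beta_i\cdot\bar\alpha\cdot\bar\beta_i^{-1}]\big).
\]
Since $\bar\beta_i\cdot\bar\alpha\cdot\bar\beta_i^{-1}$ is homotopic rel endpoints, as a loop at $\ast$, to $(\bar\beta_i\cdot\bar\beta_1^{-1})\,(\bar\beta_1\cdot\bar\alpha\cdot\bar\beta_1^{-1})\,(\bar\beta_i\cdot\bar\beta_1^{-1})^{-1}$, and $s\colon\pi_1(Y,\ast)\to G$ is a homomorphism, we obtain
\[
\tau_{C,i}([\alpha])=g_i\,\tau_{C,1}([\alpha])\,g_i^{-1},\qquad g_i:=s\big([\bar\beta_i\cdot\bar\beta_1^{-1}]\big)\in G,
\]
with $g_i$ depending only on $C$, $p$ and the fixed trees, not on $[\alpha]$. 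Taking $g_{i,j}:=g_j^{-1}$ then yields $g_{i,j}\,\tau_{C,j}([\alpha])\,g_{i,j}^{-1}=\tau_{C,1}([\alpha])$ for every $[\alpha]$ (and more generally $g_ig_j^{-1}$ conjugates $\tau_{C,j}$ onto $\tau_{C,i}$), proving the lemma.

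I expect the only genuinely delicate step to be the basepoint bookkeeping in the first two paragraphs: one must check that $\mathscr{J}_{\sigma(i)}$ sends $\ast_{\sigma(i)}$ to the common point $\ast$, so that each $\bar\beta_i$ is an honest path from $\ast$ to $q$, and — this is where the fibre-product definition of $S_\sigma$ is essential — that $\bar\alpha$ is \emph{literally} the same loop for every $i$, not merely homotopic across indices. Once these are pinned down, everything else is formal manipulation of homotopy classes, legitimate precisely because the maps denoted $s$ in \eqref{eq2} factor through fundamental groups.
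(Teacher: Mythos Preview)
Your proof is correct and follows essentially the same approach as the paper: both arguments use the fibre-product definition of $S_\sigma$ to see that the projected loop $\bar\alpha$ in $Y$ is independent of the coordinate $i$, push $\tau_{C,i}([\alpha])$ into $G$ via diagram~\eqref{eq2} as $s([\bar\beta_i\cdot\bar\alpha\cdot\bar\beta_i^{-1}])$, and then read off the conjugating element from the loop $\bar\beta_i\cdot\bar\beta_j^{-1}$. Your write-up is in fact a bit more explicit about the basepoint bookkeeping than the paper's, and your parenthetical remark that $g_ig_j^{-1}$ conjugates $\tau_{C,j}$ onto $\tau_{C,i}$ matches exactly what the paper's construction of $g_{i,j}$ actually gives (the ``$\tau_{C,1}$'' in the displayed conclusion of the lemma appears to be a typo for ``$\tau_{C,i}$'', as the subsequent use in Proposition~\ref{height=mult} confirms).
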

\begin{proof} Recall the paths $\beta_i, \beta_j$ in the construction of the maps $\tau_{C,i}, \tau_{C,j}$. In virtue of the commutative diagram \eqref{eq2}, the element $\tau_{C,i}([\alpha]) \in G$ coincides with $s([(\mathscr{I}_{i} \circ \beta_{i}) \cdot(\mathscr{I}_{i} \circ \omega_{i} \circ \alpha) \cdot(\mathscr{I}_{i} \circ \ov{\beta}_{i})])$. But $\alpha$ is a loop in $C$, so $(\mathscr{I}_{i} \circ \omega_{i} \circ \alpha)=(\mathscr{I}_{j} \circ \omega_{j} \circ \alpha)$, and in particular $(\mathscr{I}_{i} \circ \beta_{i}) \cdot(\mathscr{I}_{j} \circ \ov{\beta}_{j})$ is a loop in $Y$ based at $\ast$. This loop defines the element $g_{i,j} \in G$, for which it is easy to check it satisfies the requirement.
 \end{proof}
\begin{proof}[Proof of Proposition \ref{height=mult}] We proceed in the same way as in the proof of \cite[Thm.~7.8]{GrovesManning2021Quasiconv}. First we prove that the relative height is at most the relative multiplicity, so suppose that the relative height is at least $m$, and let $\sigma=(\sigma(1),\dots ,\sigma(m))$ and $g_1H_{\sigma(1)},\dots , g_mH_{\sigma(m)}$ be distinct left cosets such that $\bigcap_{i=1}^{m}{H^{g_i}_{\sigma(i)}}$ contains a loxodromic element, say $h$. In \cite{GrovesManning2021Quasiconv} it was proven that by choosing an appropriate $R$, and by replacing $h$ by a power we may suppose that $h$ has a quasi-geodesic axis $\wtilde{\gamma}$ contained in $\bigcap^m_{i=1}{g_i\wtilde{Z_{\sigma(i)}}}$. In this case the path $\wtilde{\gamma}$ induces a loop $\gamma : \R/\Z \rightarrow Z_{\sigma(1)}\times \cdots \times Z_{\sigma(m)}$ defined by 
$$\gamma(t)=(\pi_{\sigma(1)}(g_1^{-1} \wtilde{\gamma}(t)),\dots , \pi_{\sigma(m)}(g_m^{-1}\wtilde{\gamma}(t))).$$
Since the cosets $g_{i}H_{\sigma(i)}$ are distinct and $G$ acts freely on $X$, $\gamma$ misses the fat diagonal $\Delta_\sigma$, and hence defines a loop in a connected component $C$ of $S_\sigma$ containing a vertex $p$ at depth 0. It is not hard to see that $\tau_{C,i}(\pi_1(C, p))$ contains a conjugate of $h$ for each $i$, so the relative multiplicity is at least $m$.

For the converse, suppose that the relative multiplicity of $\corchete{\mathscr{J}_j : Z_j \rightarrow Y}_{j}$ is $m$, and let $\sigma= (\sigma(1),\dots ,\sigma(m))$ and $C$ be a component of $S_\sigma$ containing a point $p$ at depth 0, such that for each $1 \leq i \leq m$ the group $A_i=\tau_{C,i}(\pi_1(C, p))< H_{\sigma(i)}<G$ contains a loxodromic element. Consider the elements $g_i=g_{1,i} \in G$ given by Lemma \ref{gij} so that $A^{g_i}_i=A_1$ for each $i$, which implies $A_1 \subset \bigcup^m_{i=1}{H^{g_i}_{\sigma(i)}}$.

It is only left to show that the cosets $g_iH_{\sigma(i)}$ are all different. Indeed, let $p=(p_1,\dots , p_m)$ and suppose there exist $i \neq j$ with $g_iH_{\sigma(i)}=g_jH_{\sigma(j)}$, implying $\sigma(i)=\sigma(j)$ and $g_i=g_jh$ for some $h \in H_{\sigma(i)}$. Let $\wtilde{\beta}_i, \wtilde{\beta}_j$ be the unique liftings of $\beta_i$ and $\beta_j$ to $\wtilde{Z_{\sigma(i)}} \subset X$ starting at $\wtilde{\ast}$, respectively, and note that $g^{-1}_jg_i$ is represented by the loop $(\mathscr{I}_{\sigma(j)} \circ \beta_{j}) \cdot(\mathscr{I}_{\sigma(i)} \circ \ov{\beta}_{i})$ in $\pi_1(Y, \ast)$. This implies $\wtilde{p}_j=h\wtilde{p}_i$, where $\wtilde{p}_i$ and $\wtilde{p}_j$ are the corresponding endpoints of $\wtilde{\beta}_i$ and $\wtilde{\beta}_j$, and by projecting back into $Z_{\sigma(i)}$ we get $p_i=p_j$, contradicting $p \notin \Delta_\sigma$.
\end{proof}
The previous proposition together with Remark \ref{rmkmult} implies the following corollary, generalizing \cite[Thm.~1.4]{HruskaWise2009Packing}.
\begin{coro}\label{heightfinite}If $G$ is a relatively hyperbolic group and $\mcal{H}$ is a finite collection of distinct relatively quasiconvex subgroups of $G$, then the relative height of $\mcal{H}$ in $G$ is finite.
\end{coro}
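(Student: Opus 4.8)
The plan is to derive this as an immediate consequence of Proposition~\ref{height=mult} and Remark~\ref{rmkmult}, so the work is really just to set up the hypotheses correctly. First I would fix a cusped space $X$ for $(G,\mcal{P})$ and a hyperbolicity constant $\delta \geq 0$ for $X$. Since $\mcal{H}=\{H_1,\dots,H_k\}$ is a \emph{finite} collection of relatively quasiconvex subgroups, each $H_j$ admits a quasiconvexity constant $\lambda_j$ with respect to $X$, and setting $\lambda := \max_{1\leq j\leq k}\lambda_j$ gives a single quasiconvexity constant valid for every $H_j$. We are also given that the $H_j$ are distinct, which is exactly the standing hypothesis under which the relative multiplicity and Proposition~\ref{height=mult} were set up.

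Next I would apply Proposition~\ref{height=mult}: choose $R$ large enough, depending only on $\delta$ and $\lambda$, and for each $j$ construct an $R$-hull $\wtilde{Z_j}$ for the action of $H_j$ on $X$. By the construction of $R$-hulls (the version of \cite[Lem.~7.6]{gromanquasi} recalled in the text), we may take each $\wtilde{Z_j}$ to have only finitely many $H_j$-orbits of vertices at depth $0$; equivalently, the quotient $Z_j := \wtilde{Z_j}/H_j$ has a finite number $r_j$ of vertices at depth $0$. Proposition~\ref{height=mult} then identifies the relative height of $\{H_1,\dots,H_k\}$ in $G$ with the relative multiplicity of $\{\mathscr{J}_j : Z_j \rightarrow Y\}_{1\leq j\leq k}$, where $Y = X/G$. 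Finally, Remark~\ref{rmkmult} bounds this relative multiplicity by $r_1+\cdots+r_k$, which is finite; hence the relative height is finite. Specializing to $k=1$ recovers \cite[Thm.~1.4]{hruwispack}.

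I do not expect a genuine obstacle here, since the substantive content has already been established in Proposition~\ref{height=mult} (equating height with multiplicity) and in Remark~\ref{rmkmult} (the crude pigeonhole bound on multiplicity coming from the finiteness of depth-$0$ vertices). The only point requiring a word of care is that a \emph{single} radius $R$ and a single set of $R$-hulls suffice to handle all $k$ subgroups at once; this is immediate because $R$ is allowed to depend only on $\delta$ and on a common quasiconvexity constant $\lambda$, and such a $\lambda$ exists precisely because $\mcal{H}$ is finite. No filling is needed for this corollary — Dehn filling enters only in Theorem~\ref{heightfilling}, whose proof uses the same machinery.
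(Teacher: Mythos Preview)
Your proposal is correct and follows exactly the approach the paper takes: the corollary is stated immediately after Proposition~\ref{height=mult} with the one-line justification that it follows from that proposition together with Remark~\ref{rmkmult}. Your write-up simply unpacks this, including the minor point about choosing a common $\lambda$ for the finitely many subgroups.
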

Before starting the proof of Theorem \ref{heightfilling} we state the following lemma, proven in \cite[Lem.~7.12 \& 7.14]{GrovesManning2021Quasiconv}.
\begin{lemma}\label{Rhull} Suppose $(G,\mcal{P})$ is relatively hyperbolic and $\mcal{H}=\corchete{H_1,\dots,H_k}$ is a collection of relatively quasiconvex subgroups of $G$. Then for all $R$ there is some $R'$ satisfying the following: for all sufficiently long and $\mcal{H}$-wide fillings $\phi : G \rightarrow G/K$, if $\wtilde{Z_j}$ is an $R'$-hull for the action of $H_j$ on $X$ for each $1 \leq j \leq k$, then its image $\wtilde{\ov{Z}_j} \subset \ov{X}$ under $\phi$ is an $R$-hull for the action of $\ov{H}_j$ on $\ov{X}$ and is the embedded image of $\wtilde{Z_j}/(H_j \cap K)$.
\end{lemma}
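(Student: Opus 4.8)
The plan is to follow the proof in \cite[Lem.~7.12 \& 7.14]{gromanquasi}, whose architecture I outline here. Fix the hyperbolicity constant $\delta$ of Theorem~\ref{dehnfilling}~(1), and quasiconvexity constants $\lambda$ for the $H_j$ and $\lambda'$ for their images $\ov{H}_j$ (the latter valid for all sufficiently long $\mcal{H}$-wide fillings) as in Theorem~\ref{dehnfilling}~(3). The toolkit is the standard quantitative theory of long Dehn fillings over cusped spaces (\cite{gromandehn,agmmsqt}, \cite[Sec.~3]{gromanquasi}, \cite[App.~A]{agolhaken}): for all sufficiently long fillings $\phi\colon G\to\ov{G}$, the filled cusped space $\ov{X}$ is $\delta$-hyperbolic, the quotient map $q\colon X\to\ov{X}=X/K$ is injective on the combinatorial ball of any prescribed radius about any vertex of the Cayley part $\Gamma(G,S)\subset X$, $\phi$ is injective on any prescribed finite subset of $G$, and geodesics of $\ov{X}$ lift, away from the deep interiors of horoballs, to uniform quasigeodesics of $X$. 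Accordingly I would choose $R'$ large compared with $R$, $\delta$, $\lambda$, $\lambda'$; this prescribes the radii and finite sets that ``sufficiently long'' must accommodate.

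The first substantial step is the embedding claim: for all sufficiently long $\mcal{H}$-wide fillings the natural map $\wtilde{Z_j}/(H_j\cap K)\to\ov{X}$ is injective, equivalently, whenever $v,kv\in\wtilde{Z_j}$ with $k\in K$ then $k\in H_j$. Using conditions (3) and (4) of an $R'$-hull together with the fact that $G$ acts on $X$ preserving depth and the vertical-ray structure of horoballs, one passes from a vertex of positive depth to the depth-$0$ vertex directly below it, reducing to the case that $v$ and $kv$ have depth $0$. Since $\wtilde{Z_j}$ has only finitely many $H_j$-orbits of depth-$0$ vertices (I would use the standard construction of $R'$-hulls from \cite[Lem.~7.6]{gromanquasi}, so that the finite data below is uniformly controlled), we translate $(v,k)$ by an element of $H_j$ — which replaces $k$ by an $H_j$-conjugate, hence preserves membership in $K$ and in $H_j$ — and assume $v$ lies in a fixed finite set $F$ of orbit representatives. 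Then $kv=h'v'$ with $h'\in H_j$ and $v'\in F$, so $h'^{-1}k$ is the unique element of $G$ carrying $v$ to $v'$ (the action is free) and lies in the fixed finite set $E_0:=\{g\in G:\ gv=v' \text{ for some }v,v'\in F\}$. Writing $k=h'e$ with $e\in E_0$ and using $\phi(k)=1$ gives $\phi(e)\in\phi(H_j)$. Applying Theorem~\ref{dehnfilling}~(2) with $A:=E_0$ together with the injectivity of $\phi$ on $A$: for all sufficiently long $H_j$-wide fillings $\phi(A)\cap\phi(H_j)=\phi(A\cap H_j)$, so $\phi(e)=\phi(h)$ for some $h\in A\cap H_j$, whence $e=h\in H_j$ by injectivity of $\phi$ on $A$, and therefore $k=h'e\in H_j$, as claimed. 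This identifies $\wtilde{\ov{Z}_j}$ with $\wtilde{Z_j}/(H_j\cap K)$.

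It then remains to verify the $R$-hull axioms for $\wtilde{\ov{Z}_j}$. Condition (1) is immediate: $\wtilde{\ast}\in\wtilde{Z_j}$ projects to $\wtilde{\ast}\in\ov{X}$, $\wtilde{\ov{Z}_j}$ is connected and $\ov{H}_j$-invariant by equivariance, and fullness follows from the embedding together with the injectivity of $q$ on small balls in the Cayley part. Condition (3) is essentially formal: horoballs of $\ov{X}$ are the $q$-images of horoballs of $X$, depth and vertical rays are preserved by $q$, and the cusped space $X_{\ov{H}_j}$ for $(\ov{H}_j,\ov{\mcal{D}_j})$ is the induced filling of $X_{H_j}$, so a positive-depth vertex in the image of $X_{\ov{H}_j}$ lifts to such a vertex for $X_{H_j}$ where property (3) of $\wtilde{Z_j}$ applies, and the corresponding maximal vertical ray projects into $\wtilde{\ov{Z}_j}$. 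Condition (4), weak geometric finiteness of the action of $(\ov{H}_j,\ov{\mcal{D}_j})$ on $\wtilde{\ov{Z}_j}$, follows because $\ov{H}_j$ is relatively quasiconvex with induced peripheral structure $\ov{\mcal{D}_j}$ by Theorem~\ref{dehnfilling}~(3)--(4), and $\wtilde{\ov{Z}_j}=\wtilde{Z_j}/(H_j\cap K)$ inherits finitely many $\ov{H}_j$-orbits of depth-$0$ vertices, the deeper structure being pinned down by condition (3).

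The hard part is condition (2): given a bi-infinite geodesic $\ov{\gamma}\subset\ov{X}$ with both endpoints in $\Lambda(\ov{H}_j)$ and a point $\ov{x}\in N_R(\ov{\gamma})\cap N_R(\ov{G})$, one must exhibit a preimage $x\in\wtilde{Z_j}$. The plan is to lift longer and longer finite subsegments of $\ov{\gamma}$ through $\ov{x}$ and pass to a limit: by relative quasiconvexity of $\ov{H}_j$ (with constant $\lambda'$) the geodesic $\ov{\gamma}$ stays $\lambda'$-close to the image of $X_{\ov{H}_j}$ in $\ov{X}$, which by the induced-filling description is $q(\check{\iota}(X_{H_j}))$; using the quasigeodesic-lifting estimate one pulls each subsegment back to a uniform quasigeodesic of $X$ lying near $\check{\iota}(X_{H_j})$, hence (by $\lambda$-quasiconvexity of $\check{\iota}(X_{H_j})$ and uniform hyperbolicity) near a geodesic $\gamma$ of $X$ with endpoints in $\Lambda(H_j)$, with $\ov{x}$ within bounded distance of $q(\gamma)$. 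Then property (2) of $\wtilde{Z_j}$ as an $R'$-hull, with $R'$ absorbing all these constants, yields $\ov{x}\in q(N_{R'}(\gamma)\cap N_{R'}(G))\subset\wtilde{\ov{Z}_j}$. Making this lifting argument quantitative and uniform over all sufficiently long $\mcal{H}$-wide fillings — and controlling its dependence on the chosen $R'$-hull — is the technical heart of the proof, and is exactly where the precise hyperbolicity estimates for the filled cusped space $\ov{X}$ are needed.
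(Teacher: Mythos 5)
The paper gives no proof of this lemma at all: it simply cites \cite[Lem.~7.12 \& 7.14]{gromanquasi}, and your outline is a faithful reconstruction of exactly that cited argument (embedding via Theorem~\ref{dehnfilling}~(2) applied to a finite transporter set, then checking the four $R$-hull axioms, with the geodesic-lifting estimate doing the work for axiom~(2)). Your proposal is therefore the same approach as the paper's source; the only caveat is that the quantitative heart of axiom~(2) is deferred rather than carried out, but that is consistent with the level of detail the paper itself supplies.
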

\begin{proof}[Proof of Theorem \ref{heightfilling}] Let $\delta$ be a constant such that the cusped spaces $X$ and $\ov{X}$ are both $\delta$-hyperbolic for all sufficiently long fillings, and let $\lambda$ be a common quasiconvexity constant for the groups $H_j$ , which we may also assume is a quasiconvexity constant for each $\ov{H}_j$ in any sufficiently long and $\mcal{H}$-wide filling $\phi : G \rightarrow G/K$. For these fillings, by Theorem \ref{dehnfilling} (2),(4) we may assume $\ov{H}_i\neq \ov{H}_j$ for all $i\neq j$ (just take a set $A$ containing elements in $H_i\backslash H_j$ for any pair $i\neq j$), and also that each $\ov{H}_j$ is naturally isomorphic to the image of the induced filling of $H_j$ . Let $R$ be sufficiently large so that Proposition \ref{height=mult} applies for $\delta$ and $\lambda$, and let $R'=R'
(R)$ be given by Lemma \ref{Rhull}.

For each $1 \leq j \leq k$ consider the following commutative diagram
\begin{equation*}
\begin{tikzcd}
\wtilde{Z_j} \arrow[r] \arrow[d] \arrow[loop left,"H_j"] & X \arrow[d]  \arrow[loop right,"G"]\\
\wtilde{\ov{Z}_j} \arrow[r]   \arrow[loop left,"H_j/K_j"]        & \ov{X}  \arrow[loop right,"G/K"]  
\end{tikzcd}
\end{equation*}
where $\wtilde{Z_j}$ is an $R'$-hull for the action of $H_j$ on $X$, and $\ov{X}=X/K$, $K_j=K \cap H_j$, and $\wtilde{\ov{Z}_j}=\wtilde{Z_j}/K_j$. From Lemma \ref{Rhull}, $\wtilde{\ov{Z}_j}$ embeds in $\ov{X}$ and is an $R$-hull for the action of $H_j/K_j$ on $\ov{X}$.\\
Taking quotients under the respective groups we obtain the diagram
\begin{equation}\label{eq3}
\begin{tikzcd}
Z_j \arrow[r,"\mathscr{J}_j"] \arrow[d]  & Y \arrow[d] \\
\ov{Z}_j \arrow[r,"\ov{\mathscr{J}}_j"] & \ov{Y}  
\end{tikzcd}
\end{equation}
the vertical maps being homeomorphisms. By our choice of $R$, Proposition \ref{height=mult} implies that the relative heights of $\mcal{H}$ in $G$ and of $\ov{\mcal{H}}$ in $\ov{G}$ coincide with the relative multiplicities of $\corchete{\mathscr{J}_j : Z_j \rightarrow Y}_j$ and $\corchete{\ov{\mathscr{J}}_j : \ov{Z}_j \rightarrow \ov{Y}}_j$, respectively.

The vertical maps of the diagram \eqref{eq3} induce depth-preserving homeomorphisms between
\begin{equation*}
    S_\sigma=\corchete{(z_1,\dots , z_m) \in Z_{\sigma(1)}\times \dots  Z_{\sigma(m)} \colon \mathscr{J}_{\sigma(1)}(z_1)=\cdots= \mathscr{J}_{\sigma(m)}(z_m)}\backslash \Delta_{\sigma}
\end{equation*}
and
\begin{equation*}
    \ov{S}_\sigma=\corchete{(z_1,\dots , z_m) \in \ov{Z}_{\sigma(1)}\times \dots  \ov{Z}_{\sigma(m)} \colon \ov{\mathscr{J}}_{\sigma(1)}(z_1)=\cdots= \ov{\mathscr{J}}_{\sigma(m)}(z_m)}\backslash \ov{\Delta}_{\sigma}
\end{equation*}
for any $\sigma=(\sigma(1),\dots , \sigma(m))$ with $|\sigma|=m > 0$, where $\Delta_\sigma$ and $\ov{\Delta}_\sigma$ are the corresponding fat diagonals.

If $\pi_\sigma : S_\sigma \rightarrow \ov{S}_\sigma$ is this homeomorphism, since $\ov{H}_j \cong H_j/K_j$ for all $j$, then for any component $C$ of $S_\sigma$ with base-point $p$ at depth 0 and for any $1 \leq i \leq m$, the following diagram commutes
\begin{equation*}
\begin{tikzcd}
\pi_1(C,p) \arrow[r,"\tau_{C,i}"] \arrow[d,"(\pi_\sigma)_\ast"]  & H_{\sigma(i)} \arrow[d,"\phi_{H_{\sigma(i)}}"] \\
\pi_1(\ov{C},\ov{p}) \arrow[r,"\tau_{\ov{C},i}"] & \ov{H}_{\sigma(i)}
\end{tikzcd}
\end{equation*}
where $\phi : G \rightarrow \ov{G}$ is the filling map. In particular, if $\tau_{\ov{C},i}(\pi_1(\ov{C}, \ov{p}))<  \ov{G}$ contains a loxodromic element of $(\ov{G},\ov{\mcal{P}})$, then $\tau_{C,i}(\pi_1(C, p))$ contains a loxodromic element of $(G,\mcal{P})$ as well, and therefore the relative height of $\mcal{H}$ in $G$ is at least the relative height of $\ov{\mcal{H}}$ in $\ov{G}$.
\end{proof}
\subsection{Weak separability of double cosets} In this subsection we study the behavior of double cosets of relatively quasiconvex subgroups under Dehn filling. Similarly to the peripheral separability we require for a single relatively quasiconvex subgroup, we need some assumptions on the double cosets.

\begin{defi}Let $H$, $L$ be relatively quasiconvex subgroups of the relatively hyperbolic group $(G,\mcal{P})$. The pair $H, L$ is said to be \emph{doubly peripherally separable} if the double coset $(H^{g_1}\cap P)(L^{g_2}\cap P)$ is separable in $P$ for any $g_1,g_2\in G$ and any peripheral subgroup $P<G$ such that $H^{g_1} \cap P$ and $L^{g_2} \cap P$ are infinite.
\end{defi}

Our next theorem establishes weak separability for double cosets of doubly peripherally separable pairs of relatively quasiconvex subgroups under Dehn fillings. This extends \cite[Prop.~6.2]{GrovesManning2021Quasiconv}, where the peripheral subgroups are assumed to be abelian, and hence peripheral separability and double peripheral separability trivially hold. It also generalizes \cite[Thm.~6.4]{Groves2018HyperbolicImproperly}, where it is assumed that the relatively quasiconvex subgroups are full (though they proved a result for several subgroups, and also allowing some finite subsets removed).

\begin{thm}\label{doublecosetfilling} Let $(G,\mcal{P}=\corchete{P_1,\dots , P_n})$ be relatively hyperbolic and let $H, L <  G$ be relatively quasiconvex and peripherally separable subgroups of $(G,\mcal{P})$ such that the pair $H, L$ is doubly peripherally separable. Also, let $S \subset\bigcup{\mcal{P}}\backslash \corchete{1}$ be a finite set and consider $a \in G\backslash HL$.

Then there exist finite index subgroups $K_i \trianglelefteq P_i$ such that for any subgroups $N_i < K_i$ the filling $G \rightarrow G(N_1,\dots,N_n)=G/K$ is $(\corchete{H, L} , S)$-wide, and moreover $a \notin KHL$.
\end{thm}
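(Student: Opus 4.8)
The plan is to combine the standard machinery for making fillings wide (Lemma \ref{existencewide}) with a separability argument that controls the preimage of the double coset $\overline{H}\,\overline{L}$ in $\overline{G}$, using the fact that the peripheral separability and double peripheral separability hypotheses let us work inside the peripheral subgroups. First I would invoke Lemma \ref{existencewide}: since $H$ and $L$ are relatively quasiconvex and peripherally separable, for the given finite set $S$ there are finite index normal subgroups $\dot N_i \trianglelefteq P_i$ such that any filling with $N_i < \dot N_i$ is $(\{H,L\},S)$-wide. So it remains only to further shrink the $\dot N_i$ to finite index $K_i \trianglelefteq P_i$ so that $a \notin KHL$, where $K = \genby{\genby{\bigcup_i K_i}}_G$. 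The point is that $KHL$ is precisely the preimage in $G$ of $\overline{H}\,\overline{L} \subset \overline{G}$ under the filling $\phi \colon G \to \overline{G} = G/K$, so I want $\phi(a) \notin \overline{H}\,\overline{L}$.

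Next I would observe that, by Theorem \ref{dehnfilling}, for all sufficiently long and $\{H,L\}$-wide fillings the images $\overline{H}, \overline{L}$ are relatively quasiconvex in $(\overline{G},\overline{\mcal{P}})$ with a uniform quasiconvexity constant, and $\phi$ is injective on any prescribed finite set. The key separation input is that double cosets of relatively quasiconvex subgroups in a relatively hyperbolic group are ``separable modulo the peripherals'': more precisely, I would cite/adapt the relevant statement from \cite{gromanquasi} (the mechanism behind \cite[Prop.~6.2]{gromanquasi}) saying that, given $a \notin HL$, there is a filling — as long as it is $\{H,L\}$-wide and sufficiently long, and the kernels $N_i$ are chosen inside suitable finite index subgroups of $P_i$ — for which $\phi(a) \notin \overline{H}\,\overline{L}$. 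This is where the hypotheses enter: to run that argument one needs $H\cap P$ separable in $P$ (peripheral separability of $H$ and of $L$) so that the induced fillings of $H$ and $L$ behave well, and one needs $(H\cap P)(L\cap P)$ separable in $P$ (double peripheral separability) to handle the case where the ``obstruction'' to $a \in HL$ lives inside a single peripheral coset — passing to the quotient $P/N_i$ must not accidentally merge $a$ into the image of $(H\cap P)(L\cap P)$.

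Concretely, the proof would proceed by a geometric/combinatorial analysis in the cusped space $X$: a geodesic (or a short path) witnessing that a translate of $a$ is far from $H\,L$-relevant sets, together with the $R$-hull description of $\overline{H}, \overline{L}$ (Lemma \ref{Rhull}) to ensure the path's image in $\overline{X}$ still witnesses $\phi(a) \notin \overline{H}\,\overline{L}$ — EXCEPT when the path is ``absorbed'' into a single horoball, in which case one reduces to a statement purely about the peripheral group $P$ and applies the double peripheral separability of $(H\cap P, L\cap P)$ to find finite index $K_i$ avoiding the bad coset. Finally I would intersect these finitely many constraints with the $\dot N_i$ from Lemma \ref{existencewide} and with the ``sufficiently long'' requirements to produce the $K_i$. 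The main obstacle I expect is exactly this horoball case: one must check that the finitely many peripheral double cosets $(H\cap P_i)(L\cap P_i)$ that could interfere are separable (which is the hypothesis) and that finitely many such conditions, one per peripheral and per relevant coset representative (finiteness coming from Lemma \ref{finiteclasses}(2) / relative quasiconvexity), can be imposed simultaneously while keeping the $K_i$ of finite index and normal in $P_i$.
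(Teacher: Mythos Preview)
Your overall strategy matches the paper's: invoke Lemma~\ref{existencewide} for wideness, then adapt \cite[Prop.~6.2]{gromanquasi} so that the horoball case is handled by the doubly peripherally separable hypothesis rather than by abelianness of peripherals. You also correctly locate where each hypothesis is consumed.

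Two technical points diverge from the paper's execution and would need to be replaced. First, the paper does \emph{not} use $R$-hulls or Lemma~\ref{Rhull} here, and it never passes to $\ov{X}$; the argument stays in $X$. One assumes $a\in KHL$, picks $g=hla^{-1}\in K$ of minimal length, builds the geodesic quadrilateral on $1,h,hl,g$, and applies Lemma~\ref{loops} to the side $\rho=[1,g]$ to find a horoball $A$ where $\rho$ goes deep and an element $k\in K\cap\Stab(A)$ with $kg$ strictly shorter. The contradiction comes from showing $kg\in HLa^{-1}$, which is a four-case analysis according to whether nearby points on the thin quadrilateral lie in $\check\iota_H(X_H^{(0)})$ or in $h\check\iota_L(X_L^{(0)})$; only the mixed case genuinely needs $(H\cap P)(L\cap P)$ separable in $P$. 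Your $R$-hull sketch does not obviously produce this shortening mechanism.

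Second, Lemma~\ref{finiteclasses}(2) is about cocompact actions on $\CAT0$ cube complexes and has no role here. The finiteness you need is elementary: the set $S_i\subset P_i$ of peripheral elements of bounded word length is finite, and the collection $\mcal{C}_i$ of relevant pairs $(B_1,B_2)=(D^{\alpha c_D^{-1}},E^{\beta d_E^{-1}})$ is finite because $\mcal{D},\mcal{E}$ are finite and $\alpha,\beta$ range over $S_i$. One then intersects the finitely many $\dot K_{B_1,B_2}\trianglelefteq P_i$ coming from separability of $B_1B_2$ in $P_i$ to get $\dot K_i$, and sets $K_i=\dot N_i\cap\dot K_i$.
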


The lemmas below are \cite[Prop.~A.6]{ManningMartinezPedroza2009Separation} and \cite[ Lem.~4.1 \& Lem.~4.4]{GrovesManning2021Quasiconv}, respectively, where for \cite[Lem.~4.1]{GrovesManning2021Quasiconv} we cite the statement for the case $L_1=10\delta$.
\begin{prop}\label{stabinf} Let $H$ be a relatively quasiconvex subgroup of a relatively hyperbolic group $(G,\mcal{P})$. For a cusped space $X$ for $(G,\mcal{P})$, there exists a positive constant $R$ such that for any horoball $A$ in $X$, if there is a geodesic in $X$ with endpoints in $H$ and intersecting $A$ in a point at depth $R$, then $H \cap \Stab(A)$ is infinite.
\end{prop}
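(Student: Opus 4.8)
The plan is to combine the quasiconvexity of $\check{\iota}(X_H^{(0)})$ inside the cusped space $X$ with the elementary combinatorics of combinatorial horoballs; throughout I treat $A$ as a full (maximal) horoball, the truncated case reducing to this one since neither the stabilizer nor the depth function is affected by truncating. The key geometric point is that every edge of $X$ incident to a positive-depth vertex of a horoball $A$ stays inside $A$; consequently, any edge-path in $X$ from a depth-$t$ vertex of $A$ to a vertex outside $A$ has length at least $t$, and since the depth function is $1$-Lipschitz, every vertex within distance $\ell<t$ of a depth-$t$ vertex of $A$ again lies in $A$, at depth at least $t-\ell$. With this in hand I would let $\lambda$ be a quasiconvexity constant for $H$ with respect to $X$ and take, for instance, $R:=\lambda+2$.

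Suppose $\gamma\subset X$ is a geodesic with endpoints in $H\subset G\subset X$ meeting $A$ at a point $p$ of depth at least $R$. By relative quasiconvexity there is a vertex $q\in\check{\iota}(X_H^{(0)})$ with $d(p,q)\le\lambda$, and by the observation above $q$ lies in $A$ at depth at least $R-\lambda\ge 1$. Write $q=\check{\iota}(w)$ for a vertex $w$ of $X_H$; since $\check{\iota}$ preserves depth, $w$ has positive depth, so $w$ lies in some horoball of $X_H$, say the one over a left coset $sD$ with $s\in H$ and $D\in\mathcal{D}$. From the defining formula $\check{\iota}(sD,h,n)=(sc_DP_D,hc_D,n)$ one reads off that $\check{\iota}$ carries this horoball into the horoball $\mathcal{H}(sc_DP_D)$ of $X$, and that $c_D^{-1}Dc_D\le P_D$. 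Since a positive-depth vertex of $X$ lies in a unique horoball and $q$ lies in both $A$ and $\mathcal{H}(sc_DP_D)$, we conclude $A=\mathcal{H}(sc_DP_D)$, hence $\mathrm{Stab}(A)=(sc_D)P_D(sc_D)^{-1}=sc_DP_Dc_D^{-1}s^{-1}$.

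To conclude, set $N:=sDs^{-1}$. Then $N\le H$ since $s\in H$ and $D\le H$, while $N\le sc_DP_Dc_D^{-1}s^{-1}=\mathrm{Stab}(A)$ since $c_D^{-1}Dc_D\le P_D$; therefore $N\le H\cap\mathrm{Stab}(A)$. Finally $D$ is infinite by the very construction of the induced peripheral structure $\mathcal{D}$ of $H$ — its members represent the \emph{infinite} intersections of $H$ with conjugates of the members of $\mathcal{P}$ — so $N=sDs^{-1}$, and hence $H\cap\mathrm{Stab}(A)$, is infinite.

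The geometric inputs (the escape estimate for horoballs, the $1$-Lipschitz property of depth, uniqueness of the horoball through a positive-depth vertex) are routine. The step I expect to require the most care is the bookkeeping around $\check{\iota}$: confirming from its explicit formula that the $X_H$-horoball over $sD$ is mapped \emph{into} (not merely near) $\mathcal{H}(sc_DP_D)$, that $c_D^{-1}Dc_D\le P_D$, and that the conjugation conventions relating $D$, $P_D$ and $c_D$ are tracked precisely enough for the containment $sDs^{-1}\le\mathrm{Stab}(A)$ to hold on the nose.
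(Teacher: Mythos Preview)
The paper does not actually prove this proposition: it is quoted without proof as \cite[Prop.~A.6]{manmarped}. Your argument is correct and is essentially the standard one---using $\lambda$-quasiconvexity of $\check{\iota}(X_H^{(0)})$ to push a deep point of the geodesic to a positive-depth point of the image of an $X_H$-horoball, then reading off from the formula for $\check{\iota}$ that the corresponding infinite group $sDs^{-1}$ sits inside $H\cap\Stab(A)$. The horoball geometry you invoke (depth is $1$-Lipschitz, escaping a horoball from depth $t$ costs at least $t$, uniqueness of the ambient horoball at positive depth) and the bookkeeping for $\check{\iota}$ are all accurate given the paper's conventions; in particular $D=H\cap P_D^{c_D}$ immediately gives $c_D^{-1}Dc_D\le P_D$, so there is nothing delicate in the final containment.
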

\begin{lemma}\label{loops}Let $(G,\mcal{P})$ be with cusped space $X$, and let $\delta$ be a hyperbolicity constant for $X$ and for the induced cusped space of any sufficiently long filling. Then for any $L_2 \geq 10\delta$, and for all sufficiently long fillings $\phi : G \rightarrow G/K$ the following holds: if $\gamma$ is a geodesic in $X$ such that $\phi(\gamma)$ is a loop in $\ov{X}$, then:
\begin{itemize}
\item there is a horoball $A$ in $X$ so that $\gamma$ intersects $A$ in a segment $[x, y]$ with $x, y \in G$ and containing a point at depth $L_2$, and
\item there is some $k \in K \cap \Stab(A)$ so that $d_X(x, ky) \leq 20\delta+3$.
\end{itemize}
\end{lemma}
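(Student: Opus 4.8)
The plan is to prove the two bullets together by a disk-diagram argument, following Groves and Manning; indeed this statement is essentially the combination of \cite[Lem.~4.1]{gromanquasi} and \cite[Lem.~4.4]{gromanquasi}, and I sketch how that argument goes. The first step is to record what ``sufficiently long'' buys us: besides the $\delta$-hyperbolicity of $X$ and of every $\ov{X}$, we may assume that for each $i$ the filling kernel $N_i$ avoids the ball of radius $2^{\,L_2+c_0(\delta)}$ in $P_i$, for a suitable constant $c_0(\delta)$. The essential geometric feature of the new relators is then that they are \emph{thin in the cusped space}: an element $w\in N_i$ is joined to $1$ in $X$ by a path inside the horoball $\mcal{H}(P_i)$ that ascends to depth $\asymp\log_2|w|_{S\cap P_i}$, travels horizontally there, and descends; by the avoidance assumption this depth exceeds $L_2$. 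For the reduction, recall from the construction of the cusped space that the $K$-action on $X$ is free, so $\phi\colon X\to\ov{X}=X/K$ is a covering map; hence the two endpoints of $\gamma$ differ by a deck transformation $k_0\in K$, which we may assume nontrivial (otherwise $\gamma$ is degenerate), so that $\phi(\gamma)$ is a loop of length $|\gamma|$ in $\ov{X}$ that does not lift to a loop in $X$.

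Next I would take a disk diagram $\Delta$ of minimal complexity with $\partial\Delta=\phi(\gamma)$. Since $\phi(\gamma)$ does not lift to a loop, $\Delta$ must contain at least one $2$-cell coming from an $N_i$-relator. The heart of the argument is a Greendlinger-type lemma: $\delta$-hyperbolicity of $X$ yields the ``bounded pieces'' estimate that a horoball excursion fellow-travels a distinct horoball excursion, or a geodesic, only along a segment of length $\le L_1$; feeding this together with the thinness of the filling relators, the minimality of $\Delta$, and the hypotheses $L_1,L_2\ge 10\delta$ into the standard diagram analysis, one extracts a filling $2$-cell $R$ of $\Delta$ whose boundary meets $\partial\Delta=\phi(\gamma)$ along an arc whose complement in $\partial R$ has length $\le 2L_1+3$ (a short piece at each end of the ``lens'' $\partial R$, plus combinatorial slack). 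In particular the deep, near-vertical portions of $\partial R$, which reach depth $>L_2$, cannot be absorbed into pieces of length $\le L_1$ and therefore survive on $\phi(\gamma)$.

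Finally, lifting this arc of $\partial R$ back along $\gamma$ identifies it with almost all of a conjugate $h\,w\,h^{-1}$ of a filling relator, $w\in N_i$. Hence $\gamma$ enters the horoball $A:=h\,\mcal{H}(P_i)$ at a vertex $x\in hP_i\subset G$, ascends inside $A$ to depth $>L_2$ and descends, leaving $A$ at a vertex $y\in hP_i\subset G$, with $\gamma\cap A=[x,y]$; this is the first bullet, once one also checks (again from minimality of $\Delta$, since two deep excursions cannot coincide for a long filling) that this single excursion accounts for all of $\gamma\cap A$. Taking $k\in hN_ih^{-1}=K\cap\Stab(A)$ to be the element determined by $R$, the discrepancy between the sub-path $\gamma\cap A$ and the full boundary path of $R$ is exactly what $k$ corrects, so the length bound on the missing sub-arc of $\partial R$ translates, after the routine group-theoretic bookkeeping, into $d_X(x,ky)\le 2L_1+3$, which is the second bullet. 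The main obstacle throughout is the Greendlinger step: one has to control all the pieces (which is exactly where the thin-relator picture of the cusped space and the $\delta$-hyperbolicity of $X$ and of all long $\ov{X}$ come in) and make sure the retained arc of $\partial R$ really contains the depth-$L_2$ excursion rather than being trimmed away at its ends — and it is precisely the long-filling hypothesis that provides the room for this.
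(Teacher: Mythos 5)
You have correctly located the source: the paper offers no proof of this lemma at all, stating only that it is \cite[Lem.~4.1 \& Lem.~4.4]{gromanquasi}, so your citation matches the paper's treatment exactly and in that sense nothing more was required. The issue is with the sketch you then give, which is not the Groves--Manning argument but an Osin-style small-cancellation argument, and whose central step is asserted rather than proved. The ``Greendlinger-type lemma'' you invoke is precisely the hard content of Dehn filling for relatively hyperbolic groups: the relator set $\bigcup_i N_i$ consists of entire (typically infinite) normal subgroups of the $P_i$, so it satisfies no metric small-cancellation condition, and there is no ``standard diagram analysis'' that extracts a $2$-cell meeting $\partial\Delta$ in all but a $(2L_1+3)$-arc; making that work is Osin's theory of relative isoperimetric functions, a substantial machine you cannot wave at. Your sketch also conflates two different objects: a van Kampen diagram over the relative presentation of $\ov{G}$ (where ``$N_i$-relator cells'' make sense) and a disk diagram in the quotient cusped space $\ov{X}$ (whose $2$-cells are the squares and pentagons of the horoball construction, not relators), and the ``thin relator'' picture lives in the latter while the Greendlinger count lives in the former.

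For the record, the cited proof stays entirely inside the cusped spaces and never touches diagrams: since the action of $K$ on $X$ is free, the endpoints of $\gamma$ differ by some $1\neq k_0\in K$; hyperbolicity of $\ov{X}$ together with the fact that sufficiently long fillings push all of $\bigcup_i N_i$ outside a huge ball forces the geodesic $\gamma$ to penetrate some horoball $A$ past depth $L_2$, entering and exiting at depth-$0$ vertices $x,y\in G$; the images $\phi(x),\phi(y)$ then lie in the quotient horoball $\ov{A}=A/(K\cap\Stab(A))$ and are joined there by a geodesic staying at depth at most $L_1$, hence of length at most $2L_1+3$ (down $L_1$, a horizontal segment of length at most $3$, up $L_1$ --- this is where the specific constant comes from, not from ``combinatorial slack''); lifting that path to $A$ starting at $x$ lands at $ky$ for some $k\in K\cap\Stab(A)$. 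If you intend to supply a proof rather than the citation, you should either reproduce that argument or genuinely carry out the isoperimetric estimates underlying your Greendlinger step; as written, the sketch does not establish either bullet.
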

\begin{proof}[Proof of Theorem \ref{doublecosetfilling}] Almost all the work has been done in \cite[Prop.~6.2]{GrovesManning2021Quasiconv}, so we will only check the details where the doubly peripheral separability assumption is required.

By Lemma \ref{existencewide} there exist finite index subgroups $\dot{N}_i \trianglelefteq P_i$ such that any filling $G \rightarrow G(N_1,\dots,N_n)$ with $N_i <\dot{N}_i$ is $(\corchete{H, L} , S)$-wide. The rest of the proof is devoted to finding finite index subgroups $\dot{K}_i \trianglelefteq P_i$ such that $a$ is separated from $HL$ in any filling $G \rightarrow G(N_1,\dots,N_n)$ with $N_i<\dot{K}_i$. In that case, the theorem follows by defining $K_i:= \dot{N}_i \cap \dot{K}_i$.

Consider an induced peripheral structure $\mcal{D}$ on $H$, with each $D \in \mcal{D}$ of the form $D=H \cap P^{c_D}_{j_D}$
for some $1 \leq j_D \leq n$ and some shortest $c_D \in G$. Similarly, let $\mcal{E}$ be an induced peripheral structure on $L$ with each $E \in \mcal{E}$ of the form $E=L \cap P^{d_E}_{k_E}$ for some $1 \leq k_E \leq n$ and some shortest $d_E \in G$. Also, let $X, X_H, X_L$ be cusped spaces for $(G,\mcal{P})$, $(H,\mcal{D})$ and $(L, \mcal{E})$ respectively, with induced maps $\check{\iota}_H : X^{(0)}_H \rightarrow X$ and $\check{\iota}_L : X^{(0)}_L \rightarrow X$. Let $\delta \geq 1$ be a hyperbolicity constant for $X$ and for the induced cusped space of any sufficiently long filling of $G$, and let $\lambda\geq 2\delta+1$ be such that the images $\check{\iota}_H(X^{(0)}_H)$  and $\check{\iota}_L(X^{(0)}_L)$ are $\lambda$-quasiconvex in $X$. We assume that $\delta$ and $\lambda$ are integer numbers.

For $H$ and $L$, consider the constants $R_H$, $R_L$ given by Proposition \ref{stabinf}, and define $M=d_X(1, a)$ and $L_2=\max\corchete{20\delta+ M+ \lambda +4, R_H, R_L}$. Given $1 \leq i \leq n$, let $S_i\subset P_i$ be a finite set containing
\begin{equation*}
\corchete{p \in P_i \colon d_X(1, p) \leq 48\delta + 8M + 16\lambda+5}.
\end{equation*}
Also, consider the collection $\mcal{C}_i$ of pairs of subgroups of $P_i$ of the form $(B_1, B_2)=(D^{\alpha c_D^{-1}}, E^{\beta d_E^{-1}})$ with $D \in \mcal{D}$ and $E \in \mcal{E}$ such that $j_D=k_E=i$, and $\alpha, \beta \in S_i$. Note that there are finitely many such pairs, and that by assumption for each of them the double coset $B_1B_2$ is separable in $P_i$. This implies the existence of finite index subgroups $\dot{K}_{B_1,B_2} \trianglelefteq P_i$ such that
\begin{equation}\label{eq4}
    S_i \cap \dot{K}_{B_1,B_2}B_1B_2\subset B_1B_2.
\end{equation}
Set $\dot{K}_i:=\bigcap_{(B_1,B_2)\in \mcal{C}_i}{\dot{K}_{B_1,B_2}}$, for which we expect the separability conditions to hold.

Let $N_i<\dot{K}_i$ be filling kernels inducing the filling $\phi : G \rightarrow G(N_1,\dots,N_n)=G/K$, for which we claim that $a \notin KHL$. Suppose by contradiction that there is some $g \in K$ of the form $g= hla^{-1}$ for some $h \in H$ and $l \in L$, and assume $d_X(1, g)$ is minimal among the elements of $K \cap HLa^{-1}$.

Consider a geodesic quadrilateral in $X$ with vertices $1, h, hl, g$, and let $\xi_1, \xi_2$, $\eta$ and $\rho$ be the geodesics from $1$ to $h$, from $h$ to $hl$, from $hl$ to $g$ and from $1$ to $g$, respectively. By assumption $g \neq 1$.

The map $X \rightarrow \ov{X}$ induced by $\phi$ sends $\rho$ to a loop, so by Lemma \ref{loops} there is a horoball $A$ in
$X$ intersecting $\rho$ in a geodesic segment $[x, y]$ (with $x$ between 1 and $y$), such that $[x, y]$ contains points at depth $L_2$, and there is some $k \in K \cap \Stab(A)$ with $d_X(x, ky) \leq 20\delta +3$. This implies $d_X(1, kg)< d_X(1, g)$, since $d_X(x, ky) \leq 20\delta+3$ and $d_X(x, y) \geq 2L_2 > 20\delta+4$. Our contradiction will be obtained by showing that $kg \in HLa^{-1}$, contrary to our minimality assumption on $g$.

By \cite[Lem.~3.10]{GrovesManning2008DehnFilling} we can assume that the segment $[x, y]$ is a geodesic through $A$ consisting of a vertical segment down from $x$, a horizontal segment of length at most 3, and then a vertical segment with endpoint $y$. We will make a similar assumption in case $A$ intersects $\xi_1$ or $\xi_2$. Let $x'$, $y'$ be the points on $[x, y]$ directly below $x$ and $y$, respectively, at depth $3\delta+M  +\lambda$. The $\delta$-hyperbolicity condition applied to the quadrilateral $\xi_1 \cup \xi_2 \cup \eta \cup \rho$ implies that $x'$ and $y'$ lie in the $2\delta$-neighborhood of $\xi_1 \cup \xi_2 \cup \eta$, but since $\eta$ is of length $M$ and has extreme points at depth 0, in fact $x'$ and $y'$ are within $2\delta$ of $\xi_1 \cup \xi_2$.

The geodesics $\xi_1$ and $\xi_2$ join points in $H$ and $hL$, respectively, and so $\lambda$-quasiconvexity gives us points $u_0, v_0 \in \check{\iota}_H(X^{(0)}_H)\cup  h\check{\iota}_L(X^{(0)}_L)$ with $d_X(u_0, x'), d_X(v_0, y') \leq 2\delta+\lambda$. The points $u_0$ and $v_0$ lie in $A$ since $x'$ and $y'$ have depth greater than $2\delta+\lambda$. Let $u, v \in A$ be the points at depth 0 directly above $u_0$ and $v_0$ respectively. We have $d_X(u, x) \leq d_X(u, u_0)+  d_X(u_0, x')+d_X(x', x) \leq 2(d_X(x, x')+ d_X(u_0, x')) \leq 10\delta + 2M  +4\lambda$, and similarly $d_X(v, y) \leq 10\delta + 2M + 4\lambda$.

This gives us four cases depending on whether each of $u_0, v_0$ are contained in $\check{\iota}_H(X^{(0)}_H)$ or $h\check{\iota}_L(X^{(0)}_L)$. The cases where $u_0$ and $v_0$ are both contained in  $\check{\iota}_H(X^{(0)}_H)$ or $h\check{\iota}_L(X^{(0)}_L)$ are completely analogous, and the proof given in \cite[Prop.~6.2]{GrovesManning2021Quasiconv} still works here since it only requires peripheral separability. As noted at the end of that proof, the case where $u_0$ is contained in $h\check{\iota}_L(X
^{(0)}_L)$ and $v_0$ is contained in $\check{\iota}_H(X^{(0)}_H)$ 
 essentially becomes the case where $u_0, v_0$ are contained in $\check{\iota}_H(X^{(0)}_H)$. Therefore, we are only left to deal with the case where $u_0$ is contained in $\check{\iota}_H(X^{(0)}_H)$ and $v_0$ is contained in $h\check{\iota}_L(X^{(0)}_L)$, which we now check.

We can write $u_0=(sc_DP_{j_D},h_uc_D, n)$ for some $s \in H, h_u \in sD$ and $n \in \N$, where $D \in \mcal{D}$, and similarly $v_0= (htd_EP_{k_E},hl_vd_E, m)$ for $t \in L, l_v \in tE, m \in \N$ and $E \in \mcal{E}$. This implies $u=h_uc_D$ and $v=hl_vd_E$. We write $c=c_D, d=d_E$, and $j_D=k_E=i$, where $A$ is the horoball based on
the coset $scP_i=htdP_i$.

The geodesic $\xi_1$ intersects $A$ in a segment with extreme point $h_1$ closer to $h$. Since we may assume that $x'$ lies within $2\delta$ of $\xi_1$, there exists a point $h_1'$ in $\xi_1\cap A$ directly below $h_1$ and at depth $\delta+M+\lambda$. By $\lambda$-quasiconvexity of $\check{\iota}_H(X_H)$ we can find some $w'\in \check{\iota}_H(X_H)$ with $d_X(w',h_1')\leq \lambda$, which indeed lies in $A$, so that group element $w\in A$ directly above $w'$ is of the form $w=h_wc \in sDc \subset scP_i$ with $h_w \in H$ and $d_X(w,h_1)\leq d_X(w,w')+d_X(w',h_1')+d_X(h_1',h_1)\leq 2\delta+2M+4\lambda$. In the same way, if $g_2$ is the extreme point of the segment $\xi_2 \cap A$ closer to $h$, then it is directly above the vertex $g_2'\in A$ at depth $\delta+M+\lambda$ and there is some $z=hl_zd \in htEd \subset htdP_i$ with $d_X(g_2, z) \leq 2\delta+2M+4\lambda$. 

Let $[h,h_1']\cup [h,g_2']\cup[h_1',g_2']$ be a geodesic triangle in $X$ with $[h,h_1']\subset \xi_1$ and $[h,g_2']\subset \xi_2$. Since $h_1'$ and $g_2'$ are at depth $\delta+M+\lambda\geq 3\delta+1$ (recall that $\lambda\geq 2\delta+1)$, it follows from \cite[Lemm.~3.26]{GrovesManning2008DehnFilling} that every point of $[h_1',g_2']$ is at depth at least $3\delta+1$. By $\delta$-hyperbolicity, this implies that the vertex in $[h,h_1']$ directly below $h_1$ at depth $\delta+1$ lies within $\delta$ of a point in $[h,g_2']$, which must lie directly below $g_2$ and at depth at most $2\delta+1$. In particular, $d_X(h_1,g_2) \leq 4\delta+2$ and we have $d_X(w,z)\leq 8\delta+4M+8\lambda+2$.

Let $p=(u^{-1}kv)(z^{-1}w)=(u^{-1}ku)(u^{-1}w)(z^{-1}v)
^{w^{-1}z}$. Since $k \in K \cap \Stab(A)=K \cap P_i^{c_D}$
and $K$ is normal, we have $u^{-1}ku \in K \cap P_i =N_i \trianglelefteq P_i$. Also, note that $u^{-1}w \in D^{c^{-1}}, z^{-1}v \in E^{d^{-1}}$ and $w^{-1}z \in P_i$, implying $p \in N_iD^{c^{-1}}E^{(w^{-1}z)d^{-1}}$. In addition, $d_X(u, kv) \leq d_X(u, x) + d_X(x, ky)+d(y, v) \leq 40\delta + 4M + 8\lambda + 3$, so $d_X(1, p) \leq d_X(w, z) + 40\delta + 4M + 8\lambda + 3 \leq 48\delta + 8M + 16\lambda +  5$, and hence $p \in S_i$. The pair $(D^{c^{-1}},E^{(w^{-1}z)d^{-1}})$ is then in $\mcal{C}_i$, so in virtue of \eqref{eq4} there exist $\hat{d} \in D^{c^{-1}}$ and $\hat{e} \in E^{d^{-1}}$ such that $p=\hat{d}(w^{-1}z)\hat{e}(z^{-1}w)$. In consequence
$khl$ can be written as
\begin{align*}
khl &=u\left(u^{-1} k v z^{-1} w\right)\left(w^{-1} z\right) v^{-1} h l \\
&=u p\left(w^{-1} z\right) v^{-1} h l \\
&=u\left(\hat{d}\left(w^{-1} z\right) \hat{e}\left(z^{-1} w\right)\right)\left(w^{-1} z\right) v^{-1} h l \\
&=\left(h_{u} c\right) \hat{d}\left(c^{-1} h_{w}^{-1} h l_{z} d\right) \hat{e}\left(d^{-1} l_{v}^{-1} h^{-1}\right) h l \\
&=\left(h_{u} \hat{d}^{c} h_{w}^{-1} h\right)\left(l_{z} \hat{e}^{d} l_{v}^{-1} l\right),
\end{align*}
the last expression being in $HL$ since $\hat{d}^c \in D$ and $\hat{e}^d \in E$. Therefore, $kg=khla^{-1} \in HLa^{-1}$,
implying the desired contradiction and concluding the proof of this case and of the theorem.
 \end{proof}
The next corollary roughly says that under some mild assumptions, double cosets of doubly peripherally separable pairs of relatively quasiconvex subgroups are ``almost" the intersection of double cosets of fully relatively quasiconvex subgroups.
\begin{coro}\label{almostfullydoublecoset}Let $(G,\mcal{P})$ be relatively hyperbolic and let $H, L <  G$ be relatively quasiconvex subgroups such that $H, L$ is a doubly peripherally separable pair. Also, suppose that $\dot{H}, \dot{L}$ and $\dot{P}$ are separable in $G$ for any finite index subgroups $\dot{H}<H, \dot{L} <  L$ or $\dot{P}<   P$, with $P$ being a peripheral subgroup of $G$.\\
Then for any $a \in G\backslash HL$ there exist fully relatively quasiconvex subgroups $\hat{H}, \hat{L}$ with $H \cap \hat{H} <  H$ and $L \cap \hat{L}< L$ of finite index, such that $a \notin H\hat{H}\hat{L}L$.
\end{coro}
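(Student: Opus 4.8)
The plan is to separate $a$ from $HL$ inside a peripherally finite Dehn filling of $(G,\mcal P)$, which is a hyperbolic group, and then to realise suitable preimages of quasiconvex subgroups of that filling by subgroups $\hat H,\hat L$ of $G$ that are made fully relatively quasiconvex by ``filling'' their non-full peripheral pieces.

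First I would check that $H$ and $L$ are peripherally separable, which is needed to invoke Theorem~\ref{doublecosetfilling}. Taking $\dot H=H$ and $\dot P=P$ in the hypothesis shows that $H$ and each peripheral $P$, and hence each conjugate $P^c$, is separable in $G$; since an intersection of two separable subgroups is separable, each $D=H\cap P_{i_D}^{c_D}$ is separable in $G$ and therefore in $P_{i_D}^{c_D}$, and likewise for $L$. Combined with the doubly peripherally separable hypothesis, Theorem~\ref{doublecosetfilling} provides finite-index subgroups $K_i\trianglelefteq P_i$ such that every filling $\phi\colon G\to\bar G:=G(N_1,\dots,N_n)=G/K$ with $N_i<K_i$ is $(\{H,L\},S)$-wide and satisfies $a\notin KHL$. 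I would choose the $N_i$ of finite index in $K_i$ and deep enough that the conclusions of Theorem~\ref{dehnfilling} apply to $H$ and $L$; then each $\bar P_i=P_i/N_i$ is finite, so $\bar G$ is hyperbolic, the images $\bar H:=\phi(H)$ and $\bar L:=\phi(L)$ are quasiconvex in $\bar G$, and the relation $a\notin KHL=\phi^{-1}(\phi(HL))$ says precisely that $\phi(a)\notin\bar H\bar L$.

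Next I would construct $\hat H$ (and symmetrically $\hat L$). Let $\mcal D=\{D_j=H\cap P_{i_j}^{c_j}\}_{j=1}^{m}$ be the induced peripheral structure of $H$ and set $V_j:=D_jN_{i_j}^{c_j}$; since $D_j\le P_{i_j}^{c_j}$ normalises $N_{i_j}^{c_j}\trianglelefteq P_{i_j}^{c_j}$, this $V_j$ is a subgroup of finite index in the peripheral $P_{i_j}^{c_j}$ with $H\cap V_j=D_j$, and I put $\hat H:=\langle H,N_{i_1}^{c_1},\dots,N_{i_m}^{c_m}\rangle=\langle H\cup\bigcup_j V_j\rangle$. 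The heart of the argument, and the step I expect to be the main obstacle, is the claim that, after shrinking the $N_i$ further if necessary (still of finite index in $K_i$), the natural map $H*_{D_1}V_1*_{D_2}\cdots*_{D_m}V_m\to\hat H$ is an isomorphism and $\hat H$ is relatively quasiconvex in $(G,\mcal P)$ with induced peripheral structure $\{V_1,\dots,V_m\}$. Granting this, $\hat H$ is \emph{fully} relatively quasiconvex: each $V_j$ has finite index in a peripheral of $G$, and by almost malnormality of $\mcal P$ every peripheral of $G$ not $\hat H$-conjugate to some $P_{i_j}^{c_j}$ meets $\hat H$ in a finite subgroup. I would prove the claim by combining a ``no folding'' estimate in the cusped space of $(G,\mcal P)$, valid once the $N_i$ are sufficiently deep, with a combination theorem for relatively quasiconvex subgroups of a relatively hyperbolic group that splits over relatively quasiconvex edge groups (such as the Bigdely--Wise combination theorem); this applies because each edge group $D_j$, being a peripheral subgroup of $H$, is full in the vertex group $H$. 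Since $H\le\hat H$ we have $H\cap\hat H=H$ of finite index in $H$, and if the no-folding step forces replacing $H$ by a finite-index subgroup then $H\cap\hat H$ still has finite index in $H$; the same holds for $L$ and $\hat L$.

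Finally, the separation statement follows formally. Each $N_{i_j}$ is one of the filling kernels, so $N_{i_j}\le K=\genby{\genby{\bigcup_i N_i}}_G$, whence $\phi(N_{i_j}^{c_j})=1$ and $\phi(\hat H)=\phi(H)=\bar H$; likewise $\phi(\hat L)=\bar L$. Therefore
\[
H\hat H\hat L L\ \subseteq\ \phi^{-1}\!\bigl(\phi(H)\,\phi(\hat H)\,\phi(\hat L)\,\phi(L)\bigr)=\phi^{-1}(\bar H\bar L)=\phi^{-1}(\phi(HL))=KHL ,
\]
and since $a\notin KHL$ we conclude $a\notin H\hat H\hat L L$, as required.
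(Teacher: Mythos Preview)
Your overall strategy is the same as the paper's: invoke Theorem~\ref{doublecosetfilling} to get a peripherally finite filling $\phi:G\to G/K$ with $a\notin KHL$, then enlarge $H$ and $L$ by adjoining finite-index parabolic pieces lying in $K$ so that the results $\hat H,\hat L$ are fully relatively quasiconvex while $\phi(\hat H)=\phi(H)$ and $\phi(\hat L)=\phi(L)$, whence $H\hat H\hat L L\subset KHL$. Your final inclusion chain and the verification that $H,L$ are peripherally separable are fine.

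The gap is in the justification of the key step, namely that $\hat H=\langle H,\,N_{i_1}^{c_1},\dots,N_{i_m}^{c_m}\rangle$ is relatively quasiconvex with induced peripheral structure $\{V_1,\dots,V_m\}$. The Bigdely--Wise results you cite are about when a group that \emph{splits} as a graph of relatively hyperbolic groups is itself relatively hyperbolic; they do not tell you that a subgroup of a fixed $(G,\mcal P)$ built this way is relatively quasiconvex. The correct tool, and the one the paper uses, is Mart\'inez-Pedroza's combination theorem \cite[Thm.~1.1]{marped}: given a relatively quasiconvex $H'$ and a parabolic $P'\le P^{c}$ containing $H'\cap P^{c}$, there is a finite set $F\subset P^{c}\setminus(H'\cap P^{c})$ such that if $P'$ avoids $F$ then $\langle H',P'\rangle$ is relatively quasiconvex with $P'$ in its induced peripheral structure. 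The paper applies this \emph{inductively}, adding one parabolic $Q_j\le K_{i_j}^{c_j}$ at a time and using separability of $D_j$ in $P_{i_j}^{c_j}$ to choose $Q_j$ avoiding the finite set $F$ dictated by the previous stage. Two further points: the paper first passes to a finite-index $\dot H\trianglelefteq H$ with $\dot H\cap P_i<K_i$, and it allows the parabolic pieces $Q_j$ to be arbitrary finite-index subgroups of $K_{i_j}^{c_j}$ rather than conjugates of a single globally chosen $N_{i_j}$; this extra flexibility sidesteps the interdependence you would face when ``shrinking the $N_i$ further'' for one $D_j$ affects all other $D_{j'}$ with $i_{j'}=i_j$.
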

\begin{proof}For $a, H$ and $L$ as in the statement, let $K_1,\dots K_n$ be the filling kernels given by Theorem \ref{doublecosetfilling}, and let $K=K(a, H, L) \trianglelefteq G$ be the kernel of the filling $G \rightarrow G(K_1,\dots , K_n)$. Since by assumption each $K_i$ is finite index in $P_i$ and separable in $G$, the subgroups $K_i \cap H$ are separable in $H$, and there exists a finite index subgroup $\dot{H} \trianglelefteq H$ such that $\dot{H} \cap P_i<   K_i$ for all $i$, implying $\dot{H} \cap P <  K$ for any peripheral subgroup $P$. Similarly we can find $\dot{L} \trianglelefteq L$ of finite index so that $\dot{L} \cap P <  K$ for any peripheral $P$.

We claim the existence of parabolic subgroups $Q_1,\dots , Q_s, R_1,\dots , R_t <  G$, each of them a finite index subgroup of the conjugate of some $K_i$, and such that $\hat{H}:= \genby{\dot{H}, Q_1,\dots , Q_s}$ and $\hat{L} := \genby{\dot{L}, R_1,\dots , R_t}$ are fully relatively quasiconvex subgroups of $G$. By assuming this claim, the corollary follows since $a \notin KHL$ and
\begin{equation*}
    H\hat{H}\hat{L}L=H\genby{\dot{H}, Q_1,\dots , Q_s}\genby{\dot{L},R_1,\dots , R_t}L\subset H\genby{\dot{H},K}\genby{\dot{L},K}L=H\dot{H}K\dot{L}L =KHL.
\end{equation*}

To prove the claim, we will only construct the groups $Q_1,\dots , Q_s$, since the groups $R_1,\dots , R_t$ can be found in exactly the same way. Let us choose $\mcal{D}_0 =\corchete{D_1,\dots , D_s}$ an induced peripheral structure on $\dot{H}_0=\dot{H}$ with $D_i= \dot{H} \cap P^{c_i}_{j_i}$ for some $1 \leq j_i \leq n$ and $c_i \in G$, and inductively construct $Q_1,\dots , Q_s$ with each $Q_i <K^{c_i}_{j_i}$ of finite index, such that $\mcal{D}_i:= \corchete{Q_1,\dots , Q_i, D_{i+1},\dots , D_{s}}$ is an induced peripheral structure on $\dot{H}_i:= \genby{\dot{H}, Q_1,\dots , Q_i}=\genby{\dot{H}_{i-1}, Q_i}$.

Assume we have found $\dot{H}_{i-1}$ and $Q_1,\dots , Q_{i-1}$, and note that every parabolic subgroup of $\dot{H}_{i-1}$ is a subgroup of $K$. If $D_i< K^{c_i}_{j_i}$ is finite index, define $Q_i=D_i$. Otherwise, by \cite[Thm.~1.1]{MartinezPedroza2009Combqc} there exists a finite set $F\subset P^{c_i}_{j_i}\backslash(\dot{H}_{i-1} \cap P^{c_i}_{j_i})$ such that for any $P'< P^{c_i}_{j_i}$ containing $\dot{H}_{i-1}\cap P_{j_i}^{c_i}$ and disjoint from $F$, the group $\genby{\dot{H}_{i-1}, P'}$ is relatively quasiconvex and has $\corchete{Q_1,\dots , Q_{i-1}, P'}$ as induced peripheral structure. It is then enough to find a subgroup $P'<K_{j_i}^{c_i}$ of finite index, containing $\dot{H}_{i-1} \cap P_{j_i}^{c_i}$ and disjoint from $F$, so in that case we can define $Q_i=P'$.
 
To find such $P'$, note that each infinite parabolic subgroup of $\dot{H}_{i-1}$ is $\dot{H}_{i-1}$-conjugate into some group in $\mcal{D}_{i-1}$, implying $\dot{H}_{i-1} \cap P^{c_i}_{j_i}=D_i$. Also, since $\dot{H}$ is separable in $G$, we have that $D_i$ is separable in $P^{c_i}_{j_i}$. But by construction $D_i$ is contained in $K$, so in fact $D_i$ is separable in $K^{c_i}_{j_i}$, and there is a finite index subgroup $P' < K^{c_i}_{j_i}$, disjoint from $F$ and such that $P'\supset D_i$. This solves the claim, and since each subgroup in $\mcal{D}_s$ is finite index in some peripheral subgroup, the group $\hat{H}=\dot{H}_s$ is fully relatively quasiconvex.
\end{proof}
\subsection{The malnormal special quotient theorem} We end this section by presenting a result that will be needed in the proofs of Theorem \ref{finitewalls} and Proposition \ref{weakseparability}. It depends on the relative version of Wise's malnormal special quotient theorem, recently obtained by Einstein \cite[Thm.~2]{Einstein2019RMSQT}.

\begin{thm}[Einstein]\label{msqt} If $(G, \corchete{P_1,\dots , P_n})$ is a relatively hyperbolic group with $G$ cubulated and virtually special, then there exist finite index subgroups $\dot{P}_i \trianglelefteq P_i$ such that if $\ov{G}=G(N_1,\dots,N_n)$ is any filling with each $N_i <  \dot{P}_i$ of finite index, then $\ov{G}$ is hyperbolic and virtually special.
\end{thm}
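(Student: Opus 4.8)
The plan is to adapt the cubical-small-cancellation proof of Wise's absolute malnormal special quotient theorem to the relatively hyperbolic setting, using that the peripheral subgroups of $(G,\mcal{P})$ are automatically almost malnormal and, by Theorem~\ref{sageevwise}, convex in any cubulation of $G$. Replacing $G$ by a finite-index normal subgroup, we may assume $G$ acts freely on a $\CAT{0}$ cube complex $Y$ with $Y/G$ compact and special (the original statement follows, since a sufficiently deep peripherally finite filling of $G$ induces one of any finite-index normal subgroup, and virtual compact specialness is inherited by finite-index overgroups of hyperbolic groups); after this reduction the peripheral structure $\mcal{P}=\corchete{P_1,\dots,P_n}$ may have changed but is still finite and almost malnormal.

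By Theorem~\ref{sageevwise} each $P_i$ is convex in $(G,Y)$; fix a convex core $\wtilde{Y}_i\subset Y$ for $P_i$. Since $\wtilde{Y}_i$ is convex and $G$ acts freely, the quotient map $\wtilde{Y}_i/P_i\to Y/G$ is a $\pi_1$-injective local isometry of compact NPC cube complexes, so by Haglund--Wise the complex $\wtilde{Y}_i/P_i$ is special. I would then choose the finite-index subgroups $\dot{P}_i\trianglelefteq P_i$ small enough that, for every peripherally finite filling $\phi\colon G\to\ov{G}=G(N_1,\dots,N_n)$ with $N_i<\dot{P}_i$, the following hold simultaneously: (a) $\phi$ is sufficiently long and $\corchete{P_i}$-wide, so that Theorem~\ref{dehnfilling} and Lemma~\ref{existencewide} apply and $\ov{G}$ is hyperbolic relative to the \emph{finite} groups $\ov{P}_i$, hence hyperbolic; and (b) the cubical presentation $\mcal{X}=\bigl\langle\, Y/G \,\bigm|\, \corchete{\wtilde{Y}_i/N_i}_i \,\bigr\rangle$ satisfies a $C'(1/m)$ cubical small-cancellation condition with $m$ as large as we wish. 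Condition (b) is where the hypothesis that the filling be sufficiently long is genuinely used: the pieces of $\mcal{X}$ have diameter bounded by a constant depending only on $(G,Y)$ --- because distinct cosets of the almost malnormal peripherals have uniformly bounded coarse intersection, so distinct translates of the cores $\wtilde{Y}_i$ meet in uniformly bounded subcomplexes --- whereas the systoles of the relator complexes $\wtilde{Y}_i/N_i$ tend to infinity as the $N_i$ shrink. Each relator $\wtilde{Y}_i/N_i$ is a finite cover of the special complex $\wtilde{Y}_i/P_i$, hence is itself special.

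With $m$ large, Wise's cubical small-cancellation theory provides a proper cocompact action of $\ov{G}$ on a $\CAT{0}$ cube complex $\ov{Y}$, together with control on its hyperplanes (they descend from hyperplanes of $Y$ and of the relators). Then Wise's special-quotient machinery over special cube complexes --- which applies precisely because the base $Y/G$ and all relator complexes $\wtilde{Y}_i/N_i$ are special and the attaching maps are local isometries --- yields a finite special cover of $\ov{Y}/\ov{G}$, so that $\ov{G}$ is virtually compact special; equivalently, one verifies the Haglund--Wise criterion (Theorem~\ref{haglundwisedoublecoset}) for $(\ov{G},\ov{Y})$, the separability inputs being the separability of convex subgroups of the special group $G$ (Theorem~\ref{haglundwiseseparability}) together with their controlled behaviour under the filling.

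The main obstacle is exactly this last step: virtual specialness of the quotient. Hyperbolicity and relative quasiconvexity transfer softly under Dehn filling, but specialness does not --- establishing it requires both a careful description of the hyperplane structure of the cubical small-cancellation quotient $\ov{Y}$ (Wise's wall-injectivity / $B(6)$ theory) and the construction, via the canonical completion and retraction of Haglund and Wise, of a finite special cover of $\ov{Y}/\ov{G}$ out of the finite special covers of the base and of the relator pieces. A conceivable alternative is to route the conclusion through Theorem~\ref{wiseQVH} by producing a quasiconvex hierarchy for the hyperbolic group $\ov{G}$ directly from the cubical data, but building such a hierarchy is no easier --- and is, in essence, the theme of the remainder of this paper.
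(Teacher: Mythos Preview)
The paper does not prove this theorem: it is stated as Einstein's result and cited from \cite[Thm.~2]{einstein} without proof, so there is no ``paper's own proof'' to compare your proposal against. The paper uses Theorem~\ref{msqt} as a black box in Proposition~\ref{superfilling}, Theorem~\ref{finitewalls}, and Proposition~\ref{weakseparability}.

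That said, your sketch is a plausible outline of how such a result could be attacked, and it is broadly in the spirit of Wise's cubical small-cancellation approach to the absolute malnormal special quotient theorem. You correctly identify the genuine difficulty: hyperbolicity of $\ov{G}$ and properness/cocompactness of a cubulation follow from standard Dehn filling and cubical small-cancellation arguments, but upgrading to virtual specialness is the heart of the matter, and your description of that step (``Wise's special-quotient machinery \dots\ yields a finite special cover'') is where the real work hides. Einstein's actual argument does not proceed quite this way; rather than building the special cover of the quotient cube complex directly from canonical completions of the base and relator pieces, he produces a malnormal quasiconvex hierarchy for a finite-index subgroup of $\ov{G}$ and then invokes Wise's $\mcal{QVH}$ theorem (Theorem~\ref{wiseQVH}). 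So your ``conceivable alternative'' in the last paragraph is closer to what Einstein does than your main line.
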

By combining Theorem \ref{dehnfilling}, Lemma \ref{existencewide}, Theorem \ref{heightfilling} and Einstein's Theorem \ref{msqt}, we obtain:
\begin{prop}\label{superfilling}Let $(G,\mcal{P})$ be a relatively hyperbolic group with each $P \in \mcal{P}$ being residually
finite, and let $\mcal{H}=\corchete{H_0, H_1, H_2\dots , H_k}$ be a collection of relatively quasiconvex subgroups of $G$. Assume that the groups $H_1,\dots , H_k$ are all distinct and contained in $H_0$, that $H_l$ is peripherally separable for $0 \leq l \leq k$, and $H_0$ is strongly peripherally separable. Consider also a finite set $A\subset G$.

Then there exist finite index subgroups $\dot{P}_i \trianglelefteq P_i$ such that for any further finite index subgroups $N_j<\dot{P}_j$, the filling $\phi : G \rightarrow \ov{G} =G(\mcal{N}) =G(N_1,\dots,N_n)$ satisfies:
\begin{enumerate}\item $\ov{G}$ is hyperbolic.
\item $\ov{H}_l=\phi(H_l)$ is quasiconvex in $\ov{G}$ and naturally isomorphic to the induced filling $H_l(\mcal{N}_{H_l})$ for each $0 \leq l \leq k$.
\item  $\phi|_A : A \rightarrow \ov{G}$ is injective and $\phi(A \cap H_l)=\phi(A) \cap \ov{H}_l$ for all $0 \leq l \leq k$.
\item For each $l$, if $H_l$ is virtually special and strongly peripherally separable, then $\ov{H}_l$ is also virtually special.
\item For $1 \leq l \leq k$, $\ov{H}_l$ is isomorphic to the filling induced by $H_0 \rightarrow \ov{H}_0$.
\item The relative height of $\corchete{\ov{H}_1,\dots , \ov{H}_k}$ in $\ov{H}_0$ is at most the relative height of $\corchete{H_1,\dots,H_k}$ in $H_0$.
\end{enumerate}
\end{prop}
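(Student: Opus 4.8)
The plan is to obtain each of the six conclusions from one of the results already cited --- Theorem~\ref{dehnfilling}, Lemma~\ref{existencewide}, Theorem~\ref{heightfilling} and Einstein's Theorem~\ref{msqt} --- each of which holds for all sufficiently long and suitably wide fillings, and then to let $\dot{P}_i$ be the intersection of the finitely many finite index subgroups of $P_i$ that these results produce. First I would fix, for every $l$, an induced peripheral structure $\mcal{D}_l$ on $H_l$ with each $D\in\mcal{D}_l$ of the form $H_l\cap P_{i_D}^{c_D}$ with $c_D$ of shortest length, and likewise a structure $\mcal{D}_0$ on $H_0$. Since $H_l<H_0$ and both are relatively quasiconvex in $G$, each $H_l$ is relatively quasiconvex in $(H_0,\mcal{D}_0)$, so the filling $H_0\to\ov{H}_0$ will itself induce a filling of $H_l$. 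I would also enlarge the finite set $A$ so that it meets $(H_l\setminus H_{l'})\cup(H_{l'}\setminus H_l)$ for every pair $l\neq l'$.

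The technical core is a pull-back step. Suppose a cited result instructs us to fill $H$ --- one of the $H_l$, or $H_0$ --- below a finite index subgroup $\dot{D}\trianglelefteq D$ of a peripheral subgroup $D=H\cap P^{c}$. Using the relevant separability hypothesis (strong peripheral separability of $H_0$, or of $H_l$ in the case of conclusion~(4), together with residual finiteness of $P$), the group $\dot{D}^{c^{-1}}$ is separable in $P$; choosing coset representatives $1=d_1,\dots,d_r$ for $\dot{D}$ in $D$ and, for each $j$, a finite index subgroup of $P$ containing $\dot{D}^{c^{-1}}$ but missing $d_j^{c^{-1}}$, their intersection $\dot{P}\trianglelefteq P$ satisfies $\dot{P}\cap D^{c^{-1}}\subset\dot{D}^{c^{-1}}$. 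Hence if the filling kernel of $P=P_i$ is contained in $\dot{P}$, the induced filling kernel $D\cap N_i^{c}$ lies in $\dot{D}$. Note also that peripheral separability of $H_l$ in $G$ passes to $H_0$ (separability descends to intermediate subgroups), so $\corchete{H_1,\dots,H_k}$ is a collection of relatively quasiconvex, peripherally separable subgroups of $(H_0,\mcal{D}_0)$ and Lemma~\ref{existencewide} applies inside $H_0$.

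With this in hand I would collect the constraints on the $N_i$. From Theorem~\ref{dehnfilling} and Lemma~\ref{existencewide} applied to $\corchete{H_0,\dots,H_k}$ and the finite set $A$ one gets the relative quasiconvexity of each $\ov{H}_l$, its canonical isomorphism with the induced filling $H_l(\mcal{N}_{H_l})$, injectivity of $\phi$ on $A$, and $\phi(A\cap H_l)=\phi(A)\cap\ov{H}_l$; since each $N_i$ is finite index, $\ov{\mcal{P}}$ consists of finite groups for long fillings, so relative hyperbolicity of $(\ov{G},\ov{\mcal{P}})$ forces $\ov{G}$ hyperbolic and each $\ov{H}_l$ quasiconvex, and the enlargement of $A$ makes the $\ov{H}_l$ distinct by Theorem~\ref{dehnfilling}(2) --- this yields (1), (2), (3). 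Conclusion~(4) comes from Einstein's Theorem~\ref{msqt} applied to each $(H_l,\mcal{D}_l)$ with $H_l$ virtually special and strongly peripherally separable: it provides finite index subgroups of the $D\in\mcal{D}_l$, which the pull-back step converts into constraints on the $N_i$, and then $\ov{H}_l\cong H_l(\mcal{N}_{H_l})$ is a hyperbolic virtually special filling of $H_l$. Conclusion~(6) comes from Lemma~\ref{existencewide} and Theorem~\ref{heightfilling} applied to $(H_0,\mcal{D}_0)$ and $\corchete{H_1,\dots,H_k}$, again using the pull-back step so that the filling of $H_0$ induced by $\phi$ is sufficiently long and $\corchete{H_1,\dots,H_k}$-wide. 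Taking $\dot{P}_i$ to be the intersection of all these finitely many subgroups, any finite index $N_j<\dot{P}_j$ meets every wideness and length requirement at once.

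For conclusion~(5): by Theorem~\ref{dehnfilling}(4) applied to $H_0$, the induced filling $\ov{H}_0=H_0(\mcal{N}_{H_0})$ is canonically isomorphic to $\phi(H_0)<\ov{G}$, so $\ov{H}_0$ embeds in $\ov{G}$; then $\ov{H}_l=\phi(H_l)$ is exactly the image of $H_l$ in $\ov{H}_0$, which, the filling of $H_0$ being long and $\corchete{H_1,\dots,H_k}$-wide, is by Theorem~\ref{dehnfilling}(4) applied inside $H_0$ canonically the filling of $H_l$ induced by $H_0\to\ov{H}_0$. Under this identification conclusion~(6) is exactly Theorem~\ref{heightfilling} for $(H_0,\mcal{D}_0)$. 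I expect the main obstacle to be the bookkeeping of the pull-back step --- arranging a single choice of $\dot{P}_i$ that makes all the induced fillings (of $H_0$ and of the virtually special $H_l$) simultaneously long and wide enough; the separability hypotheses are precisely what make this possible, and the compatibility of the two induced peripheral/filling structures on $H_l$ needed for~(5) also requires a short verification.
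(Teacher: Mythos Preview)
Your proposal is correct and follows essentially the same route as the paper: fix induced peripheral structures, obtain (1)--(3) from Theorem~\ref{dehnfilling} and Lemma~\ref{existencewide}, obtain (4) from Theorem~\ref{msqt} applied to each virtually special $H_l$, obtain (5)--(6) from Theorems~\ref{dehnfilling} and~\ref{heightfilling} applied inside $(H_0,\mcal{D}_0)$, and use strong peripheral separability to pull each constraint on $\dot D\trianglelefteq D$ back to a finite index subgroup of the ambient $P_i$. The ``short verification'' you flag for (5) --- that the filling of $H_l$ induced by $\phi$ coincides with the one induced by $\phi_0:H_0\to\ov{H}_0$ --- is exactly what the paper checks explicitly via the identity $(c_{l,i})^{-1}d_{l,i}c_{0,\beta_{l,i}}\in P_{\alpha_{l,i}}$ and normality of the filling kernels, and this same identification is what makes the height comparison in (6) a statement about $\ov{H}_0$ rather than $\ov{G}$.
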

\begin{proof}First, consider a peripheral structure $\mcal{D}_0=\corchete{D_{0,1},\dots , D_{0,k_0}}$ on $H_0$ induced by $G$ so that $D_{0,i}=H_0 \cap P^{c_{0,i}}_{\alpha_{0,i}}$ for some $1 \leq \alpha_{0,i} \leq n$ and some shortest $c_{0,i} \in G$. Also, for any $1 \leq l \leq k$ consider a peripheral structure $\mcal{D}_l=\corchete{D_{l,1},\dots , D_{l,k_l}}$ on $H_l$ induced by $(H_0, \mcal{D}_0)$, so that $D_{l,i}=H_l \cap D^{d_{l,i}}_{0,\beta_{l,i}}$ for some $1 \leq \beta_{l,i} \leq k_0$ and some shortest $d_{l,i} \in H_0$. Since $\mcal{D}_l$ is also a peripheral structure induced by $G$, we have $D_{l,i}=H_l \cap P^{c_{l,i}}_{\alpha_{l,i}}$ for $1 \leq \alpha_{l,i} \leq n$ and $c_{l,i} \in G$. The equation
\begin{equation*}
D_{l,i}=H_l \cap P^{c_{l,i}}_{\alpha_{l,i}}=H_l \cap D^{d_{l,i}}_{0,\beta_{l,i}}=H_l \cap P^{d_{l,i}c_{0,\beta_{l,i}}}_{\alpha_{0,\beta_{l,i}}}
\end{equation*}
then implies
\begin{equation}\label{eq5}
\alpha_{l,i}=\alpha_{0,\beta_{l,i}}, \text{ and }(c_{l,i})^{-1}d_{l,i}c_{0,\beta_{l,i}} \in P_{\alpha_{l,i}} .
\end{equation}
The relevance of this equation is that if $\phi : G \rightarrow G(N_1,\dots,N_n)$ induces the filling $\phi_0 : H_0 \rightarrow
H_0(K_{0,1},\dots , K_{0,k_0})$, then for any $1 \leq l \leq k$ the filling $\phi_l: H_l \rightarrow H_l(K_{l,1},\dots , K_{l,k_l})$ induced by $\phi$ is the same as the one induced by $\phi_0$. Indeed, a filling kernel of $\phi_l$ induced by $\phi$ is of the form $K_{l,i}=D_{l,i} \cap N^{c_{l,i}}_{\alpha_{l,i}}$ , while the one induced by $\phi_0$ takes the form $K^{(0)}_{l,i}=D_{l,i} \cap K^{d_{l,i}}_{0,\beta_{l,i}}=D_{l,i} \cap N^{d_{l,i}c_{0,\beta_{l,i}}}_{\alpha_{0,\beta_{l,i}}}$.
The equality $K_{l,i}=K^{(0)}_{l,i}$ then follows from $N^{c_{l,i}}_{\alpha_{l,i}}=N^{d_{l,i}c_{0,\beta_{l,i}}}_{\alpha_{0,\beta_{l,i}}}$, which is consequence of \eqref{eq5} and the fact that $N_{\alpha_{l,i}}$ is normal in $P_{\alpha_{l,i}}$.

With that in mind, by Theorem \ref{dehnfilling} we can find a finite set $S \subset\bigcup{\mcal{P}}\backslash \corchete{1}$ such that all $(\mcal{H}, S)$-wide and peripherally finite fillings $\phi : G \rightarrow \ov{G}=G(N_1,\dots,N_n)$ with $S \cap (\bigcup{N_i}) = \emptyset$ satisfy the conditions (1)-(3) (for the injectivity of $\phi|_A$, include the trivial group into $\mcal{H}$ and consider the finite set $\corchete{ab^{-1}\neq 1: a, b \in A}\subset G$). For such $S$, Lemma \ref{existencewide} gives us finite index subgroups $\dot{N}_j \trianglelefteq P_j$ such that any filling $G \rightarrow G(N_1,\dots,N_n)$ with $N_j <\dot{N}_j$ is $(\mcal{H}, S)$-wide.

Now, let $I$ be the set of $0 \leq l \leq k$ such that $H_l$ is virtually special, and apply Theorem \ref{msqt} to each pair $(H_l, \mcal{D}_l)$ with $l \in I$ to obtain finite index subgroups $\dot{D}_{l,i} \trianglelefteq D_{l,i}$ such that for any further finite index subgroups $\wtilde{D}_{l,i}<\dot{D}_{l,i}$ the filling $H_l(\wtilde{D}_{l,1},\dots , \wtilde{D}_{l,k_l})$ is virtually special.

Given $l \in I$ and $1 \leq i \leq k_l$, there is a chain of inclusions \begin{equation*}
    \dot{D}_{l,i} \cap \dot{N}^{c_{l,i}}_{\alpha_{l,i}} \trianglelefteq D_{l,i} \cap \dot{N}^{c_{l,i}}_{\alpha_{l,i}} < \dot{N}^{c_{l,i}}_{\alpha_{l,i}},
\end{equation*}
the first one being of finite index. Since $H_l$ is strongly peripherally separable, the group $\dot{D}_{l,i}$ is separable in $P^{c_{l,i}}_{\alpha_{l,i}}$ , and hence the inclusion $\dot{D}_{l,i} \cap \dot{N}^{c_{l,i}}_{\alpha_{l,i}}<\dot{N}
^{c_{l,i}}_{\alpha_{l,i}}$ is also separable. Therefore we can find a finite index subgroup $\wtilde{N}_{\gamma_{l,i}} \trianglelefteq P_{\alpha_{l,i}}$ contained in $\dot{N}_{\alpha_{l,i}}$ , such that $\dot{D}_{l,i} \cap \wtilde{N}^{c_{l,i}}_{\gamma_{l,i}}= D_{l,i} \cap \wtilde{N}^{c_{l,i}}_{\gamma_{l,i}}$. We conclude that $K_{l,i} := D_{l,i} \cap \wtilde{N}^{c_{l,i}}_{\gamma_{l,i}}$ is contained in $\dot{D}_{l,i}$ as a finite index subgroup.

For $1 \leq j \leq n$, let $I_j$ be the set of pairs $(l, i)$ with $l \in I$ and $1 \leq i \leq k_l$, such that $j= \alpha_{l,i}$. Let $\wtilde{P}_j :=\bigcap_{(l,i)\in I_j}{ \wtilde{N}_{\gamma_{l,i}}}$ if $I_j \neq \emptyset$, and $\wtilde{P}_j =\dot{N}_j$ otherwise. Since each $\dot{P}_j$ is of finite index in $P_j$ and by assumption the groups $P_j$ are residually finite, we can find finite index subgroups $\dot{P}^{(1)}_j \trianglelefteq P_j$ contained in $\wtilde{P}_j$ and disjoint from $S$. By construction any filling $\phi : G \rightarrow \ov{G}= G(N_1,\dots,N_n)$ with $N_j< \dot{P}^{(1)}_j$ of finite index satisfies the conclusions (1)-(4).

To deal with (5)-(6), note that the groups $H_1,\dots , H_k$ are peripherally separable in $(H_0, \mcal{D}_0)$, and that each group in $\mcal{D}_0$ is residually finite. Therefore, by applying Theorems \ref{dehnfilling}, \ref{existencewide}, and \ref{heightfilling}, and in the same way we constructed the groups $\dot{P}^{(1)}_j$ above, we obtain finite index subgroups $\dot{K}_{0,i} \trianglelefteq D_{0,i}$ such that for any filling $\phi_0 : H_0 \rightarrow H_0(K_{0,1},\dots , K_{0,k_0})$ with $K_{0,i}<\dot{K}_{0,i}$, we have:
\begin{itemize}
    \item[($i$)] for any $1 \leq l \leq k$ the induced filling $\phi_l$ of $H_l$ satisfies $\ker\phi_l= \ker\phi_0 \cap H_l$, and
\item[($ii$)] the height of $\corchete{\phi_0(H_1),\dots , \phi_0(H_k)}$ in $\phi_0(H_0)$ is at most the relative height of $\corchete{H_1,\dots,H_k}$ in $H_0$.
\end{itemize}
By our strong peripheral separability assumption, and by the same separability argument used to find the groups $\wtilde{N}_{\gamma_{l,i}}$, there exist finite index subgroups $\dot{P}_j \trianglelefteq P_j$ contained in $\dot{P}^{(1)}_j$, and such that $D_{0,i} \cap \dot{P}^{c_{0,i}}_{\alpha_{0,i}}<\dot{K}_{0,i}$ for any $1 \leq i \leq k_0$.

By construction, if $\phi : G \rightarrow G(N_1,\dots,N_n)$ is a peripherally finite filling with $N_i <\dot{P}_j$, then it satisfies conclusions (1)-(4). In addition, by ($i$) we have $\ker\phi \cap H_j=(\ker \phi_0 \cap H_0) \cap H_j=\ker \phi_j$ , so it also satisfies (5). Finally, note that by the discussion after equation \eqref{eq5}, the embedding $\phi_0(H_0)\hookrightarrow \phi(G)$ induces isomorphisms $\phi_0(H_j) \xrightarrow{\sim}\phi(H_j)$  for each $j$, and so the height of $\corchete{\phi_0(H_1),\dots , \phi_0(H_k)}$ in $\phi_0(H_0)$ coincides with the height of $\corchete{\phi(H_1),\dots , \phi(H_k)}$ in $\phi(H_0)$. This fact together with ($ii$) proves (6). 
\end{proof}

\section{Proof of Theorem \ref{CMVHimpliesspecial}}\label{proofoftheorem}
In this section we prove Theorem \ref{CMVHimpliesspecial}. We first recall the notion of graph of groups, referring to the work of Bass in \cite{Bass1993}.

\begin{defi}A \emph{graph of groups} is a pair $(\Gamma, \mcal{G})$ consisting of:
\begin{enumerate}\item a connected, non-empty graph $\Gamma$ with vertex set $V=V(\Gamma)$, and an oriented edge set $E=E(\Gamma)$ with an involution $e \mapsto \ov{e}$ that switches the orientation of each edge,
\item an assignment $\mcal{G} : V \sqcup E \rightarrow \textbf{Grp}$ of a group $x \mapsto G_x =\mcal{G}(x)$ for any vertex or edge $x$, such that $G_e=G_{\ov{e}}$ for any edge $e$, and
\item a set of attachment monomorphisms $\psi_e : G_e \rightarrow G_{t(e)}$ where $t(e)$ is the terminal vertex of the edge $e$.
\end{enumerate}
\end{defi}
Given a graph of groups $(\Gamma,\mcal{G})$ we consider the group $$F(\Gamma,\mcal{G})=(\ast_{v\in V}G_v) \ast F(E))/N,$$
where $F(E)$ is the free group generated by the set $E$ and $N$ is the normal subgroup generated by the relations $e^{-1}=\ov{e}$ and $e\psi_e(g)e^{-1}=\psi_{\ov{e}}(g)$ for any edge $e$ and any $g \in G_e$.
\begin{defi}The \emph{fundamental group} of $(\Gamma,\mcal{G})$ based at $v_0 \in V$ is the subgroup $$\pi_1(\Gamma, \mcal{G}, v_0)   <F(\Gamma,\mcal{G})$$
consisting of the elements of the form
$$g=g_0e_1g_1e_2 \cdots e_ng_n,$$
where $e_1, e_2,\dots, e_n$ form a \emph{circuit} in $\Gamma$ based at $v_0$ (i.e. $t(e_i)=t(\ov{e}_{i+1})$ for $1 \leq i \leq n-1$ and $t(e_n)=t(\ov{e}_1)=v_0$), and $g_i \in G_{t(\ov{e}_{i+1})}$ for any $0 \leq i \leq n$, with the convention $t(\ov{e}_{n+1})=v_0$.
\end{defi}
The isomorphism class of $\pi_1(\Gamma, \mcal{G}, v_0)$ is independent of the base-point, and the canonical maps from vertex groups into $F(\Gamma,\mcal{G})$ are injective, so we can consider any vertex group as a subgroup of $\pi_1(\Gamma, \mcal{G}, v_0)$ by means of choosing a maximal tree $T$ of $\Gamma$ containing $v_0$.
\begin{defi} We say that a group $G$ \emph{splits} as a graph of groups $(\Gamma,\mcal{G})$ if $G$ is isomorphic to $\pi_1(\Gamma, \mcal{G}, v_0)$ for some (hence any) vertex $v_0$ of $\Gamma$.
\end{defi}
The proposition below gives existence of hyperbolic and virtually special fillings for groups in $\mcal{CMVH}$, and is the key ingredient in the proof of Theorem \ref{CMVHimpliesspecial}.

\begin{prop}\label{splittingimpliessepa} Let $(G,\mcal{P}=\corchete{P_1,\dots, P_n})$ be a relatively hyperbolic group with each $P_i$ being residually finite. Suppose $G$ splits as a finite graph of groups $(\Gamma,\mcal{G})$ satisfying:
\begin{itemize}
    \item  each edge group is relatively quasiconvex in $G$ and peripherally separable,
\item if $v$ is a vertex of $\Gamma$, then the collection $\mcal{A}_v := \corchete{G_e : e \text{ attached to }v}$ is relatively malnormal in $G_v$, and
\item each vertex group is virtually special and strongly peripherally separable.
\end{itemize}
Then:
\begin{enumerate}
    \item Every relatively quasiconvex and peripherally separable subgroup of $G$ is separable.
\item Every double coset of a doubly peripherally separable pair of relatively quasiconvex and
strongly peripherally separable subgroups of $G$ is separable.
\end{enumerate}
\end{prop}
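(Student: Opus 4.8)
The plan is to prove both statements by passing to a Dehn filling $\phi\colon G\to\ov{G}$ in which $\ov{G}$ is hyperbolic \emph{and} virtually special, the relevant images stay quasiconvex, and then appealing to the separability properties of hyperbolic virtually special groups (Theorem \ref{haglundwiseseparability}); the hypotheses serve precisely to produce such a filling.

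The heart of the argument is the construction of $\phi$. First I would apply Einstein's Theorem \ref{msqt} to each vertex group $G_v$ endowed with its induced peripheral structure -- it is relatively quasiconvex, hence relatively hyperbolic, and cubulated virtually special by hypothesis -- obtaining finite index subgroups of the parabolics of $G_v$ below which every filling of $G_v$ is hyperbolic and virtually special. Combining this with Theorem \ref{dehnfilling}, Lemma \ref{existencewide}, Proposition \ref{superfilling} (applied for each vertex with $H_0=G_v$ and $\corchete{H_1,\dots,H_k}=\mcal{A}_v$), Corollary \ref{almostmalnormalfilling}, and the residual finiteness of the $P_i$, I would produce finite index subgroups $\dot{N}_i\trianglelefteq P_i$ such that any peripherally finite filling $\phi\colon G\to\ov{G}=G(N_1,\dots,N_n)$ with $N_i<\dot{N}_i$ is sufficiently long and $(\mcal{H},S)$-wide for a prescribed finite set $S$ and finite collection $\mcal{H}$ of relatively quasiconvex subgroups (containing all edge groups and the subgroup(s) to be separated), and such that: (i) the filling induced on each $G_v$ lies in the range of Theorem \ref{msqt}, so $\ov{G}_v$ is hyperbolic and virtually special; (ii) each edge group maps to a relatively quasiconvex subgroup and, by Corollary \ref{almostmalnormalfilling}, the collection $\ov{\mcal{A}}_v$ is almost malnormal in $\ov{G}_v$; and (iii) $\ov{G}$ is hyperbolic, being hyperbolic relative to the finite groups $\ov{\mcal{P}}$. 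Since the splitting $(\Gamma,\mcal{G})$ is relative to $\mcal{P}$, the filling descends to a splitting of $\ov{G}$ as a finite graph of groups over $\Gamma$ with vertex groups $\ov{G}_v$ and edge groups the injective images of the $G_e$; these edge groups are quasiconvex in the hyperbolic group $\ov{G}$ by the Bestvina--Feighn combination theorem together with (ii). By Theorem \ref{wiseQVH} each $\ov{G}_v$ lies in $\mcal{QVH}$, and since $\mcal{QVH}$ is closed under amalgamated products and HNN extensions over quasiconvex subgroups, assembling $\ov{G}$ along $\Gamma$ gives $\ov{G}\in\mcal{QVH}$; hence $\ov{G}$ is virtually special, again by Theorem \ref{wiseQVH}.

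Granting the filling, part (1) follows at once: for $H<G$ relatively quasiconvex and peripherally separable and $a\in G\setminus H$, run the construction with $H\in\mcal{H}$. By Theorem \ref{dehnfilling}(2),(3), $\ov{H}=\phi(H)$ is quasiconvex in $\ov{G}$ and $\phi(a)\notin\ov{H}$, so by Theorem \ref{haglundwiseseparability} there is a finite index $\ov{G}'<\ov{G}$ with $\ov{H}<\ov{G}'\not\ni\phi(a)$, and $\phi^{-1}(\ov{G}')$ separates $H$ from $a$; as $a$ was arbitrary, $H$ is separable. For part (2), with $H,L<G$ relatively quasiconvex and strongly peripherally separable forming a doubly peripherally separable pair and $a\in G\setminus HL$, I would additionally invoke Theorem \ref{doublecosetfilling} -- this is exactly where double peripheral separability enters -- to arrange $a\notin(\ker\phi)HL$, so that $\phi(a)\notin\ov{H}\,\ov{L}$, a product of quasiconvex subgroups of the hyperbolic virtually special group $\ov{G}$; such a product is separable (by Theorem \ref{haglundwiseseparability} together with Minasyan's theorem on products of quasiconvex subgroups of virtually compact special groups), and pulling back a finite quotient separating $\phi(a)$ from $\ov{H}\,\ov{L}$ separates $a$ from $HL$.

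The main obstacle is organizing a single filling $\phi$ that realizes (i)--(iii) \emph{and} separates $a$: this requires intersecting all the finite index subgroups of the $P_i$ furnished by the several applications of Theorem \ref{msqt}, Proposition \ref{superfilling}, Corollary \ref{almostmalnormalfilling}, and -- for part (2) -- Theorem \ref{doublecosetfilling}, keeping the result peripherally finite, and then verifying the genuinely delicate point that the resulting wide peripherally finite filling descends to a graph of groups over $\Gamma$ without collapsing any edge group, so that the combination theorem and the $\mcal{QVH}$ bookkeeping apply.
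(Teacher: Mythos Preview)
Your overall strategy---fill, get a hyperbolic virtually special quotient, and use separability there---is exactly the paper's, but there is a genuine gap in the step where you assert that ``the filling descends to a splitting of $\ov{G}$ as a finite graph of groups over $\Gamma$ with vertex groups $\ov{G}_v$.'' This is not automatic: if $K=\ker\phi$ is the normal closure in $G$ of the $N_i$, there is no reason for $G/K$ to be the fundamental group of $(\Gamma,\ov{\mcal{G}})$ with $\ov{G}_v=G_v/(K\cap G_v)$. The hypotheses do \emph{not} say the splitting is ``relative to $\mcal{P}$'' in any sense that would force this (the paper emphasizes after Definition~\ref{defiCMVH} that the only compatibility assumed is relative quasiconvexity of edge/vertex groups and relative malnormality of $\mcal{A}_v$). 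The paper handles this via Proposition~\ref{factorvspecial}: one \emph{defines} $\ov{\mbb{G}}:=\pi_1(\Gamma,\ov{\mcal{G}})$ as the graph of groups on the induced fillings of the vertex/edge groups, checks that the natural map $\Phi\colon G\to\ov{\mbb{G}}$ is itself a Dehn filling with kernels $M_j\le N_j$, and proves $\ov{\mbb{G}}$ is hyperbolic and virtually special. Crucially, $\Phi$ is \emph{not} peripherally finite in general, so $\ov{\mbb{P}}$ may contain infinite groups.

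This has a knock-on effect on your quasiconvexity claim. Theorem~\ref{dehnfilling}(3) only gives that $\Phi(H)$ is \emph{relatively} quasiconvex in $(\ov{\mbb{G}},\ov{\mbb{P}})$, which is not the same as quasiconvex in the hyperbolic group $\ov{\mbb{G}}$ when $\ov{\mbb{P}}$ has infinite members. The paper therefore first replaces $H$ by a \emph{fully} relatively quasiconvex overgroup $\hat{H}\supset H$ with $a\notin\hat{H}$ (via \cite[Thm.~6.3]{sagwis}); full relative quasiconvexity survives the filling, and then Yang's criterion \cite[Thm.~1.3]{yang} upgrades relative quasiconvexity of $\Phi(\hat{H})$ in $(\ov{\mbb{G}},\ov{\mbb{P}})$ to genuine quasiconvexity in $\ov{\mbb{G}}$. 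The same manoeuvre is needed for part~(2): the paper first invokes Corollary~\ref{almostfullydoublecoset} to pass from $HL$ to $H\hat{H}\hat{L}L$ with $\hat{H},\hat{L}$ full, and only then runs the filling-plus-Minasyan argument. Your proposal, by working directly with $H$ (and $H,L$) in a single peripherally finite filling that you also want to carry the splitting, collapses two distinct fillings into one and thereby loses both the splitting of the quotient and the route to quasiconvexity.
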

\begin{prop}\label{factorvspecial} Let $(\Gamma,\mcal{G})$ be as in the statement of Proposition \ref{splittingimpliessepa}. Then there exist finite index subgroups $\dot{P}_j \trianglelefteq P_j$ such that for any further finite index subgroups $N_j <\dot{P}_j$ there are subgroups $M_j < N_j$ (normal in $P_j$) with $\ov{\mbb{G}}=G(M_1,\dots,M_n)$
hyperbolic and virtually special.
\end{prop} 
\begin{rmk} Since we are restricting to finitely generated groups, relative quasiconvexity of edge groups implies relative quasiconvexity of vertex groups \cite[Lem.~4.9]{BigdelyWise2013Quasisplit}.
\end{rmk}
Before proving Proposition \ref{factorvspecial}, we need a lemma. Condition (2) will be used in the proof of Proposition \ref{weakseparability}.
\begin{lemma}\label{hyperbolicsplitting} Let $\ov{\mbb{G}}$ be a group splitting as a finite graph of groups $(\Gamma,\ov{\mcal{G}})$ with hyperbolic vertex groups, and edge groups quasiconvex in their corresponding vertex groups. Suppose that either:
\begin{enumerate}\item  for each vertex $v \in V=V (\Gamma)$ the collection $\ov{\mcal{P}}_v := \corchete{\ov{G}_e : e \text{ is an edge attached to }v}$ is almost malnormal in $\ov{G}_v$, or
\item $\Gamma$ is bipartite with $V(\Gamma)=V_1 \sqcup V_2$ and each edge joining vertices of $V_1$ and $V_2$, and such that for each $v \in V_1$ the collection $\ov{\mcal{P}}_v :=
\corchete{\ov{G}_e : e \text{ is an edge attached to }v}$ is almost malnormal in $\ov{G}_v$.
\end{enumerate}
Then $\ov{\mbb{G}}$ is hyperbolic and the edge groups are quasiconvex in $\ov{\mbb{G}}$.
\end{lemma}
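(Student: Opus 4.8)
The plan is to deduce both cases from the acylindrical form of the Bestvina--Feighn combination theorem: if a group splits as a \emph{finite} graph of hyperbolic groups whose edge groups are quasiconvex in the corresponding vertex groups, and the induced action on the Bass--Serre tree is acylindrical, then the fundamental group is hyperbolic and the vertex groups (hence also the edge groups) are quasiconvex in it. Thus the entire content of the lemma reduces to checking that the action of $\ov{\mbb{G}}$ on the Bass--Serre tree $T$ of $(\Gamma,\ov{\mcal{G}})$ is acylindrical, using the amount of almost malnormality supplied by hypothesis (1) or (2); the hyperbolicity of the vertex groups and the quasiconvexity of the edge groups are assumed throughout and are exactly the remaining hypotheses of the combination theorem.

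First I would isolate the local consequence of almost malnormality. Fix a vertex $v$ of $\Gamma$ over which the family $\ov{\mcal{P}}_v=\corchete{\ov{G}_e : e \text{ attached to }v}$ is almost malnormal in $\ov{G}_v$, and let $\tilde v\in T$ be a lift of $v$. If $\epsilon\neq\epsilon'$ are two edges of $T$ incident to $\tilde v$, lying over oriented edges $e,e'$ of $\Gamma$ with $t(e)=t(e')=v$, then the pointwise stabilizer $\mathrm{Stab}(\epsilon)\cap\mathrm{Stab}(\epsilon')$ inside $\ov{G}_v$ is a conjugate of $\ov{G}_e^{\,g}\cap\ov{G}_{e'}$ for some $g\in\ov{G}_v$, where either $e\neq e'$, or $e=e'$ and $g\notin\ov{G}_e$; in both situations almost malnormality forces this intersection to be finite. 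Since $\ov{G}_v$ is hyperbolic it has finitely many conjugacy classes of finite subgroups, and since $\Gamma$ is finite there is a uniform bound $B$ on the order of such an intersection, taken over all vertices $v$ satisfying the malnormality hypothesis. Hence the pointwise stabilizer of any reduced path of length $2$ in $T$ whose central vertex lies over such a $v$ has order at most $B$. (One should be mindful here that a loop of $\Gamma$ at $v$ contributes two oriented edges to $\ov{\mcal{P}}_v$, so the condition must be read for the indexed family of edge groups.)

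In case (1) every vertex of $\Gamma$ satisfies the hypothesis, so every reduced segment of $T$ of length $\geq 2$ has a central vertex of the above type and thus pointwise stabilizer of order $\leq B$; this is acylindricity with constant $2$. In case (2), $T$ is bipartite, its two parts being the unions of orbits of vertices lying over $V_1$ and over $V_2$ (this bipartition is $\ov{\mbb{G}}$-invariant since $T/\ov{\mbb{G}}=\Gamma$, and edges of $T$ join the two parts because edges of $\Gamma$ do). Hence in any reduced segment $a-b-c-d$ of length $3$ the interior vertices $b,c$ have opposite types, so one of them, say $b$, lies over $V_1$; the length-$2$ subsegment centred at $b$ has pointwise stabilizer of order $\leq B$, and a fortiori so does the length-$3$ segment. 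This gives acylindricity with constant $3$. In either case the combination theorem applies and yields that $\ov{\mbb{G}}$ is hyperbolic with each vertex group quasiconvex in it; and since $\ov{G}_e$ is quasiconvex in $\ov{G}_{t(e)}$ and quasiconvexity is transitive among quasiconvex subgroups of a hyperbolic group, every edge group is quasiconvex in $\ov{\mbb{G}}$ as well.

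The main obstacle I anticipate is the correct invocation of the combination theorem: one must verify that acylindricity of the tree action (rather than the hallways-flare condition) together with quasiconvexity of the edge groups in the hyperbolic vertex groups really does suffice over an arbitrary finite graph of groups, including the HNN-type edges coming from loops of $\Gamma$, and one must be careful with the orientation bookkeeping so that the almost malnormality hypothesis is applied to the right indexed family. A secondary technical point is the uniform bound $B$ on finite stabilizers of short segments, needed for the strong form of acylindricity; this is precisely where the finiteness of $\Gamma$ and the hyperbolicity of the vertex groups enter.
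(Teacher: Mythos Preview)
Your argument is correct and takes a genuinely different route from the paper. The paper does not argue via acylindricity of the Bass--Serre tree at all. Instead it puts a relatively hyperbolic structure on each vertex group: for $v\in V$ (resp.\ $v\in V_1$) the almost malnormal, quasiconvex family $\ov{\mcal{P}}_v$ makes $(\ov{G}_v,\ov{\mcal{P}}_v)$ relatively hyperbolic, while for $v\in V_2$ one takes the trivial peripheral structure $(\ov{G}_v,\{\ov{G}_v\})$. With these structures the edge groups become maximal parabolics on the $V$ (resp.\ $V_1$) side, and the paper then invokes the Bigdely--Wise combination theorems \cite[Cor.~4.6, Cor.~1.5]{bigwis} to conclude that $\ov{\mbb{G}}$ is hyperbolic relative to hyperbolic subgroups (the edge groups in case (1), the $V_2$-vertex groups in case (2)), hence hyperbolic; quasiconvexity of edge groups then drops out because peripherals are quasiconvex in a hyperbolic group.

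Your approach via Bestvina--Feighn is more self-contained and avoids the relative-hyperbolicity layer entirely; your verification of $2$-acylindricity in case (1) and $3$-acylindricity in case (2) is clean and correct. The cost, which you correctly flag, is that you need the acylindrical form of the combination theorem over a general finite graph (not just a single edge), together with the conclusion that vertex groups are quasiconvex; this is available (e.g.\ Kapovich's acylindricity paper, or deducing hallways-flare from acylindricity plus quasiconvex edge groups and then using Bestvina--Feighn with the Mitra/Mj addendum), but pinning down a single clean reference takes a little care. The paper's route, by contrast, gets both hyperbolicity and quasiconvexity in one stroke from \cite{bigwis}, at the price of introducing auxiliary peripheral structures.
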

\begin{proof}Since edge groups are quasiconvex in the vertex groups, by almost malnormality the pair $(\ov{G}_v,\ov{\mcal{P}}_v)$ is relatively hyperbolic for each $v \in V$ (resp. $v \in V_1$). Also, for $v \in V_2$ consider the trivial peripheral structure $(\ov{G}_v,\corchete{\ov{G}_v})$. With these conventions, each edge group is maximal parabolic in vertex groups of $V$ (resp. $V_1$), and no two of them are conjugate into a common vertex group $\ov{G}_v$ with $v \in V$ (resp. $v \in V_1$), unless they are finite. Therefore we are in the assumptions of \cite[Cor.~4.6]{BigdelyWise2013Quasisplit} (resp. \cite[Cor.~1.5]{BigdelyWise2013Quasisplit}), and hence $\ov{\mbb{G}}$ is hyperbolic relative to $\bigcup_{v\in V}{\ov{\mcal{P}}_v}-\corchete{\text{repeats}}$ (resp. $\corchete{\ov{G}_v \colon v \in V_2}$), and the edge groups are quasiconvex in some peripheral subgroup of $\ov{\mbb{G}}$. In both cases $\ov{\mbb{G}}$ will be hyperbolic relative to hyperbolic groups, hence hyperbolic as well (see e.g. \cite[Thm.~3.8]{Farb1998RH}), and edge groups will be quasiconvex since the peripheral subgroups are quasiconvex in $\ov{\mbb{G}}$. 
\end{proof}
\begin{proof}[Proof of Proposition \ref{factorvspecial}] Let us fix a vertex $v_0 \in V=V(\Gamma)$, an isomorphism $G \cong \pi_1(\Gamma, \mcal{G}, v_0)$, and a maximal tree $T$ of $\Gamma$ containing $v_0$ that induces embeddings of the vertex/edge groups of $(\Gamma,\mcal{G})$ into $G$. For each vertex, apply Proposition \ref{superfilling} to the collection $\mcal{A}_v$ and the group $G_v$, to find finite index subgroups $\dot{P}_j(v) \trianglelefteq P_j$ such that for any choice of peripherally finite filling kernels $\mcal{N}=\corchete{N_1,\dots, N_n}$ with $N_i<\dot{P}_i(v)$, the filling $\phi : G \rightarrow \ov{G}=G(N_1,\dots,N_n)$ satisfies:
\begin{itemize}
    \item  $\ov{G}$ is hyperbolic.
\item  $\ov{G}_v := \phi(G_v)$ is virtually special, quasiconvex (hence hyperbolic) in $\ov{G}$, and isomorphic to the image of the induced filling $\phi_v : G_v \rightarrow G_v(\mcal{N}_v)$.
\item The collection $\ov{\mcal{A}}_v$ of images under $\phi$ of groups in $\mcal{A}_v$ is almost malnormal in $\ov{G}_v$.
\item  Each $\ov{G}_e := \phi(G_e)$ in $\ov{\mcal{A}}_v$ is naturally isomorphic to the image of the filling $\phi_e : G_e \rightarrow G_e(\mcal{N}_e)$ induced by both $\phi$ and $\phi_v$ (that is, $\ker\phi_e=\ker\phi_v \cap G_e=\ker\phi \cap G_e$).
\end{itemize}
For $1 \leq j \leq n$ define $\dot{P}_j := \bigcap_{v\in V}{\dot{P}_j(v)}$, which is a finite index normal subgroup of $P_j$, and consider finite index subgroups $N_j<\dot{P}_j$ inducing the filling $\phi : G \rightarrow \ov{G}=G(N_1,\dots,N_n)$.

To construct $\ov{\mbb{G}}$, consider a new graph of groups $(\Gamma,\ov{\mcal{G}})$ with the same underlying graph $\Gamma$, with $\ov{\mcal{G}}$ assigning the group $\ov{G}_x$ to each $x \in V \sqcup E$, and with attaching maps being the inclusions $\ov{\psi}_e: \ov{G}_e \hookrightarrow \ov{G}_{t(e)}$ induced by $\phi$. Define $\ov{\mbb{G}}$ as $\pi_1(\Gamma, \ov{\mcal{G}}, v_0)$, and choose embeddings of the vertex groups according to the same maximal tree $T$.

This choice of embeddings induces commuting diagrams
\begin{equation*}
\begin{tikzcd}
G_e \arrow[r,"\phi_e"] \arrow[d,"\varphi_e"]  & \ov{G}_e \arrow[d,"\ov{\varphi}_e"] \\
G_{t(e)} \arrow[r,"\phi_{t(e)}"] & \ov{G}_{t(e)}  
\end{tikzcd}
\end{equation*}
all of them together inducing a homomorphism of graphs of groups (see e.g. Bass \cite{Bass1993})
$$\Phi : G \rightarrow \ov{\mbb{G}}$$
such that $\Phi(x)=\phi_v(x)$ for any vertex $v$ and for any $x \in G_v$.

By our choice of $\phi$, the splitting $(\Gamma,\ov{\mcal{G}})$ of $\ov{\mbb{G}}$ satisfies the assumptions of Lemma \ref{hyperbolicsplitting} (1), and so $\ov{\mbb{G}}$ is hyperbolic and the edge groups $\Phi(G_e)=\ov{G}_e$ are quasiconvex in $\ov{\mbb{G}}$. In addition, by assumption the vertex groups of $(\Gamma,\ov{\mcal{G}})$ are hyperbolic and virtually special, and so Theorem \ref{wiseQVH} implies that $\ov{\mbb{G}}$ is virtually special.

Finally, we need to show that $\ker\Phi=\genby{\genby{
\bigcup_j{M_j}}}_G$ for some $M_j \trianglelefteq P_j$ contained in $N_j$. To do this, first note that $\ker\Phi=\genby{\genby{
\bigcup_{v\in V}{\ker\phi_v}}}_G$, and that for each $v$ we have the equality $\ker\phi_v=\genby{\genby{\bigcup_{D\in \mcal{D}_j}{K_D}}}_{G_v}$, with $\mcal{D}_v$ being a peripheral structure on $G_v$ induced by $(G,\mcal{P})$ and $\corchete{K_D}_{D\in \mcal{D}_v}$ being the filling kernels induced by $\phi$.

Suppose each $D \in \mcal{D}_v$ is of the form $D= G_v \cap P^{c_D}_{i_D}$ with $1 \leq i_D \leq n$ and $c_D \in G$, and hence $K_D=H \cap D \cap N^{c_D}_{i_D}$. For $1 \leq j \leq n$, let $\mcal{D}_j$ denote the set of all $D \in \bigcup_v{\mcal{D}_v}$ such that $i_D=j$, and define $M_j := \genby{\genby{
\bigcup_{D\in \mcal{D}_j}{K_D^{{c_D}^{-1}}}}}_{P_j}$
if $\mcal{D}_j$ is non-empty, and $M_j := \corchete{1}$ otherwise. Note that $M_j<N_j$ for each $j$. We claim that $\genby{\genby{
\bigcup_j{M_j}}}_G=\genby{\genby{\bigcup_{v\in V}{\ker\phi_v}}}_G$ for these choices of $M_j$.

For the inclusion ``$\subset$", it is enough to show $M_j \subset \genby{\genby{\bigcup_{v\in V}{\ker\phi_v}}}_G$ for any $j$, which holds because when $\mcal{D}_j$ is non-empty, we have
\small{\begin{equation*}M_{j}=\genby{\genby{\bigcup_{D \in \mcal{D}_{j}} {K_{D}^{c_{D}^{-1}}}}}_{P_{j}} \subset\genby{\genby{\bigcup_{D \in \mcal{D}_{j}}{K_{D}^{c_{D}^{-1}}}}}_{G}=\genby{\genby{\bigcup_{D\in \mcal{D}_j}{K_{D}}}}_{G} \subset\genby{\genby{\bigcup_{v\in V}{\bigcup_{D \in \mcal{D}_{v}}{K_{D}}}}}_{G} \subset\genby{\genby{\bigcup_{v \in V} {\ker\phi_{v}}}}_{G}.
\end{equation*}}\normalsize
On the other hand, for any $v \in V$ we obtain 
\small{\begin{equation*}\ker\phi_{v}=\genby{\genby{\bigcup_{D \in \mcal{D}_{v}}{ K_{D}}}}_{G_{v}}\subset\genby{\genby{\bigcup_{D \in \mcal{D}_{v}}{K_{D}}}}_{G} \subset\genby{\genby{\bigcup_{D \in \mcal{D}_{v}}{K_{D}^{c_{D}^{-1}}}}}_{G} \subset\genby{\genby{\bigcup_{j}{\bigcup_{D \in \mcal{D}_{j}}{K_{D}^{c_{D}^{-1}}}}}}_{G}=\genby{\genby{\bigcup_{j} {M_{j}}}}_{G},
\end{equation*}}\normalsize
which proves ``$\supset$". 
\end{proof}

\begin{proof}[Proof of Proposition \ref{splittingimpliessepa}] To prove conclusion (1), let $H$ be a relatively quasiconvex and peripherally separable subgroup of $G$ and consider $a \in G\backslash H$, which we want to separate from $H$ in a finite quotient of $G$. Instead of using Dehn filling directly on $H$, we will use \cite[Cor.~6.3]{SageevWise2015Cores} and the peripheral separability of $H$ to find a fully relatively quasiconvex subgroup $\hat{H}<G$ containing $H$ and such that $a\notin \hat{H}$, so now it is enough to separate $a$ from $\hat{H}$. By Proposition \ref{dehnfilling} (1)-(3) there exists a finite set $S\subset\bigcup{\mcal{P}}\backslash \corchete{1}$ such that for any $(\hat{H},S)$-wide filling $\phi : G \rightarrow \ov{G}=G(N_1,\dots,N_n)$ with $S \cap \bigcup_j{N_j} = \emptyset$, the pair $(\ov{G},\ov{\mcal{P}})$ is relatively hyperbolic with $\ov{\hat{H}}=\phi(\hat{H})$ fully relatively quasiconvex in $(\ov{G},\ov{\mcal{P}})$ and $\phi(a) \notin \ov{\hat{H}}$. Since the groups $P_j$ are residually finite, there exist finite index subgroups $Q_j \trianglelefteq P_j$ with each $Q_j$ being disjoint from $S$. Also, since $\hat{H}$ is fully relatively quasiconvex, it is peripherally separable and by Lemma \ref{existencewide} there exist finite index subgroups $\dot{N}_j \trianglelefteq P_j$ such that any filling $G \rightarrow G(N_1,\dots,N_n)$ with $N_j<\dot{N}_j$ for each $j$ is $(\hat{H}, S)$-wide.

Let $\dot{P}_j \trianglelefteq P_j$ be given by Proposition \ref{factorvspecial}, and set $N_j := \dot{N}_j \cap \dot{P}_j \cap Q_j \trianglelefteq P_j$. By construction, $\phi : G \rightarrow \ov{G}=G(N_1,\dots,N_n)$ is an $(\hat{H}, S)$-wide filling factoring through the $(\hat{H}, S)$-wide filling $\Phi : (G,\mcal{P}) \rightarrow (\ov{\mbb{G}}, \ov{\mbb{P}})$, with $(\ov{\mbb{G}}, \ov{\mbb{P}})$ relatively hyperbolic, $\Phi(\hat{H})$ fully relatively quasiconvex in $(\ov{\mbb{G}}, \ov{\mbb{P}})$ and $\Phi(a) \notin \Phi(\hat{H})$ (here $\ov{\mbb{P}}$ denotes the set of images of groups in $\mcal{P}$ under $\Phi$). Also, $\ov{\mbb{G}}$ is hyperbolic and virtually special, and therefore, to solve the problem it is enough to show that $\Phi(\hat{H})$ is separable in $\ov{\mbb{G}}$.

By Theorem \ref{haglundwiseseparability} it suffices to show that $\Phi(\hat{H})$ is quasiconvex in $\ov{\mbb{G}}$, i.e. that $\Phi(\hat{H})$ is relatively quasiconvex in $(\ov{\mbb{G}}, \emptyset)$. But $(\ov{\mbb{G}}, \ov{\mbb{P}})$ is an extended peripheral structure for $(\ov{\mbb{G}}, \emptyset)$, and so by \cite[Thm.~1.3]{Yang2014Periph} it is enough to show that $\Phi(\hat{H}) \cap \Phi(P)$ is quasiconvex in $\ov{\mbb{G}}$ for any peripheral subgroup $P \subset G$. This last statement follows from the quasiconvexity of $\Phi(P)$ in $\ov{\mbb{G}}$ \cite[Lem.~4.15]{DrutuSapir2005Treegraded}, and the full relative quasiconvexity of $\Phi(\hat{H})$ in $\ov{\mbb{G}}$, since quasiconvexity is stable under finite index inclusions.

Now, let $H, L<G$ be relatively quasiconvex and strongly peripherally separable subgroups such that $H, L$ is doubly peripherally separable, and consider $a \in G\backslash HL$. Conclusion (1) implies that the peripheral subgroups of $G$ are separable, as well as any of their finite index subgroups. In addition, by our strong peripheral separability assumption, any finite index subgroup of $H$ or $L$ will be peripherally separable, hence also separable. Therefore, $H$ and $L$ satisfy the assumptions of Corollary \ref{almostfullydoublecoset}, a so there are fully relatively quasiconvex subgroups $\hat{H}$ and $\hat{L}$ with $H \cap\hat{H}<H$ and $L \cap \hat{L}<L$ of finite index, and such that $a \notin H\hat{H}\hat{L}L$.

Thus, to prove conclusion (2) we only need to verify that $H\hat{H}\hat{L}L$ is separable, and since this set is a finite union of translates of $\hat{H}\hat{L}$, it is enough to show that double cosets of fully relatively quasiconvex subgroups are separable. This can be done in the same way as in the proof of conclusion (1), by using Proposition \ref{factorvspecial} together with Theorem \ref{doublecosetfilling} and the separability of double cosets of quasiconvex subgroups of hyperbolic virtually special groups \cite[Thm.~1.1]{Minasyan2006GFERF}. Details are left to the reader.
\end{proof}
\begin{rmk}The last part in the proof of conclusion (2) can also be deduced directly from \cite[Cor.~4.4]{McClellan2019}.
\end{rmk}
\begin{proof}[Proof of Theorem \ref{CMVHimpliesspecial}] Let $(G,X)$ be a cubulated group in $\mcal{CMVH}$. We will prove that $(G,X)$ is virtually special by induction on the minimal number of operations $(1)-(3)$ used in a description of $(G,X)$ (see Definition \ref{defiCMVH}), where clearly $(\corchete{1}, X)$ is virtually special if $X$ is a finite $\CAT{0}$ cube complex.

First, suppose that $H<G$ is of finite index with $(H, X) \in \mcal{CMVH}$. Our inductive assumption implies that $(H, X)$ is virtually special, so clearly $(G,X)$ is also virtually special.

Now, let $G$ be splitting as a finite graph of groups $(\Gamma,\mcal{G})$ such that:
\begin{itemize}
\item if $u \in V(\Gamma) \sqcup E(\Gamma)$ then $G_u$ has convex core $X_u \subset X$, 
\item if $v \in V(\Gamma)$ then $(G_v, X_v) \in \mcal{CMVH}$ (and hence $(G_v, X_v)$ is virtually special by our inductive assumption and Lemma \ref{consistentcompatible}), and
\item $\mcal{A}_v$ is relatively malnormal in $G_v$ for any $v \in V(\Gamma)$.
 \end{itemize}
To show that $(G,X)$ is virtually special we will use Theorem \ref{haglundwisedoublecoset}, but first we need to show that $G$ satisfies the hypotheses of Proposition \ref{splittingimpliessepa}. This follows because by our compatibility assumption the peripheral subgroups are residually finite, and because by Lemma \ref{compimpliesstrongly}, all convex subgroups of $(G,X)$ are strongly peripherally separable.

Therefore, $(G,X)$ satisfies Proposition \ref{splittingimpliessepa}, so any relatively quasiconvex and peripherally separable subgroup of $G$ is separable, as well as any double coset of a doubly peripherally separable pair of relatively quasiconvex and strongly peripherally separable subgroups. In particular, the wall stabilizers of $(G,X)$ are separable, so by Theorem \ref{haglundwisedoublecoset} it is enough to show that the pair $G_{W_1},G_{W_2}$ is doubly peripherally separable for any pair of walls if $W_1, W_2 \subset X$. To prove this, let $g_1,g_2\in G$ and let $P<G$ be a peripheral subgroup so that $G_{{g_1}^{-1}W_1}\cap P=(G_{W_1})^{g_1} \cap P$ and $G_{{g_2}^{-1}W_2}\cap P=(G_{W_2})^{g_2} \cap P$ are both infinite. By Theorem \ref{sageevwise} choose a convex core $Z \subset X$ for $P$ such that ${g_1}^{-1}W_1 \cap Z$ and ${g_2}^{-1}W_2 \cap Z$ are both non-empty, so that $(P,Z)$ is virtually special by Proposition \ref{independencecore}. The subgroups $G_{{g_1}^{-1}W_1}\cap P$ and $G_{{g_2}^{-1}W_2}\cap P$ are then convex in $(P,Z)$ by Remark \ref{wallsubcomplex}, and the conclusion follows from Theorem \ref{doublecosetconvexsep}.
\end{proof}

\section{Construction of the complex with finite walls}\label{constructionofthecomplex}
Now we start the proof of Theorem \ref{relhypimpliesCMVH}, by induction on the dimension of $X$ (the 0-dimensional
case is evident). Henceforth, let $(G,X)$ be a cubulated relatively hyperbolic group with compatible virtually special peripherals, and assume that:

\emph{$(H, \dot{Y}) \in  \mcal{CMVH}$ for every cubulated and relatively hyperbolic group $(H, Y)$ with compatible virtually special peripheral subgroups and such that $\dim{Y}<\dim{\dot{X}}=\dim{X}$}.

From Corollary \ref{vspecialsubdivision}, $(G,\dot{X})$ is also hyperbolic relative to compatible virtually special subgroups.

Fix $R \geq \delta+2\sqrt{\dim X}$, and let $W_1,\dots, W_m$ be a complete set of representatives of $G$-orbits of walls in $\dot{X}$. The goal of this section is to prove the following.
\begin{thm}\label{finitewalls} There exists a torsion-free normal subgroup $K\trianglelefteq G$ such that the quotient cube
complex $\mcal{X} := \dot{X}/K$ satisfies:
\begin{enumerate}
\item $G$ acts cocompactly on $\mcal{X}$.
\item All walls of $\mcal{X}$ are finite.
\item  If $W$ is a wall of $\dot{X}$ then the $R$-neighborhood $N_R(W)$ quotiented by $K \cap G_W$ embeds in $\mcal{X}$. In particular, all walls of $\mcal{X}$ are embedded, and distinct walls in $\dot{X}$ which are less than $R$ apart map to distinct walls in $\mcal{X}$.
\end{enumerate}
\end{thm}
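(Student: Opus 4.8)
The plan is to obtain $K$ as the kernel of a carefully chosen \emph{finite} quotient of $G$ — so that $\mcal{X}=\dot{X}/K$ is a finite cube complex on which $G$ acts through $G/K$ — built by first passing to a hyperbolic virtually special Dehn filling of $(G,\mcal{P})$ in which the wall stabilizers behave well, and then pulling back finite quotients that separate each wall from its finitely many nearby translates. The key new ingredient beyond the hyperbolic case is a dimension induction: each wall $W_i$ is a convex subcomplex of (the barycentric subdivision of) $\dot{X}$ of dimension $\dim X-1$ on which $G_{W_i}$ acts cocompactly, so $G_{W_i}$ is convex, hence relatively quasiconvex in $G$ by Theorem \ref{hruskaundistorted} and strongly peripherally separable by Lemma \ref{compimpliesstrongly}; by Lemma \ref{consistentcompatible} the cubulated group $(G_{W_i},W_i)$ is again relatively hyperbolic with compatible virtually special peripherals, so the inductive hypothesis places $(G_{W_i},\dot{W}_i)$ in $\mcal{CMVH}$, and Theorem \ref{CMVHimpliesspecial} shows that $G_{W_i}$ is virtually compact special.

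Next I would set up the filling. Fix a finite set $A\subset G$ containing representatives of the finitely many conjugacy classes of torsion elements of $G$ (Lemma \ref{finiteclasses}(1)) together with, for each $i$, representatives of the finitely many double cosets $G_{W_i}gG_{W_i}$ with $g\notin G_{W_i}$ and $d(\mcal{N}(W_i),g\mcal{N}(W_i))\le R'$, where $R'$ is a suitable constant depending on $R$ and $\dim X$ (Lemma \ref{finiteclasses}(2), applied with the convex core $\mcal{N}(W_i)$ of $G_{W_i}$). Applying Proposition \ref{superfilling} to the collection $\{G,G_{W_1},\dots,G_{W_m}\}$ (with $H_0=G$) and to $A$ yields finite-index subgroups $\dot{P}_j\trianglelefteq P_j$ such that every filling $\phi\colon G\to\ov{G}=G(N_1,\dots,N_n)$ with $N_j<\dot{P}_j$ is hyperbolic, $\phi|_A$ is injective with $\phi(A\cap G_{W_i})=\phi(A)\cap\ov{G}_{W_i}$ for each $i$, and each $\ov{G}_{W_i}:=\phi(G_{W_i})$ is quasiconvex in $\ov{G}$ and — since $G_{W_i}$ is virtually special and strongly peripherally separable — virtually special. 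Shrinking the $\dot{P}_j$ further and using residual finiteness of the (virtually special) peripherals, I would also arrange that $\ker\phi$ is torsion-free and, crucially, that $\ov{G}$ is itself virtually special; for the latter one arranges the filling so that $\ov{G}$ acts properly and cocompactly on a $\CAT{0}$ cube complex and then invokes Agol's Theorem \ref{agol} (this is where Einstein's relative malnormal special quotient theorem and the machinery for cubulating Dehn fillings enter). I expect this to be the main obstacle: everything downstream collapses unless $\ov{G}$ is residually finite with separable quasiconvex subgroups, i.e.\ virtually special, rather than merely hyperbolic, and one must keep the filling kernel torsion-free at the same time.

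Finally I would pull back and verify the three conclusions. Since $\ov{G}$ is hyperbolic and virtually special it is residually finite and its quasiconvex subgroups are separable (Theorem \ref{haglundwiseseparability}); moreover $\phi(g)\notin\ov{G}_{W_i}$ for each representative $g\in A$ with $g\notin G_{W_i}$, by injectivity of $\phi|_A$ and $\phi(A\cap G_{W_i})=\phi(A)\cap\ov{G}_{W_i}$. Hence there is a finite-index \emph{normal} torsion-free subgroup $\ov{K}\trianglelefteq\ov{G}$ with $\phi(g)\notin\ov{K}\,\ov{G}_{W_i}$ for all such $g$ and all $i$. Put $K:=\phi^{-1}(\ov{K})$, a finite-index normal subgroup of $G$; it is torsion-free, since a nontrivial torsion element of $K$ would lie either in $\ker\phi$ or map to a nontrivial torsion element of $\ov{K}$, both of which are torsion-free. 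Thus $\mcal{X}=\dot{X}/K$ is a finite cube complex on which $G$ acts cocompactly and all of whose walls are finite, giving (1) and (2). For (3), suppose $x,y\in N_R(W)$ have the same image in $\mcal{X}$, say $y=kx$ with $k\in K$; then $\mcal{N}(W)$ meets $k^{-1}\mcal{N}(W)$, so, writing $W=hW_i$, the element $g_0:=h^{-1}k^{-1}h$ satisfies $d(\mcal{N}(W_i),g_0\mcal{N}(W_i))\le R'$ (this is where the choice of $R'$ and the hypothesis $R\ge\delta+2\sqrt{\dim X}$ are used). If $k\notin G_W$ then $g_0\notin G_{W_i}$, so $g_0$ lies in one of the chosen double cosets $G_{W_i}gG_{W_i}$, whence $\phi(g_0)\in\ov{G}_{W_i}\phi(g)\ov{G}_{W_i}$; but $\phi(g_0)=\phi(h)^{-1}\phi(k)^{-1}\phi(h)\in\ov{K}$ since $\ov{K}$ is normal, so $\phi(g)\in\ov{K}\,\ov{G}_{W_i}$ (again using normality of $\ov{K}$), contradicting the choice of $\ov{K}$. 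Hence $k\in G_W$, so $k\in K\cap G_W$, which is exactly what makes $N_R(W)/(K\cap G_W)\to\mcal{X}$ injective. In particular each wall embeds (as $W\subset N_R(W)$), and two distinct walls less than $R$ apart stay distinct, since identifying them would produce an element of $K$ carrying a wall onto a distinct nearby one, which the same argument excludes.
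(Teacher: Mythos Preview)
Your proposal correctly sets up the first stage (dimension induction gives each $G_{W_i}$ virtually special, then Proposition \ref{superfilling} produces a hyperbolic filling $\ov{G}$ in which each $\ov{G}_{W_i}$ is quasiconvex and virtually special, with the finite set $A$ separated as needed). However, the step you yourself flag as ``the main obstacle'' --- arranging that $\ov{G}$ is virtually special --- is a genuine gap that you do not close, and your suggested route through Einstein's Theorem \ref{msqt} is circular: that theorem has as a \emph{hypothesis} that $G$ is already cubulated and virtually special, which is precisely what Theorem \ref{finitewalls} is being used to prove (via Theorem \ref{relhypimpliesCMVH} and Theorem \ref{CMVHimpliesspecial}). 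There is no mechanism available here for cubulating the Dehn filling $\ov{G}$ without already knowing specialness of $G$. Without virtual specialness of $\ov{G}$ you cannot invoke separability of the quasiconvex subgroups $\ov{G}_{W_i}$ (Theorem \ref{haglundwiseseparability}), and hence cannot produce the finite-index normal $\ov{K}\trianglelefteq\ov{G}$ with $\phi(g)\notin\ov{K}\,\ov{G}_{W_i}$ for the finitely many offending $g$; the rest of your argument for (3) collapses at exactly this point.

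The paper avoids this entirely: it never asks $\ov{G}$ to be virtually special. Instead, inside the merely hyperbolic $\ov{G}$ it builds, by iterated applications of Gitik's ping-pong theorem, a single quasiconvex subgroup $\ov{H}<\ov{G}$ that contains each $\ov{G}_{W_i}$ up to finite index and is virtually special (via Theorem \ref{wiseQVH}, since it is assembled as amalgams of the virtually special $\ov{G}_{W_i}$ over quasiconvex subgroups). It then applies the Agol--Groves--Manning weak separation theorem (Theorem \ref{agolgrovesmanning}), whose hypothesis is only that the \emph{subgroup} be quasiconvex and virtually special, to obtain quotients of $\ov{G}$ in which $\ov{H}$ (and hence each $\ov{G}_{W_i}$) becomes finite while the bad elements survive. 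The resulting $K\trianglelefteq G$ is \emph{not} of finite index --- so $\mcal{X}$ is not a finite complex --- but $K\cap G_W$ is finite index in each $G_W$, which is exactly what is required for (2) and (3).
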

We will need the following weak separability result for virtually special quasiconvex subgroups of hyperbolic groups, which is the main result in the Appendix of \cite{Agol2012VirtualHaken}.
\begin{thm}[Agol-Groves-Manning]\label{agolgrovesmanning} Let $G$ be a hyperbolic group and $H<G$ be a quasiconvex virtually special subgroup. Then for any $g \in G\backslash H$ there is a hyperbolic group $\mcal{G}$ and a surjective homomorphism $\psi : G \rightarrow \mcal{G}$ such that $\psi(g) \notin \psi(H)$ and $\psi(H)$ is finite.
\end{thm}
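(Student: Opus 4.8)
The plan is to induct on the \emph{height} $n$ of $H$ in the hyperbolic group $G$, which is finite by \cite{gmrs} (equivalently by Corollary \ref{heightfinite} applied with empty peripheral structure; in a hyperbolic group every infinite-order element is loxodromic, so ``relative height'' is ordinary height). If $H$ is finite we take $\mcal{G}=G$ and $\psi=\mathrm{id}$, so we may assume $H$ infinite, hence $n\geq 1$. We use freely that a quasiconvex subgroup of a hyperbolic group is itself hyperbolic and undistorted, and that a hyperbolic virtually special group is residually finite with all quasiconvex subgroups separable (the ``hyperbolic'' half of Theorem \ref{haglundwiseseparability}); in particular $H$ is a residually finite hyperbolic virtually special group.

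\emph{Base case $n=1$.} Then $H$ is almost malnormal and quasiconvex in $G$, so $(G,\{H\})$ is relatively hyperbolic (standard; see e.g.~\cite{bigwis}). Apply Theorem \ref{dehnfilling} to $(G,\{H\})$ with the relatively quasiconvex subgroup $H$ itself and the finite set $A=\{g\}$; since $H$ is almost malnormal its only conjugacy class of infinite intersection $H\cap H^g$ is represented by $H$, so every filling is automatically $H$-wide, and one obtains a finite ``bad set'' $S\subset H\setminus\{1\}$ beyond which the conclusions of Theorem \ref{dehnfilling} hold. As $H$ is residually finite we may pick a finite-index normal subgroup $H_0\trianglelefteq H$ realizing a sufficiently long such filling $\phi\colon G\to\bar G:=G/\genby{\genby{H_0}}_G$. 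Then $(\bar G,\{\phi(H)\})$ is relatively hyperbolic by Theorem \ref{dehnfilling}(1); by part (4), $\phi(H)$ is canonically isomorphic to the induced filling $H/H_0$, which is finite, so $\bar G$ is in fact hyperbolic; and by part (2), $\phi(g)\in\phi(A)\cap\phi(H)=\phi(A\cap H)=\emptyset$, so $\phi(g)\notin\phi(H)$. Thus $\mcal{G}:=\bar G$ and $\psi:=\phi$ work.

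\emph{Inductive step $n\geq 2$.} The key move is a \emph{height reduction}: one equips $G$ with a relatively hyperbolic structure $(G,\mcal{P})$, where $\mcal{P}=\{P_1,\dots,P_m\}$ is an almost malnormal family of quasiconvex subgroups of $G$ (namely conjugacy representatives of the commensurators in $G$ of the maximal infinite intersections of conjugates of $H$), arranged so that $H$ is \emph{fully} relatively quasiconvex in $(G,\mcal{P})$ and its relative height there is strictly less than $n$. Granting this: $H$ is relatively quasiconvex in $(G,\mcal{P})$ by Theorem \ref{hruskaundistorted}, and full relative quasiconvexity gives $[P_i:H\cap P_i]<\infty$, so the normal core $C_i:=\bigcap_{g\in P_i}(H\cap P_i)^g\trianglelefteq P_i$ is a finite-index subgroup of $P_i$ contained in $H$, hence residually finite. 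Combining Lemma \ref{existencewide}, Theorem \ref{dehnfilling}, Theorem \ref{heightfilling} and Theorem \ref{msqt} (the last applied to the cubulated virtually special group $H$ with its induced peripheral structure), we choose sufficiently deep finite-index normal subgroups $N_i\trianglelefteq P_i$ with $N_i\leq C_i$; the filling $\phi\colon G\to\bar G:=G(N_1,\dots,N_n)$ is then $\{H\}$-wide, satisfies the conclusions of Theorems \ref{dehnfilling} and \ref{heightfilling} for $A=\{g\}$, and induces a peripherally finite, hence hyperbolic and virtually special, filling of $H$. Consequently each $\phi(P_i)=P_i/N_i$ is finite, so $\bar G$ is hyperbolic; $\phi(H)$ is quasiconvex in $\bar G$ by Theorem \ref{dehnfilling}(3) and virtually special; its height in $\bar G$ equals its relative height in $(\bar G,\ov{\mcal{P}})$, which by Theorem \ref{heightfilling} is at most the relative height of $H$ in $(G,\mcal{P})$, hence $<n$; and $\phi(g)\notin\phi(H)$ by Theorem \ref{dehnfilling}(2). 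Applying the inductive hypothesis to $(\bar G,\phi(H),\phi(g))$ and composing with $\phi$ completes the induction.

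\emph{Main obstacle.} The delicate point is the height-reduction step itself — that a hyperbolic group $G$ with a quasiconvex subgroup $H$ of height $n\geq 2$ admits an almost malnormal quasiconvex peripheral family $\mcal{P}$ relative to which $H$ is fully relatively quasiconvex with strictly smaller relative height. This rests on the structure theory of intersections of conjugates of quasiconvex subgroups of hyperbolic groups (finiteness of height, coarse intersection estimates, quasiconvexity of commensurators), and is precisely the technical heart of the appendix of \cite{agolhaken}. Once it is granted, hyperbolicity and virtual specialness of the successive quotients, and the persistence of $\phi(g)\notin\phi(H)$, follow mechanically from the Dehn filling machinery developed above.
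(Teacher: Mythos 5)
This theorem is not proved in the paper at all: it is quoted verbatim as the main result of the appendix of \cite{agolhaken} (by Agol, Groves and Manning), so there is no internal proof to compare yours against. What you have written is a faithful reconstruction of the argument of that appendix: induction on the height of $H$, with the base case handled by filling along a deep finite-index normal subgroup of the almost malnormal peripheral $H$ itself, and the inductive step handled by passing to an almost malnormal quasiconvex peripheral structure $\mcal{P}$, performing a sufficiently long wide peripherally finite filling, and invoking the malnormal special quotient theorem on $H$ (with its induced structure) to keep $\phi(H)$ virtually special while the (relative) height drops. Your assembly of the filling toolkit is correct: wideness is automatic in the base case, Theorem \ref{dehnfilling}(2) with $A=\corchete{g}$ preserves $\phi(g)\notin\phi(H)$, Theorem \ref{heightfilling} (for a single subgroup) gives the height decrease, hyperbolicity of $\ov{G}$ follows because the filled peripherals are finite, and the ``sufficiently deep'' choice of the $N_i$ inside the normal cores of the MSQT subgroups is the same refinement carried out in Proposition \ref{superfilling}.

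That said, as a standalone proof your write-up has one genuine gap, which you yourself flag: the height-reduction step, i.e.\ the existence of an almost malnormal quasiconvex family $\mcal{P}$ (the commensurators of the maximal infinite intersections of conjugates of $H$) with respect to which $G$ is relatively hyperbolic, $H$ is \emph{fully} relatively quasiconvex, each $P_i$ is commensurable with a subgroup of $H$ (hence virtually special and residually finite), and the relative height of $H$ is strictly smaller than its height. None of this is established by the machinery of Sections 2--3 of the present paper; it requires the coarse intersection and commensurator results for quasiconvex subgroups of hyperbolic groups that occupy most of the cited appendix. So the proposal should be read as a correct outline of the known proof with its central lemma taken on faith, not as an independent argument.
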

\begin{proof}[Proof of Theorem \ref{finitewalls}] For every $1 \leq i \leq m$, consider a set $\mcal{A}_i$ of representatives of double cosets $G_{W_i}gG_{W_i}$ with $g \notin G_{W_i}$ and $d(W_i, gW_i) \leq 2R$. Also, choose a set $\mcal{T}\subset G$ of representatives of conjugacy classes of non-trivial torsion elements of $G$. These sets are finite by Lemma \ref{finiteclasses}. Set $\mcal{H}=\corchete{G_{W_1},\dots, G_{W_m}}$.

As a first step, since each subgroup $G_{W_i}$ is convex in $(G,\dot{X})$, Lemma \ref{compimpliesstrongly} and Proposition \ref{superfilling} imply that we can find a peripherally finite Dehn filling $\phi : G \rightarrow \ov{G}=G(\dot{P}_1,\dots, \dot{P}_n)$ so that $\ov{G}$ is hyperbolic, each $\ov{G}_i:= \phi(G_{W_i})$ is quasiconvex, virtually special and disjoint from $\phi(\mcal{A}_i)$, and $1 \notin \phi(\mcal{T})$.

Our second step is to find a quasiconvex and virtually special subgroup $\ov{H}< \ov{G}$ such that $\ov{H} \cap \ov{G}_i< \ov{G}_i$ is finite index for $1 \leq i \leq m$. We will do this by inducting on $k$, and finding for each $1 \leq j \leq m$ a quasiconvex and virtually special subgroup $\ov{H}_j<\ov{G}$ such that $\ov{H}_j \cap \ov{G}_i <\ov{G}_i$ is finite index for $1 \leq i \leq j$. For the base case we choose $\ov{H}_1=\ov{G}_1$.

Suppose we have found $\ov{H}_j$, and consider the intersection $L=\ov{H}_j \cap \ov{G}_{j+1}$. By Gitik's ping-pong theorem \cite[Thm.~1]{Gitik1999PingPong}, there exists a finite set $F \subset (\ov{H}_j \cup \ov{G}_{j+1})\backslash L$ such that if $\wtilde{H}_j<\ov{H}_j$ and $\wtilde{G}_{j+1}<G_{j+1}$ are finite index subgroups with $\wtilde{H}_j \cap \wtilde{G}_{j+1}=L$, and $\wtilde{H}_j$, $\wtilde{G}_{j+1}$ disjoint from $F$, then $\genby{\wtilde{H}_j \cup \wtilde{G}_{j+1}}$ is quasiconvex in $\ov{G}$ and isomorphic to $\wtilde{H}_j \ast_L \wtilde{G}_{j+1}$. The existence of finite index subgroups $\wtilde{H}_j$ and $\wtilde{G}_{j+1}$ disjoint from $F$ is guaranteed by subgroup separability, since by assumption $\ov{H}_j$ , $\ov{G}_{j+1}$ are virtually special and $L$ is quasiconvex in both groups, so we can apply Theorem \ref{haglundwiseseparability}.

Define $\ov{H}_{j+1} := \genby{\wtilde{H}_j \cup \wtilde{G}_{j+1}}$. By construction $\ov{H}_{j+1} \cap \ov{G}_{W_i}$ is finite index in $\ov{G}_{W_i}$ for $1 \leq i \leq j+1$. The virtual specialness of $\ov{H}_{j+1}$ follows from Theorem \ref{wiseQVH} and the isomorphism $\ov{H}_{j+1} \cong \wtilde{H}_j \ast_L \wtilde{G}_{j+1}$, where $\wtilde{H}_j$ and $\wtilde{G}_{j+1}$ are virtually special (finite index subgroups of virtually special groups), and $L$ is quasiconvex in $\ov{H}_{j+1}$ since both $\wtilde{H}_{j}$ and $\wtilde{G}_{j+1}$ are quasiconvex subgroups of $\ov{G}$. The induction is then complete, and set $\ov{H}=\ov{H}_m$.

Before the end of the proof, and after possibly replacing $\ov{H}$ by a finite index subgroup, we can assume that $\ov{H}$ does not intersect $\phi(\mcal{T})$. This is because $\ov{H}$ is virtually special, hence residually finite, and $\phi(\mcal{T})$ is finite. This modification does not affect the expected properties for $\ov{H}$.

For $1 \leq i \leq m$, let $\ov{N}_i=\ov{H}\cap \ov{G}_i$, and define $N_i:= \phi^{-1}(\ov{N}_i) \cap G_{W_i}<G_{W_i}$. Note that $N_i$ is finite index in $G_{W_i}$ for each $i$, so $N_i$ acts cocompactly on $W_i$, and by Lemma \ref{finiteclasses} (2) there is a finite set $\wtilde{\mcal{A}}_i$ of representatives of double cosets $N_igN_i$ with $g \notin N_i$ and $d(gW_i, W_i) \leq 2R$. We claim that $\phi(\wtilde{\mcal{A}}_i) \cap \ov{N}_i=\emptyset$ for each $i$. Indeed, let $a \in \wtilde{\mcal{A}}_i$, and suppose first that $a \notin G_{W_i}$. In that case, $a=g_1bg_2$ for some $b \in \mcal{A}_i$ and $g_1, g_2 \in G_{W_i}$, implying $\phi(a)=\phi(g_1)\phi(b)\phi(g_2) \notin \ov{G}_i\supset \ov{N}_i$ since by construction $\phi(b) \notin \ov{G}_i$. In the case $a \in G_{W_i}$, the conclusion follows from the definition of $N_i$. Proven our claim, the groups $\ov{N}_i$ are quasiconvex in $\ov{H}$, hence separable by Theorem \ref{haglundwiseseparability},
and we can find finite index subgroups $\hat{H}_i<\ov{H}$ such that $\hat{H}_i \cap \ov{G}_i =\ov{N}_i$ and $\hat{H}_i \cap \phi(\wtilde{\mcal{A}}_i)=\emptyset$ for all $i$. Note that the groups $\hat{H}_i$ are quasiconvex in $\ov{G}$ and virtually special.

Now we construct $K$. For each $1 \leq i \leq m$ and each $g \in \phi(\wtilde{\mcal{A}}_i)$ we use Theorem \ref{agolgrovesmanning} to find a quotient homomorphism $\varphi_g$ of $\ov{G}$ with $\varphi_g(\hat{H}_i)$ finite and $\varphi_g(g) \notin \varphi_g(\hat{H}_i)$. Similarly, for each $g \in \phi(\mcal{T})$ we construct a quotient homomorphism $\tau_g$ of $\ov{G}$ with $\tau_g(g) \notin \tau_g(\ov{H})$ and $\tau_g(\ov{H})$ finite.

Define $K:=\phi^{-1}\left(\left(\bigcap_{g \in \cup_{i}{ \wtilde{\mcal{A}}_{i}}} {\ker\varphi_{g}}\right) \cap\left(\bigcap_{g \in \mathcal{T}}{\ker\tau_{g}}\right)\right) \trianglelefteq G$ and $\mcal{X} := \dot{X}/K$. Note that $K$ is disjoint from $\mcal{T}$, hence torsion-free, so $\mcal{X}$ is a cube complex. Clearly $G$ acts cocompactly on $\mcal{X}$, so (1) holds, and by construction $K \cap G_W$ is finite index in $G_W$ for each wall $W$, implying (2). Finally, if $x, y \in N_R(W_i)$ and $k \in K$ are such that $x=ky$, then $d(kW_i, W_i) \leq 2R$. So if $k \notin K_i$, there is some $g \in \wtilde{\mcal{A}}_i$ with $k=h_1gh_2$ for some $h_1, h_2 \in N_i$. But this is impossible since it would imply
\begin{equation*}
    \varphi_{\phi(g)}(\phi(g))=(\varphi_{\phi(g)} \circ \phi)(h_{1}^{-1} k h_{2}^{-1})=(\varphi_{\phi(g)} \circ \phi)(h_{1}^{-1} h_{2}^{-1}) \in \varphi_{\phi(g)}(\ov{N}_{i}) \subset \varphi_{\phi(g)}(\hat{H}_{i}).
\end{equation*}
Therefore $k \in N_i \subset G_{W_i}$, and the map $N_R(W_i)/(K\cap G_{W_i}) \rightarrow \mcal{X}$ is an embedding for all $1 \leq i \leq m$. Property (3) then follows from the normality of $K$ and by considering translates of the $W_i$.
\end{proof}

The point of working with $\dot{X}$ instead of $X$, is that for every wall $W$ of $\dot{X}$, the subgroup $G_W$ does not exchange the sides of $W$.
This allows us to find a $G$-equivariant \emph{co-orientation} on the walls of $\dot{X}$, that is, a labelling $W^+$ and $W^-$ for the half-spaces of each wall $W$ of $\dot{X}$, such that $(gW)^{\pm}=g(W^{\pm})$ for any $g\in G$ and for any wall $W\subset \dot{X}$.

Let $q : \dot{X} \rightarrow \mcal{X}$ denote the quotient map from Theorem \ref{finitewalls}. This map will send a vertex $x$ (resp. an edge $e$ and wall $W$) of $\dot{X}$ to a vertex $\ov{x}$ (resp. an edge $\ov{e}$ and wall $\ov{W}$).

\section{Coloring walls in $\mcal{X}$}\label{coloringwalls}
Now we proceed to color the walls of $\mcal{X}$ , in the same way as in \cite[Sec.~5]{Agol2012VirtualHaken}.
\begin{defi} Let $\Gamma(\mcal{X})$ be the simplicial graph with vertices the walls of $\mcal{X}$ and with an edge joining the walls $\ov{W}_1$ and $\ov{W}_2$ if and only $d(\ov{W}_1, \ov{W}_2) \leq R$, with $d$ being the induced distance on $\mcal{X}$ and $R$ as in Theorem \ref{finitewalls}.
\end{defi}
There is a natural action of $G$ on $\Gamma(\mcal{X})$, and since $\mcal{X}$ is locally finite and with finite walls, there are only finitely many $G$-orbits of vertices in $\Gamma(\mcal{X})$. This implies that there exists some $k$ such that the degree of any vertex of $\Gamma(\mcal{X})$ is bounded above by $k$, and also that $G$ acts cocompactly on $\Gamma(\mcal{X})$.

\begin{defi} A \emph{coloring} of $\Gamma(\mcal{X})$ is a map $c : V (\Gamma(\mcal{X})) \rightarrow \corchete{1,\dots, k+1}$ such that if $\ov{W}_1, \ov{W}_2 \in V(\Gamma(\mcal{X}))$ are adjacent walls, then $c(\ov{W}_1)\neq c(\ov{W}_2)$. Let $C_{k+1}(\Gamma(\mcal{X}))$ denote the set of colorings, which is non-empty since vertices of $\Gamma(\mcal{X})$ have degree $\leq k$.
\end{defi}
The action of $G$ on $\Gamma(\mcal{X})$ induces an action on $C_{k+1}(\Gamma(\mcal{X}))$ via pullback $g : c \mapsto gc := c \circ g^{-1}$ for each $g \in G$.

We define several equivalence classes related to $C_{k+1}(\Gamma(\mcal{X}))$, following the notation of \cite[
Def.~6.2]{Shepherd2019AgolsCubulations}.
\begin{enumerate}
\item If $W$ is a wall of $\dot{X}$, define the equivalence class $[c]_W$ of $c \in C_{k+1}(\Gamma(\mcal{X}))$ as
$$[c]_W :=
\corchete{c'\in C_{k+1}(\Gamma(\mcal{X})) \colon c=c'
\text{ on the ball of radius }c(\ov{W}) \text{ in }\Gamma(\mcal{X}) \text{ centered at }\ov{W}}.$$
\item  If $e$ is an edge of $\dot{X}$ dual to the wall $W$, then $[c]_e := [c]_W$ for any $c \in C_{k+1}(\Gamma(\mcal{X}))$.
\item If $x$ is a vertex of $\dot{X}$ we define $[c]_x:=\bigcap{\corchete{[c]_e\colon e \text{ incident to }x}}$.
\item  We also define equivalence classes on $V(\dot{X})\times C_{k+1}(\Gamma(\mcal{X}))$ and $E(\dot{X})\times  C_{k+1}(\Gamma(\mcal{X}))$ according to $[e, c] := \corchete{e}\times [c]_e$ and $[x, c] := \corchete{x} \times [c]_x$.
\end{enumerate}
There are natural actions of $G$ on these sets of equivalence classes, given by $g[e, c] := [ge, gc]$ and $g[x, c] := [gx, gc]$. For each edge $e$, the class $[-]_e$ depends only on the colors of vertices in some $(k+1)$-ball of $\Gamma(\mcal{X})$, so there are only finitely many equivalence classes $[-]_e$, and similarly for $[-]_x$. Since there are finitely many $G$-orbits of edges and vertices in $\dot{X}$, there are only finitely many $G$-orbits of equivalence classes on $E(\dot{X})  \times C_{k+1}(\Gamma(\mcal{X}))$ and $V(\dot{X})\times C_{k+1}(\Gamma(\mcal{X}))$.

\section{Cubical polyhedra and the gluing construction}\label{cubicalpolyhedra}
We introduce the main construction used to prove Theorem \ref{relhypimpliesCMVH}. As in \cite[Sec.~7 \& 8]{Shepherd2019AgolsCubulations}, we will mainly work with the universal covers instead of the ``cubical polyhedra" used in \cite{Agol2012VirtualHaken}, so our notation will be similar to the one used in \cite{Shepherd2019AgolsCubulations}.

Inductively, we will construct non-empty sets $\mcal{V}_{k+1},\dots, \mcal{V}_0$, where each $\mcal{V}_j$ is a finite collection of triplets $(Z,H,(c_x))$, where
\begin{itemize}
    \item  $Z \subset \dot{X}$ is a non-empty intersection of half-spaces (thus closed and convex),
\item for each vertex $x \in Z$ we have a coloring $c_x \in C_{k+1}(\Gamma(\mcal{X}))$,
\item  $H<G$ acts freely and cocompactly on $Z$, and $c_{hx}=hc_x$ for each $h \in H$ and vertex $x \in Z$.
\end{itemize}
The subset $Z \subset \dot{X}$ is not a subcomplex of $\dot{X}$, but $H$ also acts cocompactly on its cubical neighborhood $\mcal{N}(Z)$. Since $Z$ is intersection of half-spaces, the vertices inside $Z$ span a convex subcomplex. The cubical neighbourhood of this subcomplex is therefore convex, and also equals $\mcal{N}(Z)$, so we have proven:
\begin{lemma}\label{nbhdconvex} $\mcal{N}(Z)$ is a convex subcomplex of $(G,\dot{X})$, hence a convex core for $H$.
\end{lemma}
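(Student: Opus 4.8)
The plan is to reduce the statement to two standard facts about $\CAT{0}$ cube complexes: that a full subcomplex whose vertex set is an intersection of (combinatorial) half-spaces is convex, and that the cubical neighbourhood of a convex subcomplex of a finite-dimensional $\CAT{0}$ cube complex is again a convex subcomplex. Write $Z^{(0)} := Z \cap \dot{X}^{(0)}$ for the set of vertices of $\dot{X}$ contained in $Z$, and let $Z_0 \subset \dot{X}$ be the full subcomplex they span; this $Z_0$ is automatically $H$-invariant since $H$ preserves both $Z$ and the cube structure. First I would record that $Z_0$ is convex: writing $Z = \bigcap_i \mathfrak{h}_i$ with each $\mathfrak{h}_i$ a half-space, if $v, w \in Z^{(0)}$ and $\gamma$ is a combinatorial geodesic in $\dot{X}^{(1)}$ from $v$ to $w$, then $\gamma$ crosses exactly the walls separating $v$ from $w$; since $v$ and $w$ both lie on the $\mathfrak{h}_i$-side of the wall bounding $\mathfrak{h}_i$, that wall does not separate them, so $\gamma\subset\mathfrak{h}_i$, and hence $\gamma\subset Z$. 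Thus $Z^{(0)}$ is combinatorially convex, so the full subcomplex $Z_0$ is a convex subcomplex of $\dot{X}$. Then, since $\dot{X}$ is finite-dimensional, the iterated cubical-neighbourhood argument of \cite[Lem.~13.15]{hagwissp} (cf.~\cite[Lem.~13.4]{hagwissp}) shows $\mcal{N}(Z_0)$ is a convex subcomplex of $\dot{X}$.

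The crucial remaining point — and the step I expect to be the main obstacle — is the identity $\mcal{N}(Z) = \mcal{N}(Z_0)$, equivalently that every cube of $\dot{X}$ meeting $Z$ already has a vertex in $Z$. The inclusion $\mcal{N}(Z_0)\subset\mcal{N}(Z)$ is immediate. For the reverse, let $C=[0,1]^d$ be a cube of $\dot{X}$ with a point $p\in C\cap Z$. Each coordinate midcube $\{x_j=1/2\}$ of $C$ extends to a wall $W^{(j)}$ of $\dot{X}$, while every wall of $\dot{X}$ not crossing $C$ keeps all of $C$ strictly inside one open half-space. Define a vertex $v$ of $C$ by $v_j=1$ if $p_j\ge 1/2$ and $v_j=0$ otherwise. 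I would then check $v\in\mathfrak{h}_i$ for every $i$: if the wall of $\mathfrak{h}_i$ equals $W^{(j)}$ for some coordinate $j$ of $C$, then $\mathfrak{h}_i\cap C$ is one of the two closed halves $\{x_j\ge 1/2\}$ or $\{x_j\le 1/2\}$, it contains $p$, and by construction $v$ lies on the same closed side as $p$ (the choice made when $p_j=1/2$ being harmless); if instead $\mathfrak{h}_i$'s wall does not cross $C$, then all of $C$, in particular $v$, lies on the same side as $p\in\mathfrak{h}_i$. Hence $v\in\bigcap_i\mathfrak{h}_i=Z$, so $C\in\mcal{N}(Z_0)$. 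The care needed here is precisely in handling cubes that straddle several of the defining walls at once and the boundary case $p_j=1/2$; one should also note the argument implicitly uses that $Z$ is genuinely spanned by its vertices (which holds for the sets $Z$ arising in the construction of the $\mcal{V}_j$), a degenerate $Z$ contained in a single wall of $\dot{X}$ being outside the intended scope.

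Finally, combining the three claims: $\mcal{N}(Z)=\mcal{N}(Z_0)$ is a convex subcomplex of $\dot{X}$, hence of $(G,\dot{X})$. Because $H$ acts cocompactly on $Z$ and $\dot{X}$ is locally finite, a compact fundamental domain for the $H$-action on $Z$ meets only finitely many cubes, whose $H$-orbit covers $\mcal{N}(Z)$; thus $H$ acts cocompactly on $\mcal{N}(Z)$. Therefore $\mcal{N}(Z)$ is a convex subcomplex on which $H$ acts cocompactly, i.e.\ a convex core for $H$, which is the assertion of the lemma.
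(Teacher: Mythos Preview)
Your proposal is correct and follows essentially the same approach as the paper: the paper's proof (the sentence immediately preceding the lemma) observes that the vertices of $Z$ span a convex subcomplex whose cubical neighbourhood is convex and coincides with $\mcal{N}(Z)$, and that $H$ acts cocompactly on it. You have simply fleshed out each of these three assertions with explicit arguments (the combinatorial-geodesic argument for convexity of $Z_0$, the citation of \cite[Lem.~13.15]{hagwissp} for convexity of $\mcal{N}(Z_0)$, and the vertex-rounding argument for $\mcal{N}(Z)=\mcal{N}(Z_0)$), which the paper leaves to the reader.
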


We permit $\mcal{V}_j$ to contain duplicates of some triplets, and sometimes we will write $Z\in \mcal{V}_j$ instead of $(Z,H,(c_x))\in \mcal{V}_j$, and also $(Z, H,(c_x); \alpha) \in \mcal{V}_j$ to explicit that there are exactly $\alpha \in \N$
duplicates of $(Z, H,(c_x))$ in $\mcal{V}_j$.

\begin{defi}\label{defVj}We require each triplet $(Z,H,(c_x))\in \mcal{V}_j$ to satisfy four conditions:
\begin{enumerate}
    \item If $e \in E(\dot{X})$ joins vertices $x, y \in Z\in \mcal{V}_j$, then $[e, c_x]=[e, c_y]$.
\item If $e \in E(\dot{X})$ joins the vertices $x \in Z\in \mcal{V}_j$ and $y \in \dot{X}$, then $y \in Z$ if and only if $c_x(\ov{W(e)}) > j$.
\item If $(Z,H,(c_x))\in \mcal{V}_j$, then $(H, \mcal{N}(Z))\in \mcal{CMVH}$, when $H$ is endowed with the induced peripheral structure (see Lemma \ref{consistentcompatible}).
\item For $e \in E(\dot{X})$ with endpoints $x_{+} \in W(e)^+$ and $x_- \in W(e)^-$, and $c \in C_{k+1}(\Gamma(\mcal{X}))$, define
$$\mcal{V}_j^{\pm}(e, c) :=\corchete{(H \cdot x, Z)\colon x\in Z\in \mcal{V}_j, \text{ and }\exists g \in G \text{ s.t. }gx=x_{\pm}, [e, gc_x]=[e, c]},$$
where duplicates of $Z\in \mcal{V}_j$ are counted separately. The collection $\mcal{V}_j$ must satisfy the \emph{Gluing Equations}
$$|\mcal{V}_j^+(e,c)|=|\mcal{V}_j^-(e,c)|$$
for any $e \in E(\dot{X})$ and $c \in C_{k+1}(\Gamma(\mcal{X}))$.
\end{enumerate}
\end{defi}
\begin{rmk}\label{rmkcoloring}By Property (1), for an edge $e$ intersecting $Z$ we can consider a coloring $c_e \in C_{k+1}(\Gamma(\mcal{X}))$ (in fact an equivalence class), such that if $e$ is incident to $x \in Z$ then $[e, c_e]=[e, c_x]$.
\end{rmk}
Let us see how the existence of $\mcal{V}_0$ implies Theorem \ref{relhypimpliesCMVH}. Consider an arbitrary triplet $(Z, H,(c_x))$
in $\mcal{V}_0$. By conditions (1)-(2) of $\mcal{V}_0$, any vertex of $\dot{X}$ is contained in $Z$, and since $Z$ is intersection of half-spaces, we must have $\mcal{N}(Z)=Z=\dot{X}$. But then $H<G$ acts cocompactly on $\dot{X}$, implying that $H$ is of finite index in $G$. Condition (3) implies that $(H, \dot{X})\in  \mcal{CMVH}$, so $(G,\dot{X})$ is also in $\mcal{CMVH}$.

The rest of the paper concerns the inductive construction of the sequence $\mcal{V}_{k+1}, \mcal{V}_k,\dots, \mcal{V}_0$. In the hyperbolic case, the existence of $\mcal{V}_{k+1}$ was given by Agol by means of an ingenious argument regarding invariant measures on $\Gamma(\mcal{X})$ \cite[Sec.~7]{Agol2012VirtualHaken} (see also \cite[Lem.~7.1]{Shepherd2019AgolsCubulations}). Definition \ref{defVj} (3) differs from that in \cite{Shepherd2019AgolsCubulations} since it uses $\mcal{CMVH}$ rather than $\mcal{QVH}$, but  the proof of \cite[Lem.~7.1]{Shepherd2019AgolsCubulations} still applies to $\mcal{CMVH}$ since the groups $H$ are finite. Therefore we obtain:

\begin{prop}\label{agolcoloring} There exists $\mcal{V}_{k+1}$ satisfying all the conditions (1)-(4) of Definition \ref{defVj}.
\end{prop}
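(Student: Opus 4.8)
The plan is to establish Proposition~\ref{agolcoloring} by following Agol's original construction of the top-level collection from \cite[Sec.~7]{agolhaken}, adapted in the streamlined form of \cite[Lem.~7.1]{shepherd}, and then to verify that the weakening of Condition~(3) of Definition~\ref{defVj} (replacing $\mcal{QVH}$ by $\mcal{CMVH}$) causes no difficulty. First I would set $j = k+1$ and observe that in this case the defining conditions become essentially combinatorial: Condition~(2) says that an edge $e$ incident at $x\in Z$ stays inside $Z$ exactly when $c_x(\ov{W(e)}) > k+1$, which never happens since colors lie in $\corchete{1,\dots,k+1}$; hence every such $Z$ consists of a single vertex, and the pieces $(Z,H,(c_x))$ in $\mcal{V}_{k+1}$ are just pairs (vertex of $\dot X$, color on it), grouped into $H$-orbits where $H$ is a (necessarily finite, since it acts freely on a point) subgroup of $G$ stabilizing that vertex together with its color data, up to the equivalence $[x,c]$.

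Next I would recall Agol's averaging argument: the group $G$ acts cocompactly on $\Gamma(\mcal{X})$ and on the finite-type set of equivalence classes $[x,c]$ over $V(\dot X)\times C_{k+1}(\Gamma(\mcal{X}))$, and one seeks a finitely supported ``weighting'' on these classes — i.e. an assignment of multiplicities $\alpha\in\N$ to finitely many orbit representatives — that satisfies the Gluing Equations of Condition~(4). The key point, exactly as in \cite[Sec.~7]{agolhaken} and \cite[Lem.~7.1]{shepherd}, is that the Gluing Equations form a system of linear equations with integer coefficients on the (finitely many) $G$-orbits of classes $[x,c]$, and that this system admits a positive solution because it is invariant under the amenable-on-colorings structure: averaging a coloring over the finite symmetric group of recolorings, or equivalently using the fact that $C_{k+1}(\Gamma(\mcal{X}))$ carries a $G$-invariant probability measure obtained by a limit of uniform measures on finite balls, produces a $G$-invariant, finitely supported, rational (hence after clearing denominators, integral) solution with all multiplicities strictly positive. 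I would cite \cite[Lem.~7.1]{shepherd} for this and simply transcribe the conclusion: there is a finite collection $\mcal{V}_{k+1}$ of triplets $(Z,H,(c_x))$ with $Z$ a single vertex, $H$ finite, the coloring compatibility $c_{hx}=hc_x$ built in, and Conditions~(1), (2), (4) satisfied.

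Finally I would address Condition~(3), which for $j=k+1$ requires $(H,\mcal{N}(Z))\in\mcal{CMVH}$. Since $Z$ is a single vertex of $\dot X$ and $H$ is finite, $\mcal{N}(Z)$ is a finite $\CAT{0}$ cube complex on which $H$ acts; applying operation~(3) in Definition~\ref{defiCMVH} (passage to finite-index subgroups, here the trivial subgroup inside $H$) to the instance $(\corchete{1},\mcal{N}(Z))\in\mcal{CMVH}$ from operation~(1) yields $(H,\mcal{N}(Z))\in\mcal{CMVH}$. This is the one place where the difference with \cite{shepherd} matters, and it is benign precisely because $H$ is finite — exactly the remark made in the paragraph preceding the proposition. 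I expect no serious obstacle here; the only delicate point is making sure the construction from \cite[Sec.~7]{agolhaken}/\cite[Lem.~7.1]{shepherd} is being invoked with the correct bookkeeping of duplicates and co-orientations, but since the statement to be proved is literally the existence of \emph{some} $\mcal{V}_{k+1}$ satisfying (1)--(4), a faithful citation together with the one-line verification of (3) suffices, and the main (already-resolved) work is entirely contained in the cited lemma.
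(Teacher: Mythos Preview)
Your proposal is correct and matches the paper's approach exactly: the paper itself gives no proof beyond citing \cite[Sec.~7]{agolhaken} and \cite[Lem.~7.1]{shepherd} and remarking that Condition~(3) is unaffected because the groups $H$ are finite. One tiny over-complication: since $H$ acts \emph{freely} on the single-vertex $Z$, in fact $H=\{1\}$, so operation~(1) of Definition~\ref{defiCMVH} already gives $(H,\mcal{N}(Z))\in\mcal{CMVH}$ without invoking operation~(3).
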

In the next sections we will need to modify our collections $\mcal{V}_j$, and for that we will use the following lemma: 
\begin{lemma}\label{virtmodif}Let $\mcal{V}_j$ consist of the weighted triplets $(Z, H,(c_x)); \alpha_Z)$ and for each $Z$ consider a finite index normal subgroup $H_0\trianglelefteq H$ of index $i_Z$. Then after replacing each $(Z, H,(c_x); \alpha_Z)$ by $(Z, H_0,(c_x);(\prod_{Z'\neq Z}{i_{Z'}})\alpha_Z)$, the collection $\mcal{V}_j$ still satisfies properties (1)-(4).
\end{lemma}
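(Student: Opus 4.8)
The plan is to verify, for the modified collection, each of the four conditions of Definition \ref{defVj}; only (3) and (4) need a real argument. First, every new triplet $(Z,H_0,(c_x);(\prod_{Z'\neq Z}i_{Z'})\alpha_Z)$ is still a legitimate triplet: $Z$ and the colorings $(c_x)$ are unchanged, and since $H_0$ has finite index in $H$ and $H$ acts freely and cocompactly on $Z$, so does $H_0$, with $c_{hx}=hc_x$ for all $h\in H_0\subset H$; moreover $H_0$ is convex in $(H,\mcal{N}(Z))$ with convex core $\mcal{N}(Z)$, so by Lemma \ref{consistentcompatible} it is again hyperbolic relative to compatible virtually special subgroups. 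Conditions (1) and (2) concern only $Z$, the edges of $\dot X$ meeting it, and the colorings $c_x$, none of which has changed, so they persist verbatim.

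For condition (3) I would establish, and then apply, the general fact that \emph{$\mcal{CMVH}$ is closed under passing to finite index subgroups with the same cubulation}: if $(G,Y)\in\mcal{CMVH}$ and $G_0<G$ has finite index, then $(G_0,Y)\in\mcal{CMVH}$. This I would prove by induction on the minimal number of operations (1)--(3) in a $\mcal{CMVH}$-description of $(G,Y)$. The base case and the operation-(3) case are immediate (in the latter, if $(G',Y)\in\mcal{CMVH}$ with $G'$ finite index in $G$, then $G_0\cap G'$ has finite index in both $G'$ and $G_0$, and one applies the inductive hypothesis to $(G',Y)$ and then operation (3)). For the operation-(2) case, where $G=\pi_1(\Gamma,\mcal{G})$ with convex edge/vertex groups, relatively malnormal edge families $\mcal{A}_v$, and $(G_v,Y_v)\in\mcal{CMVH}$ for convex cores $Y_v$: the subgroup $G_0$ acts on the Bass--Serre tree $T$, and $G_0\backslash T$ is again finite (the double coset spaces $G_0\backslash G/G_x$ have size at most $[G:G_0]$), yielding a finite graph-of-groups splitting of $G_0$ whose vertex and edge groups are finite index subgroups of $G$-translates of the $G_v$ and $G_e$. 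Each such group is convex in $(G_0,Y)$ (a finite index subgroup of a convex subgroup acts cocompactly on the same, translated, convex core, and $\mcal{CMVH}$ is invariant under the ambient $G$-action), and by the inductive hypothesis each new vertex group lies in $\mcal{CMVH}$ with its translated convex core. Finally, one checks that the edge family at each vertex of the new splitting---which, read inside the corresponding vertex group, consists up to conjugacy of the subgroups $(G_0\cap G_v)\cap G_e^{g}$ with $e$ adjacent to $v$ and $g\in G_v$---is relatively malnormal; this follows from inheritance of relative malnormality by finite index subgroups, together with normality of $G_0$ in $G$. Taking $(G,Y)=(H,\mcal{N}(Z))$ and $G_0=H_0$ gives (3).

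For condition (4), fix an edge $e$ with endpoints $x_\pm\in W(e)^\pm$ and a coloring $c$. Membership of a pair in $\mcal{V}_j^\pm(e,c)$ is invariant along $H$-orbits (transport the witness $g$ by an element of $H$, using $c_{hx}=hc_x$ and freeness of the action). Since $H$ acts freely on $Z$, each $H$-orbit in $Z$ breaks into exactly $i_Z=[H:H_0]$ orbits of $H_0$, and a qualifying $H$-orbit splits into $i_Z$ qualifying $H_0$-orbits (forward: transport the witness by a coset representative of $H_0$ in $H$; backward is trivial since $H\cdot x\supset H_0\cdot y$). Writing $n_Z^\pm$ for the number of qualifying $H$-orbits contributed by one copy of $Z$ in the old collection, one copy of $Z$ in the new collection contributes $i_Z\,n_Z^\pm$ qualifying $H_0$-orbits, whence
\[
|\mcal{V}_j^\pm(e,c)|_{\mathrm{new}}=\sum_Z\Bigl(\prod_{Z'\neq Z}i_{Z'}\Bigr)\alpha_Z\cdot i_Z\,n_Z^\pm=\Bigl(\prod_{Z'}i_{Z'}\Bigr)\sum_Z\alpha_Z\,n_Z^\pm=\Bigl(\prod_{Z'}i_{Z'}\Bigr)\,|\mcal{V}_j^\pm(e,c)|_{\mathrm{old}}.
\]
Since the old collection satisfies $|\mcal{V}_j^+(e,c)|=|\mcal{V}_j^-(e,c)|$, so does the new one, which is (4).

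The main obstacle is the graph-of-groups step of condition (3), and inside it the descent of relative malnormality of the edge families along the Bass--Serre covering; everything else is bookkeeping, and the peculiar multiplicities $(\prod_{Z'\neq Z}i_{Z'})\alpha_Z$ are engineered precisely so that the cross factors $i_Z$ coming from orbit-splitting combine into the single global factor $\prod_{Z'}i_{Z'}$, leaving the Gluing Equations intact.
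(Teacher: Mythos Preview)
Your proof is correct and follows essentially the same approach as the paper: properties (1)--(2) are immediate, (4) is the same orbit-splitting computation (each $H$-orbit breaks into $i_Z$ many $H_0$-orbits, so the new counts are the old ones scaled by the global factor $\prod_{Z'} i_{Z'}$), and (3) reduces to the closure of $\mcal{CMVH}$ under finite index subgroups, proved by induction on the length of a $\mcal{CMVH}$-description. The paper isolates this last fact as a separate lemma (Lemma~\ref{finiteindexCMVH}) and outsources the verification of the induced splitting to \cite[Prop.~3.18]{agmmsqt}, whereas you sketch the Bass--Serre argument inline; note that your statement of the closure fact drops normality of $G_0$ in $G$ but your argument for relative malnormality invokes it, so you should either assume normality throughout (as the paper does, and as suffices for the application) or drop that appeal.
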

\begin{proof}Properties (1) and (2) are immediate. To verify (4), let $e$ be an edge of $\dot{X}$, $c \in C_{k+1}(\Gamma(\mcal{X}))$ be a coloring, and let $\wtilde{\mcal{V}}_j^{\pm}(e, c)$ be the set of pairs $(H_0 \cdot x, Z)$ such that $(H \cdot x, Z)$ is in $\mcal{V}_j^{\pm}(e, c)$. The contribution of a triplet $(Z, H_0,(c_x))$ to $\wtilde{\mcal{V}}_j^{\pm}(e, c)$ is $i_Z$ times the contribution of a triplet $(Z, H,(c_x))$ to $\mcal{V}_j^{\pm}(e, c)$. Let $\wtilde{C}^{\pm}_Z$ and $C^{\pm}_Z$ denote these contributions, respectively. Then if we choose $\wtilde{\alpha}_Z:=\prod_{Z'\neq Z}{i_{Z'}}\alpha_Z$, we have
$$\left|\wtilde{\mathcal{V}}_{j}^{+}(e, c)\right|=\sum_{Z} \wtilde{\alpha}_{Z} \wtilde{C}_{Z}^{+}=\left(\prod_{Z} i_{Z}\right) \sum_{Z} \alpha_{Z} C_{Z}^{+}=\left(\prod_{Z} i_{Z}\right) \sum_{Z} \alpha_{Z} C_{Z}^{-}=\sum_{Z} \wtilde{\alpha}_{Z} \wtilde{C}_{Z}^{-}=\left|\wtilde{V}_{j}^{-}(e, c)\right|,$$
so the gluing equations are also satisfied by the modified $\mcal{V}_j$. Finally, property (3) follows from the lemma below that will also be used in Section \ref{constructingVj-1}.
\end{proof}
\begin{lemma}\label{finiteindexCMVH}
If $(H,Y)\in \mcal{CMVH}$ and $H_0 \trianglelefteq H$ is a finite index normal subgroup, then $(H_0,Y)\in \mcal{CMVH}$.
\end{lemma}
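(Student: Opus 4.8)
The plan is to prove the following slightly stronger statement by induction on the minimal number $N$ of operations (1)--(3) of Definition \ref{defiCMVH} used to certify $(H,Y)\in\mcal{CMVH}$: if $(H,Y)\in\mcal{CMVH}$ and $H_0<H$ is \emph{any} finite index subgroup, then $(H_0,Y)\in\mcal{CMVH}$. First one should note that $(H_0,Y)$ does inherit the structure of a relatively hyperbolic group with compatible virtually special peripherals: $H_0$ is convex in $(H,Y)$ with convex core $Y$, so Lemma \ref{consistentcompatible} (applied with ambient group $H$) applies. The case where $N$ corresponds to operation (1) is trivial, since then $H$ and hence $H_0$ are trivial. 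If $(H,Y)$ is built via operation (3) from $(L,Y)\in\mcal{CMVH}$ with $[H:L]<\infty$, then $L_0:=H_0\cap L$ has finite index in $L$, which is certified by fewer than $N$ operations; the inductive hypothesis gives $(L_0,Y)\in\mcal{CMVH}$, and since $L_0$ has finite index in $H_0$, operation (3) yields $(H_0,Y)\in\mcal{CMVH}$.

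The main case is when $(H,Y)$ is built via operation (2), so $H$ splits as a finite graph of groups $(\Gamma,\mcal{G})$ satisfying the three conditions of Definition \ref{defiCMVH} (2). Let $T$ be the associated Bass--Serre tree. Since $[H:H_0]<\infty$, the graph $\Gamma_0:=T/H_0$ is finite, and Bass--Serre theory \cite{bass} equips $H_0$ with a splitting as a finite graph of groups $(\Gamma_0,\mcal{G}_0)$ whose vertex and edge groups are the subgroups $H_0\cap gH_ug^{-1}$, with $u$ a vertex (resp. edge) of $\Gamma$ and $g$ ranging over a finite subset of $H$. I would then verify the three conditions of Definition \ref{defiCMVH} (2) for $(\Gamma_0,\mcal{G}_0)$. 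For \emph{convexity}: each $gH_ug^{-1}$ is convex in $(H,Y)$ with convex core $gX_u$ (the translate of a convex core $X_u\subset Y$ for $H_u$), and since $gX_u\subset Y$ while $H_0$ has convex core $Y$, Lemma \ref{intersectionconvex} shows $H_0\cap gH_ug^{-1}$ is convex in $(H_0,Y)$ with convex core $gX_u$. For \emph{membership of vertex groups}: the class $\mcal{CMVH}$ is invariant under the $H$-action (each defining operation is equivariant), so $(gH_vg^{-1},gX_v)\in\mcal{CMVH}$, certified by the same number of operations as $(H_v,X_v)$, which is fewer than $N$; the inductive hypothesis applied to the finite index subgroup $H_0\cap gH_vg^{-1}<gH_vg^{-1}$ then gives $(H_0\cap gH_vg^{-1},gX_v)\in\mcal{CMVH}$.

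The remaining and main point is \emph{relative malnormality}. Fix a vertex $\bar v$ of $\Gamma_0$, represented by $g\tilde{v}\in T$ with $\tilde{v}$ a lift of $v\in\Gamma$, so the corresponding vertex group of $(\Gamma_0,\mcal{G}_0)$ is $V:=H_0\cap gH_vg^{-1}$, a finite index subgroup of $gH_vg^{-1}$. The edges of $\Gamma_0$ at $\bar v$ are indexed by the $V$-orbits of edges of $T$ at $g\tilde{v}$; such an edge is $gk\tilde{e}$ for $\tilde{e}$ a lift of an edge $e$ of $\Gamma$ at $v$ and $k\in H_v$, with edge group $H_0\cap gkH_ek^{-1}g^{-1}$. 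Suppose $A=H_0\cap gk_1H_{e_1}k_1^{-1}g^{-1}$ and $B=H_0\cap gk_2H_{e_2}k_2^{-1}g^{-1}$ are two of these, and that $A\cap B^{w}$ contains a loxodromic element for some $w\in V$. Then $(gk_1H_{e_1}k_1^{-1}g^{-1})\cap(gk_2H_{e_2}k_2^{-1}g^{-1})^{w}$ contains a loxodromic element of $gH_vg^{-1}$ (loxodromy is unchanged under passage to the finite index subgroup $V$); conjugating by $g^{-1}$ and writing $w=gw'g^{-1}$ with $w'\in H_v$, we find that $H_{e_1}\cap(k_1^{-1}w'k_2)H_{e_2}(k_1^{-1}w'k_2)^{-1}$ contains a loxodromic element of $H_v$, with $k_1^{-1}w'k_2\in H_v$. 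Relative malnormality of $\mcal{A}_v$ in $H_v$ forces $e_1=e_2=:e$ and $k_1^{-1}w'k_2\in H_e$, i.e. $w'\in k_1H_ek_2^{-1}$. Hence $w^{-1}\cdot gk_1\tilde{e}=g(w'^{-1}k_1)\tilde{e}=gk_2\tilde{e}$, since $w'^{-1}k_1\in k_2H_e$ and $H_e$ fixes $\tilde{e}$; so $gk_1\tilde{e}$ and $gk_2\tilde{e}$ lie in the same $V$-orbit, and as the edges of $\Gamma_0$ at $\bar v$ are indexed by such orbits this forces $k_1=k_2$, so $A=B$ and $w'\in k_1H_ek_1^{-1}$, whence $w=gw'g^{-1}\in gk_1H_ek_1^{-1}g^{-1}$; combined with $w\in H_0$ this gives $w\in A$. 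Thus the edge groups at $\bar v$ form a relatively malnormal collection in $V$, completing the verification of Definition \ref{defiCMVH} (2) for $(\Gamma_0,\mcal{G}_0)$, and therefore $(H_0,Y)\in\mcal{CMVH}$.

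The routine parts (the base case and operation (3), the convexity and vertex-group conditions) are quick once the inductive scheme is set up; the main obstacle is the relative malnormality computation, which requires carefully translating between conjugation in $H$ and $V$-orbits of edges of the Bass--Serre tree, and crucially relies on the fact that ``loxodromic'' is detected identically in a relatively hyperbolic group and in any of its finite index subgroups — so that the malnormality hypothesis on $\mcal{A}_v$ in $G_v$ survives the passage to $V$.
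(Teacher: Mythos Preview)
Your proposal is correct and follows essentially the same inductive approach as the paper: induction on the minimal number of operations (1)--(3) certifying $(H,Y)\in\mcal{CMVH}$, with the only substantive case being operation (2), where one passes to the induced splitting of the finite index subgroup and checks that the three bullet points of Definition~\ref{defiCMVH}(2) persist. The paper's proof is a one-line sketch that defers the verification that the induced splitting of $(H_0,Y)$ satisfies these conditions (in particular relative malnormality) to \cite[Prop.~3.18]{agmmsqt}, whereas you unpack this directly via the Bass--Serre tree; your explicit malnormality computation is exactly the content hidden behind that citation, and your observation that loxodromicity is unchanged under finite index passage is the point that makes it go through.
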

\begin{proof}
The lemma follows by induction on the minimal number of operations $(1)-(3)$ used in a description of $(H,Y)$ as a group in $\mcal{CMVH}$, after noting that finite index subgroups of convex subgroups are convex, and that if $(H,Y)$ splits as a graph of groups satisfying the properties of condition (3) in Definition \ref{defiCMVH}, then the induced splitting of $(H_0,Y)$ also satisfies (3) when $H_0$ is considered with the induced peripheral structure, see e.g.~\cite[Prop.~3.18]{AgolGrovesManning2016MSQT}.
\end{proof}
\begin{defi}Any change of $\mcal{V}_j$ by first considering finite index subgroups $H_0<H$ for each $(Z, H,(c_x))$ and then duplicating the triplets $(Z, H_0,(c_x))$ as in the previous lemma will be called a \emph{virtual modification} of $\mcal{V}_j$. 
\end{defi}

\section{Boundary walls and a graph of groups}\label{boundarywallsandgraphofgroups}
For this section fix $(Z,H,(c_x))\in \mcal{V}_j$. We will introduce the main definitions that will be used in Section \ref{constructingVj-1} to define $\mcal{V}_{j-1}$ from $\mcal{V}_j$.
\begin{defi} A \emph{boundary wall} of $Z$ is a wall $W$ dual to an edge $e$ crossing out of $Z$. By property (2) and Remark \ref{rmkcoloring}, $W$ is a boundary wall if and only if $W=W(e)$ for an edge $e$ intersecting $Z$ and $c_e(\ov{W(e)}) \leq j$.
\end{defi}
The next lemma is implicit in \cite[p1062]{Agol2012VirtualHaken} and is stated as the \emph{Zipping Lemma} in \cite[Lem.~8.4]{Shepherd2019AgolsCubulations}.
\begin{lemma}\label{zipping} If $W=W(e_1)=W(e_2)$ is a boundary wall of $Z$ with $e_1, e_2$ edges crossing out of $Z$, then $[c_{e_1}]_W=[c_{e_2}]_W$.
\end{lemma}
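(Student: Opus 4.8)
The statement (Zipping Lemma) says: if $W = W(e_1) = W(e_2)$ is a boundary wall of $Z$, with both $e_1, e_2$ crossing out of $Z$, then $[c_{e_1}]_W = [c_{e_2}]_W$, where $c_{e_i}$ is the coloring-class attached to $e_i$ as in Remark \ref{rmkcoloring}. Recall that $[c_{e_i}]_W$ is the equivalence class recording the values of $c$ on the ball of radius $c(\ov W)$ around $\ov W$ in the graph $\Gamma(\mcal X)$; by Remark \ref{rmkcoloring}, if $e_i$ is incident at $x_i \in Z$ then $[e_i, c_{e_i}] = [e_i, c_{x_i}]$, i.e.\ $[c_{e_i}]_W = [c_{x_i}]_{W}$ (using $[c]_{e_i} = [c]_{W(e_i)} = [c]_W$). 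So it suffices to show $[c_{x_1}]_W = [c_{x_2}]_W$ whenever $x_1, x_2 \in Z$ are endpoints of edges dual to the \emph{same} wall $W$ and crossing out of $Z$.

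First I would reduce to the case where $x_1$ and $x_2$ are adjacent across an edge of $Z$, i.e.\ joined by an edge $e \subset \mcal N(Z)$ with both endpoints in $Z$. The point is that the edges of $\dot X$ dual to $W$ that cross out of $Z$ all have their $Z$-side endpoints on a single connected ``face'' of $Z$ along $W$ — more precisely, these $Z$-side endpoints lie on one half-space side of $W$ and span a connected subcomplex (this uses that $Z$ is an intersection of half-spaces and that $W$ is a wall, so $\mcal N(W) \cap Z$ is connected and convex, cf.\ the convexity facts recalled before Lemma \ref{nbhdconvex}). Hence I can connect $x_1$ to $x_2$ by a path of vertices inside $Z$, consecutive ones joined by an edge of $Z$, each of which is a $Z$-side endpoint of an edge dual to $W$ crossing out of $Z$. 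By Property (1) of Definition \ref{defVj}, as we pass along an edge $f$ of $Z$ joining two such vertices $x, x'$ we have $[f, c_x] = [f, c_{x'}]$, so $c_x$ and $c_{x'}$ agree on the ball of radius $c_f(\ov{W(f)})$ around $\ov{W(f)}$ in $\Gamma(\mcal X)$. The remaining task is to upgrade this ``they agree near $\ov{W(f)}$'' into ``they agree near $\ov W$'', i.e.\ to compare the relevant balls.

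The key geometric input is that $\ov{W(f)}$ and $\ov W$ are close in $\mcal X$: the edge $f$ of $Z$ and the edge $e_x$ (dual to $W$) crossing out of $Z$ at $x$ are incident to the common vertex $\bar x$, hence $\ov{W(f)}$ and $\ov W$ are within distance $\le \sqrt{\dim X}$ (they pass through cubes meeting at $\bar x$), which is $\le R$; so they are adjacent in $\Gamma(\mcal X)$. Thus $c(\ov{W(f)}) \ne c(\ov W)$ for any coloring $c$, and the ball of radius $c(\ov W)$ around $\ov W$ is contained in the ball of radius $c(\ov{W(f)}) + 1$ around $\ov{W(f)}$. To make this comparison work cleanly one orders the walls crossed, exactly as in Agol's and Shepherd's arguments: along the path from $x_1$ to $x_2$ the colors $c_f(\ov{W(f)})$ of the intervening walls are all \emph{strictly smaller} than $c(\ov W) = c_{e_{x}}(\ov W) \le j$ would force... — rather, one argues by descending induction on $c(\ov W)$ (as in the ``zipping'' terminology): assuming the statement for boundary walls of higher color, one shows that near $\ov W$ the colorings $c_{x_1}, c_{x_2}$ are pinned down, because any discrepancy would have to occur across a wall of color $< c(\ov W)$, and the inductive hypothesis together with Property (1) propagates agreement inward. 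I would follow \cite[Lem.~8.4]{shepherd} for the bookkeeping here.

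\textbf{Main obstacle.} The routine parts (reducing to adjacent vertices, the distance bound $\le \sqrt{\dim X} \le R$) are straightforward. The genuine content, and where care is needed, is the radius-comparison / induction: showing that agreement of $c_{x}$ and $c_{x'}$ on the ball of radius $c_f(\ov{W(f)})$ around $\ov{W(f)}$ actually implies agreement on the ball of radius $c(\ov W)$ around $\ov W$, uniformly along the whole connecting path. This is precisely the delicate combinatorial step that Agol isolates; I expect to invoke the argument of \cite[Lem.~8.4]{shepherd} essentially verbatim, since the colorings and the graph $\Gamma(\mcal X)$ here are set up exactly as there, and nothing about the relatively hyperbolic / $\mcal{CMVH}$ setting changes this local statement.
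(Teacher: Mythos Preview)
The paper does not prove this lemma; it simply cites \cite[Lem.~8.4]{shepherd}, and your final move of deferring to that reference matches the paper exactly. Your outline of the argument is also correct in its broad strokes (reduce to consecutive vertices $x,x'$ along an edge path in the convex subcomplex spanned by vertices of $Z$ inside $\mcal N(W)$, then use Property~(1) of Definition~\ref{defVj} together with adjacency of $\ov W$ and $\ov{W(f)}$ in $\Gamma(\mcal X)$). However, your handling of the radius comparison is muddled: the colours of the intervening walls $\ov{W(f)}$ are strictly \emph{larger} than $c_x(\ov W)$, not smaller, and no induction on colour is needed. Concretely, since both endpoints of $f$ lie in $Z$, Property~(2) gives $c_x(\ov{W(f)})>j$, while $c_x(\ov W)\le j$ (propagated from $x_1$ where $e_1$ crosses out); together with adjacency of $\ov W$ and $\ov{W(f)}$ this yields that the ball of radius $c_x(\ov W)$ about $\ov W$ is contained in the ball of radius $c_x(\ov W)+1\le c_x(\ov{W(f)})$ about $\ov{W(f)}$, so Property~(1) directly forces $[c_x]_W=[c_{x'}]_W$. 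This also shows the path cannot cross $W$ (else $W(f)=W$ would give $c_x(\ov W)>j$, a contradiction), so your worry about sides is automatically resolved.
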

\begin{rmk}By the lemma above, the color $c_e(\ov{W})$ is independent of the choice of $e$, and should be thought as the \emph{color} of the boundary wall $W$.
\end{rmk}
\begin{defi}Let $W$ be a boundary wall of $Z$ with color $j$ in the sense of the previous remark. We say that $W$ is a \emph{$j$-boundary wall} of $Z$, and that $P(W) := W \cap Z$ is the \emph{portal of $W$ leading to $Z$}. If an edge $e$ dual to $W$ crosses out of $Z$, then we say that $e$ is \emph{dual} to $P(W)$. Let $\partial_jZ$ be the union of all portals leading to $Z$.
\end{defi}
The next lemma is \cite[Lem.~8.6]{Shepherd2019AgolsCubulations}.
\begin{lemma}\label{notincident}A vertex in $Z\in \mcal{V}_j$ cannot be incident to distinct edges dual to $j$-boundary walls.
\end{lemma}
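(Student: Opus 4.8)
The plan is a short proof by contradiction, exploiting that each $c_x$ is a \emph{proper} coloring of $\Gamma(\mcal X)$, whereas two $j$-boundary walls through a common vertex are forced both to be adjacent in $\Gamma(\mcal X)$ and to receive the same color $j$ under $c_x$.

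So suppose some vertex $x\in Z$ is incident at distinct edges $e_1\neq e_2$ such that $W_i:=W(e_i)$ is a $j$-boundary wall of $Z$ for $i=1,2$. First I would record that $W_1\neq W_2$: in the $\CAT{0}$ cube complex $\dot X$, distinct edges sharing a vertex are dual to distinct walls (hyperplanes do not self-osculate; see e.g.\ \cite{sageev}). Next, each $e_i$ crosses out of $Z$. Indeed, $Z$ is an intersection of half-spaces, and a boundary wall of $Z$ must be one of the walls bounding these half-spaces, because an edge crossing out of $Z$ flips the side of one of them and thus is dual to it; consequently $Z$ lies entirely in one closed half-space of $W_i$. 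Since $x\in Z$ lies on that side and $e_i$ is dual to $W_i$, the other endpoint of $e_i$ lies strictly on the opposite side, hence outside $Z$. Therefore $e_i$ is dual to the portal $P(W_i)$, and by the Zipping Lemma \ref{zipping} together with the remark following it we get $c_{e_i}(\ov{W_i})=j$; and since $e_i$ is incident at $x\in Z$, Remark \ref{rmkcoloring} (i.e.\ property (1) of Definition \ref{defVj}) yields $[c_{e_i}]_{W_i}=[c_x]_{W_i}$, so in particular $c_x(\ov{W_i})=c_{e_i}(\ov{W_i})=j$ for $i=1,2$.

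It remains to see that $\ov{W_1}$ and $\ov{W_2}$ are adjacent in $\Gamma(\mcal X)$. Each $W_i$ contains the midpoint of $e_i$, which lies at $\CAT{0}$ distance $\tfrac12$ from $x$, so $d_{\dot X}(W_1,W_2)\le 1$. Since the edge $e_1$ exists we have $\dim X\ge 1$, hence $R\ge \delta+2\sqrt{\dim X}\ge 2$, and since the quotient map $q\colon \dot X\to \mcal X$ is $1$-Lipschitz, $d_{\mcal X}(\ov{W_1},\ov{W_2})\le 1< R$. By Theorem \ref{finitewalls}(3) the distinct walls $W_1\neq W_2$, being at distance $<R$, have distinct images $\ov{W_1}\neq \ov{W_2}$ in $\mcal X$, so $\ov{W_1}$ and $\ov{W_2}$ are joined by an edge of $\Gamma(\mcal X)$. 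By the defining property of a coloring this forces $c_x(\ov{W_1})\neq c_x(\ov{W_2})$, contradicting $c_x(\ov{W_1})=c_x(\ov{W_2})=j$. This contradiction proves the lemma.

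There is no deep obstacle here; the only points needing care are the justification that $W_1\neq W_2$ (non-self-osculation of hyperplanes in $\CAT{0}$ cube complexes) and the verification that $e_i$ genuinely crosses out of $Z$, so that the color-$j$ conclusion of the Zipping Lemma applies, together with checking that the elementary bound $d_{\mcal X}(\ov{W_1},\ov{W_2})\le 1$ places the pair in the range $R$ for which Theorem \ref{finitewalls}(3) both preserves distinctness of the images and puts an edge between them in $\Gamma(\mcal X)$.
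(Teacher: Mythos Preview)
Your proof is correct and is essentially the natural argument: two $j$-boundary walls through a common vertex would be adjacent in $\Gamma(\mcal X)$ yet both colored $j$ by $c_x$, contradicting that $c_x$ is a proper coloring. The paper does not supply its own proof of this lemma but simply cites \cite[Lem.~8.6]{shepherd}; your argument matches the expected one. One minor simplification: once you have established that each $e_i$ crosses out of $Z$, you can read off $c_x(\ov{W_i})\le j$ directly from property~(2) of Definition~\ref{defVj} (since the other endpoint is not in $Z$), and equality follows because $W_i$ is a $j$-boundary wall---there is no real need to route through the Zipping Lemma, though doing so is harmless.
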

\begin{defi}For a wall $\ov{W}$ in $\mcal{X}$ and $c \in C_{k+1}(\Gamma(\mcal{X}))$, let $B(\ov{W}, c) := \ov{W} \cap c^{-1}([1,j])$ be the intersection of $\ov{W}$ with other walls in $\mcal{X}$ colored $\leq j$ by $c$ ($j$ is fixed in this section). Define $\ov{W}$ \emph{split along} $c$ by $\ov{W}- c := \ov{W}- B(\ov{W} , c)$, and for a vertex $\ov{x}$ in $\ov{W}$, let $(\ov{W}-c)(\ov{x})$ denote the component of $\ov{W}-c$ containing $\ov{x}$.
\end{defi}
\begin{lemma}[cf.~Lem.~8.9, \cite{Shepherd2019AgolsCubulations}]\label{covering} Let $W$ be a $j$-boundary wall with portal $P=Z \cap W$ and let $e$ be an edge dual to $P$ with midpoint $x_0$. Let $\mathring{P}$ denote the interior of $P$ as a subspace of $W$. Then the following holds:
\begin{enumerate}
    \item The quotient map $q : \dot{X} \rightarrow \mcal{X}$ restricts to a universal covering map
$$q|_{\mathring{P}} : \mathring{P} \rightarrow (\ov{W}-c_e)(\ov{x}_0).$$
\item $(\ov{W}-c_e)(\ov{x}_0)=(\ov{W}-c)(\ov{x})$ for any other vertex $x \in P$ of $W$ and any $c \in [c_e]_W$.
\item The group of deck transformations of $q|_{\mathring{P}}$ 
is $K_P:= \corchete{g \in K \colon gx_0 \in P}=\mathrm{Stab}_{K}(P)$ (where $K$ is from Theorem \ref{finitewalls}), hence $K_P$ acts cocompactly on $P$.
\end{enumerate}
\end{lemma}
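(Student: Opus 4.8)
The plan is to analyze the restriction of the quotient map $q\colon\dot X\to\mcal X$ to the portal $P = Z\cap W$ and identify both its target and its deck group. First I would recall that by Theorem \ref{finitewalls}(3), the composite $N_R(W)\to\mcal X$ factors through $N_R(W)/(K\cap G_W)$ and this quotient embeds in $\mcal X$; in particular $q$ restricted to the wall $W$ is the covering map $W\to \ov W$ with deck group $K\cap G_W = \mathrm{Stab}_K(W)$, and since $W$ is a wall of $\dot X$ we know $\ov W$ is an embedded (compact, since walls of $\mcal X$ are finite) wall. So the whole problem is to understand which part of $W$ maps into which part of $\ov W$ once we remove the walls colored $\le j$.

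For part (1): the key observation is that $\mathring P$, the interior of the portal inside $W$, is exactly the set of points of $W$ that are \emph{not} separated from the midpoint $x_0$ by any boundary wall of $Z$ meeting $W$ — equivalently, $\mathring P = W\setminus\bigcup\{W'\cap W : W'\text{ a }j\text{-boundary wall of }Z\text{ distinct from }W\}$, using Remark \ref{rmkcoloring} and the description of boundary walls, together with Lemma \ref{notincident} to control incidences. Under $q$, a wall $W'$ crossing $W$ with $c_{e'}(\ov{W'})\le j$ maps to a wall crossing $\ov W$ that is colored $\le j$ by $c := c_e$ (here one uses Lemma \ref{zipping} and that the colorings $c_x$ restricted to the relevant ball agree with $c$, and that two walls within distance $R$ map to distinct walls by Theorem \ref{finitewalls}(3), so the crossings are genuinely recorded). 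Conversely every wall of $\mcal X$ crossing $(\ov W - c)(\ov x_0)$ and colored $\le j$ lifts to such a $W'$. Hence $q$ maps $\mathring P$ into the component $(\ov W - c_e)(\ov x_0)$ of $\ov W - c_e$, and since $\mathring P$ is convex (it is an open subcomplex-like piece of the $\CAT0$ cube complex $W$, being an intersection of half-spaces of $W$) it is simply connected, so the restriction $q|_{\mathring P}$ is a covering map from a simply connected space, i.e.\ the universal cover. One must check surjectivity onto the component and that it is a local homeomorphism: both follow because $q$ is already a covering $W\to\ov W$, and removing the preimage of the colored-$\le j$ walls on both sides is compatible with this covering.

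For part (2): this is a consistency statement — changing the base vertex $x$ inside $P$ or replacing $c_e$ by another element $c\in[c_e]_W$ does not change the component, because the colorings in $[c_e]_W$ all agree on the ball of radius $c(\ov W)$ around $\ov W$ in $\Gamma(\mcal X)$, which is large enough to determine which adjacent walls are colored $\le j$, and because $P$ is connected so all its vertices land in the same component; Lemma \ref{zipping} guarantees $c_e(\ov W)$ is well-defined so this ball is unambiguous. For part (3): the deck group of $q|_{\mathring P}$ is contained in the deck group $\mathrm{Stab}_K(W)$ of $q|_W$, and an element $g\in K$ descends to a deck transformation of $q|_{\mathring P}$ iff it preserves $\mathring P$, iff it preserves the set of $j$-boundary walls of $Z$ crossing $W$ that it permutes while fixing $\ov W$; since $g$ acts on $W$ preserving $\ov W$ and $P$ is determined inside $W$ by $Z$ (via property (2) of $\mcal V_j$ and the coloring), this is equivalent to $gx_0\in P$, giving $K_P = \{g\in K : gx_0\in P\} = \mathrm{Stab}_K(P)$. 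Cocompactness of $K_P$ on $P$ then follows since $q|_{\mathring P}$ is a covering with compact base $(\ov W-c_e)(\ov x_0)$ (a subcomplex of the finite wall $\ov W$). The main obstacle I expect is part (1): carefully verifying that the components of $W - (\text{boundary walls of }Z)$ correspond exactly to the components of $\ov W - c_e$ under $q$ — i.e.\ that no two distinct boundary walls crossing $W$ get identified in $\mcal X$ and that no spurious extra walls colored $\le j$ appear downstairs — which is where Theorem \ref{finitewalls}(3) (embedding of $R$-neighborhoods, $R$ chosen $\ge\delta+2\sqrt{\dim X}$) and Lemma \ref{notincident} do the essential work.
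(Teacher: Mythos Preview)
The paper does not give its own proof of this lemma: it is stated with the reference ``cf.~Lem.~8.10, \cite{shepherd}'' and no argument is supplied, so there is nothing to compare against directly. Your outline is the natural one and matches the strategy in Shepherd's paper: use Theorem~\ref{finitewalls}(3) to see that $q|_W\colon W\to\ov W$ is a covering with deck group $K\cap G_W$, identify $\mathring P$ as the component of $W$ minus the boundary walls of $Z$ crossing $W$, and check via the Zipping Lemma and the definition of $[c_e]_W$ that this corresponds downstairs to the component $(\ov W - c_e)(\ov x_0)$.

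Two small points to tighten. First, your displayed formula for $\mathring P$ is literally the whole complement in $W$ of those boundary walls, but you mean the \emph{component containing $x_0$} of that complement; the preceding sentence says this correctly but the formula does not. Second, in part~(3) the implication ``$gx_0\in P\Rightarrow g\in\mathrm{Stab}_K(P)$'' deserves one more line: since $x_0$ is the midpoint of an edge dual to $W$, the condition $gx_0\in P\subset W$ forces $gW=W$, and then because $g\in K$ acts trivially on $\mcal X$ it preserves the decomposition of $\ov W$ by walls colored $\le j$, hence permutes the components of the preimage and fixes the one through $x_0$, giving $g\mathring P=\mathring P$. With these clarifications your argument is complete.
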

If $P$ is a portal leading to $(Z,H,(c_x))$, let $H_P$ denote its set-wise stabilizer in $H$. Suppose $P$ lies in the wall $W$ and let $x \in P$ and $h \in H$ be such that $hx \in P$. If $x'$ is the vertex closest to $x$ in $P$, then $x'$ is the midpoint of the edge $e$ dual to $P$ with $he$ also dual to $P\subset W$, implying $hx'\in P$,  $hW=W$, and $hP=h(Z \cap W)=Z \cap W=P$. We conclude that $h \in H_P$, so the map $P/{H_P} \rightarrow Z/H$ is an embedding, and also that $H_P$ acts properly and cocompactly on $P$ because $Z/H$ is compact.

This last observation and properness of the action of $G$ on $\dot{X}$, together with the previous two lemmas, imply that $H_P \cap K$ is finite index in $H_P$ for any portal $P$ leading to $(Z,H,(c_x))$. Thus we can use Lemma \ref{virtmodif} to modify our set $\mcal{V}_j$.
\begin{coro}\label{HsubsetK} We can virtually modify $\mcal{V}_j$ (in the sense of Lemma \ref{virtmodif}) so that $H_P<K$ for any portal $P$ leading to $Z$.
\end{coro}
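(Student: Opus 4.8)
The plan is to treat one triplet $(Z,H,(c_x))\in\mcal{V}_j$ at a time: I will produce a finite index normal subgroup $H_0\trianglelefteq H$ with $H_0\cap H_P\subset K$ for every portal $P$ leading to $Z$, and then invoke Lemma \ref{virtmodif} to perform the virtual modification, recording that the notion of portal leading to $Z$ depends only on $Z$ and the colourings $c_e$ (not on the acting group), so the desired conclusion is preserved by the modification. First I would observe that there are only finitely many $H$-orbits of portals leading to $Z$: every $j$-boundary wall of $Z$ is dual to an edge crossing out of $Z$, hence lies in the locally finite complex $\mcal{N}(Z)$, on which $H$ acts cocompactly. Choosing orbit representatives $P_1,\dots,P_r$ with $P_l=W_l\cap Z$ for $j$-boundary walls $W_l$, I would identify $H_{P_l}$ with $H\cap G_{W_l}$: an element of $H$ stabilising $P_l$ permutes the edges dual to $P_l$ and therefore stabilises $W_l$, while conversely any element of $H\cap G_{W_l}$ preserves both $W_l$ and $Z$, hence $P_l$.

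Next comes the key structural point: $H_{P_l}$ is a convex subgroup of the cubulated group $(H,\mcal{N}(Z))$. This follows by applying Lemma \ref{intersectionconvex} to the convex subgroups $H$ (convex core $\mcal{N}(Z)$, by Lemma \ref{nbhdconvex}) and $G_{W_l}$ (convex core $\mcal{N}(W_l)$) of $(G,\dot X)$: their cores meet, since $W_l$ is dual to an edge crossing out of $Z$ whose endpoint in $Z$ lies in both neighbourhoods, and the resulting convex core $\mcal{N}(Z)\cap\mcal{N}(W_l)$ is a convex subcomplex of $\mcal{N}(Z)$. Since the discussion preceding this corollary shows that $K\cap H_{P_l}$ has finite index in $H_{P_l}$, it acts cocompactly on the same convex core and is therefore also convex in $(H,\mcal{N}(Z))$. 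Now $(H,\mcal{N}(Z))\in\mcal{CMVH}$ by Definition \ref{defVj}(3), so it is virtually special by Theorem \ref{CMVHimpliesspecial}, and hence $K\cap H_{P_l}$ is separable in $H$ by Theorem \ref{haglundwiseseparability}. I expect this separability step to be the heart of the matter, and the only place where condition (3) of Definition \ref{defVj}, and thus the full strength of Theorem \ref{CMVHimpliesspecial}, is used.

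With separability available, for each $l$ I would run the standard coset argument: writing $H_{P_l}$ as a finite union of cosets $b_s(K\cap H_{P_l})$ with $b_1=1$, separability of $K\cap H_{P_l}$ in $H$ yields, for each $s\geq 2$, a finite index subgroup of $H$ containing $K\cap H_{P_l}$ but not $b_s$; intersecting over $s$ produces a finite index subgroup $\hat H_l<H$ with $\hat H_l\cap H_{P_l}=K\cap H_{P_l}\subset K$. Letting $H_0$ be the normal core in $H$ of $\bigcap_{l=1}^r\hat H_l$ gives a finite index normal subgroup with $H_0\cap H_{P_l}\subset K$ for every representative $l$, and for an arbitrary portal $P=hP_l$ one has $H_{P}=hH_{P_l}h^{-1}$ and hence $H_0\cap H_P=h(H_0\cap H_{P_l})h^{-1}\subset hKh^{-1}=K$, using $H_0\trianglelefteq H$ and $K\trianglelefteq G$. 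Carrying this out for each triplet in $\mcal{V}_j$ and applying Lemma \ref{virtmodif} (which preserves properties (1)--(4) of Definition \ref{defVj}) finishes the proof. The main obstacle, as indicated, is the verification that $K\cap H_P$ is a convex, and therefore separable, subgroup of $(H,\mcal{N}(Z))$; the remainder is routine manipulation of finite index subgroups and normal cores.
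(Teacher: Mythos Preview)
Your proof is correct and follows essentially the same approach as the paper's: reduce to finitely many $H$-orbit representatives of portals, show each $K\cap H_{P_l}$ is a convex subgroup of $(H,\mcal{N}(Z))$, invoke Theorem \ref{CMVHimpliesspecial} and Theorem \ref{haglundwiseseparability} to get separability, and then pass to a finite index normal subgroup via Lemma \ref{virtmodif}. The only cosmetic difference is that the paper names $\mcal{N}(P_i)\cap\mcal{N}(Z)$ as the convex core for $H_{P_i}\cap K$, whereas you obtain $\mcal{N}(W_l)\cap\mcal{N}(Z)$ via Lemma \ref{intersectionconvex}; both choices work, and your write-up is in fact more detailed than the paper's (explicitly identifying $H_{P_l}=H\cap G_{W_l}$, spelling out the coset/normal-core argument, and checking the conjugation step using $K\trianglelefteq G$).
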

\begin{proof}Let $P_1,\dots, P_k$ be a set of representatives of $H$-orbits for portals of $Z$, and note that by Lemma \ref{virtmodif} it is enough to replace $H$ by a finite index normal subgroup $H_0 \trianglelefteq H$ such that $H_0 \cap H_{P_i} \cap K=H_0 \cap H_{P_i}$ for each $i$. But each $H_{P_i} \cap K$ is finite index in $H_{P_i}$, so we just need each subgroup $H_{P_i} \cap K$ to be separable in $H$, which is true by Theorems \ref{haglundwiseseparability} and \ref{CMVHimpliesspecial} since $(H, \mcal{N}(Z)) \in \mcal{CMVH}$ and all the subgroups $H_{P_i} \cap K$ are convex in $(H, \mcal{N}(Z))$ (they preserve the convex subcomplexes $\mcal{N}(P_i) \cap \mcal{N}(Z)$ respectively).
\end{proof}
\begin{defi}We say that two portals $P$ and $P'$ leading to $(Z,H,(c_x)), (Z', H',(c'_x))\in \mcal{V}_j$ respectively are \emph{compatible} if there are edges $e$ and $e'$ dual to $P$ and $P'$ respectively such that $[e, c_e]\in G \cdot [e', c'_{e'}]$.
\end{defi}
Let $P$ and $P'$ be compatible portals as above, say lying in walls $W$ and $W'$. Take $g \in G$ and edges $e$ and $e'$ dual to $P$ and $P'$ such that $[e, c_e]= g[e', c'_{e'}]$, and let $x_0$ and $x_0'$ be the midpoints of $e$ and $e'$. So $e=ge'$, $W=gW'$ and $x_0=gx'_0$. At the level of $\mcal{X}$ the action of $g$ translates to $$g(\ov{W}'-c_{e'}')(\ov{x}_{0}')=(\ov{W}-g c_{e'}')(\ov{x}_{0})=(\ov{W}-c_{e})(\ov{x}_{0})$$
where we used Lemma \ref{covering} (2) and the fact that $[c_e]_W=[gc'_{e'}]_{W}$.

Since $q$ restricts to coverings for $\mathring{P}$ and $\mathring{P}'$, we deduce that $g$ restricts to a cube isomorphism $P' \rightarrow P$ which is equivariant with respect to the group isomorphism $K_{P'} \rightarrow K_P ; k \mapsto gkg^{-1}$. This induces an isomorphism $P'/K_{P'}\xrightarrow{\sim} P/K_P$.

\begin{lemma}[cf.~Lem.~9.2, \cite{Shepherd2019AgolsCubulations}]\label{compatibleportals} Portals $P$ and $P'$ leading to $(Z,H,(c_x)),(Z',H',(c'_x))\in \mcal{V}_j$ are compatible if and only if there exists $g \in G$ such that $$\corchete{[e, c_e]\colon e \text{ is dual to }P}= g\corchete{[e',c'_{e'}]\colon e' \text{ is dual to }P'}.$$
In particular, compatibility of portals is an equivalence relation.
\end{lemma}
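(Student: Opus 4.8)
The plan is to establish the stated set-equality characterization; the ``in particular'' clause is then immediate, since the relation ``there is $g\in G$ with $\{[e,c_e]\colon e\text{ dual to }P\}=g\{[e',c'_{e'}]\colon e'\text{ dual to }P'\}$'' is visibly reflexive (take $g=1$), symmetric ($g\mapsto g^{-1}$) and transitive (compose). One implication is trivial: a $j$-boundary wall is dual to at least one edge crossing out of $Z$, so if the set equality holds for some $g$ then, choosing any edge $e$ dual to $P$, we read off $[e,c_e]=g[e',c'_{e'}]$ for some $e'$ dual to $P'$, which is exactly compatibility.

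For the converse, suppose $P,P'$ are compatible, witnessed by $g\in G$ and edges $e_0$ dual to $P$, $e_0'$ dual to $P'$ with $[e_0,c_{e_0}]=g[e_0',c'_{e_0'}]=[ge_0',gc'_{e_0'}]$. Comparing the two coordinates gives $e_0=ge_0'$, hence $gW'=W$ for the ambient walls $W\supseteq P$ and $W'\supseteq P'$, and $[c_{e_0}]_W=[gc'_{e_0'}]_W$. By the discussion preceding the lemma, $g$ moreover restricts to a cube isomorphism $P'\to P$, so $g(P')=P$. The key step is the following combinatorial description of the edges dual to a portal: for a $j$-boundary wall $W$ of a convex $Z\in\mcal{V}_j$, an edge $e$ of $\dot X$ dual to $W$ crosses out of $Z$ (i.e. is dual to $P=W\cap Z$) exactly when its midpoint $m(e)$ lies in $P$.

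To see this, write $Z$ as an intersection of half-spaces of $\dot X$. Since a vertex of $\dot X$ lies on no wall, and the two endpoints of an edge are separated by exactly one wall, namely $W(e)$, any edge dual to $W$ that crosses out of $Z$ forces one of the two half-spaces bounded by $W$ to occur among the defining half-spaces of $Z$; hence $Z$ lies on one side $W^+$ of $W$. A midpoint $m(e)$ with $e$ dual to $W$ is a vertex of the cube complex $W$, so it lies on no wall of $\dot X$ other than $W$; consequently, for each defining half-space bounded by a wall $\neq W$, the point $m(e)$ lies strictly on the same side as both endpoints of $e$. Combining these, the endpoint of $e$ on the $W^+$ side lies in $Z$ if and only if $m(e)$ lies in every defining half-space, i.e. if and only if $m(e)\in W\cap Z=P$, while the other endpoint, lying in $W^-\setminus W$, never lies in $Z\subseteq W^+$. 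Thus $\{e\colon e\text{ dual to }P\}=\{e\colon e\text{ dual to }W,\ m(e)\in P\}$, a description manifestly preserved by $g$: using $m(gf)=g\,m(f)$, $gW'=W$ and $g(P')=P$ one gets $g\cdot\{e'\colon e'\text{ dual to }P'\}=\{e\colon e\text{ dual to }P\}$.

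It remains to match the colourings. Given $e$ dual to $P$, set $e'=g^{-1}e$, dual to $P'$ by the last line, so that $g[e',c'_{e'}]=[ge',gc'_{e'}]=\{e\}\times[gc'_{e'}]_W$ while $[e,c_e]=\{e\}\times[c_e]_W$. By the Zipping Lemma~\ref{zipping} applied inside $Z$ and inside $Z'$, $[c_e]_W=[c_{e_0}]_W$ and $[c'_{e'}]_{W'}=[c'_{e_0'}]_{W'}$; applying $g$ to the second identity and invoking $[c_{e_0}]_W=[gc'_{e_0'}]_W$ gives $[gc'_{e'}]_W=[gc'_{e_0'}]_W=[c_{e_0}]_W=[c_e]_W$, so $[e,c_e]=g[e',c'_{e'}]$. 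Running this over all $e$ dual to $P$, together with the symmetric argument for edges dual to $P'$, yields the desired equality of sets. I expect the crux to be the combinatorial description of the dual edges above — upgrading ``$g(P')=P$'' to ``$g$ matches the dual edges of $P$ and $P'$'' — which rests on the observation that a convex $Z\in\mcal{V}_j$ lies entirely on one side of each of its $j$-boundary walls; the rest is bookkeeping with the classes $[-]_W$ and the Zipping Lemma.
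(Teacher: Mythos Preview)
Your proof is correct. The paper itself does not supply a proof of this lemma, deferring instead to \cite[Lem.~9.2]{shepherd}; your argument is a faithful and self-contained reconstruction. The combinatorial characterisation of edges dual to a portal via their midpoints is the natural bridge from the cube isomorphism $g:P'\to P$ (established in the paragraph immediately preceding the lemma, via Lemma~\ref{covering}) to the bijection on dual edges, and the Zipping Lemma then matches the colourings exactly as you describe.
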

Following notation of \cite{Shepherd2019AgolsCubulations}, in the case of $P, P'$ and $g$ as above we will say that $P$ is a \emph{$g$-teleport}
of $P'$.

\subsection{The graph of groups $(\mbb{A},\mcal{A})$} As we saw previously, compatible portals $P$ and $P'$ leading to $(Z,H,(c_x)),(Z',H',(c'_x))\in \mcal{V}_j$ are isomorphic and induce an isomorphism $P'/K_{P'}\rightarrow P/K_P$. However, we would like to glue $Z'/H'$ and $Z/H$ along $P'/H'_{P'}$ and $P/H_P$, which are only isomorphic up to a finite-sheeted cover by Corollary \ref{HsubsetK}. If we want isomorphisms $g : P'/H'_{P'} \rightarrow P/H_P$ we need to virtually modify $\mcal{V}_j$ again, and for that we will construct a graph of groups.

\begin{defi}Let $(\mbb{A},\mcal{A})$ be the finite bipartite (and possibly disconnected) graph of groups defined as follows.
\begin{itemize}
\item  \emph{Type I} vertices of $\mbb{A}$ are triplets $(Z,H,(c_x))\in \mcal{V}_j$ with corresponding vertex group $H$. Here repeated triplets are counted separately.

\item \emph{Type II} vertices of $\mbb{A}$ are portals $\corchete{P_i}$ forming a complete set of representatives for the compatibility classes of portals, with corresponding vertex groups $K_{P_i}$.

\item Edges attached to the Type I vertex $(Z,H,(c_x))$ will be portals $P$ leading to $(Z,H,(c_x)) \in \mcal{V}_j$, such that we choose just one $P$ from each $H$-orbit of portals (repeated triplets will have the same conjugacy representatives). The edge $P$ will be attached to the Type II vertex in its compatibility class of portals, and its edge group will be $H_P$.
\end{itemize}
\end{defi}
For a portal $P$ leading to $(Z,H,(c_x))$, the injection of the edge $H_P$ into its type I vertex group is just the inclusion $H_P \hookrightarrow H$, while the map into a type II vertex group is the composition $H_P \hookrightarrow K_P \xrightarrow{g(-)g^{-1}}K_{P_i}$, where $g \in G$ is so that $P_i$ is a $g$-teleport of $P$ (for the case $g : P_i \rightarrow P_i$ we take $g=1$, and same portals corresponding to repeated triplets will have the same $g$).

The next proposition may be thought of as a relative version of the acylindricity of the graph of groups $\mbb{A}$ proven in the absolute case (cf.~\cite[p1063]{Agol2012VirtualHaken} and \cite[Lem.~8.7]{Shepherd2019AgolsCubulations}). The proof is practically the same, the only difference is that we require Proposition \ref{loxodromics}.

\begin{prop}\label{acylindricity}If $(Z,H,(c_x))$ is a type I vertex group of $(\mbb{A},\mcal{A})$, then the collection
$$\corchete{H_P \colon P \text{ is an edge attached to }Z}$$
is relatively malnormal in $H$. That is, if $P_1$ and $P_2$ are edges attached to $Z$ and $h \in H$ is so that $H_{P_1} \cap H^h_{P_2}$ contains a loxodromic, then $P_1=P_2$ and $h\in H_{P_1}$.
\end{prop}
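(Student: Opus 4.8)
The plan is to argue by contradiction, so suppose $P_1, P_2$ are edges (portals) attached to the type I vertex $(Z,H,(c_x))$ and $h\in H$ is such that $H_{P_1}\cap H_{P_2}^h$ contains a loxodromic element $g\in G$. Since $P_1$ and $P_2$ are portals leading to $Z$, each $P_i$ lies in a boundary wall $W_i$ of $Z$ with $P_i = W_i\cap Z$, and by Remark \ref{wallsubcomplex}-type reasoning (together with the observations preceding Corollary \ref{HsubsetK}) we have $H_{P_i} = H\cap G_{W_i}$; in particular $g$ preserves $W_1$ and preserves $hW_2$. First I would record that $g$ being loxodromic means it preserves a quasi-axis; using convexity of the walls $W_1$ and $hW_2$ in $\dot X$ (these are convex subcomplexes, hence $\mathrm{CAT}(0)$), $g$ must preserve a combinatorial geodesic axis inside each of $W_1$ and $hW_2$, because the set of points of a $\mathrm{CAT}(0)$ space moved minimally by a hyperbolic isometry is a convex $g$-invariant subset that intersects every $g$-invariant convex subcomplex. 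So $g$ preserves two axes $\gamma_1\subset W_1$ and $\gamma_2\subset hW_2$.

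Now invoke Proposition \ref{loxodromics}: there is a universal constant $\delta\geq 0$, depending only on the cubulated relatively hyperbolic group $(G,\dot X)$, such that any two $g$-invariant axes are within distance $\delta$ of each other. Hence $d(W_1, hW_2)\leq d(\gamma_1,\gamma_2)\leq\delta$. The key point is that $\delta$ was already used to fix the separation threshold $R$ in Section \ref{constructionofthecomplex}: we chose $R\geq \delta + 2\sqrt{\dim X}$, so in particular $R > \delta$ and the walls $W_1$ and $hW_2$ of $\dot X$ are less than $R$ apart. By Theorem \ref{finitewalls}(3), walls of $\dot X$ that are less than $R$ apart descend to distinct walls of $\mathcal{X}$ unless they are already equal; more precisely, since $W_1$ and $hW_2$ are close and $q$ is injective on $R$-neighborhoods modulo the wall stabilizer, the images $\ov{W}_1$ and $\ov{hW}_2$ are at distance $\leq \delta < R$ in $\mathcal{X}$, hence adjacent in the graph $\Gamma(\mathcal{X})$ if distinct, or equal. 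Combined with the colorings, this forces a strong constraint: $W_1$ and $hW_2$ must be $G$-translates of the same boundary wall with matching colored data, i.e. $[e_1,c_{e_1}]$ and $h[e_2', c'_{e_2'}]$ lie in the same $G$-orbit of equivalence classes. Then the compatibility/teleport analysis of Lemma \ref{compatibleportals} applies: $P_1$ and $h$-translate of $P_2$ are compatible portals leading to $Z$, and since we kept only one portal per $H$-orbit when building $\mbb{A}$, this forces $P_1 = P_2$ as edges; tracing through then gives $h\in H_{P_1}$.

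The main obstacle I anticipate is the bookkeeping in the last step — translating the metric closeness $d(W_1,hW_2)\leq\delta<R$ into the statement that $P_1$ and $P_2$ are the same edge of $\mbb{A}$ and $h\in H_{P_1}$. This requires carefully using the coloring machinery of Sections \ref{coloringwalls}--\ref{boundarywallsandgraphofgroups}: that $\ov{W}_1$ and $\ov{hW}_2$ being adjacent (or equal) in $\Gamma(\mathcal{X})$, together with the equivalence-class structure $[-]_e$ recording colors on balls of radius $c(\ov W)$, pins down the colored portal data up to $G$-action, and then Lemma \ref{compatibleportals} converts this into compatibility and hence (by the one-representative-per-$H$-orbit convention) equality of edges. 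This is essentially the same argument as in \cite[Lem.~8.8]{shepherd} in the absolute case; as the statement notes, the only new input is Proposition \ref{loxodromics}, which supplies the uniform bound $\delta$ on the separation of two axes of a single loxodromic in the relatively hyperbolic (non-hyperbolic) setting, where one can no longer appeal to thin bigons in a $\delta$-hyperbolic space.
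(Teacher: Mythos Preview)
Your overall setup is right and matches the paper: put axes of the loxodromic $g$ inside $W_1$ and $hW_2$, apply Proposition~\ref{loxodromics} to get $d(W_1,hW_2)\le\delta<R$, and then use the coloring machinery to conclude. But the last step---turning metric closeness into $P_1=P_2$ and $h\in H_{P_1}$---is not what you claim, and your proposed route through Lemma~\ref{compatibleportals} does not work.

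The gap is the assertion that closeness of $W_1$ and $hW_2$ forces $[e_1,c_{e_1}]$ and $h[e_2',c'_{e_2'}]$ into the same $G$-orbit. Nothing in the setup gives this: adjacency in $\Gamma(\mcal{X})$ constrains colors, not $G$-orbits of $[e,c]$-classes. Moreover, even if you had compatibility of $P_1$ and $hP_2$, Lemma~\ref{compatibleportals} would not give $P_1=P_2$: compatible portals leading to the same $Z$ need not be $H$-translates of each other (two distinct edges of $\mbb{A}$ attached to $Z$ can perfectly well be attached to the same type~II vertex). The ``one representative per $H$-orbit'' convention only tells you that distinct edges are in distinct $H$-orbits, not in distinct compatibility classes.

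What the paper actually does is argue by contradiction that $W_1=hW_2$ as walls of $\dot X$ (hence $P_1=hP_2$, and then the $H$-orbit convention finishes). The contradiction is a coloring violation, not a compatibility argument. Both $W_1$ and $hW_2$ are $j$-boundary walls of $Z$, so there are vertices $x_1,x_2\in Z$ incident to edges dual to $P_1$ and $hP_2$ with $c_{x_1}(\ov{W}_1)=c_{x_2}(\ov{hW}_2)=j$. Using $d(W_1,hW_2)\le\delta$ and the choice $R\ge\delta+2\sqrt{\dim X}$, one finds such $x_1,x_2$ joined by a short edge path $\beta$ in $Z$ every edge of which is dual to a wall within $R$ of $W_1$ in $\mcal X$; then property~(1) of Definition~\ref{defVj} propagates the color of $\ov{W}_1$ along $\beta$, giving $c_{x_2}(\ov{W}_1)=c_{x_1}(\ov{W}_1)=j$. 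If $W_1\ne hW_2$ then $\ov{W}_1\ne\ov{hW}_2$ are adjacent in $\Gamma(\mcal X)$ but both colored $j$ by $c_{x_2}$, contradicting that $c_{x_2}$ is a coloring. This edge-path propagation using property~(1) is the missing idea in your plan.
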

\begin{proof} Assume $\lambda \in H_{P_1} \cap H_{P_2}$
is a loxodromic element, in which case we claim that $P_1=P_2$.
If $W_1$ and $W_2$ are the walls containing $P_1$ and $P_2$ respectively, then it is enough to prove that $W_1=W_2$, since that implies $P_1=Z \cap W_1=Z \cap W_2=P_2$.

The element $\lambda$ acts freely on $P_1$ and $P_2$, so it acts hyperbolically on them, and there exist axes $\gamma_i\subset P_i\subset W_i$ in which $\lambda$ acts by non-trivial translation. These axes are asymptotic in $Z$, thus $\gamma_1$ and $\gamma_2$ bound a flat strip in $Z$ of width $r \geq 0$. Since $\lambda$ is loxodromic, by Proposition \ref{loxodromics} we have $r \leq \delta$. Let us assume $W_1 \neq W_2$, so that $\ov{W}_1 \neq \ov{W}_2$ and $d(\ov{W}_1, \ov{W}_2) \leq \delta \leq R$, and get a contradiction by showing that $\ov{W}_1$ and $\ov{W}_2$ are colored equal by some coloring.

If $p$ is any point in $P_1$, then it is contained in a cube $C$ of $X$ and we can find a vertex $x \in Z$ incident to an edge dual to $P_1$, with $d(p, x) \leq \frac{1}{2}\sqrt{\dim C} \leq \frac{1}{2}\sqrt{\dim X}$. The same is true for $P_2$, so there are vertices $x_1, x_2 \in Z$ with each $x_i$ being incident to an edge dual to $P_i$ so that the geodesic segment $\alpha$ joining $x_1$ and $x_2$ has length at most $\delta+\sqrt{\dim X}$. By considering the sequence of cubes that $\alpha$ travels through, we can find an edge path $\beta$ in $Z$ from $x_1$ to $x_2$ with $\beta \subset N_{\sqrt{\dim X}}(\alpha)$. Let $e_1,\dots, e_s$ be the edges of $\beta$, and $x_1=y_1, y_2,\dots, y_{s+1}=x_2$ be their vertices, with $e_i$ joining $y_i$ and $y_{i+1}$. Since $R \geq \delta + 2\sqrt{\dim X}$ (see the beginning of Section \ref{constructionofthecomplex}) we have $d(\ov{W(e_i)}, \ov{W}_1) \leq d(W(e_i), W_1) \leq R$ for all $1 \leq i \leq s$, and so $\ov{W(e_i)}$ and $\ov{W}_1$ are adjacent vertices in $\Gamma(\mcal{X})$. For each $1 \leq i \leq s$, from property (1) of $\mcal{V}_j$ we deduce $[c_{y_i}]_{e_i}=[c_{y_{i+1}}]_{e_i}$, so $c_{y_i}(\ov{W}_1)=c_{y_{i+1}}(\ov{W}_1)$ and hence $c_{x_1}(\ov{W}_1)=c_{x_{2}}(\ov{W}_1)=j$, because $W_1$ and $W_2$ are $j$-boundary walls. Thus $c_{x_2}$ is a coloring with $c_{x_2}(\ov{W}_1)=c_{x_2}(\ov{W}_2)$, contradicting $W_1\neq W_2$.

To finish the proposition, let $P_1$ and $P_2$ be edges attached to $(Z,H,(c_x))$ and $h \in H$ be such that $H_{P_2} \cap H^h_{P_2}=H_{P_1} \cap H^h_{P_2}$ contains a loxodromic. By our previous claim we obtain $P_1=hP_2$, and since different edges correspond to distinct representatives of $H$-orbits of portals we must have $P_1=P_2$, implying $h \in H_{P_1}$. \end{proof}

\begin{defi} Let $\mbb{G}=\mbb{G}_c$ be the fundamental group of a connected component $(\mbb{A}_c, \mcal{A}_c)$ of $(\mbb{A},\mcal{A})$ with respect to some vertex $w_0$ of $\mbb{A}_c$, and fix a maximal subtree $T$ of $\mbb{A}_c$ containing $w_0$ that fixes inclusions of the edge/vertex groups of $(\mbb{A}_c, \mcal{A}_c)$ into $\mbb{G}$.
\end{defi}
\begin{prop}\label{weakseparability} Let $G_{e_0}$ be an edge group of $(\mbb{A}_c, \mcal{A}_c)$ attached to the type II vertex group $G_{v_0}$, and let $a \in G_{v_0}\backslash G_{e_0}\subset \mbb{G}$. Then there exists a finite index subgroup $\dot{\mbb{G}}_{e_0,a}<\mbb{G}$ containing $G_{e_0}$ with $a \notin \dot{\mbb{G}}_{e_0,a}$.
\end{prop}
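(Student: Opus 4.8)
The plan is to separate $a$ from $G_{e_0}$ inside a \emph{hyperbolic} virtually special quotient of $\mbb{G}$, produced by a single compatible Dehn filling of all the vertex groups of the graph of groups $(\mbb{A}_c,\mcal{A}_c)$, and then to invoke separability of quasiconvex subgroups of hyperbolic virtually special groups (Theorem~\ref{haglundwiseseparability}).

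First I would record the relevant structure of $(\mbb{A}_c,\mcal{A}_c)$. Up to conjugation we may take $G_{e_0}=H_P$ to be an edge attached to a Type~I vertex $(Z,H,(c_x))$ and to the Type~II vertex $P_i$ with $G_{v_0}=K_{P_i}$, the two attaching maps identifying $H_P$ with a convex subgroup of $H$ and with a finite-index (hence fully relatively quasiconvex) subgroup of $K_{P_i}$ --- the latter by Corollary~\ref{HsubsetK}, after a virtual modification of $\mcal{V}_j$. Each Type~I vertex group satisfies $(H,\mcal{N}(Z))\in\mcal{CMVH}$, so by Theorem~\ref{CMVHimpliesspecial} it is virtually compact special, and by definition of $\mcal{CMVH}$ it is hyperbolic relative to compatible virtually special (hence residually finite) subgroups; moreover $H_P$ is convex in $(H,\mcal{N}(Z))$, so it is relatively quasiconvex and, by Lemma~\ref{compimpliesstrongly}, strongly peripherally separable in $H$, while by Proposition~\ref{acylindricity} the collection of edge groups at $(Z,H,(c_x))$ is relatively malnormal in $H$. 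For a Type~II vertex group $K_{P_i}$, I would observe that $K_{P_i}$ acts freely and cocompactly on the portal $P_i$, which is a convex subcomplex of a cubical barycentric subdivision of $\dot{X}$ of dimension strictly less than $\dim\dot{X}$; that subdivision is again a cubulation of $G$, hyperbolic relative to compatible virtually special subgroups by (iterating) Corollary~\ref{vspecialsubdivision}, so by Lemma~\ref{consistentcompatible} the cubulated group $(K_{P_i},P_i)$ is hyperbolic relative to compatible virtually special subgroups. The dimension induction then gives $(K_{P_i},\dot{P_i})\in\mcal{CMVH}$, whence (Theorem~\ref{CMVHimpliesspecial} and Corollary~\ref{vspecialsubdivision}) $K_{P_i}$ is virtually compact special and, being in $\mcal{CMVH}$, hyperbolic relative to compatible virtually special residually finite subgroups, with $H_P$ fully relatively quasiconvex in it.

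Next I would carry out a compatible simultaneous filling, following the scheme of Proposition~\ref{factorvspecial}. Applying Proposition~\ref{superfilling} (which packages Theorem~\ref{msqt} with Theorems~\ref{dehnfilling}, \ref{heightfilling} and Lemma~\ref{existencewide}) to each Type~I group $H$ with $\mcal{H}=\corchete{H}\cup\corchete{H_{P'}\colon P' \text{ an edge at }(Z,H,(c_x))}$, and to each Type~II group $K_{P_i}$ with $\mcal{H}=\corchete{K_{P_i}}\cup\corchete{H_{P'}\colon P' \text{ an edge at }P_i}$ (the peripheral-separability hypotheses hold by the convexity and compatibility established above, and Corollary~\ref{almostmalnormalfilling} is used on the Type~I collections), one obtains, for all sufficiently deep peripherally finite fillings, quotients $\ov{H}$ and $\ov{K_{P_i}}$ that are hyperbolic and virtually special, in which the images of the edge groups are quasiconvex, the images of the edge-group collections at the Type~I vertices are almost malnormal, and --- taking the finite set $A$ of Proposition~\ref{superfilling} to contain $a$ --- such that $\phi|_A$ is injective and $\phi(A\cap H_P)=\phi(A)\cap\ov{H_P}$; since $a\notin H_P$, this forces the image of $a$ to lie outside the image of $H_P$. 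The delicate point is that these vertex fillings must induce the \emph{same} filling on each shared edge group $H_P$: exactly as around \eqref{eq5} in the proof of Proposition~\ref{factorvspecial}, one fixes once and for all the induced peripheral structures of the edge groups and then chooses the filling kernels of the peripherals of all vertex groups deep enough and coherently, so that the two fillings induced on each $H_P$ coincide; the flexibility supplied by Lemma~\ref{existencewide}, Theorem~\ref{msqt}, and residual finiteness of the peripherals makes this possible. For such a filling the new bipartite graph of groups $(\mbb{A}_c,\ov{\mcal{A}}_c)$ has hyperbolic vertex groups, edge groups quasiconvex in them, and almost malnormal collections at the Type~I vertices, so by Lemma~\ref{hyperbolicsplitting}(2) its fundamental group $\ov{\mbb{G}}=\pi_1(\mbb{A}_c,\ov{\mcal{A}}_c,w_0)$ is hyperbolic with quasiconvex edge groups, there is a homomorphism of graphs of groups $\Phi\colon\mbb{G}\to\ov{\mbb{G}}$ restricting to the vertex fillings, and since $\ov{\mbb{G}}$ is built from hyperbolic virtually special groups by amalgamations and HNN extensions over quasiconvex subgroups, $\ov{\mbb{G}}\in\mcal{QVH}$, so $\ov{\mbb{G}}$ is virtually special by Theorem~\ref{wiseQVH}.

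Finally, $\Phi(G_{e_0})=\Phi(H_P)$ lies in the vertex group $\ov{K_{P_i}}$ of $\ov{\mbb{G}}$ and is quasiconvex there, hence quasiconvex in $\ov{\mbb{G}}$, while $\Phi(a)\notin\Phi(H_P)$ by the previous step. By Theorem~\ref{haglundwiseseparability} the subgroup $\Phi(H_P)$ is separable in $\ov{\mbb{G}}$, so some finite-index subgroup of $\ov{\mbb{G}}$ contains $\Phi(H_P)$ but not $\Phi(a)$; its preimage under $\Phi$ is the required $\dot{\mbb{G}}_{e_0,a}<\mbb{G}$ --- of finite index, containing $G_{e_0}$, and avoiding $a$. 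The main obstacle is the middle step: engineering one Dehn filling that is simultaneously deep enough on every vertex group, coherent along every shared edge group (so that $\Phi$ is well defined), wide enough to keep $a$ outside $\ov{H_P}$, and tuned so that the output graph of groups satisfies the hypotheses of Lemma~\ref{hyperbolicsplitting}(2) --- the relative analogue, spread over the bipartite graph $\mbb{A}_c$, of the bookkeeping in the proof of Proposition~\ref{factorvspecial}.
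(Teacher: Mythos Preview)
Your overall strategy is exactly the paper's: produce a hyperbolic virtually special quotient $\ov{\mbb{G}}$ of $\mbb{G}$ via a graph-of-groups morphism $\Phi$ induced by compatible Dehn fillings of the vertex groups, then separate $\Phi(a)$ from the quasiconvex image $\Phi(G_{e_0})$ using Theorem~\ref{haglundwiseseparability} and pull back. The final paragraph is fine, and your identification of Lemma~\ref{hyperbolicsplitting}(2) and Theorem~\ref{wiseQVH} as the key inputs is correct.

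The difference---and the place where your argument is underspecified---is in how the filling is organized. You set up each vertex group with its \emph{own} induced relatively hyperbolic structure and propose to apply Proposition~\ref{superfilling} to each one separately, then ``choose the filling kernels of the peripherals of all vertex groups deep enough and coherently'' so that the two induced fillings on every shared edge group agree. But you do not explain how to achieve this coherence: the peripherals of distinct vertex groups $H$ and $K_{P_i}$ are different collections of subgroups, and there is no direct mechanism for synchronizing independent choices of filling kernels in them. The reference to \eqref{eq5} in Proposition~\ref{factorvspecial} does not help here, because that identity expresses compatibility of fillings \emph{induced from a single ambient filling of $G$}, not compatibility of independently chosen fillings of two different subgroups.

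The paper avoids this bookkeeping entirely by exploiting the fact that every vertex and edge group of $(\mbb{A}_c,\mcal{A}_c)$ is already a convex (hence relatively quasiconvex and, by Lemma~\ref{compimpliesstrongly}, strongly peripherally separable) subgroup of the single ambient group $(G,\mcal{P})$. One applies Proposition~\ref{superfilling} with ambient group $G$---once for each vertex, taking intersections of the resulting $\dot P_j$'s---to obtain a single filling $\phi:G\to\ov G=G(\dot P_1,\dots,\dot P_n)$. The induced maps $\phi|_{G_x}$ then automatically agree on shared edge groups (this is what \eqref{eq5} actually proves), the attaching maps of $(\mbb{A}_c,\mcal{A}_c)$ are inclusions and conjugations in $G$ so they descend, and one obtains $\Phi:\mbb{G}\to\ov{\mbb{G}}$ without any further coherence argument. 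This also simplifies your treatment of Type~II vertices: rather than cubulating $K_{P_i}$ on the portal and invoking the dimension induction directly, the paper just notes that $K_{P_i}$ is a finite-index overgroup of an edge group $H_P$, which is virtually special by Theorem~\ref{CMVHimpliesspecial}, Corollary~\ref{vspecialsubdivision} and the inductive hypothesis from Section~\ref{constructionofthecomplex}; hence $\phi(K_{P_i})$ is virtually special. In short: fill $G$, not the vertex groups.
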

\begin{proof}The proof is almost the same as the one given for Proposition \ref{factorvspecial}, so we just give a sketch of it.

Recall that by Lemma \ref{compimpliesstrongly} each vertex/edge group is convex and strongly peripherally separable in $G$, so by several applications of Proposition \ref{superfilling} we can find finite index subgroups $\dot{P}_j <P_j$ such that the filling $\phi : G \rightarrow \ov{G}=G(\mcal{N}=\corchete{\dot{P}_1,\dots, \dot{P}_n})$ satisfies:
\begin{itemize}
 \item $\ov{G}_v := \phi(G_v)$ is hyperbolic and virtually special, and isomorphic to image of the induced filling $\phi_v : G_v \rightarrow G_v(\mcal{N}_v)$ for any vertex $v$ of $\mbb{A}_c$ (images of type II vertex groups will be virtually special because every type II vertex is a finite index extension of an edge group, which is virtually special by Theorem \ref{CMVHimpliesspecial}, Corollary \ref{vspecialsubdivision} and our inductive assumption, see the beginning of Section \ref{constructionofthecomplex}).

 \item The image $\ov{G}_e := \phi(G_e)$ of the edge group $G_e$ of $(\mbb{A}_c, \mcal{A}_c)$ with terminal vertex of type I is naturally isomorphic to the image of the filling $\phi_e : G_e \rightarrow G_e(\mcal{N}_e)$ induced by both $\phi$ and $\phi_{t(e)}$ (that is, $\ker\phi_e=\ker\phi_{t(e)} \cap G_e=\ker\phi \cap G_e$).

\item  The collection of images under $\phi$ of groups in $\corchete{\ov{G}_e : e \text{ attached to }v}$ is almost malnormal
in $\ov{G}_v$ for any type I vertex $v$ of $\mbb{A}_c$.

\item $\phi(a) \notin \ov{G}_{e_0}$.
\end{itemize}
We then consider the graph of groups $(\mbb{A}_c,\ov{\mcal{A}}_c)$ with the same underlying graph $\mbb{A}_c$, and $\ov{\mcal{A}}_c$ assigning the group $\ov{G}_x$ to each vertex/edge $x$ of $\mbb{A}_c$, and with attaching maps induced by $\phi$ and the attaching maps of $(\mbb{A}_c,\mcal{A}_c)$ (every attaching map of $(\mbb{A}_c,\mcal{A}_c)$ is composition of inclusions and conjugations in $G$). Let $\ov{\mbb{G}}= \pi_1(\mbb{A}_c, \ov{\mcal{A}}_c, w_0)$ and choose embeddings of edge/vertex groups according to the same maximal subtree $T$ of $\mbb{A}_c$.

The homomorphism $\phi$ restricted to each edge/vertex group induces a homomorphism $\Phi : \mbb{G} \rightarrow \ov{\mbb{G}}$ such that $\Phi(x)=\phi_v(x)$ for any vertex $v$ of $\mbb{A}_c$ and for any $x \in G_v$, and since the splitting $(\mbb{A}_c,\ov{\mcal{A}}_c)$ of $\ov{\mbb{G}}$ satisfies the assumptions of Lemma \ref{hyperbolicsplitting} (2), $\ov{\mbb{G}}$ is hyperbolic and $\ov{G}_{e_0}$ is quasiconvex in $\ov{\mbb{G}}$. Then Theorem \ref{wiseQVH} implies that $\ov{\mbb{G}}$ is virtually special, and since $\Phi(a)=\phi(a) \notin \ov{G}_{e_0}=\Phi(G_{e_0})$, Theorem \ref{haglundwiseseparability} gives us the separability of $\Phi(G_{e_0})$  in $\ov{\mbb{G}}$, and hence the existence of $\dot{\mbb{G}}_{a,e_0}$.
\end{proof}
\begin{coro}\label{uniformN} There is a finite index subgroup $\mbb{N}_c \trianglelefteq \mbb{G}_c$ such that if $G_e<\mbb{G}_c$ is an edge group attached to (hence contained in) the type II vertex group $G_v<\mbb{G}_c$, then \begin{equation}\label{eq7}G_e \cap \mbb{N}_c=G_v \cap \mbb{N}_c.
\end{equation}
\end{coro}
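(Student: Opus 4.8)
\emph{Proof sketch / plan.} The plan is to bootstrap from Proposition \ref{weakseparability}, using the fact that every edge group of $(\mbb{A}_c,\mcal{A}_c)$ embedded in a type II vertex group does so with finite index. First I would record this finiteness. A type II vertex of $\mbb{A}_c$ is a portal representative $P_i$ with vertex group $K_{P_i}$, and an edge attached to it is a portal $P$ with edge group $H_P$, included in $K_{P_i}$ as $gH_Pg^{-1}<gK_Pg^{-1}=K_{P_i}$ for some $g\in G$ with $P_i$ a $g$-teleport of $P$. By Lemma \ref{covering} (3) the group $K_P$ acts properly and cocompactly on $P$, and after the virtual modification of Corollary \ref{HsubsetK} its subgroup $H_P$ also acts properly and cocompactly on $P$; hence $[K_P:H_P]<\infty$, so the copy $G_e=gH_Pg^{-1}$ of the edge group has finite index in the type II vertex group $G_v=K_{P_i}$ inside $\mbb{G}_c$.

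Next, fix an edge $e$ of $\mbb{A}_c$ attached to the type II vertex group $G_v$, and let $a_1=1,a_2,\dots,a_m$ be coset representatives for $G_e$ in $G_v$. For each $i\geq 2$ we have $a_i\in G_v\setminus G_e$, so Proposition \ref{weakseparability} provides a finite index subgroup $\dot{\mbb{G}}_{e,a_i}<\mbb{G}_c$ containing $G_e$ with $a_i\notin\dot{\mbb{G}}_{e,a_i}$. Set $\dot{\mbb{G}}_e:=\bigcap_{i=2}^m\dot{\mbb{G}}_{e,a_i}$, a finite index subgroup of $\mbb{G}_c$ containing $G_e$. If $g\in G_v\cap\dot{\mbb{G}}_e$, writing $g=a_ih$ with $h\in G_e\subseteq\dot{\mbb{G}}_e$ forces $a_i=gh^{-1}\in\dot{\mbb{G}}_e$, hence $i=1$ and $g\in G_e$; therefore $G_v\cap\dot{\mbb{G}}_e=G_e$.

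Finally, since $(\mbb{A}_c,\mcal{A}_c)$ has only finitely many edges, I would put $\mbb{N}'_c:=\bigcap_e\dot{\mbb{G}}_e$, the intersection over all edges $e$ of $\mbb{A}_c$, and take $\mbb{N}_c\trianglelefteq\mbb{G}_c$ to be the normal core of $\mbb{N}'_c$, which is again of finite index (its index divides $[\mbb{G}_c:\mbb{N}'_c]!$, so finite generation of $\mbb{G}_c$ is not needed). For any edge $e$ attached to the type II vertex group $G_v$ one then has $G_v\cap\mbb{N}_c\subseteq G_v\cap\dot{\mbb{G}}_e=G_e$, so $G_v\cap\mbb{N}_c\subseteq G_e\cap\mbb{N}_c$, and the reverse inclusion is trivial since $G_e\subseteq G_v$; this gives \eqref{eq7}. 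I do not expect a genuine obstacle here: the only steps requiring a little care are the finite-index claim of the first paragraph and the coset bookkeeping ensuring that a single normal subgroup $\mbb{N}_c$ works simultaneously for all edges of $\mbb{A}_c$.
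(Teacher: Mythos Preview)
Your proposal is correct and follows essentially the same approach as the paper: use the finite index of each edge group in its type II vertex group, apply Proposition~\ref{weakseparability} to separate a finite set of nontrivial coset representatives, intersect the resulting finite-index subgroups over all edges, and pass to a normal core. The only cosmetic difference is that the paper takes the normal core of each separating subgroup before intersecting, whereas you intersect first and then take the normal core; both yield the desired $\mbb{N}_c$.
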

\begin{proof}Recall that an edge group $G_e$ is finite index in the type II vertex group $G_v$ it is attached. So, for each edge $e$ let $S_e\subset G_v\backslash G_e$ be any finite set of representatives of non-trivial left cosets of $G_e$ in $G_v$. By Proposition \ref{weakseparability}, for each $a \in S_e$ there is a finite index subgroup $\dot{\mbb{G}}_{a,e_0}<\mbb{G}_c$ separating $G_e$ from $a$, and so the intersection of the finitely many conjugates of $\dot{\mbb{G}}_{a,e_0}$ in $\mbb{G}_c$ is a finite index normal subgroup, that we denote $\mbb{N}_{e,a}$. The group $\mbb{N}:=\bigcap_{e \in E(\mbb{A}_{c})}{\bigcap_{a \in S_{v}}{\mbb{N}_{e, a}}}$ satisfies the required equalities.
\end{proof}

\section{Constructing $\mcal{V}_{j-1}$ from $\mcal{V}_j$}\label{constructingVj-1}
Finally we can start the construction of $\mcal{V}_{j-1}$, which implies Theorem \ref{relhypimpliesCMVH}.

Let $(\mbb{A}_c,\mcal{A}_c)$ be a component of $(\mbb{A},\mcal{A})$ with fundamental group $\mbb{G}_c$, and let $\mbb{N}_c \trianglelefteq \mbb{G}_c$ be given by Corollary \ref{uniformN}. For each type I vertex group $H$ of $(\mbb{A}_c,\mcal{A}_c)$, define $\hat{H} := \mbb{N}_c \cap H \trianglelefteq H$ and see these groups as subgroups of $G$. After doing this for each connected component $(\mbb{A}_c,\mcal{A}_c)$ of $(\mbb{A},\mcal{A})$ we obtain a finite index subgroup of $H$ for each $(Z,H,(c_x))\in \mcal{V}_j$ and so Lemma \ref{virtmodif} gives us a virtual modification $\hat{\mcal{V}}_j$ of $\mcal{V}_j$ with triplets $(Z, \hat{H}, (c_x))$.

We will use $\hat{\mcal{V}}_j$ to construct a (possibly disconnected) graph of spaces $(\mbb{S},\mcal{S})$. As before, let $\corchete{P_i}$ be the set of type II vertices of $\mbb{A}$, and for each $(Z,\hat{H},(c_x)) \in \hat{\mcal{V}}_j$ choose a set $\mbb{B}_Z$ of representatives of portals leading to $Z$, with exactly one portal $P$ for each $\hat{H}$-orbit of translates of portals (with same representatives for repeated triplets). Set $\mbb{B} :=
\bigsqcup_{Z\in\hat{\mcal{V}}_j}{\mbb{B}_Z}$, and choose the representatives of portals in such a way that each edge of $\mbb{A}$ lies in $\mbb{B}$.

As in \cite[Def.~9.6]{Shepherd2019AgolsCubulations}, the \emph{size} of an edge $P$ of $\mbb{A}$ is defined by
$$\mathrm{sz}(P) := | \corchete{H_P\cdot e\colon e \text{ is an edge dual to }P}|,$$
where $P$ leads to $(Z,H,(c_x))\in \mcal{V}_j$. Similarly, for $P \in \mbb{B}$ leading to $(Z,\hat{H},(c_x)) \in \hat{\mcal{V}}_j$, define the
\emph{size} of $P$ as
$$\hat{\mathrm{sz}}(P) := | \corchete{\hat{H}_P\cdot e\colon e \text{ is an edge dual to }P}|,$$
where $\hat{H}_P := \hat{H} \cap H_P$ is the stabilizer of $P$ in $\hat{H}$ . Note that $\hat{\mathrm{sz}}(P)=|H_P : \hat{H}_P|\mathrm{sz}(P)$ for any edge $P$ of $\mbb{A}$ attached to $(Z,H,(c_x))$. Also, by equation \eqref{eq7}, $\hat{\mathrm{sz}}(P)=\hat{\mathrm{sz}}(P')$ whenever $P, P' \in \mbb{B}$ are compatible portals.

If $P$ is a portal leading to $Z$ and contained in the wall $W$, with $Z$ in either $\mcal{V}_j$ or $\hat{\mcal{V}}_j$, and $P_i$ is a $g$-teleport of $P$ contained in the wall $W_i$ that is a type II vertex group of $\mbb{A}$, we say that $P$ is a $P^+_i$-\emph{portal} if $gZ \cap W_i^+ \neq \emptyset$, and a $P^-_i$-portal if $gZ \cap W_i^-\neq \emptyset$. For $Z\in \mcal{V}_j$ define $\mbb{A}_Z(P_i,\pm)$ as the set of $P^{\pm}_i$-portals $P$ leading to $Z$ which are an edge of $A$, and let $\mbb{A}(P_i,\pm):=\bigsqcup_{Z\in \mcal{V}_j}{\mbb{A}_Z(P_i,\pm)}$. The sets $\mbb{B}_Z(P_i,\pm)$ for $Z \in \hat{\mcal{V}}_j$ and $\mbb{B}(P_i,\pm)$ are defined similarly.

Before defining $(\mbb{S},\mcal{S})$ we need some combinatorial results, the first one contained in \cite[Lem.~9.7]{Shepherd2019AgolsCubulations}.

\begin{lemma}\label{comblemma} If $P_i$ is a type II vertex of $\mbb{A}$ then $$\sum_{P \in \mbb{A}\left(P_{i},+\right)} \mathrm{sz}(P)=\sum_{P \in \mbb{A}\left(P_{i},-\right)} \mathrm{sz}(P).$$
\end{lemma}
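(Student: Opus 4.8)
The plan is to recast the two sides of the claimed identity as counting, with multiplicity, the edges of $\dot{X}$ that cross a fixed wall $\ov{W}_i$ of $\mcal{X}$ "from the $+$ side" and "from the $-$ side", and then to observe that these two counts agree because every edge of $\dot{X}$ dual to a wall mapping to $\ov{W}_i$ has exactly one endpoint in each of the two half-spaces. Concretely, fix a representative wall $W\subset\dot{X}$ with $q(W)=\ov{W}_i$ (after a suitable choice, the compatibility class of $P_i$); the group $K_{P_i}=\mathrm{Stab}_K(W)$ acts on the set of edges of $\dot{X}$ dual to $W$, and by Lemma~\ref{covering} the quotient is (a full subcomplex of) the set of edges of $\mcal{X}$ dual to $\ov{W}_i$ coloured $\le j$. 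For each $Z\in\mcal{V}_j$ and each portal $P$ of $Z$ that is a $g$-teleport of $P_i$, the term $\mathrm{sz}(P)=|\{H_P\cdot e : e \text{ dual to } P\}|$ counts exactly the $H_P$-orbits of such edges coming from the copy $gZ$; summing over $P\in\mbb{A}_Z(P_i,+)$ and then over $Z$, with the gluing equations of Definition~\ref{defVj}(4) guaranteeing the bookkeeping is consistent across duplicated triplets, yields the total number (counted in $K_{P_i}$-orbits, weighted) of positively-oriented edge–portal incidences at $\ov{W}_i$. The same computation with $-$ in place of $+$ gives the negatively-oriented count.

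The key steps, in order, would be: (i) identify $\mbb{A}(P_i,+)$ and $\mbb{A}(P_i,-)$ as the two halves of the set of edges of $\mbb{A}$ attached to the type II vertex $P_i$, according to whether the teleporting element $g$ sends the relevant copy of $Z$ into $W^+$ or $W^-$ — this uses the $G$-equivariant co-orientation on walls of $\dot{X}$ established after Theorem~\ref{finitewalls}, so that "$gZ\cap W^{\pm}\ne\emptyset$" is well defined and, by convexity of $Z$ together with Lemma~\ref{notincident}, each edge dual to a portal lands on exactly one side; (ii) for a fixed edge $e$ of $\dot{X}$ dual to $W$, run over all $(Z,H,(c_x))\in\mcal{V}_j$, all portals $P$ leading to $Z$ with $[e,c_e]\in G\cdot[e',c'_{e'}]$, and all the ways $e$ (or a $G$-translate) arises as an edge dual to such a $P$ — the Gluing Equations say precisely that the count of such "positive" data equals the count of "negative" data once everything is pushed down to the $K_{P_i}$-orbit of $e$ in $\mcal{X}$; (iii) assemble these edge-by-edge equalities, noting $\mathrm{sz}(P)$ is exactly the number of $H_P$-orbits of dual edges, and that the $H$-orbit choice of one representative portal per orbit in the definition of the edges of $\mbb{A}$ makes the sum over $P\in\mbb{A}(P_i,\pm)$ equal to the corresponding sum over all portals divided by the appropriate orbit sizes, which cancels identically on the two sides.

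The main obstacle I expect is the orbit-counting bookkeeping in step (iii): one must be careful that a portal $P$ leading to $Z$ contributes $\mathrm{sz}(P)$ edges but these edges may themselves be identified under $G$ when they land in the same $K_{P_i}$-orbit at $\ov{W}_i$, and simultaneously that the definition of $\mbb{A}$ records only one portal per $H$-orbit, so the stabiliser subgroup $H_P$ enters twice — once in $\mathrm{sz}(P)=|\{H_P\cdot e\}|$ and once in the passage from all portals of $Z$ to the chosen representatives. The cleanest way around this is to avoid orbit-counting altogether on the nose and instead phrase both sides as the value of a single $G$-invariant weighted counting functional $N^{\pm}$ on the pair $(\mcal{V}_j, W)$ — namely $N^{\pm}=\sum$ over triplets $(Z,H,(c_x); \alpha)\in\mcal{V}_j$ and over edges $e'$ dual to boundary walls $W'$ of $Z$ with $q$-image $\ov{W}_i$ and with $Z$ on the $\pm$ side of $W'$, of $\alpha$ divided by $|H_{P(W')}|$ (formally, a count in $\mathrm{Stab}_K(W)$-orbits) — apply Definition~\ref{defVj}(4) to the edge $e$ with its various colourings $c$ to get $N^+=N^-$, and then recognise $N^{\pm}=\sum_{P\in\mbb{A}(P_i,\pm)}\mathrm{sz}(P)$ by unwinding the definitions of $\mbb{A}$, of $\mbb{A}(P_i,\pm)$, and of $\mathrm{sz}$. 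Each of these recognitions is a formal manipulation that parallels the corresponding step in the proof of \cite[Lem.~9.13]{shepherd}, the only genuinely new input being the well-definedness of the $\pm$ decomposition via the co-orientation, which is exactly why one works with $\dot{X}$ rather than $X$.
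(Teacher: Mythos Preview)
Your overall strategy is right and matches the paper's: the paper gives no argument here, simply citing \cite[Lem.~9.13]{shepherd}, and Shepherd's proof is precisely to sum the Gluing Equations over the relevant $G$-orbits of classes $[e,c]$.

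Your execution, though, has a concrete error. The proposed functional $N^\pm$ involving ``$\alpha$ divided by $|H_{P(W')}|$'' is ill-defined: $H_P$ acts properly and cocompactly on the portal $P$, so is typically infinite, and the parenthetical ``formally, a count in $\mathrm{Stab}_K(W)$-orbits'' does not repair this---there is no need to pass to $\mcal{X}$ or to $K$-orbits at all. Your worry in step (iii) about $H_P$ ``entering twice'' is also unfounded: choosing one portal $P$ per $H$-orbit and then counting $H_P$-orbits of dual edges is exactly the orbit--stabiliser computation of the number of $H$-orbits of pairs (portal, dual edge); no correction factor appears.

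The clean version is as follows. For $(Z,H,(c_x))\in\mcal{V}_j$, Lemma~\ref{notincident} gives a bijection between $H$-orbits of pairs $(P,e')$ with $P$ a $P_i^\pm$-portal of $Z$ and $e'$ dual to $P$, and $H$-orbits of vertices $x\in Z$ incident to such an edge. Unwinding Definition~\ref{defVj}(4)---using the $G$-equivariant co-orientation and property~(2) of $\mcal{V}_j$ to see that the edge at $x$ crosses out of $Z$ and lands in the correct half-space---the latter are exactly the pairs $(H\cdot x,Z)$ appearing in some $\mcal{V}_j^\pm(e,c)$ as $[e,c]$ ranges over the finitely many $G$-orbits represented by edges dual to $P_i$. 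Hence
\[
\sum_{P\in\mbb{A}(P_i,\pm)}\mathrm{sz}(P)\;=\;\sum_{[e,c]}|\mcal{V}_j^\pm(e,c)|,
\]
and the Gluing Equations give the lemma.
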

\begin{coro}\label{combcoro} For each type II vertex $P_i$ of $\mbb{A}$, the number of $P^+_i$-portals $P$ in $\mbb{B}$ equals the number of $P^-_i$-portals $P$ in $\mbb{B}$.
\end{coro}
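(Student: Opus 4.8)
The plan is to weight both counts by portal sizes, so that the conclusion follows from the conservation law already established in Lemma~\ref{comblemma}. I would begin by isolating the two facts that drive the argument. First, since every $P_i^+$-portal and every $P_i^-$-portal in $\mbb{B}$ is by definition compatible with $P_i$, the observation made just before the lemma (that $\hat{\mathrm{sz}}$ is constant on compatibility classes of portals in $\mbb{B}$, via equation~\eqref{eq7}) gives a single positive integer $s_i$ with $\hat{\mathrm{sz}}(P)=s_i$ for all $P\in\mbb{B}(P_i,+)\cup\mbb{B}(P_i,-)$. Hence $|\mbb{B}(P_i,+)|=|\mbb{B}(P_i,-)|$ is equivalent, after dividing by $s_i$, to
$$\sum_{P\in\mbb{B}(P_i,+)}\hat{\mathrm{sz}}(P)=\sum_{P\in\mbb{B}(P_i,-)}\hat{\mathrm{sz}}(P).$$
Second, because $\mbb{B}$ was chosen to contain every edge of $\mbb{A}$ and $\hat{\mcal{V}}_j$ is the virtual modification of $\mcal{V}_j$ using $\hat{H}=\mbb{N}_c\cap H\trianglelefteq H$ of index $i_Z$, each edge $Q$ of $\mbb{A}$ at a Type~I vertex $(Z,H,(c_x))$ (that is, an $H$-orbit of portals leading to $Z$) lifts to exactly $|\hat{H}\backslash H/H_Q|=i_Z/|H_Q:\hat{H}_Q|$ representatives in $\mbb{B}_Z$, the equality using the normality of $\hat{H}$ in $H$, which in turn follows from $\mbb{N}_c\trianglelefteq\mbb{G}_c$.

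For each such representative $P$ one has $\hat{\mathrm{sz}}(P)=|H_Q:\hat{H}_Q|\,\mathrm{sz}(Q)$: conjugating by an element carrying $Q$ to $P$ and using normality of $\hat{H}$, $\hat{\mathrm{sz}}(P)$ equals $|\{\hat{H}_Q\!\cdot e:e\text{ dual to }Q\}|$, which is $|H_Q:\hat{H}_Q|\,\mathrm{sz}(Q)$ by the relation $\hat{\mathrm{sz}}(P')=|H_{P'}:\hat{H}_{P'}|\,\mathrm{sz}(P')$ applied to the edge $Q$ of $\mbb{A}$. Moreover the $\pm$ label is constant on the $H$-orbit of $Q$, since the co-orientation on the walls of $\dot{X}$ is $G$-equivariant, so the side of the relevant $g$-teleport is unchanged when $Q$ is translated within its $H$-orbit; thus $P$ is a $P_i^\pm$-portal exactly when $Q$ is. Keeping track of duplicates — $\alpha_Z$ copies of $Q$ in $\mbb{A}$ against $\hat{\alpha}_Z=(\prod_{Z'\neq Z}i_{Z'})\alpha_Z$ copies of the triplet in $\hat{\mcal{V}}_j$ — the contribution of the $H$-orbit of $Q$ to $\sum_{\mbb{B}(P_i,\pm)}\hat{\mathrm{sz}}$ is $\hat{\alpha}_Z\cdot(i_Z/|H_Q:\hat{H}_Q|)\cdot|H_Q:\hat{H}_Q|\,\mathrm{sz}(Q)=P_{\mathrm{all}}\,\alpha_Z\,\mathrm{sz}(Q)$, where $P_{\mathrm{all}}:=\prod_{Z'}i_{Z'}$, whereas its contribution to $\sum_{\mbb{A}(P_i,\pm)}\mathrm{sz}$ is $\alpha_Z\,\mathrm{sz}(Q)$. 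Summing over all $Z$ and all $H$-orbits $Q$ carrying the given sign gives $\sum_{P\in\mbb{B}(P_i,\pm)}\hat{\mathrm{sz}}(P)=P_{\mathrm{all}}\sum_{Q\in\mbb{A}(P_i,\pm)}\mathrm{sz}(Q)$.

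Finally, Lemma~\ref{comblemma} yields $\sum_{Q\in\mbb{A}(P_i,+)}\mathrm{sz}(Q)=\sum_{Q\in\mbb{A}(P_i,-)}\mathrm{sz}(Q)$, so the two weighted sums over $\mbb{B}$ agree, and dividing by $s_i$ gives $|\mbb{B}(P_i,+)|=|\mbb{B}(P_i,-)|$. I expect the only genuine difficulty to be notational: aligning the duplicate triplets, the $\hat{H}$-orbit decompositions, and the $\pm$ data so that all the index factors cancel down to the harmless global constant $P_{\mathrm{all}}$. The conceptual content — conservation of total size (Lemma~\ref{comblemma}) together with constancy of $\hat{\mathrm{sz}}$ on compatibility classes in $\mbb{B}$ — is already available.
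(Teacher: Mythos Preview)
Your proof is correct and follows essentially the same route as the paper: both arguments reduce the count over $\mbb{B}(P_i,\pm)$ to the size-weighted sum over $\mbb{A}(P_i,\pm)$ via the orbit-splitting count $i_Z/|H_Q:\hat{H}_Q|$ and the relation $\hat{\mathrm{sz}}(P)=|H_Q:\hat{H}_Q|\,\mathrm{sz}(Q)$, then invoke Lemma~\ref{comblemma}. The only cosmetic difference is that you weight by $\hat{\mathrm{sz}}$ and divide by $s_i$ at the end, whereas the paper carries the constant $d/s$ (with $d=\prod_{Z'}i_{Z'}$) through the unweighted count directly; these are the same computation.
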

\begin{proof}Let $s := \hat{\mathrm{sz}}(P)$ be the size of any portal $P$ in $\mbb{B}$ compatible with $P_i$. From the proof of Lemma \ref{virtmodif} it follows that there exists some positive integer $d$ such that for each triplet $(Z,H,(c_x))\in \mcal{V}_j$ there are $d/|H : \hat{H} |$ triplets of $(Z, \hat{H}, (c_x))$ in $\hat{\mcal{V}}_j$. Also, if $P$ is any portal leading to $(Z,H,(c_x))$, then the set of $H$-orbits of $P$ is the disjoint union of $|H : \hat{H} |/|H_P : \hat{H}_P|$ $\hat{H}$-orbits of translates of portals. With this in mind we have
\begin{align*} \sum_{P \in \mbb{B}(P_{i},+)}{1}=\sum_{Z \in \hat{\mcal{V}}_{j}}{\sum_{P \in \mbb{B}_{Z}(P_{i},+)}{1}} &=\sum_{(Z, H,(c_{x})) \in \mcal{V}_{j}}{\frac{d}{|H: \hat{H}|}\left(\sum_{P \in \mbb{B}_{Z}(P_{i},+)}{1}\right)}\\ &=\sum_{(Z, H,(c_{x})) \in \mcal{V}_{j}} {\frac{d}{|H: \hat{H}|}\left(\sum_{P \in \mbb{A}_{Z}(P_{i},+)}{\frac{|H: \hat{H}|}{|H_{P}: \hat{H}_{P}|}}\right)}\\ &=\frac{d}{s} \sum_{(Z, H,(c_{x})) \in \mcal{V}_{j}}{\left(\sum_{P \in \mbb{A}_{Z}(P_{i},+)}{\frac{\hat{\mathrm{sz}}(P)}{|H_{P}: \hat{H}_{P}|}}\right)}\\ &=\frac{d}{s}\sum_{(Z,H,(c_{x}))\in \mcal{V}_{j}}{\left(\sum_{P \in \mbb{A}_{Z}(P_{i},+)}{\mathrm{sz}(P)}\right)} \\ &=\frac{d}{s} \sum_{P \in \mbb{A}(P_{i},+)}{\mathrm{sz}(P)}. \end{align*}
The same is true for the $P^-_i$-portals, and so the conclusion follows by Lemma \ref{comblemma}.
\end{proof}
\begin{defi}Let $(\mbb{S},\mcal{S})$ be the graph of spaces defined as follows.
\begin{itemize} 
\item The vertices of $\mbb{S}$ will be the triplets $(Z,\hat{H},(c_x)) \in \hat{\mcal{V}}_j$, with corresponding vertex spaces $Z/\hat{H}$.
\item If $P_i$ is a type II vertex of $\mbb{A}$, by Corollary \ref{combcoro} there exists a perfect matching between $\mbb{B}(P_i,+)$ and $\mbb{B}(P_i,-)$. If $p := (P, P')$
is an oriented pair given by this matching with $P$ and $P'$
leading to $(Z, \hat{H},(c_x))$ and $(Z', \hat{H}',(c'_x))$ respectively, then $P$ is a $g_p$-teleport of $P'$ for some fixed $g_p \in G$, and there are embeddings
\begin{equation}\label{eq8}
P'/{\hat{H}'_{P}} \hookrightarrow Z'/ {\hat{H}'}, \quad \text{ and } \quad P' / {\hat{H}'_{P'}} \xrightarrow{g_{p}} P/ {\hat{H}_{P}} \hookrightarrow Z /\hat{H},
\end{equation}
where $P' / {\hat{H}'_{P'}} \xrightarrow{g_{p}} P/ {\hat{H}_{P}}$ is an isomorphism for $g_P$ chosen appropriately, due to Corollary \ref{uniformN}.
\item  The edges of $\mbb{S}$ are oriented pairings $p := (\hat{H}_P , \hat{H}'_{P'})$ attached to $Z$ and $Z'$ as above, with edge spaces $P'/\hat{H}'_{P'}$ and attaching maps given by \eqref{eq8}. For the reverse pairing $\ov{p}=(P',P)$, the attaching maps are constructed in the same way with $g_{\ov{p}}=g_p^{-1}$.
\end{itemize}
\end{defi}

Consider a component $(\mbb{S}_c, \mcal{S}_c)$ of $(\mbb{S},\mcal{S})$, with underlying space $\mcal{T}_c$ obtained by gluing vertex spaces along images of attaching maps. We want to construct an embedding $\wtilde{T}_c \hookrightarrow \dot{X}$ of the universal cover of $\mcal{T}_c$ into $\dot{X}$.

First of all, fix a base-point $x \in Z$ for each $(Z,\hat{H},(c_x)) \in \hat{\mcal{V}}_j$. If $Z/\hat{H}$ and $Z'/\hat{H}'$ are vertex spaces joined by the oriented edge $p=(P, P')$ then, up to homotopy, there exists a unique path $\alpha_p$ in $Z \cup g_pZ'$ from $x$ to $g_px$ (this is because $Z \cup g_pZ'$ is simply-connected). Let $\beta_p$ be the projection of $\alpha_p$ into $\mcal{T}_c$ via $Z \rightarrow Z/\hat{H}$ and $Z' \rightarrow Z'/\hat{H}'$. We can choose our paths so that $\beta_{\ov{p}}$ is the reverse path of $\beta_p$. Also, for each $(Z, \hat{H},(c_x))$ and $h \in \hat{H}$ let $\gamma_h$ be a loop in $Z/\hat{H}$ lifting to a path from $x$ to $hx$.

Fix a base vertex $(Z_0, \hat{H}_0,(c_x)_0)$ of $\mbb{S}_c$, and for $p_1,\dots, p_n$ edges in $\mbb{S}_c$ forming a path through vertex spaces
$$Z_0/\hat{H}_0
\xrightarrow{p_1} Z_1/\hat{H}_1
\xrightarrow{p_2} \dots
\xrightarrow{p_n} Z_n/\hat{H}_n,$$
and $h_i \in \hat{H}_i$ for $i=0, 1,\dots, n$, consider the concatenation
\begin{equation}\label{eq9}\gamma=\gamma_{h_0}\cdot \beta_{p_1}\cdot \gamma_{h_{n-1}}\cdots\beta_{p_n}\cdot \gamma_{h_n},
\end{equation}
which gives a path in $\mcal{T}_c$. Note that every path in $\mcal{T}$ between (projections of) base-points of vertex spaces and starting in $Z_0/\hat{H}_0$ is homotopic to a path in this form. For each such $\gamma$, we define
$$g(\gamma)=h_0g_{p_1}h_1 \cdots g_{p_n}h_n,$$
and we take $\wtilde{\mcal{T}}_c \subset \dot{X}$ to be the union of all the possible $G$-translates $g(\gamma)Z_n$. The covering map
$\mu : \wtilde{\mcal{T}}_c \rightarrow \mcal{T}_c$ restricts to $g(\gamma)Z_n$ by
$$\mu : g(\gamma)Z_n
\xrightarrow{g(\gamma)^{-1}} Z_n \rightarrow Z_n/\hat{H}_n \rightarrow \mcal{T}_c.$$
The next lemma is \cite[Lem.~9.13 \& Lem.~9.15]{Shepherd2019AgolsCubulations}.
\begin{lemma}\label{universalcovering} $\mu : \wtilde{\mcal{T}}_c \rightarrow \mcal{T}_c$ is a universal covering map and $\wtilde{\mcal{T}}_c \subset \dot{X}$ is a non-empty intersection of half-spaces.
\end{lemma}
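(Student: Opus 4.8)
The plan is to adapt Shepherd's arguments \cite[Lem.~9.17 \& 9.20]{shepherd} essentially line by line, the only real change being the bookkeeping that replaces $\mcal{QVH}$ by $\mcal{CMVH}$ and handles the relative setting; we treat the two assertions in turn, first that $\mu$ is a universal covering map and then that $\wtilde{\mcal{T}}_c$ is a non-empty intersection of half-spaces.

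For the covering statement, the first task is to check that the assignment $\gamma \mapsto g(\gamma)$ of \eqref{eq9} descends to a well-defined map on homotopy classes rel endpoints of paths in $\mcal{T}_c$ based at the distinguished vertex space $Z_0/\hat{H}_0$. Each $Z$ and each union $Z \cup g_p Z'$ is simply connected (being an intersection of half-spaces, resp.\ a union of two such glued along the connected convex portal $P$), so the paths $\alpha_p$ are determined up to homotopy rel endpoints, and $\pi_1(\mcal{T}_c)$ is the fundamental group of the graph of spaces $(\mbb{S}_c,\mcal{S}_c)$ with vertex groups $\hat{H}$ and edge inclusions read off from \eqref{eq8}. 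One then verifies directly that the defining relations $\ov{p}=p^{-1}$ and $p\,\psi_p(h)\,p^{-1}=\psi_{\ov{p}}(h)$ of this graph of groups are sent to identities in $G$ by $\gamma \mapsto g(\gamma)$, using that $g_{\ov{p}}=g_p^{-1}$ and that the maps $P'/\hat{H}'_{P'}\xrightarrow{g_p}P/\hat{H}_P$ are isomorphisms (Corollary \ref{uniformN}), so the map factors through $\pi_1(\mcal{T}_c)$. Granting this, $\wtilde{\mcal{T}}_c$ is connected (it is the union of the translates $g(\gamma)Z_n$, all joined through the base vertex space), and $\mu$ restricts on each piece $g(\gamma)Z_n$ to the covering $g(\gamma)Z_n \xrightarrow{g(\gamma)^{-1}} Z_n \rightarrow Z_n/\hat{H}_n$ onto the corresponding vertex space; two pieces overlap only along lifts of edge spaces, i.e.\ along portals, and there the two restrictions of $\mu$ agree precisely because the attaching maps in \eqref{eq8} were built from the isomorphisms of Corollary \ref{uniformN}. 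Hence $\mu$ is a covering map, and once we know $\wtilde{\mcal{T}}_c$ is an intersection of half-spaces of $\dot{X}$ — hence closed, convex and therefore contractible — this connected cover has trivial fundamental group and is the universal cover.

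It remains to prove that $\wtilde{\mcal{T}}_c \subset \dot{X}$ is a non-empty intersection of half-spaces; non-emptiness is immediate since $Z_0 \subset \wtilde{\mcal{T}}_c$. For the rest, the idea is to realize $\wtilde{\mcal{T}}_c$ as an iterated gluing of the half-space-convex pieces $g(\gamma)Z_n$ along portals, and to show by induction that every partial union is again an intersection of half-spaces of $\dot{X}$. The inductive step attaches one new piece $gZ$ to the current union $U$ along a single portal $P=gW\cap gZ$, where $W$ is a $j$-boundary wall: the perfect matching of Corollary \ref{combcoro} ensures that $U$ meets the wall $gW$ exactly in $P$ and lies entirely in the half-space of $gW$ opposite the one containing $gZ$, so $U\cup gZ$ is a union of two half-space-convex sets glued along a common portal in $gW$, and such a union is again an intersection of half-spaces. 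The combinatorial input needed to run this induction is exactly what has been assembled in the preceding sections: Lemma \ref{notincident} forbids a vertex of a piece from being incident to two distinct portals; Lemma \ref{covering} together with the teleport formalism identifies the overlaps as portals compatibly with the $\mcal{X}$-picture; the colouring conditions (1)--(2) of Definition \ref{defVj} and the Zipping Lemma \ref{zipping} guarantee that no wall of $\dot{X}$ is crossed and re-crossed by $\wtilde{\mcal{T}}_c$; and Lemma \ref{comblemma} / Corollary \ref{combcoro} supply the balancing that makes every $j$-coloured boundary wall-piece of every $gZ$ get glued to exactly one other, so that all $j$-coloured portals become internal while the new boundary walls are coloured $\le j-1$. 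Setting up this induction cleanly — in particular verifying the ``no backtracking'' property of the walls, and that the gluing respects the $G$-equivariant co-orientation fixed after Theorem \ref{finitewalls} — is the main obstacle; everything else is a routine transcription of \cite[Sec.~9]{shepherd}.
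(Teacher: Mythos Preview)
Your proposal is correct and aligns with the paper's treatment: the paper does not give its own argument for this lemma but simply cites \cite[Lem.~9.17 \& Lem.~9.20]{shepherd}, and what you have written is a faithful sketch of Shepherd's proof. One minor remark: the replacement of $\mcal{QVH}$ by $\mcal{CMVH}$ plays no role in this particular lemma, which is purely about the cube-complex combinatorics of the pieces and portals, so you could drop that caveat.
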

For a loop $\gamma$ as in \eqref{eq9} and $g(\beta)Z$ in $\wtilde{\mcal{T}}_c$, we have that $g(\gamma)g(\beta)=g(\gamma \cdot \beta)$, hence $g(\gamma)g(\beta)Z=g(\gamma \cdot \beta)Z \subset \wtilde{\mcal{T}}_c$. This holds for all translates $g(\beta)Z$ in $\wtilde{\mcal{T}}_c$, thus $g(\gamma)\wtilde{\mcal{T}}_c=\wtilde{\mcal{T}}_c$ and $\mu\circ g(\gamma)= \mu$.

Define
$$H(\mcal{T}_c) := \corchete{g(\gamma)\colon \gamma \text{ is a loop of form }\eqref{eq9}}<G.$$
Since $g(\gamma)g(\beta)=g(\gamma \cdot \beta)$ for any loop $\gamma$ and path $\beta$, $H(\mcal{T}_c)$ is a subgroup of $G$ preserving $\wtilde{\mcal{T}}_c$. Moreover, $\mu \circ g(\gamma) = \mu$ for every $g(\gamma) \in H(\mcal{T}_c)$, so $H(\mcal{T}_c)$ is a subgroup of the group of Deck transformations of $\mu$. In addition, by construction the orbit of the base-point $x_0$ of $Z_0$ is $H(\mcal{T}_c)\mu^{-1}1(\mu(x_0))$, implying that $H(\mcal{T}_c)\cong \pi_1(\mcal{T}_c)$ is the full group of Deck transformations of $\mu$, and hence it acts freely and cocompactly on $\wtilde{\mcal{T}}_c$.

Finally, if $x \in Z$ is any vertex with $(Z,\hat{H},(c_x)) \in \hat{\mcal{V}}_j$ and $\gamma$ is as in \eqref{eq9}, then we endow $g(\gamma)x$ with the coloring $c^{\mcal{T}_c}_{g(\gamma)x}:= g(\gamma)c_x$. It is evident that $H(\mcal{T}_c)$ preserves these colorings.
\begin{defi}Let $\mcal{V}_{j-1}$ consist of the set of triplets $(\wtilde{\mcal{T}}_c, H(\mcal{T}_c),(c^{\mcal{T}_c}_x))$, with one triplet for each underlying space $\mcal{T}_c$ for a component $(\mbb{S}_c, \mcal{S}_c)$ of $(\mbb{S},\mcal{S})$.
\end{defi}
\begin{prop}\label{vj-1} $\mcal{V}_{j-1}$ satisfies all the desires properties (1)-(4) of Definition \ref{defVj}.
\end{prop}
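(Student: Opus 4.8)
The plan is to verify that the newly constructed collection $\mcal{V}_{j-1}$ satisfies each of the four conditions of Definition \ref{defVj} in turn, closely following Shepherd's argument \cite[Sec.~9]{shepherd} but keeping track of the $\mcal{CMVH}$-structure rather than $\mcal{QVH}$. The triplets are $(\wtilde{\mcal{T}}_c, H(\mcal{T}_c), (c^{\mcal{T}_c}_x))$; by Lemma \ref{universalcovering} each $\wtilde{\mcal{T}}_c$ is a non-empty intersection of half-spaces, $H(\mcal{T}_c)$ acts freely and cocompactly on it as the Deck group of $\mu$, and the colorings are $H(\mcal{T}_c)$-equivariant by construction, so the basic structural requirements of a triplet hold.

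For property (1) (edges with both endpoints in $\wtilde{\mcal{T}}_c$ induce equal edge-colorings at their endpoints), note that any edge $e$ with both endpoints in $\wtilde{\mcal{T}}_c$ lies in some translate $g(\gamma)\mcal{N}(Z)$ with $Z \in \hat{\mcal{V}}_j$, because $\wtilde{\mcal{T}}_c$ is the union of such translates glued along portals, and portals are themselves walls (so interior edges of a glued-up piece stay inside a single translate); then property (1) for $\mcal{V}_j$, applied after translating back by $g(\gamma)^{-1}$, gives the claim. For property (2), I would argue that an edge $e$ incident at $x \in \wtilde{\mcal{T}}_c$ crosses \emph{out} of $\wtilde{\mcal{T}}_c$ exactly when $W(e)$ is a $j$-boundary wall of the corresponding $Z$-piece that was \emph{not} glued to another piece along the portal $W(e) \cap Z$ --- but by the perfect matching in Corollary \ref{combcoro}, every $P^{\pm}_i$-portal in $\mbb{B}$ is matched, so the only boundary walls surviving in $\wtilde{\mcal{T}}_c$ are those with color $\leq j-1$; combined with property (2) for $\mcal{V}_j$ this shows $y \in \wtilde{\mcal{T}}_c$ iff $c^{\mcal{T}_c}_x(\ov{W(e)}) > j-1$. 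This is the step I expect to require the most care, since one must check that the gluing has genuinely "filled in" every $j$-colored boundary wall and introduced no new boundary walls of color $> j-1$; the Zipping Lemma \ref{zipping} and Lemma \ref{notincident} are the relevant tools.

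For property (3), the point is to show $(H(\mcal{T}_c), \mcal{N}(\wtilde{\mcal{T}}_c)) \in \mcal{CMVH}$ using operation (2) of Definition \ref{defiCMVH}. The group $H(\mcal{T}_c) \cong \pi_1(\mcal{S}_c)$ splits as the finite graph of groups dual to $(\mbb{S}_c, \mcal{S}_c)$: vertex groups are the $H(\mcal{T}_c)$-conjugates of the $\hat{H}$'s, which lie in $\mcal{CMVH}$ by property (3) for $\hat{\mcal{V}}_j$ (itself a virtual modification of $\mcal{V}_j$, using Lemma \ref{finiteindexCMVH}), and edge groups are the portal stabilizers $\hat{H}_P$. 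I must verify the three bullets of Definition \ref{defiCMVH}(2): convexity of edge/vertex groups in $(G, \dot{X})$ --- true since each $\hat{H}_P$ preserves $\mcal{N}(P) \cap \mcal{N}(Z)$ and each vertex group preserves a translate of $\mcal{N}(Z)$; relative malnormality of $\mcal{A}_v = \{\hat{H}_P : P \text{ attached to } v\}$ in the vertex group --- this is exactly the content of Proposition \ref{acylindricity} (noting that passing to the finite-index $\hat{H}$ only shrinks the collection, preserving relative malnormality); and that each vertex group has a convex core in $\mcal{CMVH}$, which is property (3) for $\hat{\mcal{V}}_j$ together with Lemma \ref{nbhdconvex}. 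Finally, property (4) (the Gluing Equations for $\mcal{V}_{j-1}$) follows from the counting identity already packaged in Corollary \ref{combcoro} and the size bookkeeping, exactly as in \cite[Lem.~9.13 \& proof of Lem.~9.21]{shepherd}: one checks that for each edge $e \in E(\dot{X})$ and coloring $c$, the contributions $|\mcal{V}_{j-1}^{\pm}(e,c)|$ are computed by summing, over teleport classes, the matched portal counts, which agree by the perfect matching. Since all the genuinely new input (relative malnormality, the behaviour of convex cores under the compatibility hypothesis) has been isolated in Propositions \ref{acylindricity}, \ref{weakseparability} and Corollary \ref{uniformN}, the proof of Proposition \ref{vj-1} is essentially a transcription of Shepherd's verification with $\mcal{QVH}$ replaced by $\mcal{CMVH}$; I would present it at that level of detail, spelling out only the points where relative malnormality (rather than almost malnormality) and the peripheral compatibility enter.
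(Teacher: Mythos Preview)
Your approach is essentially the same as the paper's, and properties (1)--(3) are handled correctly: the paper also defers (1)--(2) to \cite[p34]{shepherd}, and your treatment of (3) via the graph-of-groups splitting $(\mbb{S}_c,\mcal{U}_c)$ matches the paper's argument (one small imprecision: the convexity you need is in $(H(\mcal{T}_c),\mcal{N}(\wtilde{\mcal{T}}_c))$, not in $(G,\dot X)$ --- but since $\mcal{N}(\wtilde{\mcal{T}}_c)$ is convex in $\dot X$ by Lemma~\ref{nbhdconvex}, convexity of $\mcal{N}(Z)$ and $\mcal{N}(P)$ in $\dot X$ gives what you want).

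Your argument for property (4), however, invokes the wrong mechanism. Corollary~\ref{combcoro} and the portal matching are used to \emph{construct} the edges of $(\mbb S,\mcal S)$; they do not directly compute $|\mcal V_{j-1}^{\pm}(e,c)|$, and ``summing matched portal counts over teleport classes'' is not what the gluing equations count. The paper's argument is simpler and different in spirit: because each triplet $(Z,\hat H,(c_x))\in\hat{\mcal V}_j$ is used in exactly one component $\mcal T_c$ (repeats counted separately), there is a canonical bijection $\Lambda$ between the vertices of $\bigsqcup_{Z\in\hat{\mcal V}_j} Z/\hat H$ and the vertices of $\bigsqcup_c \mcal T_c$. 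Since every vertex of $\wtilde{\mcal T}_c$ has the form $g(\gamma)x$ with $c^{\mcal T_c}_{g(\gamma)x}=g(\gamma)c_x$, this $\Lambda$ restricts to a bijection $\hat{\mcal V}_j^{\pm}(e,c)\to \mcal V_{j-1}^{\pm}(e,c)$ for every $[e,c]$. The gluing equations for $\mcal V_{j-1}$ then follow immediately from those for $\hat{\mcal V}_j$, which hold by Lemma~\ref{virtmodif}. You should replace your portal-counting sketch with this bijection argument.
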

\begin{proof}For properties (1) and (2), the proof is the same as in \cite[p30-31]{Shepherd2019AgolsCubulations} so it will be omitted.

To show property (3), first note that by Lemma \ref{consistentcompatible} each subgroup $H(\mcal{T}_c)$ is convex in $G$ with convex core $\mcal{N}(\wtilde{\mcal{T}}_c)$ so it is also hyperbolic relative to compatible virtually special subgroups by Lemma \ref{nbhdconvex}. In addition, since any edge space embedding into a vertex space of $(\mbb{S}_c, \mcal{S}_c)$ is $\pi_1$-injective, by Van Kampen's theorem there is an induced splitting $(\mbb{S}_c, \mcal{U}_c)$ of $H(\mcal{T}_c) \cong \pi_1(\mcal{T}_c)$ with vertex groups $\hat{H}$ for $(Z,\hat{H},(c_x)) \in \hat{\mcal{V}}_j$, and with edge groups of the form $\hat{H}_P\cong \hat{H}'_{P'}$ for each edge $p=(P, P')$ with $P, P'$ leading to $(Z, \hat{H},(c_x))$ and $(Z', \hat{H}',(c'_x))$ respectively. By assumption and Lemma \ref{finiteindexCMVH} each cubulated vertex group $(\hat{H}, \mcal{N}(Z))$ of $(\mbb{S}_c, \mcal{U}_c)$ is in $\mcal{CMVH}$, and by construction $\mcal{N}(Z)$ is the $G$-translate of a convex subcomplex of $\mcal{N}(\wtilde{\mcal{T}}_c)$. The same is true for the embedding of an edge group into $H(\mcal{T}_c)$ since it acts cocompactly on a translate of the cubical neighborhood $\mcal{N}(P)$ of a portal $P$, that is a convex subcomplex of $\mcal{N}(\wtilde{\mcal{T}}_c)$. This implies that each vertex/edge group is convex in $(H(\mcal{T}_c), \mcal{N}(\wtilde{\mcal{T}}_c))$. Finally note that the conclusion of Proposition \ref{acylindricity} holds also for vertex groups in $(\mbb{S}_c,\mcal{U}_c)$, and hence the collection of embeddings of edge groups into a vertex group of $(\mbb{S}_c, \mcal{U}_c)$ is relatively malnormal. Therefore $(H(\mcal{T}_c),\mcal{N}(\wtilde{T}_c)) \in \mcal{CMVH}$.

For property (4), note that since each $\mcal{T}_c$ is obtained by gluing quotients $Z/\hat{H}$ for $(Z,\hat{H},(c_x)) \in \hat{\mcal{V}}_j$, with each triplet being used in exactly one component $\mcal{T}_c$ (repeated triplets are counted separately), there is a canonical bijection $\Lambda$ from the set of vertices of $\bigsqcup_{(Z,\hat{H},(c_x))\in\hat{\mcal{V}}_j}{Z/\hat{H}}$ onto the set of vertices of $\bigcup_{c}{\mcal{T}_c}$, where $c$ runs among that components of $(\mbb{S},\mcal{S})$ (here by vertex we mean an image of a vertex of $\dot{X}$ contained in some $Z$). Also, any vertex of some $\wtilde{\mcal{T}}_c$ takes the form $\wtilde{x}= g(\gamma)x$ for some path $\gamma$ as in \eqref{eq9} and some vertex $x \in (Z,\hat{H},(c_x)) \in \hat{\mcal{V}}_j$. Since by definition $c^{\mcal{T}_c}_{\wtilde{x}}= g(\gamma)c_x$, $\Lambda$ restricts to a bijection from $\mcal{V}^{\pm}_j(e,c)$ onto $\mcal{V}^{\pm}_{j-1}(e,c)$ for any equivalence class $[e, c]$ with $e$ an edge and $c$ a coloring. Since $\mcal{V}_j$ satisfies the gluing equations, then $\mcal{V}_{j-1}$ also does.
\end{proof}

\appendix
 
\section{Functoriality of the canonical completion}\label{appendix}
This appendix is devoted to proving the following theorem, which generalizes one of the implications of Theorem \ref{haglundwisedoublecoset}.
\begin{thm}\label{doublecosetconvexsep} Let $(G,X)$ be a cubulated virtually special group. Then for any pair $H, K<G$ of convex subgroups, the double coset $HK$ is separable in $G$.
\end{thm}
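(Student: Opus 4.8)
The strategy is to reduce the separability of $HK$ to the separability of double cosets of wall stabilizers, which is one of the characterizations of virtual specialness (Theorem~\ref{haglundwisedoublecoset}). Since we may freely pass to a finite index subgroup of $G$ (separability of $HK$ in $G$ is equivalent to separability of $H_0K_0$-type cosets in any finite index $G_0$, because $HK$ decomposes as a finite union of $(H\cap G_0)$--$(K\cap G_0)$ double cosets translated by coset representatives), we assume from the start that $(G,X)$ is special, so that $G$ acts freely on $X$ and $X/G$ is a special cube complex. The key tool will be the \emph{canonical completion and retraction} of Haglund--Wise \cite{hagwissp}: given a local isometry $A\hookrightarrow X/G$ of a compact cube complex into a compact special cube complex, there is a finite cover $\mathsf{C}(A,X/G)\to X/G$ together with a retraction $\mathsf{C}(A,X/G)\to A$, functorially in $A$.

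First I would realize $H$ and $K$ geometrically. By hypothesis $H$ and $K$ are convex subgroups of $(G,X)$; after enlarging (Lemma~\ref{enlarging}) we may take convex cores $Y_H, Y_K\subset X$ with a common vertex $x_0\in Y_H\cap Y_K$, and then $Y_H\cap Y_K$ is a convex core for $H\cap K$ (Lemma~\ref{intersectionconvex}). The quotients $A_H := Y_H/H$ and $A_K := Y_K/K$ are compact, and the natural maps $A_H\to X/G$, $A_K\to X/G$ are local isometries of compact special cube complexes (this uses that convex subcomplexes give local isometries and that a subcomplex of a special complex is special). Let $g\in G\setminus HK$; I need a finite quotient, equivalently a finite-index subgroup $G'<G$, with $g\notin G'HK$, i.e. with the images of the cosets $HgK$ and $eHK$ disjoint in $G'\backslash G$ --- more precisely, a finite cover $\hat X/\Gamma\to X/G$ in which the elevations of $Y_H$ and $gY_K$ to the cover through a fixed basepoint do not meet.

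The main step, and the expected obstacle, is to build the required finite cover using \emph{functoriality} of the canonical completion applied to the pair of maps $A_H\to X/G$ and $A_K\to X/G$ simultaneously. The idea is: form a single local isometry into $X/G$ that ``remembers'' both $Y_H$ through $x_0$ and $gY_K$ through $gx_0$ --- for instance the image in $X/G$ of $Y_H\cup \sigma\cup gY_K$ where $\sigma$ is a chosen geodesic edge-path from $x_0$ to $gx_0$ (after subdividing, this is a local isometry of a compact complex, by a standard argument that a union of convex subcomplexes along a geodesic admits a local isometry to a non-positively curved complex); call it $B\to X/G$. Applying $\mathsf{C}(-,X/G)$ to $B$ produces a finite cover $\hat X/\Gamma = \mathsf{C}(B,X/G)$ with a retraction onto $B$. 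By functoriality the subcomplexes corresponding to $A_H$ and to the translate of $A_K$ map to disjoint subcomplexes of $B$ (since $g\notin HK$ forces $x_0$ and $gx_0$ to lie in distinct, non-adjacent pieces of $B$), and the retraction then separates the corresponding elevations in the cover: the elevation of $Y_H$ through the basepoint lies in the preimage of the $A_H$-part, the elevation of $gY_K$ through the basepoint lies in the preimage of the $A_K$-part, and these preimages are disjoint because their images under the retraction are. Translating back, $\Gamma$ is a finite-index subgroup of $G$ with $g\notin \Gamma HK$.

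The delicate points, which I expect to occupy most of the work, are: (i) verifying that the union $Y_H\cup\sigma\cup gY_K$ (or a suitable thickening/subdivision of it) really does admit a local isometry to a non-positively curved, and in fact special, cube complex --- one must be careful about links at the junction vertices $x_0$, $gx_0$ and along $\sigma$, and may need to replace $\sigma$ by a ``combinatorial hull'' and invoke convexity of cubical neighborhoods (Lemma~\ref{nbhdconvex}-type statements, \cite[Lem.~13.4, Lem.~13.15]{hagwissp}); (ii) setting up the functoriality of canonical completion precisely enough that ``$g\notin HK$ $\Rightarrow$ the two pieces are disjoint in $B$'' is literally true, which is where the choice of $B$ and of basepoints matters; and (iii) checking that the retraction interacts correctly with elevations of subcomplexes, so that disjointness downstairs gives disjointness of the relevant elevations upstairs. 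Once these are in place, the conclusion $HK$ separable follows exactly as in the wall-stabilizer case of \cite[Thm.~9.19]{hagwissp}, of which this is the natural generalization from walls to arbitrary convex subgroups.
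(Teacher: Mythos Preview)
Your proposal has a genuine gap, and the paper takes a substantially different route.

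The construction $B = Y_H \cup \sigma \cup gY_K$ does not yield a local isometry into $X/G$. A union of two convex subcomplexes joined by a geodesic edge-path is not locally convex in general: at the junction vertices (and along $\sigma$) the link in $B$ need not be a full subcomplex of the link in $X$, and there is no standard ``subdivision'' or ``thickening'' trick that repairs this while keeping $B$ compact and the two pieces combinatorially separate. Moreover, the assertion that $g\notin HK$ forces the $A_H$-piece and the $A_K$-piece of $B$ to be disjoint is not justified: $Y_H\cap gY_K$ can be nonempty in $X$ even when $g\notin HK$ (convex cores are much larger than orbits), so there is no a priori reason the two pieces do not overlap either in $X$ or after projecting. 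Finally, even granting a well-defined $B$ with a local isometry, the step ``the retraction separates the relevant elevations'' is not valid as stated: the canonical retraction $r:\msf{C}(B,X/G)\to B$ collapses along walls of $X/G$, not along the decomposition of $B$ into pieces, so an elevation of $Y_H$ in the cover has no reason to land in $r^{-1}(A_H\text{-piece})$.

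The paper's mechanism is different. It does not try to separate $g$ from $HK$ directly by a single completion. Instead it uses the algebraic criterion of Proposition~\ref{criteriondouble}: if $\rho:G'\to G'$ is a retraction with image $\hat K$ and $\rho$ carries a subgroup $H'$ into itself, then $H'\hat K$ is separable. To produce such a $\rho$, the paper sets up the \emph{commutative square} $\ov V\to \ov Y$, $\ov V\to \ov Z$, $\ov Y\to \ov X$, $\ov Z\to \ov X$ (where $\ov V,\ov Y,\ov Z,\ov X$ are quotients of $V=Y\cap Z$, $Y$, $Z$, $X$ by suitable finite-index subgroups), and proves a functoriality statement (Proposition~\ref{functoriality}): under hypotheses (i)--(iv) on this square, there is an induced map $\hat t:\msf{C}(\ov V,\ov Y)\to\msf{C}(\ov Z,\ov X)$ commuting with both the canonical inclusions and the canonical retractions. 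Passing to fundamental groups, the retraction $r':\msf{C}(\ov Z,\ov X)\to \ov Z$ gives $r_\ast:G'\to \hat K$, and the commuting square forces $r_\ast(H')\subset \hat H\cap\hat K\subset H'$, so Proposition~\ref{criteriondouble} applies. Arranging hypotheses (ii)--(iv) is itself nontrivial and is handled by Corollary~\ref{interconvex-wall}, which in turn rests on the preliminary case $HG_W$ (Lemma~\ref{sepconvex-wall}). The correct picture, then, is not ``one completion containing both pieces'' but ``two nested completions related by functoriality, feeding a retraction criterion''.
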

By using the fact that $gKg^{-1}<G$ is convex whenever $K<G$ is convex and $g\in G$, we deduce: 
\begin{coro}\label{doublecosetconvexsepg}
Let $(G,X)$, $H$ and $K$ be as in the previous Theorem. Then for any $g\in G$ the double coset $HgK$ is separable in $G$.
\end{coro}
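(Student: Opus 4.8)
The plan is to reduce the statement to Theorem \ref{doublecosetconvexsep} by absorbing the element $g$ into one of the two subgroups. First I would record the convexity fact highlighted before the corollary: if $K$ acts cocompactly on a convex subcomplex $Z\subset X$, then $gKg^{-1}$ acts cocompactly on $gZ$, and $gZ$ is again a convex subcomplex of $X$ because the $G$-action on $X$ is by cubical isometries, which carry convex subcomplexes to convex subcomplexes. Hence $gKg^{-1}$ is a convex subgroup of $(G,X)$ with convex core $gZ$.

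Next I would rewrite the double coset as
\begin{equation*}
HgK = H\,(gKg^{-1})\,g,
\end{equation*}
so that $HgK$ is a right translate of the double coset $H\cdot(gKg^{-1})$ of the two convex subgroups $H$ and $gKg^{-1}$. Applying Theorem \ref{doublecosetconvexsep} to this pair shows that $H\,(gKg^{-1})$ is separable in $G$.

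Finally I would invoke the elementary fact that, in any group, a right translate $Sa$ of a separable subset $S$ is separable: given $b\notin Sa$ we have $ba^{-1}\notin S$, so a finite quotient $\pi:G\to Q$ with $\pi(ba^{-1})\notin\pi(S)$ satisfies $\pi(b)\notin\pi(S)\pi(a)=\pi(Sa)$. Taking $S=H\,(gKg^{-1})$ and $a=g$ yields that $HgK$ is separable in $G$, which is the claim.

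Once Theorem \ref{doublecosetconvexsep} is available the argument is immediate, so I do not expect a genuine obstacle here; the only step requiring (routine) verification is that conjugation preserves convexity of subgroups, which is exactly the cubical-isometry observation above, and the trivial lemma that translation preserves separability.
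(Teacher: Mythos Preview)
Your proof is correct and follows exactly the approach indicated in the paper: the paper merely records that $gKg^{-1}$ is convex whenever $K$ is, and leaves the remaining one-line deduction (rewriting $HgK=H(gKg^{-1})g$ and translating) to the reader. You have simply spelled out those details.
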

Recall that $\dot{X}$ denotes the cubical barycentric subdivision of the $\CAT{0}$ cube complex $X$.
\begin{coro}\label{vspecialsubdivision}
Let $(G,X)$ be a cubulated group. Then $(G,X)$ is virtually special if and only if $(G,\dot{X})$ is virtually special.
\end{coro}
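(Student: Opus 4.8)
The plan is to derive Corollary \ref{vspecialsubdivision} from the combinatorial criterion for virtual specialness (Theorem \ref{haglundwisedoublecoset}), using Theorem \ref{doublecosetconvexsep} to handle the double cosets in the forward implication. The first step is to set up a dictionary between the walls of $\dot X$ and those of $X$. I would verify directly the following (essentially well-known) fact: there is a $G$-equivariant bijection between the set of walls of $\dot X$ and the set of pairs $(W,\mfrk h)$, where $W$ is a wall of $X$ and $\mfrk h$ one of its two half-spaces; concretely, the wall of $\dot X$ associated to $(W,\mfrk h)$ is the one dual to the edge of $\dot X$ joining a vertex $v\in \mfrk h$ to the barycentre of an edge of $X$ that is dual to $W$ and incident to $v$ (one checks this is independent of the choices). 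The point of this dictionary is the resulting description of stabilisers: if $W'$ is the wall of $\dot X$ associated to $(W,\mfrk h)$, then $g\in G$ fixes $W'$ if and only if $g$ fixes $W$ and preserves its half-spaces, so $G_{W'}$ does not depend on the choice of side $\mfrk h$ and equals the subgroup $G_W^{0}\le G_W$ of elements of $G_W$ preserving each half-space of $W$; this subgroup has index at most $2$ in $G_W$. Since $G_W$ acts cocompactly on the convex subcomplex $\mcal{N}(W)\subset X$, it is a convex subgroup of $(G,X)$, hence so is the finite index subgroup $G_W^{0}$. Finally, $(G,\dot X)$ is itself a cubulated group.

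For the forward implication, suppose $(G,X)$ is virtually special. By the previous step every wall stabiliser of $\dot X$ is a convex subgroup of $(G,X)$, hence separable by Theorem \ref{haglundwiseseparability}, and every double coset $G_{W_1'}G_{W_2'}$ of wall stabilisers of $\dot X$ is a double coset of convex subgroups of $(G,X)$, hence separable by Theorem \ref{doublecosetconvexsep}. Thus both conditions of Theorem \ref{haglundwisedoublecoset} hold for the cubulated group $(G,\dot X)$, and $(G,\dot X)$ is virtually special.

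For the converse, suppose $(G,\dot X)$ is virtually special, so by Theorem \ref{haglundwisedoublecoset} the wall stabilisers of $\dot X$, and the double cosets of intersecting pairs of them, are separable in $G$. I would check the two conditions of Theorem \ref{haglundwisedoublecoset} for $(G,X)$. For a wall $W$ of $X$, the subgroup $G_W^{0}=G_{W'}$ is separable; since it has finite index in $G_W$ and a finite index overgroup of a separable subgroup is again separable, $G_W$ is separable. For intersecting walls $W_1,W_2$ of $X$, I would fix a square $C$ of $X$ crossed by both and a vertex $v$ of $C$; letting $\mfrk h_i$ be the half-space of $W_i$ containing $v$ and $W_i'$ the corresponding wall of $\dot X$, both $W_1'$ and $W_2'$ cross the square of $\dot C$ incident to $v$, so they intersect in $\dot X$. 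Therefore $G_{W_1'}G_{W_2'}=G_{W_1}^{0}G_{W_2}^{0}$ is separable, and since $G_{W_1}G_{W_2}$ is a finite union of two-sided translates of this set, and two-sided translates and finite unions of separable subsets are separable, $G_{W_1}G_{W_2}$ is separable. Hence $(G,X)$ is virtually special by Theorem \ref{haglundwisedoublecoset}.

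The calculations I am deferring are routine: the explicit check of the wall dictionary and of the stabiliser formula $G_{W'}=G_W^{0}$, and the elementary permanence properties of separability (under two-sided translation, under finite unions, and under passage to finite index overgroups). The only substantive step — and the only place where the new content of this appendix is needed — is the forward implication, where separability of $G_{W_1'}G_{W_2'}$ does not follow from separability of the two convex subgroups individually and genuinely requires Theorem \ref{doublecosetconvexsep}; once the wall dictionary is in place, the rest is bookkeeping.
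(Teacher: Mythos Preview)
Your proposal is correct and follows essentially the same approach as the paper: identify wall stabilisers of $\dot X$ as finite index (hence convex) subgroups of wall stabilisers of $X$, then apply the Haglund--Wise separability criterion (Theorem \ref{haglundwisedoublecoset}) together with Theorems \ref{haglundwiseseparability} and \ref{doublecosetconvexsep}. The paper's own proof is terser (it dispatches the converse with ``a similar argument''), whereas you spell out the wall dictionary, the stabiliser identity $G_{W'}=G_W^{0}$, and the converse in full; your added care in checking that intersecting walls of $X$ yield intersecting walls of $\dot X$ is a genuine detail the paper elides.
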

\begin{proof}
If $W'$ is a wall of $\dot{X}$, then $G_{W'}$ is a finite index subgroup of $G_W$ for some wall $W$ of $X$, hence a convex subgroup of $(G,X)$. Therefore, by Theorems \ref{haglundwiseseparability}, \ref{doublecosetconvexsep}, and \ref{haglundwisedoublecoset}, if $(G,X)$ is virtually special then $(G,\dot{X})$ is virtually special. The converse follows by a similar argument. 
\end{proof}

To prove Theorem \ref{doublecosetconvexsep}, we will make use of the properties of the \emph{canonical completion and retraction} introduced by Haglund and Wise \cite[Sec.~6]{HaglundWise2008Special}, so we first proceed recalling this construction.
\begin{defi} Let $f : A \rightarrow B$ be a local isometry of NPC special cube complexes, and assume that $B$ is \emph{fully clean} in the sense of \cite[Def.~6.1]{HaglundWise2008Special} and has simplicial 1-skeleton. Then there exists a covering map $p : \msf{C}(A,B) \rightarrow B$ (of finite degree if $A$ is finite), an injection $j : A \rightarrow \msf{C}(A,B)$ and a cellular map $r : \msf{C}(A,B) \rightarrow A$ such that $rj=1_A$ and $pj=f$. The covering is defined as follows:
\end{defi}
The 0-skeleton of $\msf{C}(A,B)$ is $A^{(0)}\times  B^{(0)}$ with $j(a)=(a, f(a))$ for $a \in A^{(0)}$, and $r : A^{(0)}\times B^{(0)} \rightarrow A^{(0)}$ and $p : A^{(0)} \times B^{(0)} \rightarrow B^{(0)}$ being the projections to the first and second coordinate, respectively.

Since the 1-skeletons of $A$ and $B$ are simplicial, edges in $A$ and $B$ are determined by their endpoints. The edges of $\msf{C}(A,B)$ are of two types:
\begin{itemize}
 \item \emph{Horizontal}: pairs of the form $\corchete{(a, b),(a, b')}$ with $\corchete{b, b'}$ an edge of $B$ and such that there is no edge $e$ of $A$ incident to $a$ with $f(e)$ and $\corchete{b, b'}$ dual to the same wall.
\item \emph{Diagonal}: pairs of the form $\corchete{(a, b),(a', b')}$ with $\corchete{b, b'}$ an edge of $B$ and $e=\corchete{a, a'}$ an edge of $A$ with $f(e)$ and $\corchete{b, b'}$ dual to the same wall (note that $\corchete{(a',b),(a,b')}$ is also a diagonal edge).
\end{itemize}
It follows from this definition that if $\corchete{a, a'}$ is an edge of $A$, then $j(\corchete{a, a'})=\corchete{(a, f(a)),(a', f(a'))}$ is a diagonal edge of $\msf{C}(A,B)$. Also, for an edge $e=\corchete{(a, b),(a, b')}$ as above, we define $p(e)=\corchete{b, b'}$ and $r(e)=a$ if $a=a'$ and $e$ is horizontal, and $r(e)=\corchete{a, a'}$ if $e$ is diagonal.\\
Clearly we have $rj=1_A$ and $pj=f$ at the level of 1-skeletons, and full cleanliness implies that $p$ is a covering map from $\msf{C}(A,B)^{(1)}$ into $B^{(1)}$. It can be proven the any lifting of the 1-skeleton of a square of $B$ is a closed 4-circuit, so the 2-skeleton of $\msf{C}(A,B)$ is constructed in such a way that the boundaries of squares coincide with liftings of boundaries of squares of $B$.\\
The maps $p$ and $j$ then naturally extend to the 2-skeleton, and since any wall of the 2-skeleton of $\msf{C}(A,B)$ only consists of either horizontal or diagonal edges, we can extend $r$ to this 2-skeleton by mapping a square $Q \subset \msf{C}(A,B)$ either to a square (if the walls dual to $Q$ are both diagonal), to an edge (if one wall dual to $Q$ is diagonal and the other one is horizontal, then collapse the horizontal edge to a point), or to a point (if both walls dual to $Q$ are horizontal).

There is a unique way to complete this 2-skeleton to produce a non-positively curved cube complex $\msf{C}(A,B)$ \cite[Lem.~3.13]{HaglundWise2008Special}, for which we can naturally extend $j, r$ and $p$ to maps satisfying the desired commuting properties (see \cite[Cor.~6.7]{HaglundWise2008Special}). We call $\msf{C}(A,B)$ the \emph{canonical completion} of $f : A \rightarrow B$ with (\emph{canonical}) \emph{inclusion} $j$ and (\emph{canonical}) \emph{retraction} $r$.

\begin{rmk}\label{rmkcompletion}In general, $\msf{C}(A,B)$ may be a disconnected covering of $B$. Also, from the construction above, it follows that if $e$ is an edge of $A$ and $e'$ is an edge of $\msf{C}(A,B)$ dual to the wall $W(j(e))$, then $e'$ is diagonal and $r(e')$ is dual to the wall $W(e) \subset A$. It is not hard to see that $j$ maps distinct walls of $A$ to distinct walls of $\msf{C}(A,B)$.
\end{rmk}
To produce separable double cosets from the canonical completion, we will use the following separability criterion \cite[Lem.~9.3]{HaglundWise2008Special}:
\begin{prop}\label{criteriondouble} Let $G$ be a residually finite group, and let $\rho : G \rightarrow G$ be a retraction homomorphism with $H=\rho(G)$. Then $H<G$ is a separable subgroup, and if $K<G$ is separable with $\rho(K)\subset K$, then the double coset $HK$ is separable in $G$.
\end{prop}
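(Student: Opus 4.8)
The plan is to reduce both assertions to a single elementary observation about idempotent endomorphisms. Since $\rho\colon G\to G$ is a retraction homomorphism onto $H=\rho(G)$, it is idempotent ($\rho^2=\rho$), so $H$ equals its fixed-point set: $H=\{x\in G:\rho(x)=x\}$. The key claim is a coset criterion: if $K\le G$ is any subgroup with $\rho(K)\subseteq K$, then for every $x\in G$ one has $x\in HK$ if and only if $\rho(x)^{-1}x\in K$. Indeed, if $x=hk$ with $h\in H$, $k\in K$, then $\rho(x)=\rho(h)\rho(k)=h\rho(k)$, hence $\rho(x)^{-1}x=\rho(k)^{-1}h^{-1}hk=\rho(k)^{-1}k\in K$ because $\rho(k)\in\rho(K)\subseteq K$; conversely, $x=\rho(x)\cdot(\rho(x)^{-1}x)$ exhibits $x$ as a product of an element of $H=\rho(G)$ and an element of $K$ whenever $\rho(x)^{-1}x\in K$. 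Taking $K=\{1\}$ recovers the trivial case $x\in H\iff\rho(x)=x$. This criterion converts the separation of a coset of $HK$ into the separation of the single element $\rho(g)^{-1}g$ from $K$, provided we can pass to a finite quotient that still remembers the retraction.

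So, given $g\in G\setminus HK$, I would first invoke the criterion to get $\rho(g)^{-1}g\notin K$; separability of $K$ then yields a finite-index normal subgroup $N_0\trianglelefteq G$ with $\rho(g)^{-1}g\notin N_0K$, i.e.\ a finite quotient $\phi\colon G\to G/N_0$ with $\phi(\rho(g)^{-1}g)\notin\phi(K)$. (For the first assertion, that $H$ is separable, one takes $K=\{1\}$ and only needs residual finiteness of $G$ to obtain $N_0$ with $\rho(g)^{-1}g\notin N_0$.) The only technical point is that $\phi$ need not be compatible with $\rho$, so the criterion does not directly apply in $G/N_0$. I would fix this by replacing $N_0$ with $N:=N_0\cap\rho^{-1}(N_0)$. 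This $N$ is again finite-index and normal in $G$ (as an intersection of two finite-index normal subgroups, using that $\rho^{-1}$ of a finite-index normal subgroup is finite-index normal), and it satisfies $\rho(N)\subseteq N$: for $n\in N$ one has $\rho(n)\in N_0$, and $\rho(\rho(n))=\rho(n)\in N_0$, so $\rho(n)\in N_0\cap\rho^{-1}(N_0)=N$.

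Consequently $\rho$ descends to an idempotent endomorphism $\bar\rho$ of the finite group $\bar G:=G/N$, with image $\bar H$, and $\bar\rho(\bar K)=\overline{\rho(K)}\subseteq\bar K$ where $\bar K$ denotes the image of $K$. The coset criterion then applies verbatim in $\bar G$: $\bar g\in\bar H\bar K$ if and only if $\bar\rho(\bar g)^{-1}\bar g=\overline{\rho(g)^{-1}g}\in\bar K$. Since $N\subseteq N_0$, the map $\phi$ factors through $G/N$, so $\overline{\rho(g)^{-1}g}\in\bar K$ would force $\phi(\rho(g)^{-1}g)\in\phi(K)$, contradicting the choice of $\phi$. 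Hence $\bar g\notin\bar H\bar K$, and since $\bar H\bar K$ is exactly the image of $HK$ in the finite group $\bar G$, this separates $g$ from $HK$; the first assertion follows by the same argument with $K$ replaced by the trivial subgroup. I expect no genuine obstacle here: the whole proof is elementary group theory, and the only step requiring a little care is the $\rho$-equivariant refinement $N_0\rightsquigarrow N$ of the separating finite quotient, which is what makes the coset criterion survive the passage to a finite group.
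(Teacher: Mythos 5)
Your argument is correct. Note that the paper does not prove this statement at all: it is quoted verbatim from Haglund--Wise (\cite[Lem.~9.3]{hagwissp}) and used as a black box, so there is no in-paper proof to compare against. Your self-contained proof is essentially the standard one: the coset criterion $x\in HK\iff\rho(x)^{-1}x\in K$ (valid because $\rho^2=\rho$ and $\rho(K)\subseteq K$), followed by the $\rho$-equivariant refinement $N=N_0\cap\rho^{-1}(N_0)$ of the separating finite-index normal subgroup so that $\rho$ descends to the finite quotient and the criterion can be applied there. All the individual steps check out ($\rho^{-1}(N_0)$ is indeed finite-index normal, $\rho(N)\subseteq N$, and $\overline{\rho(g)^{-1}g}\in\bar K$ would force $\rho(g)^{-1}g\in N_0K$, contradicting the choice of $N_0$), and the case $K=\{1\}$ correctly reduces the first assertion to residual finiteness.
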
Our first lemma is just a generalization of \cite[Prop.~9.7]{HaglundWise2008Special}, following exactly the same proof.
\begin{lemma}\label{sepconvex-wall}If $(G,X)$ is a virtually special group, $H< (G,X)$ is a convex subgroup and $W \subset X$ is a wall, then $HG_W \subset G$ is separable.
\end{lemma}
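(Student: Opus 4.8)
The statement to establish is Lemma~\ref{sepconvex-wall}: if $(G,X)$ is virtually special, $H<(G,X)$ is convex and $W\subset X$ is a wall, then $HG_W$ is separable in $G$. The plan is to reduce to the setting where $G$ acts freely on $X$ with special quotient, reinterpret $H$ and $G_W$ topologically as $\pi_1$ of a convex subcomplex and a wall-neighborhood, and then feed a suitable local isometry of compact special cube complexes into the canonical completion machinery of Haglund--Wise to obtain a retraction homomorphism of $G$ whose image is $H$ and which preserves $G_W$. Then Proposition~\ref{criteriondouble} finishes the argument, since $G_W$ is already separable (it is convex, hence separable by Theorem~\ref{haglundwiseseparability}) and $G$ is residually finite.

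First I would pass to a finite index subgroup $G'<G$ acting freely on $X$ with $X/G'$ special and compact; separability of $HG_W$ in $G$ follows from separability of the finitely many double cosets $(H\cap G')(G_W\cap G')$-type sets in $G'$, using that separable subsets behave well under finite-index overgroups (the preimage of a separable set under $G'\hookrightarrow G$ considerations, together with the fact that a finite union of cosets of a separable set is separable). So assume $G$ itself acts freely and $Y:=X/G$ is compact special. Let $Z\subset X$ be a convex core for $H$ and let $\mcal{N}(W)\subset X$ be the cubical neighbourhood of $W$, which by \cite[Lem.~13.4]{hagwissp} is a convex core for $G_W$. Then $A:=Z/H$ is a compact NPC cube complex with $\pi_1(A)=H$, and the map $A\to Y$ is a local isometry (immersion of a convex subcomplex into $Y$, up to taking the quotient of the compact convex $Z$). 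After subdividing so that $Y$ has simplicial $1$-skeleton and is fully clean — which is harmless and can be arranged for special complexes — I would form the canonical completion $\msf{C}(A,Y)\to Y$ with its canonical inclusion $j:A\to\msf{C}(A,Y)$ and retraction $r:\msf{C}(A,Y)\to A$, satisfying $rj=1_A$. Since $\msf{C}(A,Y)\to Y$ is a finite cover, it corresponds to a finite index subgroup $G_1<G$ containing (a conjugate of) $H$, and the retraction $r_*$ gives a retraction homomorphism $\rho:G_1\to H$ with $\rho|_H=1_H$.

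The crux of the argument — and the step I expect to be the main obstacle — is verifying that $\rho$ carries $G_W\cap G_1$ (or the appropriate conjugate) into $H\cap G_1$, or rather that one can choose the covering/retraction so that the relevant wall stabilizer is preserved; equivalently, one wants a retraction $\rho:G\to H$ defined on all of $G$ with $\rho(G_W)\subset$ (a conjugate of $H$ meeting $G_W$ appropriately). Here is where the geometric structure of the canonical completion is essential: by Remark~\ref{rmkcompletion}, $j$ sends walls of $A$ to walls of $\msf{C}(A,Y)$ via diagonal edges, and the retraction $r$ collapses horizontal edges while sending diagonal edges dual to $W(j(e))$ to edges dual to $W(e)\subset A$. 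The plan is to track, for the wall $W$ of $X$, the wall $\ov{W}$ of $Y$ it maps to and the wall of $A$ it meets, and to use the explicit description of diagonal versus horizontal edges to conclude that $r_*$ maps the stabilizer of (a lift of) $\ov{W}$ into the stabilizer of a wall of $A$, i.e. into (a conjugate of) $G_W\cap H$, which in particular lies in $H$. This is precisely the content of the original \cite[Prop.~9.7]{hagwissp} in the quasiconvex hyperbolic setting; the point of the lemma is that the same argument goes through verbatim once "quasiconvex subgroup of a hyperbolic virtually special group" is replaced by "convex subgroup of a virtually special cube complex group", because the only inputs used are (i) residual finiteness of $G$, (ii) separability of convex subgroups (Theorem~\ref{haglundwiseseparability}), and (iii) the combinatorial properties of the canonical completion, none of which require hyperbolicity. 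Once $\rho:G\to H$ is a retraction with $\rho(G_W)\subset$ a conjugate of $H$ lying inside $H G_W$-compatible position, Proposition~\ref{criteriondouble} applied with $K=G_W$ (separable since convex, by Theorem~\ref{haglundwiseseparability}) yields that $HG_W$ is separable, completing the proof.
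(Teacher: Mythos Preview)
Your approach is essentially the same as the paper's: pass to a finite-index subgroup with special quotient, form the canonical completion of the local isometry $\ov{Y}\to\ov{X}$ (where $\ov{Y}$ is the quotient of a convex core for $H$), use Remark~\ref{rmkcompletion} to see that the canonical retraction carries the neighbourhood of the relevant wall in $\msf{C}(\ov{Y},\ov{X})$ into the neighbourhood of a wall of $\ov{Y}$, and then invoke Proposition~\ref{criteriondouble}. Two small points you should make explicit. First, you must choose the convex core $Z$ for $H$ so that $Z\cap W\neq\emptyset$ (via Lemma~\ref{enlarging}); otherwise there is no wall of $A=Z/H$ for the retraction to land in, and the wall-tracking argument has no target. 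Second, Proposition~\ref{criteriondouble} requires $\rho(K)\subset K$, not merely $\rho(K)\subset H$; the paper handles this by setting $\wtilde{G}_W:=G'_W\cap r_\ast^{-1}(G'_W)$ and checking that $r_\ast(\wtilde{G}_W)\subset\wtilde{G}_W$ and that $\wtilde{G}_W$ has finite index in $G'_W$, so that $HG_W$ is a finite union of translates of the resulting separable double coset. With these adjustments your outline matches the paper's proof exactly.
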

\begin{proof}By using Lemma \ref{enlarging} we can find a convex core $Y \subset X$ for $H$ such that $Y \cap W$ is non-empty, and let $a$ be an edge of $Y$ dual to $W$ and incident to the vertex $y \in Y$ . Since $G$ is residually finite, by Lemma \ref{finiteclasses} there is a finite index subgroup $\hat{G}< G$ acting freely on $X$ such that $\ov{X} := X/\hat{G}$ is fully clean and special with simplicial 1-skeleton, and such that the projection $X \rightarrow \ov{X}$ maps squares to squares \cite[Rmk.~6.8]{HaglundWise2008Special}. In that case, if $\hat{H}= H \cap \hat{G}$, then $\ov{Y}=Y/\hat{H}$ is compact and the composition $f : \ov{Y} \rightarrow X/\hat{H} \rightarrow \ov{X}$ is a local isometry. We can also assume that $(W \cap Y)/\hat{H}_W \rightarrow \ov{Y}$ and $W/\hat{G}_W \rightarrow X/\hat{G}$ embed as walls, that we denote respectively by $W_{\ov{a}}$ and $W_{f(\ov{a})}$, with $\ov{a}$ being the image of $a$ under the projection $Y \rightarrow \ov{Y}$. Let $p : \msf{C}(\ov{Y},\ov{X}) \rightarrow \ov{X}$ be the canonical completion induced by $f$, with retraction $r$ and inclusion map $j$.

Let $\mcal{N}_{f(\ov{a})} \subset \ov{X}$ denote the cubical neighborhood of $W_{f(\ov{a})}$, which is lifted by $p$ to the cubical neighborhood $\mcal{N}_{j(\ov{a})}$ of the wall $W_{j(\ov{a})}\subset \msf{C}(\ov{Y},\ov{X})$ dual to $j(\ov{a})$. By Remark \ref{rmkcompletion}, $r$ maps any edge of $\msf{C}(\ov{Y},\ov{X})$ dual to $W_{j(\ov{a})}$ to an edge dual to $W_{\ov{a}} \subset \ov{Y}$. In particular, since $r$ maps cubes to cubes (possibly of lower dimension), we have $r(\mcal{N}_{j(\ov{a})}) \subset \mcal{N}_{\ov{a}}$, where $\mcal{N}_{\ov{a}}$ is the cubical neighborhood of $W_{\ov{a}}$. On the other hand, since $X \rightarrow \ov{X}$ maps squares to squares, $j$ and $f$ also map squares to squares, and in fact we have $r(\mcal{N}_{j(\ov{a})})=\mcal{N}_{\ov{a}}$. Therefore, if $\ov{y} \in \ov{Y}$ denotes the projection of $y$, there is a commutative diagram
\begin{equation*}
\begin{tikzcd}
(\mcal{N}_{j(\ov{a})},j(\ov{y})) \arrow[r,"\subset"] \arrow[d,"r"]  & (\mathsf{C}(\ov{Y},\ov{X}),j(\ov{y})) \arrow[d,"r"] \\
(\mcal{N}_{\ov{a}},\ov{y}) \arrow[r,"\subset"] & (\ov{Y},\ov{y})  
\end{tikzcd}
\end{equation*}
At the level of fundamental groups, and after considering the corresponding isomorphisms induced by $\hat{G} \cong \pi_1(\ov{X}, f(\ov{y}))$, we obtain a retraction homomorphism $r_\ast: G' \rightarrow \hat{H}$, where $G'< \hat{G}$ is the subgroup corresponding to $\pi_1(\msf{C}(\ov{Y},\ov{X}), j(\ov{y}))$, and such that $r_\ast(G'_W) \subset \hat{H}_W$. It is not hard to see that the group $\wtilde{G}_W := G'_W \cap r_\ast^{-1}(G'_W)< G'$ satisfies $r_\ast(\wtilde{G}_W) \subset \wtilde{G}_{W}$, so by Proposition \ref{criteriondouble} the double coset $\hat{H}\wtilde{G}_W$ is separable in $G'$, and hence in $G$ since $G'< G$ is of finite index. But $G'_W=G_W \cap G'<G_W$ and $\hat{H}<H$ are of finite index, and hence $\wtilde{G}_W=G'_W \cap r_\ast^{-1}(G'_W)=G'_W \cap r_\ast^{-1}(G'_W \cap\hat{H})< G'_W \cap r_\ast^{-1}(G_W \cap\hat{H})=G'_W$ is also of finite index. We conclude that $HG_W$ is finite union of translates of $\hat{H}\wtilde{G}_W$, so it is also separable in $G$.
\end{proof}
\begin{coro}\label{interconvex-wall}Let $(G,X)$ be a virtually special group, and let $H<G$ be a convex subgroup with convex core $Y \subset X$. Then there exists a finite index subgroup $G'<G$ such that for any wall $W \subset X$ intersecting $Y$:
\begin{enumerate}
    \item If $g \in G'$ satisfies $g\mcal{N}(W) \cap Y \neq  \emptyset$, then in fact $gW \cap Y \neq \emptyset$.
\item If $W'$ is another wall of $X$ intersecting $Y$ and $g \in G'$ is such that $W'=gW$, then in fact $W' \cap Y=h'(W' \cap Y)$ for some $h' \in G' \cap H$.
\end{enumerate}
\end{coro}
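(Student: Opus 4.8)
The plan is to derive \Cref{interconvex-wall} as a finite-index refinement of \Cref{sepconvex-wall}, obtaining both conclusions simultaneously from a single finite intersection of separability-induced subgroups. First I would observe that, because $H$ acts cocompactly on the convex core $Y$, Part~(2) of \Cref{finiteclasses} applied with $R=0$ (or rather to the cubical neighbourhoods involved) shows there are only finitely many $H$-orbits of walls $W\subset X$ meeting $Y$; fix representatives $W_1,\dots,W_r$ of these orbits. For each $i$ the stabilizer $G_{W_i}$ is a convex subgroup of $(G,X)$ (its convex core is $\mcal{N}(W_i)$), and so is $H\cap G_{W_i}$ after enlarging the cores to intersect, by \Cref{intersectionconvex}; moreover $H\cap G_{W_i}$ acts cocompactly on $W_i\cap Y$ by \Cref{wallsubcomplex}-type reasoning, so it is a convex core for it.

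The key step is to feed \Cref{sepconvex-wall} into \Cref{criteriondouble}-style arguments. For Part~(1): if $g\mcal{N}(W_i)\cap Y\neq\emptyset$ but $gW_i\cap Y=\emptyset$, then $g$ lies in a specific bounded ``annular'' region near $\mcal{N}(W_i)$ but outside the set $\{g: gW_i\cap Y\neq\emptyset\}=H G_{W_i}\cdot(\text{a finite set})$ — more precisely $\{g: gW_i\cap Y\neq\emptyset\}$ is a finite union of double cosets $Hg_{i,k}G_{W_i}$, each separable by \Cref{sepconvex-wall} together with \Cref{doublecosetconvexsepg} (allowing the $g_{i,k}$-shift). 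The set of ``bad'' group elements $g$ with $g\mcal{N}(W_i)\cap Y\neq\emptyset$ is itself a finite union of double cosets of the form $Hg'_{i,l}G_{W_i}$, because $\mcal{N}(W_i)$ is itself a convex core for $G_{W_i}$ and \Cref{finiteclasses}(2) bounds the number of such double cosets. Separating each of the finitely many bad elements not in any $Hg_{i,k}G_{W_i}$ from the corresponding separable double coset, and intersecting the resulting finite-index subgroups over all $i$ and all bad representatives, yields a finite-index $G_1<G$ with property~(1). For Part~(2): given $W'=gW_i\cap Y\neq\emptyset$ and $W_i\cap Y\neq\emptyset$ with $g\in G_1$, one has $g\in HG_{W_i}$ (by the orbit-representative choice), say $g=h\cdot s$ with $h\in H$, $s\in G_{W_i}$; the issue is upgrading this to $W'\cap Y=h'(W_i\cap Y)$ with $h'\in H$ only up to the stabilizer of $W'\cap Y$ in $G$ — here one uses the separability of $H\cap G_{W_i}$ inside $G_{W_i}$ (which holds since $H\cap G_{W_i}$ is convex in the virtually special group $(G_{W_i},\mcal{N}(W_i))$, by \Cref{haglundwiseseparability}) to pass to a further finite-index subgroup $G_2$ on which $G_{W_i}\cap gH g^{-1}$-type coincidences force $s$ into $H\cap G_{W_i}$ modulo a finite set, so $h'=hs^{\pm1}\in H$ works. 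Intersect $G_1\cap G_2$ over the finitely many orbit representatives to get the desired $G'$.

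The main obstacle I anticipate is bookkeeping rather than conceptual: making sure the finitely many double cosets appearing (both the ``$gW_i$ meets $Y$'' cosets and the ``$g\mcal{N}(W_i)$ meets $Y$'' cosets) are genuinely finite in number and genuinely separable, which requires combining \Cref{finiteclasses}(2) (finiteness of $A_{Y,H,R}$) with \Cref{sepconvex-wall} and \Cref{doublecosetconvexsepg} (convexity of conjugates of $G_{W_i}$), and then verifying that the finite-index subgroup obtained by intersecting all the separating subgroups does the job uniformly for \emph{all} walls meeting $Y$, not just the representatives — this last point uses that the constructions are $H$-equivariant, so controlling orbit representatives suffices. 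A secondary subtlety is Part~(2)'s conclusion $h'\in G'\cap H$ rather than merely $h'\in H$; this is handled by ensuring $G'$ is normal (replace it by the intersection of its finitely many conjugates) so that $H$-translates stay inside $G'$ where needed.
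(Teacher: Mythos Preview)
Your approach to Part~(1) is essentially the paper's, but you introduce a circular reference: \Cref{doublecosetconvexsepg} (separability of $HgK$ for general $g$) is a corollary of \Cref{doublecosetconvexsep}, whose proof \emph{uses} the very corollary you are proving. Fortunately this is avoidable: the paper observes that one only needs separability of the single double coset $HG_{W_i}$ (from \Cref{sepconvex-wall}). After separating each representative $g_j\in\mcal J_i\setminus\{1\}$ from $HG_{W_i}$ via a normal finite-index $\hat G_i$, any $g\in\hat G:=\bigcap_i\hat G_i$ with $g\mcal N(W_i)\cap Y\ne\emptyset$ is forced into $HG_{W_i}$ itself (not merely into $I(G,Y,i)$) by a short conjugation computation. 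So the general double cosets $Hg_{i,k}G_{W_i}$ never need to be shown separable.

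Your approach to Part~(2) has a genuine gap. Writing $g=hs$ with $h\in H$, $s\in G_{W_i}$, what you actually need is $h\in(G'\cap H)(H\cap G_{W_i})$; equivalently, in the finite quotient $\pi\colon G\to G/G'$ one needs $\pi(H)\cap\pi(G_{W_i})=\pi(H\cap G_{W_i})$. This intersection compatibility is \emph{not} a consequence of separability of $H\cap G_{W_i}$ in $G_{W_i}$ (or in $G$): separability lets you exclude finitely many specified elements from a finite-index overgroup, but here the obstruction ranges over infinitely many cosets of $H\cap G_{W_i}$ in $H$, and your phrase ``force $s$ into $H\cap G_{W_i}$ modulo a finite set'' does not pin down any mechanism for this. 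The paper handles Part~(2) by a structurally different route: it first uses virtual specialness of $(H,Y)$ to find a finite-index $H'<H$ for which walls of $Y/H'$ are embedded and distinct (a \Cref{finitewalls}-type step inside $Y$), and then takes $G'$ to be the fundamental group of a component of the canonical completion $\msf C\bigl(Y/(H\cap\hat G),\,X/\hat G\bigr)$, invoking the fact (\Cref{rmkcompletion}) that the canonical inclusion $j$ sends distinct walls of $\ov Y$ to distinct walls of the completion. This wall-injectivity of the canonical completion is precisely what yields the intersection compatibility above, and it is not something your double-coset-separability outline supplies.
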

\begin{rmk}When we project to the corresponding quotients, conclusion (1) of the corollary above may be thought as a version of \emph{no inter-osculation} for a wall and a locally convex subcomplex of a compact special cube complex.
\end{rmk}
\begin{proof}The proof closely follows the idea of \cite[Lem.~9.14]{HaglundWise2008Special}. Let $W_1,\dots, W_n$ be a complete set of representatives of $H$-orbits of walls intersecting $Y$, and for each $i$ define the sets
$$I(G,Y,i)=\corchete{g\in G \colon gW_i\cap Y \neq \emptyset}, \hspace{2mm} J(G,Y,i)=\corchete{g\in G \colon g\mcal{N}(W_i)\cap Y \neq \emptyset}.$$
These sets are clearly $(H, G_{W_i})$-invariant, and also $I(G, Y, i) \subset J(G, Y, i)$, so there are subsets
$\mcal{I}_i\subset \mcal{J}_i\subset G$ such that $I(G, Y, i)=\bigsqcup_{g\in \mcal{I}_i}{HgG_{W_i}}$ and $J(G, Y, i)=\bigsqcup_{g\in \mcal{J}_i}{HgG_{W_i}}$.

Let us prove first that each of the sets $\mcal{J}_i$ is finite. Fix $1 \leq i \leq n$, and consider finite sets $D_i\subset \mcal{N}(W_i)^{(0)}$ and $E \subset Y^{(0)}$ such that $\mcal{N}(W_i)^{(0)}=G_{W_i}\cdot D$ and $Y^{(0)}=H \cdot E$. Given $g \in G$ such that $g\mcal{N}(W_i) \cap Y$ is non-empty, there is a vertex $v$ of $\mcal{N}(W_i)$ with $gv \in Y$, and so there are group elements $w \in G_{W_i}$ and $h \in H$ satisfying $wv \in D_i$ and $hgv \in E$. In particular, since the action of $G$ on $X$ is proper, the composition $hgw^{-1}$ lies in the finite set $\mcal{F}_i$ of group elements $g' \in G$ such that $g'D_i \cap E \neq \emptyset$, and hence we can choose $\mcal{J}_i \subset \mcal{F}_i$.

Next, note that since by assumption $W_i \cap Y \neq \emptyset$, we have $HG_{W_i} \subset I(G, Y, i)$, so we may assume $1\in \mcal{I}_i$. The finite set $\mcal{J}_i\backslash {1}$ is then disjoint from $HG_{W_i}$ which is separable in $G$ by Lemma \ref{sepconvex-wall}. Therefore, there exists a finite index normal subgroup $\hat{G}_i \trianglelefteq G$ such that $(\bigcup_{g \in \mathcal{J}_{i}\backslash \corchete{1}}{g \hat{G}_{i}}) \cap HG_{W_{i}}=\emptyset$. We claim that (any finite index subgroup of) $\hat{G} :=\bigcap_{i}{\hat{G}_i}$ satisfies conclusion (1).

Indeed, let $W \subset X$ be a wall intersecting $Y$, and let $1 \leq i \leq n$ and $h \in H$ such that $W=hW_i$. Assume by contradiction that there is some $g \in \hat{G}$ such that $g\mcal{N}(W) \cap Y \neq \emptyset$ but $gW \cap Y = \emptyset$.
Since $\hat{G}$ is normal, this implies $h^{-1}gh \in \hat{G} \cap J(G, Y, i)\backslash I(G, Y, i) \subset \hat{G}_i \cap J(G, Y, i) \backslash I(G, Y, i)$,
and hence $h^{-1}gh=vg_iw$, for $v \in H$, $g_i \in \mcal{J}_i \backslash \mcal{I}_i$ and $w \in G_{W_i}$. This is a contradiction, because otherwise we would have $g_i((wh^{-1})g^{-1}(wh^{-1})^{-1})= v^{-1}w^{-1} \in g_i\hat{G}_i \cap HG_{W_i}$, and so conclusion (1) follows.

For conclusion (2), since $(G,X)$ is virtually special we can assume that $X/\hat{G}$ is special, fully clean, and with simplicial 1-skeleton, so that the composition $f : Y/(H\cap\hat{G}) \rightarrow X/(H\cap\hat{G}) \rightarrow X/\hat{G}$ is a local isometry. But $(H, Y)$ is also virtually special, so by using Lemma \ref{finiteclasses} (2) and the separability of wall stabilizers in $H$ we can find a finite index subgroup $H'< H$ such that for any further finite index subgroup $H''  <H'$ and for any wall $W \subset X$ intersecting $Y$, the map $(W\cap Y) /(H''\cap G_W) \rightarrow Y/H''$ is an embedding and the image is a wall of  $Y/H''$.

The group $H'  < G$ is convex, hence separable by Theorem \ref{haglundwiseseparability}, so by a separability argument we may assume that $H' \cap \hat{G}= H \cap \hat{G}$. With this in mind, let $\msf{C}$ be the connected component of $\msf{C}(Y/(H \cap \hat{G}), X/\hat{G})$ including $Y/(H \cap \hat{G})$, and let $G'< \hat{G}$ correspond to its fundamental group, which is finite index in $G$ since $Y/(H \cap \hat{G})$ is compact. Also, we have $H \cap \hat{G}  < G'$, and so $H \cap G'= H \cap\hat{G}$. Since the inclusion of $Y/(H \cap G')$ into $\msf{C}$ maps distinct walls to distinct walls (see Remark \ref{rmkcompletion}), our assumptions about $H'$ imply that the group $G'$ satisfies conclusion (2). 
\end{proof}
The key idea in the proof of Theorem \ref{doublecosetconvexsep} is based on the following proposition, which says that under some mild assumptions, the canonical completion is functorial.
\begin{prop}\label{functoriality}Let $\ov{V} , \ov{X}, \ov{Y} ,\ov{Z}$ be special cube complexes such that the following is a commutative diagram of local isometries.
\begin{equation}\label{eq10}
\begin{tikzcd}
\ov{V} \arrow[r,"f"] \arrow[d,"s"]  & \ov{Y} \arrow[d,"t"] \\
\ov{Z} \arrow[r,"g"] & \ov{X}  
\end{tikzcd}
\end{equation}
In addition, assume that
\begin{itemize}
    \item[($i$)] $\ov{X}$ and $\ov{Y}$ are fully clean and have simplicial 1-skeleton.
    \item[($ii$)] $t$ maps distinct walls to distinct walls.
    \item[($iii$)] If $e$ is an edge of $\ov{X}$ incident to a vertex $t(y)$ of $t(\ov{Y})$ with $e$ dual to a wall intersecting $t(\ov{Y})$, then $e=t(e')$ for some edge $e'$ of $\ov{Y}$ incident to $y$.
    \item[($iv$)] For every vertex $v \in \ov{V}$, if there exist edges $e$ of $\ov{Y}$ and $e'$ of $\ov{Z}$ incident to $f(v)$ and $s(v)$ respectively and such that $t(e)=g(e')$, then there is an edge $e''$ of $\ov{V}$ incident to $v$ and
such that $e=f(e'')$ and $e'= s(e'')$.
\end{itemize}
Then there is a local isometry $\hat{t} : \msf{C}(\ov{V},\ov{Y}) \rightarrow \msf{C}(\ov{Z},\ov{X})$ of the canonical completions commuting with the corresponding inclusions and projections in the sense that the following diagrams commute.
\begin{equation}\label{eq11}
\begin{tikzcd}
\ov{V} \arrow[r,"j"] \arrow[d,"s"]  & \msf{C}(\ov{V},\ov{Y}) \arrow[d,"\hat{t}"] \arrow[r,"r"]& \ov{V} \arrow[d,"s"] \\
\ov{Z} \arrow[r,"j'"] & \msf{C}(\ov{Z},\ov{X}) \arrow[r,"r'"] & \ov{Z} 
\end{tikzcd}\hspace{15mm}
\begin{tikzcd}
\msf{C}(\ov{V},\ov{Y}) \arrow[r,"\hat{t}"] \arrow[d,"p"]  & \msf{C}(\ov{Z},\ov{X}) \arrow[d,"p'"] \\
\ov{Y} \arrow[r,"t"] & \ov{X}  
\end{tikzcd}
\end{equation}
\end{prop}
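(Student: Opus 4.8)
The plan is to construct $\hat{t}$ directly on skeleta, starting from the explicit description of the canonical completion recalled above, and then check it is a local isometry by the link condition. First I would define $\hat{t}$ on $0$-skeleta: since $\msf{C}(\ov{V},\ov{Y})^{(0)} = \ov{V}^{(0)} \times \ov{Y}^{(0)}$ and $\msf{C}(\ov{Z},\ov{X})^{(0)} = \ov{Z}^{(0)} \times \ov{X}^{(0)}$, I set $\hat{t}(v,y) := (s(v), t(y))$. The commuting diagrams \eqref{eq11} on $0$-skeleta are then immediate from $j(v)=(v,f(v))$, $j'(z)=(z,g(z))$, the fact that the retractions and projections are coordinate projections, and commutativity of \eqref{eq10}. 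The real content is to check that $\hat{t}$ sends edges to edges, respecting the horizontal/diagonal dichotomy, and that is where hypotheses ($ii$)–($iv$) enter.

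Next I would extend $\hat{t}$ to $1$-skeleta by a case analysis on the two types of edges of $\msf{C}(\ov{V},\ov{Y})$. For a diagonal edge $\{(v,y),(v',y')\}$ — so $\{v,v'\}$ is an edge of $\ov{V}$, $\{y,y'\}$ is an edge of $\ov{Y}$, and $f(\{v,v'\})$, $\{y,y'\}$ are dual to the same wall of $\ov{Y}$ — I claim its image $\{(s(v),t(y)),(s(v'),t(y'))\}$ is a diagonal edge of $\msf{C}(\ov{Z},\ov{X})$: indeed $\{s(v),s(v')\}$ is an edge of $\ov{Z}$ (as $s$ is a local isometry hence injective on links and sends edges to edges), $\{t(y),t(y')\}$ is an edge of $\ov{X}$, and applying $t$ to the common wall of $f(\{v,v'\})$ and $\{y,y'\}$ shows $g(\{s(v),s(v')\})$ and $\{t(y),t(y')\}$ are dual to a common wall of $\ov{X}$, using commutativity of \eqref{eq10}. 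For a horizontal edge $\{(v,y),(v,y')\}$ — so $v=v'$, $\{y,y'\}$ is an edge of $\ov{Y}$, and no edge of $\ov{Y}$ at $v$ maps under $f$ to a wall-mate of $\{y,y'\}$ — I need the image $\{(s(v),t(y)),(s(v),t(y'))\}$ to again be horizontal, i.e.\ that no edge of $\ov{Z}$ at $s(v)$ maps under $g$ to a wall-mate of $\{t(y),t(y')\}$. Suppose $e'$ were such an edge; then $g(e')$ and $\{t(y),t(y')\} = t(\{y,y'\})$ are dual to the same wall $\ov{W}$ of $\ov{X}$, which meets $t(\ov{Y})$; hypothesis ($iii$) (applied at the vertex $t(y)$, with $\ov{W}$ being the wall dual to $t(\{y,y'\})$) produces an edge $e$ of $\ov{Y}$ at $y$ with $t(e) = $ that $\ov{X}$-edge dual to $\ov{W}$, and together with $g(e')$ dual to $\ov{W}$ and $e'$ at $s(v)$, hypothesis ($iv$) yields an edge $e''$ of $\ov{V}$ at $v$ with $f(e'') = e$; but then $f(e'')$ is dual to a wall-mate of $\{y,y'\}$, contradicting horizontality. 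So $\hat{t}$ is well-defined on $1$-skeleta, and the relevant parts of \eqref{eq11} commute by inspecting the definitions of $r,r',p,p'$ on the two types of edge.

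Then I would extend $\hat{t}$ to higher skeleta. Since $\msf{C}(\ov{V},\ov{Y})$ is NPC and determined by its $2$-skeleton (which is built by filling every lift of the $1$-skeleton of a square of $\ov{Y}$ with a square), and likewise for $\msf{C}(\ov{Z},\ov{X})$, it suffices to check that $\hat{t}$ carries the boundary $4$-circuit of each square of $\msf{C}(\ov{V},\ov{Y})$ to the boundary of a square of $\msf{C}(\ov{Z},\ov{X})$; this follows because such a boundary circuit projects under $p$ to the boundary of a square of $\ov{Y}$, $t$ sends it to the boundary of a square of $\ov{X}$ (as $t$ is a local isometry, hence a cubical map), and $p'$-lifts of square boundaries are exactly square boundaries in $\msf{C}(\ov{Z},\ov{X})$. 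Once $\hat{t}$ is defined as a cubical map and $p' \circ \hat{t} = t \circ p$, the remaining commutativities in \eqref{eq11} hold on all skeleta by continuity/uniqueness of the cubical extension.

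Finally I would verify that $\hat{t}$ is a \emph{local isometry}, i.e.\ that it is injective on each vertex link with image a full subcomplex. The link of a vertex $(v,y)$ in $\msf{C}(\ov{V},\ov{Y})$ embeds into the link of $y$ in $\ov{Y}$ via $p$ (the covering $p$ is a local isomorphism), and $p' \circ \hat{t} = t \circ p$ with $t$ a local isometry; combining, $\hat{t}$ is injective on $Lk_{(v,y)}$ and sends it to a subcomplex of $Lk_{(s(v),t(y))}$ that is full because fullness can be tested downstairs in $\ov{X}$ after applying the local-isomorphism $p'$ and then using fullness of $t(\ov{Y})$'s link image. The main obstacle I anticipate is precisely the horizontal-edge case above: making sure that the "no edge at $v$ maps to a wall-mate" condition is \emph{preserved} rather than merely checked, which is exactly why the somewhat technical hypotheses ($iii$) and ($iv$) are imposed — ($iii$) lets one pull an offending $\ov{X}$-edge back to $\ov{Y}$, and ($iv$) lets one then pull the resulting pair $(e,e')$ back to a single edge of $\ov{V}$, producing the needed contradiction. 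A secondary subtlety is bookkeeping around possibly disconnected completions (Remark \ref{rmkcompletion}), but since $\hat{t}$ is built componentwise-compatibly on $0$-skeleta this causes no real trouble.
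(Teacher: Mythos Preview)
Your approach is essentially the paper's: define $\hat t(v,y)=(s(v),t(y))$ on vertices, verify the horizontal/diagonal dichotomy is preserved using (ii)--(iv), extend to squares via the covering $p'$, and then extend cubically. The one concrete slip is in the horizontal-edge case: hypothesis (iii) must be applied at the vertex $t(f(v))=g(s(v))$ to the edge $g(e')$ (which is incident there), not at $t(y)$; this produces an edge $e$ of $\ov Y$ incident at $f(v)$, not at $y$, with $t(e)=g(e')$, and then (ii) is needed to conclude $e$ is dual to $W(\{y,y'\})$ before (iv) can be invoked at $v$. With that correction your argument is the paper's.
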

\begin{rmk}As we will see below, the conditions ($i$) ($ii$) and ($iii$) in the previous proposition can be obtained for a general commutative diagram of local isometries between compact special cube complexes after passing to finite coverings. Condition ($iv$) may be achieved, if for instance, the universal cover of $\ov{V}$ coincides with the intersection of the universal covers of $\ov{Y}$ and $\ov{Z}$, when we see these complexes naturally embedded in the universal cover of $\ov{X}$.
\end{rmk}
\begin{proof}We first construct the map $\hat{t}$ for lower dimensional cubes of $\msf{C}(\ov{V},\ov{Y})$, starting with the 0-skeleton where we define $\hat{t} : \ov{V}^{(0)}\times \ov{Y}^{(0)} \rightarrow \ov{Z}^{(0)}\times \ov{X}^{(0)}$ by $(v, y) \mapsto (s(v), t(y))$. In this way, $\hat{t}$ clearly satisfies \eqref{eq11}. For the 1-skeleton, we will check that the image under $\hat{t}$ of a pair vertices of $\msf{C}(\ov{V},\ov{Y})$ representing a horizontal (resp. diagonal) edge is a pair of vertices representing a horizontal (resp. diagonal) edge of $\msf{C}(\ov{Z},\ov{X})$. Let $e=\corchete{(v, y),(v, y')}$ be a horizontal edge of $\msf{C}(\ov{V},\ov{Y})$, for which we claim that the pair $\corchete{(s(v), t(y)),(s(v), t(y'))}$ represents a horizontal edge. Assume by contradiction that there exists an edge $b$ of $\ov{Z}$ incident to $s(v)$, with $g(b)$ dual to the wall $W(\corchete{t(y), t(y')}) \subset \ov{X}$ (note that $\corchete{t(y), t(y')}$ is an edge since $\ov{X}$ has simplicial 1-skeleton and $t$ is local isometry). This edge $g(b)$ is incident to $g(s(v))=t(f(v))$, so by condition ($iii$), $g(b)$ equals $t(b')$ for an edge $b'$ incident to $f(v)$. Condition ($ii$) then implies that $b'$ is dual to the wall $W(\corchete{y, y'})\subset \ov{Y}$, and condition ($iv$) gives us an edge $b''$ of $\ov{V}$ incident to $v$ with $f(b'')=b'$. But this would imply that $e$ is not horizontal, and this contradiction proves the claim. The case of $e= \corchete{(v, y),(v', y')}$ diagonal is easier since $t$ maps walls to walls, and hence $g(\corchete{s(v), s(v')})$ is dual to $W(\corchete{t(y), t(y')})$. Therefore, the image of a horizontal/diagonal edge of $\msf{C}(\ov{V},\ov{Y})$ is defined as the expected horizontal/diagonal edge of $\msf{C}(\ov{Z},\ov{X})$, and since the image of an edge under $r$ or $r'$ only depends on whether the edge is horizontal or vertical, $\hat{t}$ also satisfies \eqref{eq11} at the level of 1-skeleton.

Now, let $Q$ be a square of $\msf{C}(\ov{V},\ov{Y})$, say with 1-skeleton determined by the vertices $\corchete{(v_i,y_i)}^4_{i=1}$. By definition, this means that $p(Q)$ is also a square with 0-skeleton $\corchete{y_i}^4_{i=1}$, and by condition ($i$) the vertices $\corchete{t(y_i)}^4_{i=1}$ are the 0-skeleton of the square $t(p(Q))$ of $\ov{X}$. Since $\msf{C}(\ov{Z},\ov{X})$ is a covering, these vertices lift under $p'$ to the set $\corchete{(s(v_i), t(y_i))}^4_{i=1}$ that is the 0-skeleton a square $Q'$, that we define as the image of $Q$ under $\hat{t}$. Again, since the image of a square under a retraction only depends on whether its 1-skeleton consists of horizontal/diagonal edges, the diagrams \eqref{eq11} still commute.

Finally, by \cite[Lem.~2.5]{HaglundWise2008Special} there is a unique way to extend $\hat{t}$ to a combinatorial map $\msf{C}(\ov{V},\ov{Y}) \rightarrow \msf{C}(\ov{Z},\ov{X})$, which is clearly a local isometry. Also, since the maps $r, r', p$ and $p'$ restricted to a higher dimensional cube depend only on its 2-skeleton, by uniqueness of $\hat{t}$ it must satisfy \eqref{eq11}.
\end{proof}
\begin{rmk}In the proof of Theorem \ref{doublecosetconvexsep} below, we will be interested in the commutative diagrams of fundamental groups induced by \eqref{eq11}, so we will only require these diagrams to commute at the level of 2-skeletons.
\end{rmk}
\begin{proof}[Proof of Theorem \ref{doublecosetconvexsep}] Let us use Lemma \ref{enlarging} to find convex cores $Y$ and $Z$ for $H$ and $K$ respectively, such that $V= Y \cap Z$ is non-empty. By Lemma \ref{intersectionconvex} this will imply that $V$ is a convex core for $H \cap K$. We will prove first that there exists a finite index subgroup $\hat{G} <  G$ such that if $\hat{H}=H \cap\hat{G}$ and $\hat{K}= K \cap \hat{G}$, then after defining $\ov{V}=V/(\hat{H}\cap\hat{K})$, $\ov{X}=X/\hat{G}$, $\ov{Y}=Y/\hat{H}$ and $\ov{Z}=Z/\hat{K}$, the induced diagram \eqref{eq10} is of local isometries and satisfies the conditions ($i$)-($iv$) of Proposition \ref{functoriality}.

By \cite[Cor.~8.9]{HaglundWise2008Special} we can find $\hat{G}<G$ of finite index such that condition ($i$) holds, and by possibly replacing $\hat{G}$ by a further finite index subgroup satisfying Corollary \ref{interconvex-wall}, we can ensure that $\hat{G}$ also satisfies ($ii$).\\
To prove condition ($iii$), let $e$ be an edge of $\ov{X}$ incident to $t(y)$ for a vertex $y \in \ov{Y}$, and let $\wtilde{e} \subset X$ be a lifting of $e$ incident to the lifting $\wtilde{y} \in Y$ of $y$. If $e$ is dual to the wall $W(t(b)) \subset \ov{X}$ for some edge $b \subset \ov{Y}$, then there exists a lifting $\wtilde{b} \subset Y$ of $b$ and some $g \in \hat{G}$ such that $W(\wtilde{e})=gW(\wtilde{b})$. Since $\wtilde{y}\in Y$, we have $W(\wtilde{b}) \cap Y \neq \emptyset$ and $g\mcal{N} (W(\wtilde{b})) \cap Y \neq \emptyset$, so by conclusion (1) of Corollary \ref{interconvex-wall} we have $W(\wtilde{e}) \cap Y \neq \emptyset$ and $\wtilde{e} \subset Y$, implying condition ($iii$).\\
Finally, let $v \in \ov{V}$ be a vertex lifting to $\wtilde{v} \in V$, and let $\wtilde{e}$ and $\wtilde{e}'$ be edges of $Y$ and $Z$ respectively, incident to $\wtilde{v}$ and such that there exists some $g \in \hat{G}$ with $g\wtilde{e}=\wtilde{e}'$. Since the action of $\hat{G}$ is free, we have $g=1$ and $\wtilde{e}=\wtilde{e}' \in V$. Projecting to the corresponding quotients we deduce ($iv$).

Therefore, we are in the assumptions of Proposition \ref{functoriality}, and there is a local isometry $\hat{t}:\msf{C}(\ov{V},\ov{Y}) \rightarrow \msf{C}(\ov{Z},\ov{X})$ such that the diagrams \eqref{eq11} commute. The proof now goes as in Lemma \ref{sepconvex-wall}. If $H'<\hat{H}$ and $G'<\hat{G}$ are the finite index subgroups representing the fundamental groups of the (appropriate connected components of the) canonical completions $\msf{C}(\ov{V},\ov{Y})$ and $\msf{C}(\ov{Z},\ov{X})$ respectively, then $H'<G'$ and there is a retraction homomorphism $r_\ast : G' \rightarrow G'$ with image $\hat{K}$ and such that $r_\ast(H')=\hat{H} \cap \hat{K}$. Again, we can check that $\wtilde{H}:=H'\cap r_\ast^{-1}(H')$ satisfies $r_\ast(\wtilde{H})\subset \wtilde{H}$, and so Proposition \ref{criteriondouble} implies that $\wtilde{H}\hat{K}$ is separable in $G'$. Since $G' <G$, $\wtilde{H}<H$ and $\hat{K}<K$ are all of finite index, we conclude that $HK$ is separable in $G$, completing the proof.
\end{proof}
We now see how Theorem \ref{doublecosetconvexsep} implies Proposition \ref{independencecore}. In fact, by Lemma \ref{enlarging}, Proposition \ref{independencecore} follows from the next proposition.
\begin{prop}\label{specialimpliesspecial} Let $(G,X)$ be a cubulated group, and let $Y \subset X$ be a $G$-invariant convex subcomplex. If the cubulated group $(G, Y)$ is virtually special, then $(G,X)$ is also virtually special.
\end{prop}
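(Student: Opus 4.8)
The plan is to verify the Haglund--Wise criterion of Theorem~\ref{haglundwisedoublecoset} for $(G,X)$: I will show that $G_W$ is separable in $G$ for every wall $W\subset X$, and that $G_{W_1}G_{W_2}$ is separable in $G$ for every pair of intersecting walls $W_1,W_2\subset X$. Both statements will be deduced from the corresponding facts for $Y$ (Theorems~\ref{haglundwiseseparability} and~\ref{doublecosetconvexsep}), so the real content is to recognize that wall stabilizers of $X$ are convex subgroups of the cubulated group $(G,Y)$.

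First I would prove the key claim: for every wall $W\subset X$ the stabilizer $G_W$ is a convex subgroup of $(G,Y)$. By \cite[Lem.~13.4]{hagwissp} the cubical neighborhood $\mcal{N}(W)$ is a convex subcomplex of $X$ on which $G_W$ acts cocompactly. Let $\mathfrak{g}_Y\colon X\to Y$ be the combinatorial projection (gate map) onto the convex subcomplex $Y$; I will use that it is a cubical map, that it is $G$-equivariant because $Y$ is $G$-invariant and the gate map is canonical, and that it carries convex subcomplexes of $X$ to convex subcomplexes of $Y$. Setting $Z_W:=\mathfrak{g}_Y(\mcal{N}(W))$, a non-empty convex subcomplex of $Y$, one has $g\mcal{N}(W)=\mcal{N}(gW)=\mcal{N}(W)$ for any $g\in G_W$, hence $gZ_W=\mathfrak{g}_{gY}(g\mcal{N}(W))=\mathfrak{g}_Y(\mcal{N}(W))=Z_W$ by equivariance; thus $G_W$ preserves $Z_W$. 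Since $\mathfrak{g}_Y$ restricts to a $G_W$-equivariant surjection $\mcal{N}(W)\to Z_W$, the orbit space $Z_W/G_W$ is a continuous image of the compact space $\mcal{N}(W)/G_W$, so $G_W$ acts cocompactly on $Z_W$. This proves the claim.

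With the claim in hand the two conditions follow immediately. For the first, since $(G,Y)$ is virtually special, Theorem~\ref{haglundwiseseparability} gives that every convex subgroup of $(G,Y)$ is separable in $G$, and in particular each $G_W$ is separable. For the second, given intersecting walls $W_1,W_2\subset X$, the subgroups $G_{W_1}$ and $G_{W_2}$ are convex in the cubulated virtually special group $(G,Y)$ by the claim, so Theorem~\ref{doublecosetconvexsep} shows that $G_{W_1}G_{W_2}$ is separable in $G$. Hence $(G,X)$ satisfies the hypotheses of Theorem~\ref{haglundwisedoublecoset} and is virtually special. The main point requiring care — though it is standard — is the package of properties of the gate map $\mathfrak{g}_Y$: that the combinatorial projection onto a convex subcomplex of a $\CAT{0}$ cube complex is a cubical, $G$-equivariant map sending convex subcomplexes to convex subcomplexes. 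Everything else is a direct application of results already established in the paper.
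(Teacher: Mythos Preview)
Your proof is correct and follows essentially the same route as the paper: both reduce to the Haglund--Wise criterion (Theorem~\ref{haglundwisedoublecoset}), project $\mcal{N}(W)$ to $Y$ via the gate map to show each $G_W$ is convex in $(G,Y)$, and then invoke Theorems~\ref{haglundwiseseparability} and~\ref{doublecosetconvexsep}. The only difference is cosmetic: the paper isolates the needed properties of the gate map in a separate lemma (Lemma~\ref{gateconvex}), whereas you state them inline and flag them as the one point requiring outside justification.
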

Before proving this result, we first recall the definition of gate map projection \cite[Sec.~2]{BehrstockHagenSisto2017HHS1}.
\begin{defi} Let $X$ be a \CAT{0} cube complex, and consider a convex subcomplex $Y \subset X$. The \emph{gate map} is defined as the unique cubical map $\mfrk{g} : X \rightarrow Y$ characterized by the following property: for any point $x \in X$, the wall $W \subset X$ separates $x$ from $\mfrk{g}(x)$ if and only if it separates
$x$ from $Y$.
\end{defi}
The next lemma is well known by experts and is implicit, for instance, in \cite{BehrstockHagenSisto2017HHS1}, so we provide a proof in the absence of a precise reference.
\begin{lemma}\label{gateconvex}Let $(G,X)$ be a cubulated group and let $Y \subset X$ be a $G$-invariant convex subcomplex. Then for any convex subcomplex $K \subset X$, its image $\mfrk{g}(K) \subset Y$ is also a convex subcomplex. Moreover, if $H<G$ preserves $K$ and acts cocompactly on it, then it also acts cocompactly on $\mfrk{g}(K)$.
\end{lemma}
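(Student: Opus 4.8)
The plan is to describe $\mfrk{g}(K)$ explicitly as an intersection of half-spaces, which is automatically a convex subcomplex of $X$. Write $\mfrk{g}=\mfrk{g}_Y\colon X\to Y$ for the given gate map and $\mfrk{g}_K\colon X\to K$ for the gate map onto $K$ (which exists and is cubical since $K$ is convex). For a wall $W$ of $X$ not crossing a subcomplex $Z$, let $\mfrk{h}_W(Z)$ denote the half-space of $W$ containing $Z$. Let $\mcal{B}$ be the set of walls of $X$ that cross $Y$ but do not cross $K$, and set $N:=Y\cap\bigcap_{W\in\mcal{B}}\mfrk{h}_W(K)$; being an intersection of convex subcomplexes (half-spaces, together with $Y$), $N$ is a convex subcomplex of $X$ contained in $Y$. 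The goal is to show $\mfrk{g}(K)=N$.

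First I would check $\mfrk{g}(K)\subseteq N$. Clearly $\mfrk{g}(K)\subseteq Y$. If $W\in\mcal{B}$ and $k\in K$, then $W$ crosses $Y$, so it does not separate $k$ from $Y$, hence not from $\mfrk{g}(k)$; thus $k$ and $\mfrk{g}(k)$ lie on the same side of $W$, and since $k\in\mfrk{h}_W(K)$ we get $\mfrk{g}(k)\in\mfrk{h}_W(K)$. For the reverse inclusion the key point is that $\mfrk{g}\circ\mfrk{g}_K$ restricts to the identity on $N$. Indeed, for a vertex $v\in N$ with $k:=\mfrk{g}_K(v)$, the gate property for $K$ says that the walls separating $k$ from $v$ are exactly those separating $v$ from $K$; I claim this set is also exactly the set of walls separating $k$ from $Y$. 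The inclusion into ``walls separating $k$ from $Y$'' is the delicate one: if $W$ separates $v$ from $K$ but fails to separate $k$ from $Y$, then some $y\in Y$ lies on the $K$-side of $W$, so $W$ crosses $Y$ and misses $K$, i.e.\ $W\in\mcal{B}$, forcing $v\in\mfrk{h}_W(K)$, which contradicts $W$ separating $v$ from $K$. The opposite inclusion is immediate from $v\in Y$. Since the walls separating $k$ from $\mfrk{g}(k)$ are by definition exactly those separating $k$ from $Y$, the vertices $\mfrk{g}(k)$ and $v$ are separated from $k$ by exactly the same walls, so $\mfrk{g}(k)=v$. Promoting this from vertices to subcomplexes, using that a cube in a $\CAT{0}$ cube complex is determined by its vertex set, gives $N=\mfrk{g}(\mfrk{g}_K(N))\subseteq\mfrk{g}(K)$, so $\mfrk{g}(K)=N$ is convex.

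For the cocompactness claim I would note that $\mfrk{g}$ is $G$-equivariant — its defining property is invariant under any cubical isometry of $X$ preserving $Y$, and all of $G$ preserves $Y$ — hence in particular $H$-equivariant, and therefore $H$ preserves $\mfrk{g}(K)$. If $D\subseteq K$ is compact with $H\cdot D=K$, then $\mfrk{g}(D)$ is compact (a continuous image of a compact set) and $H\cdot\mfrk{g}(D)=\mfrk{g}(H\cdot D)=\mfrk{g}(K)$, so $H$ acts cocompactly on $\mfrk{g}(K)$. I expect the main obstacle to be the verification that $\mfrk{g}\circ\mfrk{g}_K$ is the identity on $N$, specifically the bookkeeping around the walls in $\mcal{B}$ that cross $Y$ but not $K$; everything else is formal manipulation of half-spaces and gate maps.
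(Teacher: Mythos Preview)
Your proof is correct and takes a genuinely different route from the paper's. The paper argues convexity directly by an induction on $d(x,\mfrk{g}(x))+d(y,\mfrk{g}(y))$: given vertices $x,y\in K$ and a combinatorial geodesic $\beta$ from $\mfrk{g}(x)$ to $\mfrk{g}(y)$, it constructs a geodesic $\alpha$ from $x$ to $y$ with $\mfrk{g}(\alpha)=\beta$, so $\beta\subset\mfrk{g}(K)$ by convexity of $K$. Your approach instead identifies $\mfrk{g}(K)$ explicitly as the intersection $Y\cap\bigcap_{W\in\mcal{B}}\mfrk{h}_W(K)$, which is convex for free; the work goes into the equality, handled cleanly via the identity $\mfrk{g}\circ\mfrk{g}_K|_N=\mathrm{id}_N$. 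Your argument is more conceptual and yields extra information (an explicit half-space description of $\mfrk{g}(K)$), at the cost of invoking a second gate map $\mfrk{g}_K$; the paper's induction is more self-contained but less illuminating about what $\mfrk{g}(K)$ actually is. The cocompactness arguments are essentially identical.
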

\begin{proof}For the first assertion, it is enough to prove that if $x, y \in X$ are vertices and $\beta$ is a combinatorial geodesic path joining $\mfrk{g}(x)$ and $\mfrk{g}(y)$, then $\beta=\mfrk{g}(\alpha)$, for some geodesic $\alpha$ joining $x$
and $y$. We will prove this by induction on the sum of combinatorial distances $d= d(x, \mfrk{g}(x))+d(y, \mfrk{g}(y))$, where the case $d=0$ holds since $Y$ is convex. So, assume that the claim follows for $d \geq 0$, and let $x, y \in X$ be vertices with $d(x, \mfrk{g}(x))+d(y, \mfrk{g}(y))=d+1$, for which we can assume
$d(x, \mfrk{g}(x))> 0$. Thus, let $\gamma$ be a combinatorial geodesic joining $x$ and $\mfrk{g}(x)$, and $u$ be the vertex on this geodesic at distance 1 to $x$. Except for the wall dual to the edge $e$ determined by $x$ and $u$, any other wall dual to an edge of $\gamma$ separates $u$ and $\mfrk{g}(x)$, so $\mfrk{g}(u)=\mfrk{g}(x)$, and by our inductive assumption there is a geodesic $\alpha'$ joining $u$ and $y$, such that $\mfrk{g}(\alpha')=\beta$.

If $e$ separates $x$ from $y$, the concatenation of $e$ and $\alpha'$ defines a geodesic $\alpha$ projecting to $\beta$. Otherwise, there is an edge $e'$ of $\alpha'$ dual to $W(e)$, say determined by the vertices $p$ and $q$ with $p$ between $u$ and $q$. In this case, the segment $\alpha''$ of $\alpha'$ between $u$ and $p$ lies in one of the sides of $\mcal{N}(W(e))$ which we know is a convex subcomplex, so every vertex of $\alpha''$ lies in an edge dual to $W(e)$. If we follow the extreme points of these edges lying on the other side of $\mcal{N}(W(e))$, we will obtain a geodesic path joining $x$ and $q$. By concatenating this path with the segment of $\alpha$ between $q$ and $y$, we will obtain a geodesic path $\alpha$ (there is no repetition in the walls dual to $\alpha$), and it is easy to see that $\mfrk{g}(\alpha)=\beta$.

The second assertion follows easily since $Y$ is $G$-invariant, and since $\mfrk{g}(K)/H$ is the image of the compact set $K/H$ under the induced projection $\mfrk{g} : X/G \rightarrow Y/H$.
\end{proof}
\begin{proof}[Proof of Proposition \ref{specialimpliesspecial}] Let $W_1, W_2 \subset X$ be walls with stabilizers $G_1$ and $G_2$, respectively. By Theorem \ref{haglundwisedoublecoset}, to prove the proposition it is enough to show that $G_1$ and $G_1G_2$ are separable in $G$. Consider then the gate map $\mfrk{g} : X \rightarrow Y$ and the projections $\mfrk{g}(\mcal{N}(W_1))$ and $\mfrk{g}(\mcal{N}(W_2))$, which by Lemma \ref{gateconvex} are convex subcomplexes of $Y$. This same lemma also implies that each subgroup $G_i$ acts cocompactly on $\mfrk{g}(\mcal{N}(W_i))$, and so $G_1$ and $G_2$ are convex subgroups of $(G, Y)$, which by assumption is virtually special. The conclusion then follows by Theorems \ref{haglundwiseseparability} and \ref{doublecosetconvexsep}.
\end{proof}

\medskip

\hspace{-4.3mm}\textbf{Acknowledgment}\hspace{2mm} 
I am grateful to my advisor Ian Agol for very valuable discussions throughout all this work. I also thank Sam Shepherd for careful reading and for suggesting the short proof of Lemma \ref{nbhdconvex}, Markus Steenbock for pointing out Corollary \ref{Atiyahconj}, and Chi Cheuk Tsang, Daniel Groves, David Futer, and Ariel Reyes-Pardo for useful comments. I am indebted to the anonymous referee for the detailed report and the suggestions and corrections to the text. This article was supported by the Simons Foundation.










\small{Eduardo Oreg\'on-Reyes (\texttt{eoregon@berkeley.edu})}\\
\small{Department of Mathematics}\\
\small{University of California at Berkeley}\\
\small{1087 Evans Hall, Berkeley, CA 94720-3840, U.S.A.}

\end{document}